\newtheorem{dummy}{dummy}[section]
\newtheorem{theorem}[dummy]{Theorem}
\newtheorem{conjecture}[dummy]{Conjecture}
\newtheorem{corollary}[dummy]{Corollary}
\newtheorem{proposition}[dummy]{Proposition}
\theoremstyle{definition}
\newtheorem{definition}[dummy]{Definition}
\newtheorem{example}[dummy]{Example}
\newtheorem{remark}[dummy]{Remark}
\newtheorem{question}[dummy]{Question}
\newtheorem{construction}[dummy]{Construction}
\newtheorem{observation}[dummy]{Observation}
\newtheorem{axiom}[dummy]{Axiom}
\numberwithin{equation}{section}
\newcommand{\R}{\mathbb {R}}
\newcommand{\Z}{\mathbb {Z}}
\newcommand{\e}{\epsilon}
\newcommand{\dd}{\partial}
\newcommand{\alg}{\mathcal{A}}
\newcommand{\calg}{\mathcal{A}_{\mathit{cell}}}
\newcommand{\B}{\mathcal{B}}
\newcommand{\calC}{\mathcal{C}}
\newcommand{\wt}{\widetilde}
\newcommand{\symp}{\mathit{Symp}}
\newcommand{\aug}{\mathit{Aug}}
\newcommand{\dga}{\mathfrak{DGA}}
\newcommand{\MCF}{\mathit{MCF}}
\newcommand{\res}{\mathit{res}}
\newcommand{\comp}{\mathit{cell}}
\newcommand{\leg}{\mathfrak{Leg}}
\def\dr#1{\textcolor{Black}{#1}}
\def\wt#1{\widetilde{#1}}
\begin{document}

\title[Augmentations and immersed Lagrangian fillings]{Augmentations and immersed Lagrangian fillings}


\author{Yu Pan}
\address{Tianjin University}
\email{ypan@tju.edu.cn}

\author{Dan Rutherford}
\address{Ball State University}
\email{rutherford@bsu.edu}

\begin{abstract}
For a Legendrian link $\Lambda \subset J^1M$ with $M = \R$ or $S^1$, immersed exact Lagrangian fillings $L \subset \mbox{Symp}(J^1M) \cong T^*(\R_{>0} \times  M)$ of $\Lambda$ can be lifted to conical Legendrian fillings $\Sigma \subset J^1(\R_{>0} \times M)$ of $\Lambda$. When $\Sigma$ is embedded, using the version of functoriality for Legendrian contact homology (LCH) from \cite{PanRu1}, for each augmentation $\alpha: \mathcal{A}(\Sigma) \rightarrow \Z/2$ of the LCH algebra of $\Sigma$, there is an induced augmentation $\epsilon_{(\Sigma,\alpha)}: \mathcal{A}(\Lambda) \rightarrow \Z/2$.   With $\Sigma$ fixed, the set of homotopy classes of all such induced augmentations, $I_\Sigma \subset \mathit{Aug}(\Lambda)/{\sim}$, is a Legendrian isotopy invariant of $\Sigma$.  
We establish methods to compute $I_\Sigma$ based on the correspondence between Morse complex families and augmentations.  This includes developing a functoriality for the cellular DGA from \cite{RuSu1} with respect to Legendrian cobordisms, and proving its equivalence to the functoriality for LCH.  For arbitrary $n \geq 1$, we give examples of Legendrian torus knots with $2n$ distinct conical Legendrian fillings distinguished by their induced augmentation sets.  
 We prove that when $\rho \neq 1$ and $\Lambda \subset J^1\R$ {\it every} $\rho$-graded augmentation of $\Lambda$ can be induced in this manner by an immersed Lagrangian filling.  Alternatively, this is viewed as a computation of cobordism classes for an appropriate notion of $\rho$-graded augmented Legendrian cobordism. 

\end{abstract}

\maketitle

{\small \tableofcontents}

\section{Introduction}

A fundamental holomorphic curve invariant of a Legendrian submanifold, $\Lambda$, is the Legendrian contact homology (LCH) dg-algebra (DGA), denoted $\mathcal{A}(\Lambda)$.  As part of the symplectic field theory package, the LCH algebra is functorial for an appropriate class of cobordisms.  In this article we consider $1$-dimensional Legendrian links in the $1$-jet spaces, $J^1M$ with $M = \R$ or $S^1$, and exact Lagrangian cobordisms in the symplectization, $\mathit{Symp}(J^1M) = \R \times J^1M$; throughout, our coefficient field is $\Z/2$.  For $\Lambda_-, \Lambda_+ \subset J^1M$, such a cobordism, $L: \Lambda_- \rightarrow \Lambda_+$, cylindrical over $\Lambda_-$ and $\Lambda_+$ at the negative and positive ends of $\mathit{Symp}(J^1M)$, equipped with a $\Z/\rho$-valued Maslov potential induces a $\Z/\rho$-graded DGA map $f_{L} :\alg(\Lambda_+) \rightarrow \alg(\Lambda_-)$, cf. \cite{E2, EHK}.  In particular, when $L$ is an exact Lagrangian filling, i.e. a cobordism $L: \emptyset \rightarrow \Lambda$, the induced map
\[
\epsilon_L: \mathcal{A}(\Lambda) \rightarrow \Z/2
\]
is a {\bf $\rho$-graded augmentation} which by definition is a unital ring homomorphism that satisfies $\epsilon_L \circ \partial =0$ and preserves a $\Z/\rho$-grading on $\mathcal{A}(\Lambda)$.

A natural question is:

\begin{question}
Which augmentations come from exact Lagrangian fillings?
\end{question} 

While orientable exact Lagrangian fillings have been constructed for several classes of Legendrian knots
\cite{EHK, HS, STWZ, Tagami}, there are many augmentations that cannot be induced by any orientable filling as obstructions to such fillings arise from the Thurston-Bennequin number of $\Lambda$ and from the linearized homology of the augmentation;  
 see \cite{Chan, E2, DR}.  The main result of this article shows that if one extends the setting to allow immersed cobordisms with double points then the algebra more closely matches the geometry.  Indeed we prove that when $\rho$ is even, {\it every $\rho$-graded augmentation can be induced by an orientable immersed exact Lagrangian filling.}

An extension of the functoriality for LCH to immersed Lagrangian cobordisms is implemented in \cite{PanRu1} by working with a class of Legendrian cobordisms as follows.  Applying a symplectomorphism $\mathit{Symp}(J^1M) \cong T^*(\R_{>0} \times M)$ an exact immersed Lagrangian cobordism, $L \subset \mathit{Symp}(J^1M)$, can be lifted to a Legendrian $\Sigma \subset J^1(\R_{>0} \times M)$ with the cylindrical ends of $L$ translating to conical ends for $\Sigma$ and double points of $L$ becoming Reeb chords of $\Sigma$.  See Section \ref{sec:2-4}.  When $\Sigma$ is embedded and equipped with a $\Z/\rho$-valued Maslov potential, such a {\bf conical Legendrian cobordism}, $\Sigma: \Lambda_- \rightarrow \Lambda_+$, induces a diagram of $\rho$-graded DGA maps
\begin{equation} \label{eq:intro1}
\mathcal{A}(\Lambda_+) \stackrel{f_\Sigma}{\rightarrow} \mathcal{A}(\Sigma) \stackrel{i_\Sigma}{\hookleftarrow} \mathcal{A}(\Lambda_-)
\end{equation}
where $\mathcal{A}(\Sigma)$ is generated by $\mathcal{A}(\Lambda_-)$ and the Reeb chords of $\Sigma$.  Diagrams of the above form are referred to in
 \cite{PanRu1} as {\bf immersed DGA maps}.  There, a notion of homotopy for immersed DGA maps is introduced, and the homotopy type of the immersed map (\ref{eq:intro1}) is shown to be an invariant of the conical Legendrian isotopy type of $\Sigma$.  When $\Sigma: \emptyset \rightarrow \Lambda$ is a conical Legendrian filling equipped with a choice of $\rho$-graded augmentation, $\alpha:\mathcal{A}(\Sigma) \rightarrow \Z/2$, we can then define an {\bf induced augmentation} $\epsilon_{(\Sigma, \alpha)}: \mathcal{A}(\Lambda) \rightarrow \Z/2$ as the composition  $\epsilon_{(\Sigma, \alpha)} = \alpha \circ f_{\Sigma}$.  This generalizes the construction of induced augmentations from embedded Lagrangian fillings.  

We can now state our first main result, where in the following we write $\epsilon \simeq \epsilon'$ to indicate that two augmentations are DGA homotopic.

\begin{theorem} \label{thm:main}
Let $\Lambda \subset J^1\R$ have the $\Z/\rho$-valued Maslov potential $\mu$ where $\rho \geq 0$, and  let $\epsilon: \mathcal{A}(\Lambda) \rightarrow \Z/2$ be any $\rho$-graded augmentation.  
\begin{enumerate}
\item If $\rho \neq 1$, there exists a conical Legendrian filling $\Sigma$  of $\Lambda$ with $\Z/\rho$-valued Maslov potential extending $\mu$ together with a $\rho$-graded augmentation $\alpha:\alg(\Sigma) \rightarrow \Z/2$ such that $\epsilon \simeq \epsilon_{(\Sigma, \alpha)}$.  Moreover, if $\rho$ is even, then $\Sigma$ is orientable.
\item If $\rho =1$, then there exists a conical Legendrian cobordism $\Sigma: U \rightarrow \Lambda$ where $U$ is the standard Legendrian unknot with $\mathit{tb}(U) = -1$ together with a $1$-graded augmentation $\alpha:\alg(\Sigma) \rightarrow \Z/2$ such that $\epsilon \simeq \alpha \circ f_\Sigma$.
\end{enumerate}
\end{theorem}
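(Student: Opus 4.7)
The strategy is to use the equivalence -- established in this paper's extension of the framework of \cite{RuSu1} -- between $\rho$-graded augmentations of $\alg(\Lambda)$, $\rho$-graded augmentations of the cellular DGA $\calg(\Lambda)$, and $\rho$-graded Morse complex families (MCFs) on a front diagram of $\Lambda$.  Given $\epsilon$, I would first convert it (up to DGA homotopy) to a $\rho$-graded MCF $\M$ on a chosen front projection of $\Lambda$.  This transfers the problem from the algebraic world of $\alg(\Lambda)$ to a combinatorial description of $\M$ on the front.

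Next, I would construct $\Sigma$ geometrically by reading $\M$ as a recipe.  Simplify the front of $\Lambda$ downward through a sequence of elementary Legendrian cobordism moves (cusps, crossings of strands, saddles, and $0$-handle/$1$-handle moves), until the bottom slice becomes trivial: the empty diagram when $\rho \neq 1$, or the standard unknot $U$ with $\mathit{tb}(U)=-1$ when $\rho=1$.  Stacking the corresponding elementary conical Legendrian cobordisms yields $\Sigma$.  In parallel, extend $\M$ downward through each elementary move; whenever an MCF move cannot be realized by an embedded Lagrangian cobordism, insert a Reeb chord into the corresponding conical slice and assign it an augmentation value dictated by the local MCF data.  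The resulting assignments collectively define a $\rho$-graded augmentation $\alpha: \alg(\Sigma) \to \Z/2$.

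To conclude, it remains to identify $\epsilon_{(\Sigma,\alpha)} = \alpha \circ f_\Sigma$ with $\epsilon$ up to homotopy.  For this, apply the new cellular DGA cobordism functoriality developed in the paper, which is proven there to be homotopy-equivalent to LCH functoriality.  The pushforward of $\alpha$ along the cellular DGA map of $\Sigma$ is, by construction, the cellular augmentation of $\Lambda$ corresponding to $\M$, hence homotopic to $\epsilon$.  For the orientability statement when $\rho$ is even, the $\Z/\rho$-valued Maslov potential extending $\mu$ over $\Sigma$ reduces mod $2$ to a $\Z/2$-valued Maslov potential on $\Sigma$; this forces the mod-$2$ Maslov class of $\Sigma$ to vanish, so $\Sigma$ is orientable.

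The main obstacle will be in the middle step: exhibiting, for each MCF move, a local model of a conical Legendrian cobordism whose Reeb chords and $\alpha$-values realize that move while respecting the $\rho$-grading.  Careful bookkeeping is needed at cusp passages and at handle-slide configurations where the MCF data forks or merges.  The case $\rho=1$ is especially delicate: without grading constraints the MCF is maximally flexible, but a mod-$2$ Thurston--Bennequin parity obstruction generally prevents the cobordism from terminating at $\emptyset$; beginning instead at $U$ (which carries a canonical $1$-graded augmentation) absorbs this parity discrepancy and lets the construction close off.
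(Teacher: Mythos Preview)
Your high-level strategy is essentially the paper's: convert $\epsilon$ to a $\rho$-graded MCF, build the cobordism by elementary moves while extending the MCF over each move, and then invoke the cellular-LCH isomorphism (Corollary~\ref{cor:augmentationset}, Proposition~\ref{prop:MCFcomp}) to recover $\epsilon$ up to homotopy.  The orientability argument is exactly the paper's (Remark~\ref{rem:orient}).  Two points deserve correction or sharpening.

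First, your explanation of the $\rho=1$ case is wrong.  There is no Thurston--Bennequin parity obstruction at play.  The actual obstruction, visible in the paper's proof of Proposition~\ref{prop:MCSextend}, is local to the MCF: when $\rho=1$ an SR-form MCF may carry a handleslide connecting the two strands of a right cusp (Definition~\ref{def:Aform}(2), or item (3) in the SR-form definition).  When the induction reduces a component to a standard unknot, such a handleslide can survive between its two sheets, and then building block (B5) --- removing the unknot via the Unknot Move --- fails because the MCF does not extend over the disk.  For $\rho\neq 1$ the Maslov potential forbids such a handleslide, so (B5) always applies.  The $\rho=1$ fix is simply to leave that unknot as the negative end.

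Second, you underestimate the ``main obstacle.''  It is not enough to have local cobordism models for each MCF move; one must organize the front-simplification so that at every stage the MCF is in a shape to which one of the building blocks (B1)--(B5) applies.  The paper's device for this is Henry's Proposition~\ref{prop:SRform}: any MCF is equivalent to an \emph{SR-form} MCF, tied to a normal ruling.  Repeatedly renormalizing to SR-form after each move is what makes the nested induction (on the number of left cusps and the length of a residual word) go through in Cases~1--10.  Without this normalization the ``careful bookkeeping'' you allude to does not obviously terminate.
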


The algebra of the standard Legendrian unknot, $U$, is generated by a single Reeb chord $b$ of degree $1$.  In the case where $\rho =1$,  if the restriction of $\alpha$ to $\mathcal{A}(U) \subset \mathcal{A}(\Sigma)$ sends $b$ to $0$, then by concatenating with the standard filling of $U$ we see that $\epsilon$ can be induced by a pair $(\Sigma', \alpha')$ where $\Sigma'$ is a conical Legendrian filling.  However, in the case that $\alpha$ restricts to the augmentation of $\mathcal{A}(U)$ that maps $b$ to $1$, we are not sure whether $\epsilon$ can be induced by a Legendrian filling.  In fact, we conjecture that this is not  possible.

\begin{conjecture} \label{co:1}
There is no conical Legendrian filling $\Sigma$ of the Legendrian unknot $U$ with $1$-graded augmentation $\alpha:\mathcal{A}(\Sigma) \rightarrow \Z/2$ such that the induced augmentation $\epsilon_{(\Sigma, \alpha)}$ maps the unique Reeb chord $b$ to $1$.
\end{conjecture}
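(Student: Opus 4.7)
The plan is to exhibit an algebraic obstruction on $f_\Sigma(b)$ that forces $\alpha(f_\Sigma(b)) = 0$ for any choice of augmentation $\alpha$ on a conical Legendrian filling $\Sigma$ of $U$. Since $\epsilon_{(\Sigma,\alpha)}(b) = \alpha(f_\Sigma(b))$, it is enough to identify a $\Z/2$-valued invariant of conical fillings that equals $0$ for the standard embedded disk filling, is preserved under conical Legendrian isotopy and under elementary cobordism moves, and computes $\alpha(f_\Sigma(b))$ whenever $(\Sigma,\alpha)$ induces an augmentation of $\alg(U)$.

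The first step would be to constrain the topology of $\Sigma$. An immersed Thurston--Bennequin identity for $\rho = 1$ fillings, relating $\mathit{tb}(U) = -1$ to $\chi(\Sigma)$ and the number $d$ of Reeb chords of $\Sigma$, should force $\chi(\Sigma)$ to be odd and impose a linear relation of the shape $\chi(\Sigma) = 1 + 2d$. In the orientable case this immediately isolates the disk, for which $f_\Sigma(b) = 0$, and so the conjecture holds trivially. The genuinely nontrivial content therefore concerns non-orientable fillings (permitted because $\rho = 1$) or fillings with many Reeb chords. The second step would decompose such a $\Sigma$ into elementary conical cobordism moves -- births, deaths, saddles, cross-cap attachments, and pinch cobordisms that create double points -- and track $f(b)$ through each move using the cellular DGA functoriality developed in the body of the paper. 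If the class of $f(b)$ modulo the augmentation ideal is preserved by each elementary move, induction starting from the disk completes the argument.

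The hard step, and the reason the statement is stated as a conjecture rather than a theorem, is the cross-cap or pinch move: this is precisely where the parity of the intersection count defining $f_\Sigma(b)$ can change without signed orientation data to control it. Embedded obstructions such as Chantraine's inequality or signed linking arguments are unavailable in the $\rho = 1$ setting, so establishing invariance under cross-cap attachment likely requires either a direct classification of conical Legendrian fillings of $U$ up to the elementary moves, or a strengthening of LCH that records a mod-$2$ characteristic class of the immersed filling even in the ungraded setting. I would expect progress to come from the sheaf-theoretic avatar of augmentations, where $\epsilon_0$ and $\epsilon_1$ correspond to two distinct simple constructible sheaves on the front of $U$, and non-fillability of $\epsilon_1$ should translate into an obstruction to realizing the corresponding sheaf as a pushforward along a Lagrangian-like correspondence.
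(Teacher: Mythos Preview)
The statement you are attempting to prove is labeled a \emph{Conjecture} in the paper, and the paper offers no proof of it; indeed, the authors explicitly write that they ``are not sure whether $\epsilon$ can be induced by a Legendrian filling'' in this case. So there is no proof in the paper to compare your proposal against, and your own write-up is not a proof either but rather a research outline that candidly identifies the step at which it breaks down.

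That said, a few of your intermediate claims also deserve scrutiny. The Thurston--Bennequin relation you invoke, $\chi(\Sigma) = 1 + 2d$, would (if valid for arbitrary conical fillings) force $d = 0$ and $\chi = 1$ for any connected surface with one boundary component, making the conjecture trivial; the paper would certainly have said so. In fact the usual adjunction-type identity for immersed exact Lagrangian fillings involves a \emph{signed} count of double points, and in the $1$-graded setting there is no grading to supply those signs, so no such Euler-characteristic constraint is available. This is exactly why the $\rho = 1$ case is singled out as open. Your second step, decomposing $\Sigma$ into elementary moves and tracking $f_\Sigma(b)$ modulo the augmentation ideal, presupposes both a move-classification for conical fillings of $U$ and an invariance statement under each move; neither is established in the paper, and you correctly flag the cross-cap/pinch case as the obstruction. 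In short, your proposal accurately locates the difficulty but does not resolve it, and the paper leaves the statement open.
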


For the case of embedded Lagrangians, induced augmentations provide an effective means of distinguishing Lagrangian fillings of a given Legendrian knot.  The results of \cite{PanRu1} show that these induced augmentations are actually invariants of the associated conical Legendrian fillings.  (Note that conical Legendrian isotopy appears to be a much less restrictive notion of equivalence than isotopy of the corresponding exact Lagrangians, since during the course of a Legendrian isotopy any number of double points can be added and removed from the Lagrangian projections.)   

A key new feature in the immersed case is that a single Legendrian filling, $\Sigma$, can induce more than one augmentation of $\Lambda$ due to the dependence of the construction on the choice of augmentation $\alpha$ of $\mathcal{A}(\Sigma)$.  As a result, to obtain an invariant of $\Sigma$ we should consider the {\it set} of (DGA homotopy classes of) induced augmentations $I_{\Sigma} \subset \mathit{Aug}_\rho(\Lambda)/{\sim}$.  After developing methods to compute this invariant {\bf induced augmentation set}, we demonstrate that $I_\Sigma$ can be effective for distinguishing conical Legendrian fillings in the following theorem.

\begin{theorem} \label{thm:1-4}
For each $n \geq 1$, there exists $2n$ distinct conical Legendrian fillings, $\Sigma_1, \ldots, \Sigma_{2n}$ of the max-$\mathit{tb}$ Legendrian torus knot $T(2, 2n+1)$ such that 
\begin{itemize}
\item[(i)] the $\Sigma_i$ are all orientable with genus $n-1$ and have $\Z$-valued Maslov potentials,
\item[(ii)] each $\Sigma_i$ has a single Reeb chord of degree $0$,
\item[(iii)] and the induced augmentation sets satisfy $I_{\Sigma_i} \neq I_{\Sigma_j}$ when $i \neq j$.
\end{itemize}
\end{theorem}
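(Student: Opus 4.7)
The plan is to construct the $\Sigma_i$ as decomposable conical Legendrian cobordisms, then use the cellular DGA functoriality developed earlier in the paper to reduce the computation of each $I_{\Sigma_i}$ to a combinatorial count on Morse complex families, and finally to verify pairwise distinctness by identifying each induced augmentation with a specific $\rho$-graded normal ruling of $T(2,2n+1)$.

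First I would construct the candidate fillings. The max-$\mathit{tb}$ Legendrian $T(2,2n+1)$ has a standard front projection with $2n+1$ crossings stacked vertically, and it is well known (cf.\ \cite{EHK}) that iterated ``pinch'' (saddle) moves at the crossings, followed by capping unknot components, produce a Catalan family of embedded orientable Lagrangian fillings of genus $n$. Replacing exactly one of these pinches by the elementary ``double-point'' conical cobordism from the catalog in \cite{PanRu1} produces a conical Legendrian filling whose underlying immersed Lagrangian has a single transverse double point; the Euler characteristic identity $\chi(L)=-\mathit{tb}(\Lambda)+2d$ forces $d=1$ and genus $n-1$ as required, and the corresponding Reeb chord in $\Sigma$ has degree $0$. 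I would exhibit $2n$ such sequences of elementary moves — indexed, up to Legendrian isotopy, by a natural combinatorial choice such as the position of the double-point cobordism in the stack of crossings together with a parity constraint — and check that they satisfy (i) and (ii) directly from the construction.

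Next I would compute $I_{\Sigma_i}$. By the invariance of the homotopy class of the immersed DGA map under conical Legendrian isotopy from \cite{PanRu1}, it is enough to compute $f_{\Sigma_i}\colon \mathcal{A}(\Lambda)\to \mathcal{A}(\Sigma_i)$ from any decomposition. Using the equivalence between the cellular DGA functoriality (established in the present paper) and the LCH functoriality, I would pass to the combinatorial side: augmentations $\alpha\colon \mathcal{A}(\Sigma_i)\to \Z/2$ correspond to Morse complex families on a chosen cellular decomposition of $\Sigma_i$, and the induced augmentation $\alpha\circ f_{\Sigma_i}$ corresponds to the restriction of this MCF to $\Lambda$. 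Because $\mathcal{A}(\Sigma_i)$ has a single degree-$0$ Reeb chord $c_i$, the set of MCFs on $\Sigma_i$ splits according to the value $\alpha(c_i)\in\Z/2$, so $|I_{\Sigma_i}|\le 2$. In each case I would read off explicitly the $\rho$-graded normal rulings of $T(2,2n+1)$ appearing as the $\Lambda$-restrictions, using the established bijection between homotopy classes of $\rho$-graded augmentations and $\rho$-graded normal rulings.

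Finally, to verify (iii) I would exhibit, for each pair $i\neq j$, a specific normal ruling of $T(2,2n+1)$ lying in exactly one of $I_{\Sigma_i}$ and $I_{\Sigma_j}$. The key observation is that the location of the double-point cobordism in the decomposition of $\Sigma_i$ forces the MCF on $\Sigma_i$ to ``switch'' at certain prescribed crossings of $\Lambda$, so the restricted rulings carry a visible signature of~$i$.

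The main obstacle will be this last step. Different placements of the double-point cobordism can a priori produce the same pair of boundary rulings, so one must show that the $2n$ constructions are fine-grained enough to separate. I anticipate the bulk of the argument is a careful combinatorial analysis, organized around which crossings are ``switched'' by the rulings arising from each $\Sigma_i$, together with a uniqueness-style argument at each pinch or double-point move to track how the MCF on $\Sigma_i$ is forced to restrict on $\Lambda$. Once this bookkeeping is in place, distinctness of the $I_{\Sigma_i}$ should follow by direct inspection of the resulting $2n$ subsets of the finite ruling set.
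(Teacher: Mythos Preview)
Your overall strategy---build each $\Sigma_i$ from elementary cobordism pieces with a single clasp/double-point move, invoke the MCF description of the cellular induced augmentation set, and distinguish the $I_{\Sigma_i}$ by explicit computation---is exactly the paper's approach. Two points need correction.

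First, there is no ``established bijection between homotopy classes of $\rho$-graded augmentations and $\rho$-graded normal rulings.'' What the paper uses (Proposition~\ref{prop:Aform}) is the bijection between $\mathit{MCF}_\rho(\Lambda)/{\sim}$ and $\aug_\rho(\Lambda_{\mathit{res}})/{\sim}$ via $A$-form MCFs; the map from augmentations to rulings is in general many-to-one. In the paper's computation the induced augmentations are recorded not as rulings but as algebra maps $\epsilon_I$ sending a specified subset $I$ of crossings to $1$, and it is these subsets that separate the $\Sigma_i$. Your plan to ``identify each induced augmentation with a specific normal ruling'' would not by itself distinguish them.

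Second, the paper's construction differs slightly from yours: rather than replacing one pinch in a genus-$n$ filling by a double-point move, the paper first applies a \emph{clasp move} at the $i$-th ``eye'' of $T(2,2n+1)$ (this is the source of the single degree-$0$ Reeb chord) to obtain $T(2,2n-1)$, and then uses a fixed sequence of $2n-2$ pinch moves and an unknot cap. The paper then does not merely anticipate the combinatorial bookkeeping---it draws the two MCFs on each $\Sigma_i$ explicitly (Figure~\ref{fig:claspMCF}) and reads off
\[
I_{\Sigma_i}=\begin{cases}\{\epsilon_{\{1\}},\epsilon_{\{3\}}\},& i=1,\\ \{\epsilon_{\{1\}},\epsilon_{\{1,i,i+2\}}\},& 1<i<2n,\\ \{\epsilon_{\{1\}},\epsilon_{\{1,2n\}}\},& i=2n,\end{cases}
\]
from which pairwise distinctness is immediate. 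Your proposal would be complete once you replace the ruling language with $A$-form MCFs and actually perform this computation.
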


Note that the Lagrangian projections $L_1, \ldots, L_n$ are immersed Lagrangian fillings of $T(2, 2n+1)$ with a single double point.   Moreover, the dg-algebras $\mathcal{A}(\Sigma_i)$ are all isomorphic to one another, and thus do not distinguish the $\Sigma_i$ on their own.

\subsection{Cobordism classes of augmented Legendrians}  
In the context of relative symplectic field theory (SFT) type invariants \cite{EGH}, it is natural to consider Lagrangian cobordisms between Legendrian submanifolds in the symplectization of a contact manifold (or more generally in symplectic cobordisms with concave and convex  ends modeled on the negative and positive ends of symplectizations).  
In the following discussion, we refer to exact Lagrangian cobordisms in $\mathit{Symp}(J^1M)$  with cylindrical ends as {\bf SFT-cobordisms}.  The relation defined by embedded SFT-cobordisms, or even SFT-concordances, is not symmetric \cite{Chan2, Pan1}, and hence does not define an equivalence relation on Legendrian links in $J^1M$.  In fact, it is a major open question \cite{CornNS}
 in the field whether or not SFT-cobordisms define a partial order on the set of Legendrian isotopy classes in $J^1\R$.  

The lack of a readily available symmetry is visible after lifting an  
SFT-cobordism to a conical Legendrian cobordism $\Sigma \subset J^1(\R_{>0} \times M) = T^*(\R_{>0} \times M) \times \R_z$ in the difference in behavior at the two ends of $\Sigma$:  as $\Sigma$ approaches $0$ (resp. $+\infty$) along the $\R_{>0}$ factor the cotangent coordinates and the $z$-coordinates of $\Sigma$ appear as those of $\Lambda_-$ (resp. $\Lambda_+$) but shrinking (resp. expanding).  However, when one allows for SFT-cobordisms to be immersed it becomes possible to reverse their direction as an expanding end can be modified to a shrinking end (and vice-versa) at the expense of creating some additional Reeb chords.  Thus, the relation of conical Legendrian cobordism is equivalent to another standard notion of Legendrian cobordism in $1$-jet spaces introduced by Arnold, cf. \cite{Arnold1, Arnold2, ArnoldWave}.   In Arnold's definition, a Legendrian cobordism between two Legendrians $\Lambda_0, \Lambda_1 \subset J^1M$ is a compact Legendrian,
$\Sigma \subset J^1([0,1]\times M)$, whose restriction
to $J^1(\{i\} \times M)$ is $\Lambda_i$.  Seminal results in this theory of Legendrian (and also Lagrangian) cobordisms were achieved by Audin, Eliashberg, and Vassiliev in the 1980's, including homotopy theoretic characterizations of various Lagrange and Legendre cobordism groups.  See \cite{E84, Audin87, Vass}. 

For an alternate perspective on our main theorem, we can incorporate augmentations into an Arnold-type cobordism theory.  Define a {\bf $\rho$-graded augmented Legendrian} to be a Legendrian submanifold $\Lambda \subset J^1M$ equipped with a $\Z/\rho$-valued Maslov potential and a $\rho$-graded augmentation $\epsilon: \mathcal{A}(\Lambda) \rightarrow \Z/2$.  For compact Legendrian cobordisms $\Sigma \subset J^1([0,1]\times M)$ satisfying a suitable Morse minimum boundary condition (see Section \ref{sec:minimum}), the LCH dg-algebra of $\Sigma$ is defined as in \cite{EK}, and contains $\mathcal{A}(\Lambda_0)$ and $\mathcal{A}(\Lambda_1)$ as sub-dg-algebras.  We then declare two $\rho$-graded augmentation Legendrians, $(\Lambda_0, \epsilon_0)$ and $(\Lambda_1, \epsilon_1)$, to be cobordant if there exists a pair $(\Sigma, \alpha)$ consisting of a Legendrian cobordism equipped with a $\rho$-graded augmentation of $\mathcal{A}(\Sigma)$ whose restriction to $\mathcal{A}(\Lambda_i)$ is homotopic to $\epsilon_i$  for $i =0,1$.  As a variant on Theorem \ref{thm:main} we obtain the following.

\begin{theorem} \label{thm:main2} Let $\rho \geq 0$ be a non-negative integer, and let $(\Lambda, \epsilon)$ be a $\rho$-graded augmented Legendrian in $J^1\R$.

\begin{enumerate}
\item If $\rho \neq 1$, then $(\Lambda, \epsilon)$ is cobordant to $\emptyset$.

\item If $\rho =1$, then $(\Lambda, \epsilon)$ is either cobordant to $\emptyset$ or $(U, \epsilon_1)$ where $U$ is the standard Legendrian unknot and $\epsilon_1: \mathcal{A}(U) \rightarrow \Z/2$ satisfies $\epsilon_1(b) = 1$ on the unique Reeb chord of $U$.
\end{enumerate}
\end{theorem}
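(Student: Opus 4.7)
The strategy is to deduce Theorem \ref{thm:main2} from Theorem \ref{thm:main} by converting the conical Legendrian fillings/cobordisms produced by the latter into compact Arnold-type Legendrian cobordisms in $J^1([0,1]\times M)$ satisfying the Morse minimum boundary condition.

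For part (1), given a $\rho$-graded augmented Legendrian $(\Lambda, \epsilon)$ with $\rho \neq 1$, apply Theorem \ref{thm:main} to obtain a conical Legendrian filling $\Sigma \subset J^1(\R_{>0} \times M)$ equipped with a $\rho$-graded augmentation $\alpha:\mathcal{A}(\Sigma) \rightarrow \Z/2$ satisfying $\epsilon \simeq \alpha \circ f_\Sigma$.  I would first restrict $\Sigma$ to a compact interval $[a,b] \subset \R_{>0}$, chosen (after reparametrization if necessary) so that $\Sigma$ is already in conical form over $\Lambda$ for $s \geq a$ and so that all Reeb chords of $\Sigma$ lie in the interior.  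Then reparametrize the expanding conical end near $s = b$ to fit the Morse minimum profile of \cite{EK}, producing a compact Legendrian cobordism $\wh{\Sigma}: \emptyset \rightarrow \Lambda$.  Because no new Reeb chords are introduced, $\alpha$ transports directly to an augmentation of $\mathcal{A}(\wh{\Sigma})$, and its restriction along the inclusion $\mathcal{A}(\Lambda) \hookrightarrow \mathcal{A}(\wh{\Sigma})$ coming from the Morse minimum coincides (at the chain level) with $\alpha \circ f_\Sigma$, hence is DGA-homotopic to $\epsilon$.

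For part (2) with $\rho = 1$, Theorem \ref{thm:main} gives a conical cobordism $\Sigma: U \rightarrow \Lambda$ with augmentation $\alpha$ satisfying $\epsilon \simeq \alpha \circ f_\Sigma$.  The same truncation/reparametrization produces a compact Legendrian cobordism $\wh{\Sigma}: U \to \Lambda$ together with an augmentation of $\mathcal{A}(\wh{\Sigma})$ whose restriction to $\mathcal{A}(\Lambda)$ is homotopic to $\epsilon$.  The restriction to the sub-DGA $\mathcal{A}(U) \subset \mathcal{A}(\wh{\Sigma})$ is either the augmentation $\epsilon_0$ sending $b \mapsto 0$ or the augmentation $\epsilon_1$ sending $b \mapsto 1$.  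In the first case, stacking $\wh{\Sigma}$ on top of the standard disk filling of $U$ (which induces $\epsilon_0$) exhibits $(\Lambda, \epsilon)$ as cobordant to $\emptyset$; in the second case, $\wh{\Sigma}$ itself exhibits $(\Lambda, \epsilon)$ as cobordant to $(U, \epsilon_1)$.

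The main technical obstacle is justifying the reparametrization of the expanding conical end into a Morse minimum profile in a way compatible with the dg-algebraic structure.  The two formalisms treat the positive end asymmetrically: in the conical model of \cite{PanRu1} one gets a DGA map $f_\Sigma: \mathcal{A}(\Lambda_+) \rightarrow \mathcal{A}(\Sigma)$, while in the Morse minimum model of \cite{EK} one gets an inclusion $\mathcal{A}(\Lambda_+) \hookrightarrow \mathcal{A}(\wh{\Sigma})$.  Since Reeb chords on a conical end can be put in bijection with those of the corresponding $\Lambda_+$-cylinder over the Morse minimum profile (no births or deaths occur during the reparametrization), I expect the two constructions to produce chain-isomorphic DGAs, with $f_\Sigma$ corresponding to the sub-DGA inclusion up to a DGA homotopy; this will preserve the homotopy class of any augmentation and supply the required cobordisms in the Arnold sense.
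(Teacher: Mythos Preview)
Your strategy is the same as the paper's: invoke Theorem~\ref{thm:main}, then convert the conical cobordism into a Morse minimum cobordism and transport the augmentation.  The paper carries this out via Construction~\ref{const:conical} and Proposition~\ref{prop:min}, and your case split in part~(2) (stacking the standard disk filling when the restriction to $\mathcal{A}(U)$ sends $b\mapsto 0$) is exactly the argument alluded to in the introduction after Theorem~\ref{thm:main}.

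There is one point where your sketch is imprecise and the paper's mechanism is worth noting.  You write that ``no new Reeb chords are introduced'' so $\alpha$ ``transports directly'' to $\mathcal{A}(\wh{\Sigma})$.  This is not literally true: the DGA $\mathcal{A}(\Sigma)$ in the conical formalism of \cite{PanRu1} is generated by the Reeb chords of $\Sigma$ together with those of $\Lambda_-$, whereas $\mathcal{A}(\Sigma_{\mathit{min}})$ also contains the Reeb chords of $\Lambda_+$ coming from the Morse minimum at the positive end.  So there is no chain-level identification, and in particular $f_\Sigma$ does not become $i_+$ on the nose.  The paper resolves this by observing that a compactly supported Legendrian isotopy takes $\Sigma$ to the conical cobordism $\Sigma_{\mathit{conic}}$ associated to $\Sigma_{\mathit{min}}$; then Theorem~\ref{thm:PanRu1}(3) gives an immersed homotopy $M_\Sigma \simeq M_{\Sigma_{\mathit{conic}}}$, and Proposition~\ref{prop:min} identifies the latter with $(\mathcal{A}(\Lambda_+)\stackrel{i_+}{\rightarrow}\mathcal{A}(\Sigma_{\mathit{min}})\stackrel{i_-}{\hookleftarrow}\mathcal{A}(\Lambda_-))$.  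The stable tame isomorphism underlying this immersed homotopy yields a DGA homotopy equivalence $h:\mathcal{A}(\Sigma_{\mathit{min}})\rightarrow\mathcal{A}(\Sigma)$ with $h\circ i_+\simeq f_\Sigma$, and the augmentation on $\Sigma_{\mathit{min}}$ is $\alpha\circ h$, not $\alpha$ itself.  This is precisely the compatibility you flag as the ``main technical obstacle,'' and the paper's Proposition~\ref{prop:min} is the lemma that handles it.
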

As in Conjecture \ref{co:1}, we expect that $(U, \epsilon_1)$ is not null-cobordant, so that there should be exactly one cobordism class when $\rho \neq 1$ and exactly two cobordism classes when $\rho = 1$.

In a further article \cite{PanRu3}, we give a complete classification of cobordism classes of augmented Legendrians in $J^1S^1$.  Interestingly, in $J^1S^1$ it is often the case that cobordant Legendrians may become non-cobordant once they are equipped with augmentations.  
	We note that for long Legendrian knots in $\R^3$ a concordance group of similar spirit, incorporating quadratic at infinity generating families as additional equipment rather than augmentations, is considered recently in the work of Limouzineau, \cite{Limo}.

\subsection{Methods and outline}  
Our approach is based on working with certain algebraic/combinatorial structures equivalent to augmentations called Morse complex families (MCFs) that were introduced by Pushkar (unpublished) and studied by Henry in \cite{Henry}.  
MCFs can be viewed as combinatorial approximations to generating families of functions, as they consist of formal Morse complexes and handleslide data assigned to a Legendrian submanifold.  For $1$-dimensional Legendrians, the work of Henry \cite{Henry,HenryRu2} establishes a bijection between equivalence classes of MCFs and homotopy classes of augmentations.  More recently, a correspondence between MCFs and augmentations for $2$-dimensional Legendrians was obtained in \cite{RuSu3} using  
the cellular DGA which is a cellular model for LCH developed in the work of the second author and Sullivan \cite{RuSu1,RuSu2,RuSu25}. 
The strategy of the present article is to extend these methods to the case of Legendrian cobordisms, and then apply Morse complex families to compute induced augmentation sets.  In more detail, to prove Theorem \ref{thm:main} we accomplish the following tasks (1)-(3).  Note that (1) and (2) may be of some independent interest.

\medskip

\begin{enumerate}
\item[(1)]  We develop a functoriality of the cellular DGA for Legendrian cobordisms, and we extend the equivalence with LCH from \cite{RuSu1,RuSu2,RuSu25} to an isomorphism between the immersed LCH functor from \cite{PanRu1} and its cellular analog; \dr{see  Proposition \ref{prop:Fisomorph}.}
\end{enumerate}

\medskip

Although it seems crucial for this construction to be carried out in the broader setting of Legendrian cobordisms and immersed DGA maps, this allows the cellular DGA to be applied just as well for working with embedded SFT-cobordisms.

\medskip

\begin{enumerate}
\item[(2)]  We extend the correspondence between MCFs and augmentations to the case of $2$-dimensional Legendrian cobordisms.  In particular, we give a method based on MCFs for computing the induced augmentation sets of Legendrian cobordisms and fillings; \dr{see Proposition \ref{prop:MCFcomp}.}  
\end{enumerate} 

\medskip

In Section \ref{sec:example}, we illustrate this method with examples and use it to prove Theorem \ref{thm:1-4}.  Making use of (1) and (2),  Theorem \ref{thm:main} is then reduced to the following construction.  

\medskip

\begin{enumerate}
\item[(3)] Given a $1$-dimensional Legendrian knot, $\Lambda \subset J^1\R$, with a  $\rho$-graded MCF $\mathcal{C}_\Lambda$ with $\rho \neq 1$ (resp. $\rho=1$), we produce a Legendrian filling  $\Sigma$ of $\Lambda$ (resp. a Legendrian cobordism $\Sigma: U\rightarrow \Lambda$) with a $2$-dimensional $\rho$-graded MCF $\mathcal{C}_{\Sigma}$ extending $\mathcal{C}_\Lambda$.  
\end{enumerate}

\medskip

In Section \ref{sec:aug} we provide such a construction via an induction  on the complexity of the front projection of $\Lambda$.  \dr{The resulting cobordism is built up out of elementary building blocks that include those used in constructing decomposable, embedded Lagrangian cobordisms (Legendrian isotopies, pinch and unknot moves) as well as clasp moves that produce double points in the Lagrangian projection.  
Care is taken to ensure that at each step the MCF can be extended over the cobordism, and these considerations are simplified by}  
making use of a standard form for MCFs (the ``$SR$-form'') introduced by Henry.   

\begin{remark}
\dr{For embedded, decomposable SFT cobordisms, explicit computations of induced DGA maps and in particular induced augmentations are possible via the results of \cite{EHK}, and this approach has been commonly used in the literature.  A possible alternate route to proving Theorem \ref{thm:main} could be to extend the arsenal from \cite{EHK} to include computations of the immersed DGA map associated to the Clasp Move and attempt a similar inductive construction in this context.  
Such a computation would be useful more generally for handling immersed Lagrangian cobordisms within the framework of \cite{EHK}.  We leave this as an interesting direction for possible future research, while noting that the approach taken in the current article and \cite{PanRu1} is  independent of \cite{EHK} in both the foundations (based on  \cite{EESLCH, Ekh}) and computational methods (based on \cite{RuSu1,RuSu3}).}   
\end{remark}

The remainder of the paper is organized as follows.  In Section \ref{sec:background} we collect background from \cite{PanRu1} on immersed DGA maps, conical Legendrian cobordisms, and the immersed LCH functor, $F$.  
In addition, we discuss Morse minimum cobordisms and show that Theorem \ref{thm:main2} follows from Theorem \ref{thm:main}.  In Section \ref{sec:3}, we establish some basic properties of induced augmentation sets associated to immersed DGA maps including a composition formula and Legendrian isotopy invariance.
In Section \ref{sec:4}, we review the cellular DGA and define a cellular LCH functor, $F_\comp$.  We state an isomorphism between the cellular LCH functor and the immersed LCH functor.  The proof of the isomorphism is an extension of \cite{RuSu2,RuSu25} \dr{that is sketched here with a more detailed argument appearing in the preprint version of this article \cite{PanRu2Arxiv}.}
  In Section \ref{sec:MCF}, 
\dr{after reviewing the definition of Morse complex families, 
we extend the correspondence between MCFs and augmentations of the cellular DGA to the cobordism case and establish an MCF characterization of the induced augmentation set.}  
In Section \ref{sec:example}, we make use of $A$-form MCFs to compute the induced augmentation sets for several examples of Legendrian fillings, and we prove Theorem \ref{thm:1-4}.  In particular, we give explicit examples of augmentations that can be induced by Legendrian fillings but cannot be induced by any embedded fillings.  Finally, the article concludes with Section \ref{sec:aug} that, after recalling Henry's $SR$-form MCFs, provides the proof of Theorem \ref{thm:main}.  \dr{The reader that is content to blackbox the characterization of the induced augmentation set in terms of MCFs can more quickly arrive at the proof of Theorem \ref{thm:main} by omitting Sections \ref{sec:4}, \ref{sec:Aform}, and \ref{sec:example}.}

\begin{remark}  In the present article we restrict considerations to $\Z/2$ valued augmentations.  This is mainly because the isomorphism between the cellular DGA and LCH DGA proven in \cite{RuSu1, RuSu2, RuSu25} is only established with $\Z/2$ coefficients.  We expect that a version of this isomorphism should extend to a more general coefficient ring, eg. to the setting of the LCH DGA with fully non-commutative $\Z[\pi_1(\Lambda)]$ coefficients which is appropriate for considering augmentations to general rings.  
Assuming this point, the proof of Theorem \ref{thm:main} should be extendable to apply to augmentations with values in an arbitrary {\it field}; see also the discussion on coefficients in \cite[Section 3.3]{PanRu3}.  The proof  
breaks down in the case of augmentations to a general ring, as the ability to take multiplicative inverses of handleslide coefficients is crucial in extending the arguments in Section \ref{sec:aug}.  
\end{remark}

\subsection{Acknowledgements}  DR is partially supported by grant 429536 from the Simons Foundation. YP is partially supported by the NSF Grant (DMS-1510305).

\section{Immersed DGA maps and cobordisms}  \label{sec:background}

In this section we review the algebraic and geometric setup from \cite{PanRu1}, and discuss two classes of Legendrian cobordisms.   Section \ref{sec:2-1} recalls a class of DGAs relevant for Legendrian contact homology, and records in Proposition \ref{prop:quotient} a method for producing stable tame isomorphisms.  In Section \ref{sec:2-2} we review the concept of immersed DGA maps 
and immersed homotopy then discuss two equivalent characterizations of composition. In addition, we recall the category, $\mathfrak{DGA}^\rho_{\mathit{im}}$, of $\Z/\rho$-graded DGAs with immersed maps constructed in \cite{PanRu1}, and its connection with the ordinary homotopy category of DGAs, $\mathfrak{DGA}^\rho_{\mathit{hom}}$; see Proposition \ref{prop:IFunctor}.  

Next, we turn to the geometric side with a brief review of Legendrians and LCH in the setting of $1$-jet spaces in \ref{sec:2-3}.  Section \ref{sec:2-4} discusses conical Legendrian cobordisms and, after observing their connection with immersed Lagrangian cobordisms, recalls the construction of immersed DGA maps from conical Legendrian cobordisms as in \cite{PanRu1}.  The construction is nicely encoded in a functor, $F$, called the {\it immersed LCH functor} that plays a central role in this article.  Finally, Section \ref{sec:minimum} discusses Morse minimum Legendrian cobordisms, connects them with conical Legendrian cobordisms, and provides a proposition that allows for Morse minimum cobordisms to be used for computations with the immersed LCH functor.  The section concludes by defining the equivalence relation of $\rho$-graded augmented Legendrian cobordism and showing that Theorem \ref{thm:main2} follows from Theorem \ref{thm:main}.

\subsection{Differential graded algebras}  \label{sec:2-1} We work in the algebraic context of \cite[Sections 2 and 3]{PanRu1} that we will now briefly review.  In this article, {\bf differential graded algebras} (abbr. {\bf DGAs}), $(\mathcal{A}, \partial)$, are defined over $\Z/2$ and are graded by $\Z/\rho$ for some fixed $\rho \geq 0$.  The differential, $\partial$, has degree $-1$ (mod $\rho$).  We restrict attention to {\bf based DGAs} where the $\Z/2$-algebra $\mathcal{A} = \Z/2\langle x_1, \ldots, x_n\rangle$ is free associative (non-commutative) with identity element and  is equipped with a choice of (finite) free generating set $\{x_1, \ldots, x_n\}$; generators have degrees $|x_i| \in \Z/\rho$.  Subalgebras (resp. $2$-sided ideals) generated by a subset $Y \subset \mathcal{A}$ are notated as $\Z/2\langle Y \rangle$ (resp. $\mathcal{I}(Y)$).  Such a DGA $(\mathcal{A}, \partial)$ is {\bf triangular} if (with respect to some ordering of the generating set) we have $\partial x_i \in \Z/2\langle x_1, \ldots, x_{i-1} \rangle$ for all $1\leq i \leq n$.  The coproduct of based DGAs, $\mathcal{A} * \mathcal{B}$, is the based DGA whose generating set is the union of the generating sets of $\mathcal{A}$ and $\mathcal{B}$ and whose differential extends the differentials of $\mathcal{A}$ and $\mathcal{B}$.  A {\bf stabilization} of a DGA $\mathcal{A}$ is a DGA of the form $\mathcal{A} * S$ where $S$ has generating set of the form $\{a_1, b_1, \ldots, a_r, b_r\}$ with differential $\partial a_i = b_i$, $1 \leq i \leq r$.  {\bf DGA morphisms} are unital, algebra homomorphisms that preserve the $\Z/\rho$-grading and commute with differentials.  A {\bf stable tame isomorphism} from $\mathcal{A}$ to $\mathcal{B}$ is a DGA isomorphism $\varphi: \mathcal{A}*S \rightarrow \mathcal{B}*S'$ between stabilizations of $\mathcal{A}$ and $\mathcal{B}$ that is tame, i.e. it is a composition of isomorphisms that have a certain form on generators; see \cite[Section 2.2]{PanRu1}.   We say DGAs are {\bf equivalent} if they are stable tame isomorphic.  Two DGA maps $f,g: \mathcal{A} \rightarrow \mathcal{B}$ are {\bf DGA homotopic} if they satisfy
\[
f-g = \partial_\mathcal{B} \circ K + K \circ \partial_\mathcal{A}
\]
for some $(f,g)$-derivation, $K:\mathcal{A} \rightarrow \mathcal{B}$, where an {\bf $(f,g)$-derivation} is a degree $1$ (mod $\rho$) linear map satisfying $K(xy) = K(x) g(y) + (-1)^{|x|}f(x) K(y)$.  When $\varphi: \mathcal{A}*S \rightarrow \mathcal{B}*S'$ is a stable isomorphism there is an {\bf associated DGA homotopy equivalence} $h:\mathcal{A} \rightarrow \mathcal{B}$ given by $h = \pi' \circ h \circ \iota$ where $\iota:\mathcal{A} \rightarrow \mathcal{A}*S$ and $\pi':\mathcal{B} *S' \rightarrow \mathcal{B}$ are inclusion and projection.  

The following proposition is contained in \cite[Propositions 2.3 and 2.5]{PanRu1}.

\begin{proposition}  \label{prop:quotient}
Let $(\mathcal{A},\partial)$ be a DGA that is triangular with respect to the ordered generating set $\{x_1, \ldots, x_n\}$, and suppose that $\partial x_i = x_j + w$ where $w \in \Z/2\langle x_1, \ldots, x_{j-1} \rangle$.  
\begin{enumerate}
\item  Then, $\mathcal{A}/I$ where $I =\mathcal{I}(x_i, \partial x_i)$ is a triangular DGA with respect to the generating set $\{\overline{x_1}, \ldots, \widehat{j}, \ldots, \widehat{i}, \ldots, \overline{x_n} \}$, and there is a stable tame isomorphism 
\[
\varphi: \mathcal{A}/I * S \rightarrow \mathcal{A}
\]
with $S = \Z/2\langle y,z\rangle$.  Moreover,  $\varphi = g*h$ where $h(y) = x_i$, $h(z)=\partial x_i$, and $g:\mathcal{A}/I \rightarrow \mathcal{A}$ is DGA homotopy inverse to the quotient map $p:\mathcal{A} \rightarrow \mathcal{A}/I$ via
\[
g \circ p - \mathit{id}_{\mathcal{A}} = \partial \circ H + H \circ \partial
\]  
where $H:\mathcal{A} \rightarrow \mathcal{A}$ is the $(g\circ p,\mathit{id}_{\mathcal{A}})$-derivation satisfying $H(x_j) = x_i$ and $H(x_l) =0$ for $l\neq j$.
\item  If $\mathcal{B} \subset \mathcal{A}$ is a based sub-DGA generated by $Y \subset \{x_1, \ldots, x_n\}$ and $x_i, x_j \notin Y$, then $\varphi(\overline{x_k}) = x_k$ for all $x_k \in Y$.  
\item For $\varphi^{-1}$, the associated DGA homotopy equivalence $\mathcal{A} \stackrel{\varphi^{-1}}{\rightarrow} \mathcal{A}/I*S \stackrel{\pi}{\rightarrow} \mathcal{A}/I$ has $\pi \circ \varphi^{-1} =p$ where $p:\mathcal{A} \rightarrow \mathcal{A}/I$ is the quotient map.
\end{enumerate}
\end{proposition}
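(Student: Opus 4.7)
The proposition is a cancellation-of-generators statement: the pair $(x_i, x_j)$ forms an acyclic sub-complex that can be split off. My plan is to build $g:\mathcal{A}/I\to\mathcal{A}$ and the derivation $H$ simultaneously by induction on the triangular ordering of generators, and then assemble $\varphi = g*h$. Triangularity of $(\mathcal{A},\partial)$ forces $j<i$, and in $\mathcal{A}/I$ one has $\overline{x_i}=0$ and $\overline{x_j}=\overline{w}$ with $w$ lying in the subalgebra generated by $\{x_\ell\}_{\ell<j}$; this immediately yields the triangular structure on $\mathcal{A}/I$ with respect to the reduced generating set $\{\overline{x_k}\}_{k\neq i,j}$.

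For the main inductive construction, I work with the DGA endomorphism $G := g\circ p$ and the $(G,\mathit{id}_{\mathcal{A}})$-derivation $H$, declaring $H(x_j) = x_i$ and $H(x_k) = 0$ for $k\neq j$. The homotopy identity $G - \mathit{id}_{\mathcal{A}} = \partial H + H\partial$ forces $G(x_k) = x_k$, $w$, $0$ for $k<j$, $k=j$, $k=i$ respectively, and $G(x_k) = x_k + H(\partial x_k)$ for $k>j$ with $k\neq i$. Since $\partial x_k$ involves only strictly earlier generators, at each inductive stage $H(\partial x_k)$ is computed from data fixed at previous stages, breaking the apparent circularity. A case-by-case check on generators verifies the homotopy identity (in particular $H(w)=0$ since $w$ only involves $x_\ell$ for $\ell<j$), and it then automatically implies that $G$ commutes with $\partial$, so $g$, defined by $g(\overline{x_k}) := G(x_k)$ for $k\neq i,j$, is a DGA map. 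The structural fact driving everything is that $H$ maps every monomial into $\mathcal{I}(x_i)\subset I$, since in the Leibniz expansion every surviving term contains $H(x_j) = x_i$; this yields $p\circ g = \mathit{id}_{\mathcal{A}/I}$.

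Setting $\varphi = g*h$ with $h(y) = x_i$, $h(z) = \partial x_i$, the relation $\partial y = z$ maps to $\partial x_i$, so $\varphi$ is a DGA map. To verify tameness I would factor $\varphi$ through the naive algebra isomorphism $\overline{x_k}\mapsto x_k$, $y\mapsto x_i$, $z\mapsto x_j + w$ via a finite sequence of elementary triangular substitutions that absorb the correction terms $H(\partial x_k)$, each of which lies in the ideal generated by $x_i$. Part (2) follows immediately: for $x_k\in Y$ with $x_i, x_j\notin Y$, the differential $\partial x_k$ is a polynomial in $Y$ containing no $x_j$, so $H(\partial x_k) = 0$ by Leibniz and hence $\varphi(\overline{x_k}) = x_k$. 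For part (3), I would verify $p\circ\varphi = \pi$ generator-by-generator (using $p\circ g = \mathit{id}$, $p(x_i) = 0$, and $p(x_j + w) = \overline{w}+\overline{w} = 0$), and inverting yields $\pi\circ\varphi^{-1} = p$. The main technical obstacle is the intertwined inductive construction of $g$ and $H$, since the $(G,\mathit{id}_{\mathcal{A}})$-derivation rule for $H$ references the very map $G = g\circ p$ being built; triangularity is what turns this apparent fixed-point problem into an honest induction.
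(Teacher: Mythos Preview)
Your argument is correct and follows the standard cancellation-of-generators approach. Note that the paper itself does not prove this proposition; it simply cites \cite[Propositions 2.3 and 2.5]{PanRu1}, so there is no paper-internal proof to compare against. Your inductive construction of $G=g\circ p$ and $H$ simultaneously along the triangular filtration, together with the observation that $H$ lands in $\mathcal{I}(x_i)$ and hence $p\circ g=\mathit{id}$, is exactly the mechanism underlying the cited result. One small point worth making explicit in your write-up: the implication ``homotopy identity on generators $\Rightarrow$ $G$ is a chain map $\Rightarrow$ homotopy identity on products'' is itself part of the induction, since verifying $G(\partial x_k)=\partial G(x_k)$ uses the homotopy identity on the subalgebra $\Z/2\langle x_1,\ldots,x_{k-1}\rangle$, and extending the homotopy identity from generators to products requires knowing $G$ is already a chain map on that subalgebra (otherwise the obstruction term $(\partial G + G\partial)(a)\,H(b)$ does not vanish). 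You clearly understand this, but the phrase ``automatically implies'' slightly obscures the inductive dependence.
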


\begin{definition}
A {\bf $\rho$-graded augmentation} to $\Z/2$ of a $\Z/\rho$-graded DGA, $(\mathcal{A}, \partial)$, is a DGA morphism $\epsilon:(\mathcal{A}, \partial) \rightarrow (\Z/2, 0)$ where the grading on $\Z/2$ is concentrated in degree $0$ mod $\rho$.  I.e., $\epsilon$ is a $\Z/2$-algebra homomorphism from $\mathcal{A}$ to $\Z/2$ that satisfies
\[
\epsilon \circ \partial = 0, \quad \epsilon(1) = 1, 
\] 
and $\epsilon(x) \neq 0$ implies that $|x| =0 \in \Z/\rho$.
\end{definition}

\subsection{Immersed DGA maps} \label{sec:2-2}
The
main construction of \cite{PanRu1} extends the functoriality of the Legendrian contact homology DGA to the case where the domain category consists of Legendrians with a class of {\it immersed} Lagrangian cobordisms.  To accomplish this it is natural to also enlarge the class of morphisms in the target category of DGAs, and this is done by introducing {\it immersed} DGA maps with a suitable notion of homotopy.  Here, we recall these notions which will be central in the remainder of the article.

\begin{definition}  Let $(\mathcal{A}_1, \partial_1)$ and $(\mathcal{A}_2, \partial_2)$ be triangular DGAs.  An {\bf immersed DGA map}, $M$, from $\mathcal{A}_1$ to $\alg_2$ is a diagram of DGA maps
\[
M = \left( \mathcal{A}_1 \stackrel{f}{\rightarrow} \mathcal{B} \stackrel{i}{\hookleftarrow} \mathcal{A}_2 \right)
\]
where $\mathcal{B}$ is a triangular DGA and $f$ and $i$ are DGA maps such that $i$ is an inclusion induced by an inclusion of the generating set of $\mathcal{A}_2$ into the generating set of $\mathcal{B}$.  

Two immersed DGA maps $M =\left( \mathcal{A}_1 \stackrel{f}{\rightarrow} \mathcal{B} \stackrel{i}{\hookleftarrow} \mathcal{A}_2 \right)$ and $M' = \left( \mathcal{A}_1 \stackrel{f'}{\rightarrow} \mathcal{B}' \stackrel{i'}{\hookleftarrow} \mathcal{A}_2\right)$ are {\bf immersed homotopic} if there exists a stable tame isomorphism $\varphi: \mathcal{B} *S \rightarrow \mathcal{B} * S'$ such that
\begin{itemize}
\item $\varphi \circ i = i'$ and
\item $\varphi \circ f \simeq f'$  (DGA homotopy).
\end{itemize}
That is, the left (resp. right) half of the diagram
\[
\xymatrix{ & \B *S  \ar[dd]^{\varphi} & \\ \alg_1 \ar[ru]^{f}  \ar[rd]^{f'}  & & \alg_2 \ar[lu]_{i} \ar[ld]_{i'}  \\ & \B'*S' & } 
\]
is commutative up to DGA homotopy (resp.  fully commutative).
\end{definition}

\begin{definition}
\label{def:immersedcomp}
The {\bf composition} $M_2\circ M_1$ of two immersed DGA maps $M_k=\big(\alg_k \stackrel{f_k}{\rightarrow} \mathcal{B}_k \stackrel{i_k}{\hookleftarrow} \alg_{k+1} \big),
$ for ${k=1,2}$
 is given by $M_2\circ M_1 = \big(\alg_1 \stackrel{f}{\rightarrow} \mathcal{B} \stackrel{i}{\hookleftarrow} \alg_3 \big),$ where $\mathcal{B} = \mathcal{B}_1*\mathcal{B}_2/\mathcal{I}( \{i_1(x) - f_2(x) \,|\, x \in \mathcal{A}_2\})$ is the categorical push out of $i_1$ and $f_2$ and $f= p_1\circ f_1$, $i= p_2\circ i_2$ with $p_k:\mathcal{B}_k \rightarrow \mathcal{B}$ the projection maps.  The algebra $\mathcal{B}$ is triangular with respect to the generating set obtained as the union of the generators of $\mathcal{B}_1$ and $\mathcal{B}_2$ with the generators of $\mathcal{A}_2$ removed.   This is summarized by the diagram:
$$
  \xymatrix{ & & \B  &&  \\ & \B_1 \ar@{.>}[ru]^{p_1}  & & \B_2 \ar@{.>}[lu]_{p_2}&  &\\
  \alg_1 \ar[ru]^{f_1}&& \alg_2 \ar[lu]_{i_1} \ar[ru]^{f_2} & &\alg_3 \ar[lu]_{i_2}  \\ } 
 $$
\end{definition}

There is an alternate characterization of composition of immersed maps, up to immersed homotopy, as follows.  Again, let $M_k=\big(\alg_k \stackrel{f_k}{\rightarrow} \mathcal{B}_k \stackrel{i_k}{\hookleftarrow} \alg_{k+1} \big)$, $k=1,2$ be immersed DGA maps.  Let $a_1, \ldots, a_m$ denote the generators of $\mathcal{A}_2$, and let $\widehat{\mathcal{A}}_2$ have free generating set $\widehat{a}_1, \ldots, \widehat{a}_m$ with degree shift $|\widehat{a_i}| = |a_i|+1$.  
\begin{proposition}[Proposition 3.6 of \cite{PanRu1}] \label{prop:CompositionAlt} Suppose
that $(\mathcal{D}, \partial)$ is a triangular DGA such that 
\begin{itemize}
\item $\mathcal{D} = \mathcal{B}_1 * \widehat{\mathcal{A}}_2 * \mathcal{B}_2$,
\item for $k=1,2$, $\partial|_{\mathcal{B}_k} = \partial_{\mathcal{B}_k}$, and 
\item for $1 \leq i \leq m$, 
\begin{equation} \label{eq:prop2-5}
\partial \widehat{a}_i = i_1(a_i) + f_2(a_i) + \gamma_i
\end{equation} where $\gamma_i \in \mathcal{I}(\widehat{a}_1, \ldots, \widehat{a}_m)$.
\end{itemize}
Then, there is an immersed homotopy
\[
M_2 \circ M_1 \simeq  \left( \mathcal{A}_1 \stackrel{j_1 \circ f_1}{\rightarrow} \mathcal{D} \stackrel{j_2 \circ i_2}{\hookleftarrow} \mathcal{A}_3 \right)
\]
where $j_k: \mathcal{B}_k \rightarrow \mathcal{B}_1 * \widehat{\mathcal{A}}_2 * \mathcal{B}_2$ is the inclusion.
\end{proposition}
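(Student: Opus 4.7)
The plan is to apply Proposition \ref{prop:quotient} iteratively, using each pair $(\widehat{a}_i, a_i)$ as a cancellation pair, in order to produce a stable tame isomorphism between a stabilization of the pushout DGA $\mathcal{B}$ from Definition \ref{def:immersedcomp} and $\mathcal{D}$, and then to verify the two conditions of immersed homotopy for this isomorphism.

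First, I would fix a triangular ordering on $\mathcal{D} = \mathcal{B}_1 * \widehat{\mathcal{A}}_2 * \mathcal{B}_2$ that refines the triangular orderings on $\mathcal{B}_1$ and $\mathcal{B}_2$ and places the interleaved sequence $a_1 < \widehat{a}_1 < a_2 < \widehat{a}_2 < \ldots < a_m < \widehat{a}_m$ at the top, with the $\gamma_i$ arranged so that $\gamma_i \in \mathcal{I}(\widehat{a}_1, \ldots, \widehat{a}_{i-1})$ for every $i$. Then (\ref{eq:prop2-5}) gives $\partial \widehat{a}_i = a_i + (f_2(a_i) + \gamma_i)$ with the parenthesized part lying in generators strictly earlier than $a_i$, so the hypothesis of Proposition \ref{prop:quotient}(1) is met. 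Processing the pairs from top down, I would apply the proposition at each step to cancel $(\widehat{a}_k, a_k)$; composing these steps yields a stable tame isomorphism
\[
\varphi: \mathcal{D}/I * S \longrightarrow \mathcal{D}, \qquad I = \mathcal{I}(\widehat{a}_1, a_1, \ldots, \widehat{a}_m, a_m).
\]
The quotient $\mathcal{D}/I$ has generating set equal to the generators of $\mathcal{B}_1$ with the $a_i$ removed, together with the generators of $\mathcal{B}_2$, and is subject to the relations $a_i \equiv f_2(a_i) + \gamma_i$. Since $\gamma_i \in \mathcal{I}(\widehat{a}_1, \ldots, \widehat{a}_m) \subset I$, these collapse to $i_1(a_i) = f_2(a_i)$, identifying $\mathcal{D}/I$ with $\mathcal{B}$ on the nose.

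Next, I would verify the immersed homotopy conditions for $\varphi$ regarded as a stable tame isomorphism $\mathcal{B} * S \to \mathcal{D}$. On the inclusion side, $\mathcal{B}_2 \hookrightarrow \mathcal{D}$ is a based sub-DGA disjoint from every cancellation pair, so Proposition \ref{prop:quotient}(2) applied at each step shows that $\varphi$ fixes every generator in the image of $j_2 \circ i_2$; hence $\varphi \circ (p_2 \circ i_2) = j_2 \circ i_2$ on the nose. On the map side, the final clause of Proposition \ref{prop:quotient}(1) provides at each step a DGA homotopy inverse $g_k$ to the quotient $p_k$. Composing yields $g = g_1 \circ \cdots \circ g_m: \mathcal{B} \to \mathcal{D}$ which is DGA homotopy inverse to the total quotient $p: \mathcal{D} \to \mathcal{B}$, and $\varphi$ restricts on the $\mathcal{B}$-factor of $\mathcal{B}*S$ to $g$. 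Thus $\varphi \circ f = \varphi \circ (p_1 \circ f_1) = g \circ p \circ (j_1 \circ f_1) \simeq j_1 \circ f_1$, which is the required DGA homotopy.

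The principal obstacle is bookkeeping during the iteration: one must confirm that the triangular ordering can indeed be chosen so that $\gamma_i \in \mathcal{I}(\widehat{a}_1, \ldots, \widehat{a}_{i-1})$ (which uses only that $\mathcal{D}$ is triangular with respect to some ordering together with (\ref{eq:prop2-5})), and one must check that the DGA homotopy inverses $g_k$ from successive applications of Proposition \ref{prop:quotient}(1) compose into a single homotopy inverse to $p$. The latter reduces to the standard fact that DGA homotopy is preserved by pre- and post-composition, but it is the step where one has to be most careful about the successive identifications of quotient algebras.
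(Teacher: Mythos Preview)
Your overall strategy is right and is essentially the intended one: iteratively cancel the pairs $(\widehat a_i,a_i)$ via Proposition~\ref{prop:quotient}, identify the resulting quotient with the pushout $\mathcal B$, and then use parts (1)--(2) of that proposition to verify the two halves of the immersed homotopy. The identification $\mathcal D/I\cong\mathcal B$ and the verification that $\varphi$ fixes $\mathcal B_2$ pointwise and satisfies $\varphi\circ f\simeq j_1\circ f_1$ are handled correctly.

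There is, however, a genuine gap in your choice of ordering. You propose to put the interleaved sequence $a_1<\widehat a_1<\cdots<a_m<\widehat a_m$ \emph{at the top} of a triangular ordering that also refines the triangular orderings on $\mathcal B_1$ and $\mathcal B_2$. These requirements are incompatible: the $a_i$'s are generators of $\mathcal B_1$, and other $\mathcal B_1$-generators $b$ may well have $a_i$ in $\partial_{\mathcal B_1}b$ (nothing about $i_1$ being a based inclusion prevents this). Forcing the $a_i$'s above such a $b$ destroys triangularity, so Proposition~\ref{prop:quotient} does not apply as stated.

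The repair is minor. Use instead the ordering
\[
\mathcal B_2 \;<\; \mathcal B_1 \;<\; \widehat a_1 < \cdots < \widehat a_m,
\]
keeping the given triangular orderings on the $\mathcal B_k$ and (after the relabeling you already justified) arranging $\gamma_i\in\mathcal I(\widehat a_1,\ldots,\widehat a_{i-1})$. This ordering \emph{is} triangular for $\mathcal D$. Now process \emph{bottom-up}: cancel $(\widehat a_1,a_1)$ first. Since $\gamma_1=0$, one has $\partial\widehat a_1=a_1+f_2(a_1)$ with $f_2(a_1)\in\mathcal B_2$ strictly below $a_1\in\mathcal B_1$, so Proposition~\ref{prop:quotient} applies. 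In the quotient $\mathcal D_1$ the generators $\widehat a_1$ and $a_1$ are gone; because $\gamma_2\in\mathcal I(\widehat a_1)$, its image vanishes and $\partial\widehat a_2=a_2+f_2(a_2)$ in $\mathcal D_1$. Inductively, at step $k$ all $\widehat a_j$ with $j<k$ have been killed, so the image of $\gamma_k$ is zero and $\partial\widehat a_k=a_k+f_2(a_k)$ with $f_2(a_k)$ below $a_k$. The rest of your argument then goes through unchanged.
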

Note that differentials of the form (\ref{eq:prop2-5}) always exist on $\mathcal{D}$.  For instance, 
one can define
\begin{equation} \label{eq:prop2-5b}
\partial \widehat{a}_i= i_1(a_i) + f_2(a_i) + \Gamma \circ \partial a_i
\end{equation}
where $\Gamma: \mathcal{A}_2 \rightarrow \mathcal{D}$ is the unique $(i_1, f_2)$-derivation satisfying $\Gamma(a_i) = \widehat{a}_i$.
\begin{definition}   
For
$\rho\in \Z_{\geq 0}$ fixed, we define a category $\mathfrak{DGA}^\rho_{\mathit{im}}$ whose objects are triangular DGAs graded by $\Z/\rho$ and whose morphisms are immersed homotopy classes of immersed DGA maps.  Define a related category 
$\mathfrak{DGA}^\rho_{\mathit{hom}}$
 to have the same objects as $\mathfrak{DGA}^\rho_{\mathit{im}}$ but with morphisms given by 
 DGA homotopy classes of ordinary DGA maps.
\end{definition}    
\begin{proposition}[Propositions 3.9 and 3.10 of \cite{PanRu1}]  \label{prop:IFunctor}
For any $\rho \in \Z_{\geq 0}$, $\mathfrak{DGA}^\rho_{\mathit{im}}$ is a category with identity morphisms given by the homotopy classes of the immersed maps 
\[
\left( \mathcal{A} \stackrel{\mathit{id}}{\rightarrow} \mathcal{A} \stackrel{\mathit{id}}{\hookleftarrow} \mathcal{A} \right).
\]
Moreover, there is a functor $I:\mathfrak{DGA}^\rho_{\mathit{hom}} \rightarrow \mathfrak{DGA}^\rho_{\mathit{im}}$ that is the identity on objects and has
\[
I\left( [ \mathcal{A}_1 \stackrel{f}{\rightarrow} \mathcal{A}_2 ] \right) = \left[\left( \mathcal{A}_1 \stackrel{f}{\rightarrow} \mathcal{A}_2 \stackrel{\mathit{id}}{\hookleftarrow} \mathcal{A}_2 \right) \right],
\]
and $I$ is injective on all hom-spaces.
\end{proposition}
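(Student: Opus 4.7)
The plan is to verify the category axioms for $\mathfrak{DGA}^\rho_{\mathit{im}}$, check that $I$ is a well-defined functor, and finally prove injectivity on hom-spaces, which is the main obstacle. Most of the first two tasks reduce to manipulating the pushout defining composition and exploiting that stabilizations and DGA homotopies are well-behaved under this pushout.

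For the category axioms, identity laws can be checked on the nose: the pushout defining $M \circ (\mathcal{A}_1 \stackrel{\mathit{id}}{\to} \mathcal{A}_1 \stackrel{\mathit{id}}{\hookleftarrow} \mathcal{A}_1)$ quotients $\mathcal{A}_1 * \mathcal{B}$ by the ideal $\mathcal{I}(\{a - f(a) : a \in \mathcal{A}_1\})$, which eliminates all generators of $\mathcal{A}_1$ in favor of their images in $\mathcal{B}$, yielding $\mathcal{B}$ itself and recovering $M$. The left identity is symmetric, and associativity reduces to the associativity of categorical pushouts. For well-definedness of composition on immersed homotopy classes, given $M_k \simeq M_k'$ for $k = 1, 2$ witnessed by stable tame isomorphisms $\varphi_k: \mathcal{B}_k * S_k \to \mathcal{B}_k' * S_k'$, the alternate presentation of Proposition \ref{prop:CompositionAlt} together with the canonical extension (\ref{eq:prop2-5b}) provides a triangular $\mathcal{D}$-model of the pushout in which DGA homotopies appearing on the $f$-side can be transported consistently through the gluing.

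For functoriality of $I$, well-definedness on morphisms is direct: if $f \simeq g: \mathcal{A}_1 \to \mathcal{A}_2$, then $\varphi := \mathit{id}_{\mathcal{A}_2}$ witnesses an immersed homotopy from $I([f])$ to $I([g])$ since $\varphi \circ f = f \simeq g$ and $\varphi \circ \mathit{id} = \mathit{id}$. Identity preservation is definitional. For composition preservation, the pushout defining $I([g]) \circ I([f])$ amalgamates $\mathcal{A}_2$ and $\mathcal{A}_3$ along $\mathit{id}_{\mathcal{A}_2}$ and $g$, which collapses canonically to $\mathcal{A}_3$, exhibiting the composition as $(\mathcal{A}_1 \stackrel{g \circ f}{\to} \mathcal{A}_3 \stackrel{\mathit{id}}{\hookleftarrow} \mathcal{A}_3) = I([g \circ f])$ strictly.

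The main obstacle is injectivity of $I$ on hom-spaces. Suppose $f, g: \mathcal{A}_1 \to \mathcal{A}_2$ satisfy $I([f]) = I([g])$, witnessed by a stable tame isomorphism $\varphi: \mathcal{A}_2 * S \to \mathcal{A}_2 * S'$ with $\varphi \circ \iota_S|_{\mathcal{A}_2} = \iota_{S'}|_{\mathcal{A}_2}$ (from $\varphi \circ i = i'$ on the inclusions) and $\varphi \circ \iota_S \circ f \simeq \iota_{S'} \circ g$. Applying the projection $\pi_{S'}: \mathcal{A}_2 * S' \to \mathcal{A}_2$ yields $(\pi_{S'} \circ \varphi \circ \iota_S) \circ f \simeq g$. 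The composite $h := \pi_{S'} \circ \varphi \circ \iota_S$ is precisely the associated DGA homotopy equivalence of $\varphi$, and the strict equality $\varphi \circ \iota_S|_{\mathcal{A}_2} = \iota_{S'}|_{\mathcal{A}_2}$ forces $h = \pi_{S'} \circ \iota_{S'} = \mathit{id}_{\mathcal{A}_2}$ on the nose, so $f \simeq g$. The key subtlety is that the generator-set inclusion condition on $i$ and $i'$ pins $h$ down exactly, not merely up to homotopy; this is what ensures $I$ is injective rather than only injective on some further quotient.
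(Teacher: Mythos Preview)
The paper does not actually supply a proof of this proposition; it is quoted verbatim from \cite{PanRu1} (Propositions 3.9 and 3.10 there), and no argument appears in the present paper.  So there is nothing here to compare your proof against line by line.

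That said, your argument is essentially correct, and in particular your injectivity argument is exactly the right one: the strict equality $\varphi\circ i=i'$ forces the associated homotopy equivalence $h=\pi_{S'}\circ\varphi\circ\iota_S$ to be literally $\mathit{id}_{\mathcal{A}_2}$ on generators, hence on all of $\mathcal{A}_2$, so that $f\simeq h\circ f\simeq g$.  This is the crux of the proposition and you have nailed it.

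One caution: your treatment of well-definedness of composition on immersed homotopy classes is quite compressed.  The paper explicitly flags (just after the proposition) that ``the triangularity condition is used crucially in \cite{PanRu1} in establishing the well-definedness of compositions in $\mathfrak{DGA}^\rho_{\mathit{im}}$.''  Your appeal to Proposition~\ref{prop:CompositionAlt} and the canonical differential (\ref{eq:prop2-5b}) is in the right direction, but the actual verification that an immersed homotopy $M_k\simeq M_k'$ induces $M_2\circ M_1\simeq M_2'\circ M_1'$ is more delicate than ``transporting homotopies through the gluing'' suggests, and it is here that triangularity enters.  If you want a self-contained proof rather than a citation, this is the step that needs to be fleshed out.
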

We emphasize that the triangularity condition is used crucially in \cite{PanRu1} in establishing the well-definedness of compositions in $\mathfrak{DGA}^\rho_{\mathit{im}}$.  

\subsection{The Legendrian contact homology DGA and exact Lagrangian cobordisms}  \label{sec:2-3}
Recall that the $1$-jet space, $J^1E$, of an $n$-dimensional manifold, $E$, has its standard contact structure $\xi=\ker \alpha$ where $\alpha=dz-\sum y_i dx_i$ in coordinates $(x_1, \ldots, x_n, y_1, \ldots, y_n,z) \in T^*E \times \R = J^1E$ arising from local coordinates $(x_1, \ldots, x_n)$ on $E$.  A {\bf Legendrian submanifold}, $\Lambda$, is an $n$-dimensional submanifold that is tangent to $\xi$ everywhere.  In this article, we will only need to consider Legendrian submanifolds of dimension $n=1$ or $2$, i.e. Legendrian knots and surfaces.  We use the notations $\pi_x:J^1E \rightarrow E$ and $\pi_{xz}:J^1E \rightarrow J^0E = E \times \R$ for the {\bf base} and {\bf front projections}.  Generically, outside
 of a codimension 1 subset $\Lambda_{\mathit{cusp}} \subset \Lambda$ consisting of cusp points (resp. cusp edge and swallowtail points)  the front projection, $\pi_{xz}|_\Lambda$, of a Legendrian knot (resp. surface) is an immersion.  Moreover, any $p \in \Lambda \setminus \Lambda_{\mathit{cusp}}$ has a neighborhood $W \subset \Lambda$ that is the $1$-jet, $j^1f$, of some {\bf local defining function} $f:U \rightarrow \R$ defined in a neighborhood $U \subset E$ of $\pi_x(p)$.  In particular, $\pi_{xz}(W)$  
agrees with the graph of $f$.  Suppose that $E$ has boundary and $N \subset \partial E$ is a boundary component.  If $\Lambda$ intersects $\pi_{x}^{-1}(N)$ transversally (this is equivalent to the map $\pi_x|_\Lambda$ being transverse to $N$), then the {\bf restriction} of $\Lambda$ to $J^1N$ is the Legendrian submanifold $\Lambda|_{N} \subset J^1N$ that is the image of $\Lambda \cap \pi_x^{-1}(N)$ under the restriction map $\pi_x^{-1}(N) \rightarrow J^1N$.  The front projection of $\Lambda|_{N}$ is the intersection of $\pi_{xz}(\Lambda) \subset E \times \R$ with $N \times \R$.

The standard Reeb vector field on $J^1E$ is $\frac{\partial}{\partial z}$, and Reeb chords of $\Lambda$, i.e. trajectories of $\frac{\partial}{\partial z}$ having endpoints on $\Lambda$, correspond to critical points of {\bf local difference functions}, 
$f_{i,j}=f_i-f_j$, where $f_i , f_j$ are local defining functions for $\Lambda$ with $f_i> f_j$.  For $\rho \in \Z_{\geq 0}$, a $\Z/\rho$-valued {\bf Maslov potential} $\mu$ for a $1$ or $2$ dimensional Legendrian $\Lambda$ is a locally constant map $\mu:\Lambda\backslash \Lambda_{cusp} \to \Z/\rho$, 
such that near each cusp point or edge, the value of $\mu$ at the upper sheet of the cusp is one more than the value of $\mu$ at the lower sheet;  such a Maslov potential exists if and only if $\rho$ is a divisor of the Maslov number of $\Lambda$ which is $2 \mathit{rot}(\Lambda)$ when $\Lambda$ is $1$-dimensional, cf. \cite[Section 4]{PanRu1}.    When $\Lambda$ is equipped with a choice of $\Z/\rho$-valued Maslov potential, each Reeb chord, $c$, is assigned a $\Z/\rho$-grading by
$$|c|=\mu(c_u)-\mu(c_l)+\mathit{ind}(f_{i,j})-1 \in \Z/\rho$$
where $c_u$ and $c_l$ are the upper and lower endpoints of $c$ and $ind(f_{i,j})$ is the Morse index of $c$ when viewed as a critical point of the local difference function $f_{i,j}=f_i-f_j$ (where $f_i , f_j$ are defining functions for $\Lambda$ near $c_u$ and $c_l$.)  In the case that $\Lambda$ is connected, the grading of Reeb chords is independent of the choice of Maslov potential, but this is not true in the multi-component case.  

In order to have a well defined grading of Reeb chords (and also of the Legendrian contact homology algebra) we will always work with Legendrians equipped with a choice of Maslov potential.

\begin{definition}
For $\rho \in \Z_{\geq 0}$, a {\bf $\rho$-graded Legendrian}  is a pair $(\Lambda, \mu)$ consisting of a Legendrian submanifold $\Lambda \subset J^1E$ together with a choice of $\Z/\rho$-valued Maslov potential, $\mu$.
\end{definition}

\begin{remark}\label{rem:orient}
When the base space $E$ is oriented a $\Z/2$-valued Maslov potential is equivalent to a choice of orientation for $\Lambda$.  [Indeed, given an orientation for $\Lambda$ one defines a $\Z/2$-valued Maslov potential, $\mu: \Lambda \setminus \Lambda_{\mathit{cusp}}\rightarrow \Z/2$, to be $0$ (resp. $1$) at points where the base projection of $\Lambda$ to $E$ is orientation preserving (resp. reversing).  Moreover, this procedure can be reversed to produce an orientation from a $\Z/2$-valued Maslov potential.]  Thus, $\Z/2$-graded Legendrians are the same as oriented Legendrians, and any $\Z/\rho$-graded Legendrian with $\rho$ even has a well defined orientation (from reducing the Maslov potential mod $2$).
\end{remark}

 For a $\rho$-graded Legendrian submanifold $\Lambda$ in  $J^1E$, symplectic field theory gives a Floer type invariant called the {\bf  Legendrian contact homology DGA} (aka. the Chekanov-Eliashberg algebra) \cite{Che,Eli, EES} that we will denote by  $(\alg(\Lambda), \dd)$.
It is a $\Z/\rho$-graded triangular, based DGA 
over $\Z/2$ generated by Reeb chords of $\Lambda$ with $\Z/\rho$-grading arising from the choice of Maslov potential, $\mu$.  
 The differential $\dd$ is defined by counting holomorphic disks either in $T^*M$, cf. \cite{EES}, or $\mathit{Symp}(J^1M)$, cf. \cite{DR},  with boundary on either the cotangent projection of $\Lambda$ or the Lagrangian cylinder $\R\times \Lambda$.
 According to \cite{Ekh}, one can alternatively compute the differential by counting {\bf gradient flow trees} (abbrv. {\bf GFTs}) which are certain trees whose edges parametrize flowlines of gradients of local difference functions, $-\nabla f_{i,j}$, with $f_{i,j} >0$; see also \cite{RuSu2}.    
The $\Z/\rho$-graded DGA $(\alg(\Lambda), \dd)$ is an invariant of $(\Lambda, \mu)$ up to stable tame isomorphism.

 The LCH DGA is functorial for exact Lagrangian cobordisms, and we will restrict our attention to the case of cobordisms in the symplectization of $J^1M$.  An {\bf exact Lagrangian cobordism} $L$ from $\Lambda_-$ to $\Lambda_+$ is an embedded surface in  $\mathit{Symp}(J^1M) := (\R_t\times J^1M, d(e^t \alpha))$ (as shown in Figure \ref{fig:lagcob})
 that 
 \begin{itemize}
 \item agrees with the cylinder $\R \times \Lambda_+$ (resp. $\R \times \Lambda_-$) when $t$ is very positive (resp. very negative; and 
 \item there is a function $g:L\to \R$ such that $(e^t \alpha)|_{L}=dg$  and $g$ is constant for $t$ near $\pm \infty$. Such a function $g$ is called a {\bf primitive}.
 \end{itemize}
When $\Lambda_-$ is empty, we say that $L$ is an {\bf exact Lagrangian filling} of $L$.
  
 \begin{figure}[!ht]
 \labellist
 \pinlabel $t$ at -10 380
 \pinlabel $\Lambda_-$ at 350 120
 \pinlabel $\Lambda_+$ at  350 330
 \pinlabel $L$ at  350 210
 \endlabellist
 \includegraphics[width=2in]{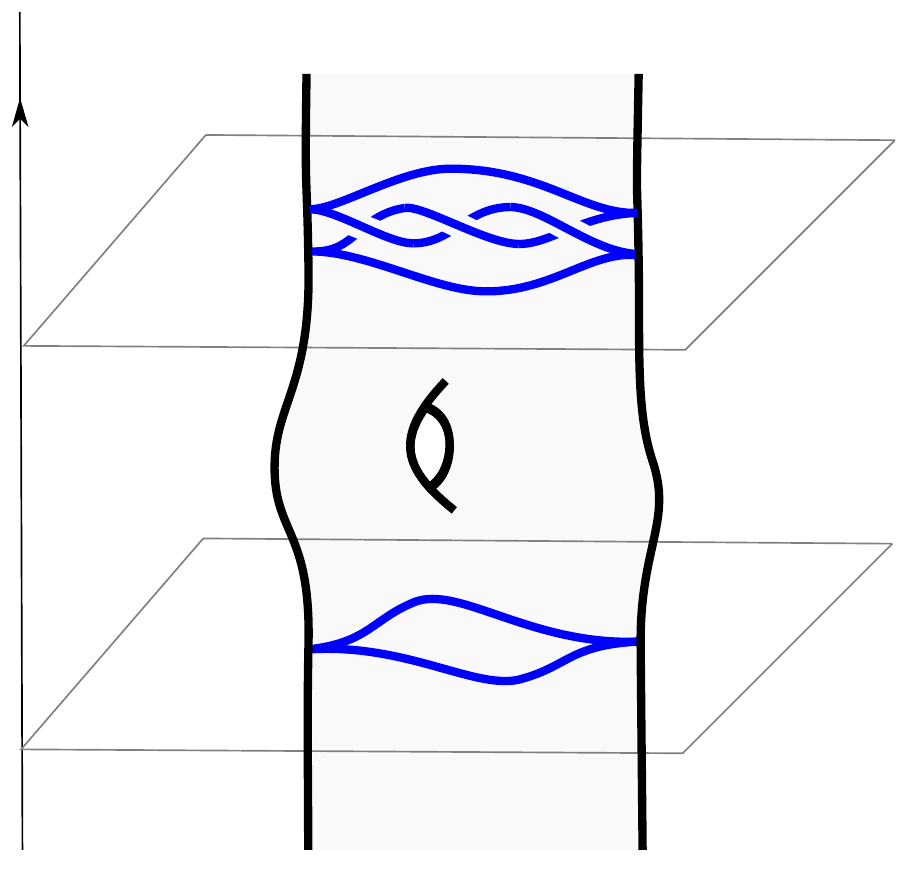}
 \caption{An exact Lagrangian cobordism from $\Lambda_-$ to $\Lambda_+$.}
 \label{fig:lagcob}
 \end{figure} 
 
 According to \cite{EHK, E2, EK}, an exact Lagrangian cobordism $L$ from $\Lambda_-$ to $\Lambda_+$ induces a DGA map $f_{L}$ from $\alg(\Lambda_+)$ to $\alg(\Lambda_-)$.  (For now, we suppress discussion of the grading.)
When two such cobordisms are isotopic through exact Lagrangian cobordisms, their induced DGA  maps are DGA homotopic. 
Moreover, when  two exact Lagrangian cobordisms $L_i$, $i=1,2$ from $\Lambda_{i}$ to $\Lambda_{i+1}$ are concatenated together to form  $L_2 \circ L_1$ (by translating $L_2$ in the positive $t$-direction, truncating the two cobordisms, and gluing along a region of the form $(a,b) \times \Lambda_1$), the induced DGA map $f_{L_2\circ L_1}$ is DGA homotopic to $f_{L_1}\circ f_{L_2}$.  
In summary, we have a functor between a suitably defined category of Legendrian knots with morphisms exact Lagrangian cobordisms and the homotopy category of DGAs.

\subsection{Conical Legendrian cobordisms} \label{sec:2-4}

In \cite{PanRu1}, functoriality for the LCH DGA was generalized to a class of {\it immersed} exact Lagrangian cobordisms by working with conical Legendrian cobordisms. We now review relevant results from \cite{PanRu1}.

\medskip

Let $M$ be a $1$-manifold.  In $J^1(\R_{>0}\times M)$, we denote the $\R_{>0}$ coordinate by $s$.

\begin{definition}  \label{def:j1} Let $\Lambda$ be a Legendrian link in $J^1M$ that is parametrized by $\theta \mapsto (x(\theta), y(\theta), z(\theta))$, let $f:I \rightarrow \R_{>0}$ where $I \subset \R_{>0}$ is an interval, and let $A\in \R$ be a constant. 
Define 
\[
j^1(f(s)\cdot \Lambda+A) \subset J^1(I\times M)
\]
 to be the Legendrian that is parametrized by 
\begin{align*}
(s,\theta) \mapsto &(s, x(\theta), f'(s)z(\theta), f(s) y(\theta), f(s)z(\theta)+A)\\ &= (s, x, u,y,z) \in J^1(I\times M)
\end{align*}
where $(s,x)$ are the coordinates on $I\times M$, $(u,y)$ are the coordinates on the cotangent fibers, and $z$ is the $\R$ coordinate on $J^1(I\times M) = T^*(I\times M) \times \R$. 
\end{definition}
Note that the front projection of $j^1(f(s)\cdot \Lambda+A)$ is obtained from $\pi_{xz}(\Lambda)$ by forming the cylinder in the $s$-direction, multiplying $z$-coordinates by $f(s)$, and then shifting them by $A$.

\begin{definition}
A {\bf conical Legendrian cobordism} $\Sigma$ from $\Lambda_-$ to $\Lambda_+$ is an embedded Legendrian surface in $J^1(\R_{>0}\times M)$ (see Figure \ref{fig:conLeg}) such that
\begin{itemize}
\item
 $\Sigma$ has conical ends, i.e, when $s> s_+$ (resp. $s<s_-$) for some positive number $s_{\pm}$, the Legendrian surface $\Sigma$ is $j^1(s\cdot \Lambda_{\pm}+A_{\pm})$ for some constant $A_{\pm}$; and
 \item the intersection $\Sigma \cap J^1([s_-,s_+] \times M)$ is compact.
\end{itemize}
Two conical
Legendrian cobordisms from $\Lambda_-$ to $\Lambda_+$ are {\bf conical Legendrian isotopic} if they are isotopic through conical Legendrian cobordisms from $\Lambda_-$ to $\Lambda_+$.
 \end{definition}
 
 \begin{figure}[!ht]
 \labellist
 \pinlabel  $\Lambda_-$ at 80 80
 \pinlabel $\Lambda_+$ at 150 90
 \pinlabel $\Sigma$ at 100 20
 \pinlabel $z$ at 8 55
 \pinlabel $x$ at 20 40
 \pinlabel $s$ at  50 0
 \endlabellist
 \includegraphics[width=3in]{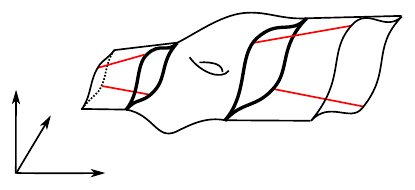}
\caption{A sketch of a conical Legendrian cobordism in the front projection to $J^0(\R_{>0}\times M)$.}
\label{fig:conLeg}
 \end{figure}
 
 Note that there is a contactomorphism between $J^1(\R_{>0}\times M)$ and $(\symp(J^1M)\times \R_w, dw+e^t \alpha)$ (see \cite{PanRu1} for details).
 Consider the image of $\Sigma$ in $\symp(J^1M)\times \R$ and then take the Lagrangian projection to $\symp(J^1M)$.
 The resulting surface $L$ is an (immersed) exact Lagrangian surface in $\symp(J^1M)$, and we call surfaces obtained in this manner {\bf good Lagrangian cobordisms} from $\Lambda_-$ to $\Lambda_+$.  Note that the conical ends condition on $\Sigma$ implies that $L$ has cylindrical ends and that the primitive is constant on top and bottom cylinders.  As long as the immersed Lagrangians are equipped with such primitives, the construction is reversible, so we have a bijection between conical Legendrian cobordisms and good Lagrangian cobordisms; see eg. \cite[Proposition 4.9]{PanRu1}.   Thus, conical Legendrian cobordisms generalize  exact Lagrangian cobordisms from the embedded case to the immersed case.  Conical Legendrian cobordisms can be concatenated in a way that generalizes the concatenation of exact Lagrangian cobordisms;  see \cite[Section 4.4]{PanRu1}.

 The functoriality of the LCH DGA extends to conical Legendrian cobordisms provided the induced maps are allowed to be immersed DGA maps.
\begin{theorem}[\cite{PanRu1}]  \label{thm:PanRu1}
A conical Legendrian cobordism $\Sigma$ from $\Lambda_-$ to $\Lambda_+$ induces an  immersed DGA map
$$M_\Sigma = \big(\alg(\Lambda_+) \stackrel{f}{\rightarrow} \alg(\Sigma) \stackrel{i}{\hookleftarrow} \alg(\Lambda_-) \big),$$
satisfying:
\begin{enumerate}
\item The DGA $\alg(\Sigma)$  is generated by Reeb chords of $\Sigma$ and Reeb chords of $\Lambda_-$.
\item When $\Lambda_{\pm}$ and $\Sigma$ are $\rho$-graded such that the $\Z/\rho$-valued Maslov potential of $\Sigma$ restricts to the Maslov potentials on $\Lambda_{\pm}$, all the DGAs $\alg(\Lambda_+), \alg(\Sigma),$ and $\alg(\Lambda_-)$ inherit $\Z/\rho$-gradings that are preserved by $f$ and $i$.  
\item When two conical Legendrian cobordisms $\Sigma$ and $\Sigma'$ from $\Lambda_-$ to  $\Lambda_+$ are conical Legendrian isotopic, their induced  immersed DGA maps are  immersed homotopic. 

\item When $\Sigma_1: \Lambda_1 \rightarrow \Lambda_2$ and $\Sigma_2:\Lambda_2 \rightarrow \Lambda_3$ are concatenated the immersed maps satisfy
\[
M_{\Sigma_2 \circ \Sigma_1} \simeq M_{\Sigma_1} \circ M_{\Sigma_2}  \quad \mbox{(immersed homotopy)}.
\]

\item When $\Sigma$ has no Reeb chords, i.e., when it corresponds to an embedded exact Lagrangian cobordism $L$, we have that $\alg(\Sigma)=\alg(\Lambda_-)$ and the $f$ map is DGA homotopic to the induced map $f_L$ from \cite{EHK}. 
 
\end{enumerate}

\end{theorem}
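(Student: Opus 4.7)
The plan is to use Ekholm's gradient flow tree (GFT) description of the LCH differential, since GFTs interact transparently with the conical structure of $\Sigma$. First I would analyze the Reeb chord structure. By Definition 2.6, the $u$-coordinate of $j^1(f(s)\cdot \Lambda_\pm+A_\pm)$ at a point with parameter $\theta$ is $f'(s)z(\theta)$, which depends only on the ``sheet'' $\theta$ sits on and not on $s$. Hence two points with the same $(s,x,u,y)$ but distinct $z$-values cannot occur in either conical end, so all actual Reeb chords of $\Sigma$ lie in the compact region $s\in[s_-,s_+]$. However, a GFT on $\Sigma$ may asymptote, at ends where $s\to 0^+$, to ``Reeb chords at infinity'' of $\Lambda_-$: as $s\to 0^+$, the local difference functions of $\Sigma$ converge after rescaling to those of $\Lambda_-$, whose critical points are exactly the Reeb chords of $\Lambda_-$. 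This identifies the generating set claimed in (1). The inclusion $i:\alg(\Lambda_-)\hookrightarrow \alg(\Sigma)$ is then tautological, and one defines $f:\alg(\Lambda_+)\rightarrow \alg(\Sigma)$ on a generator $c$ by counting rigid GFTs with one positive puncture asymptotic to $c$ (as a Reeb chord at infinity of the positive end) and negative punctures at Reeb chords of $\Sigma$ or of $\Lambda_-$. The chain map property $f\circ \dd_+ = \dd_\Sigma\circ f$ follows from the standard boundary analysis of one-dimensional moduli spaces of such trees. The grading assertion (2) is then immediate from the Reeb chord grading formula $|c|=\mu(c_u)-\mu(c_l)+\mathit{ind}(f_{i,j})-1$, applied consistently on $\Sigma$ and on $\Lambda_\pm$ using the hypothesis that the Maslov potential of $\Sigma$ restricts correctly.

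For isotopy invariance (3), I would carry out a bifurcation analysis for a generic one-parameter family of conical cobordisms. Such a family decomposes as a sequence of elementary moves (Reeb chord birth/death, triple point moves, handle slides, swallowtail moves), and I would check case-by-case that each move alters $M_\Sigma$ only by a stable tame isomorphism of $\alg(\Sigma)$ that is compatible with $i$ and respects $f$ up to DGA homotopy—precisely the data of an immersed homotopy.

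For the composition property (4), the cleanest route is Proposition 2.5. After truncating $\Sigma_1,\Sigma_2$ and gluing along a cylinder over $\Lambda_2$, I would perform a neck-stretching argument by extending the compact region along an increasingly long cylinder. In the limit, new ``long'' Reeb chords $\widehat{a}_i$ of $\Sigma_2\circ\Sigma_1$ appear, one for each Reeb chord $a_i$ of $\Lambda_2$. A compactness-gluing analysis of rigid GFTs in this stretched family identifies their differentials as $\dd \widehat{a}_i = i_1(a_i)+f_2(a_i)+\gamma_i$ with $\gamma_i\in\mathcal{I}(\widehat{a}_1,\ldots,\widehat{a}_m)$, which is exactly the hypothesis of Proposition 2.5 and delivers $M_{\Sigma_2\circ \Sigma_1}\simeq M_{\Sigma_1}\circ M_{\Sigma_2}$. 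Part (5) then follows almost formally: when $\Sigma$ has no Reeb chords, its Lagrangian projection to $\symp(J^1M)$ is an embedded exact Lagrangian cobordism $L$, and the rigid GFTs contributing to $f$ correspond bijectively to the rigid holomorphic disks computing $f_L$ in \cite{EHK}, up to DGA homotopy.

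The principal obstacle is the composition statement (4). The neck-stretching analysis requires tight control over both transversality and compactness of the stretched moduli spaces, together with a careful matching of GFTs on the concatenation with pairs of GFTs on $\Sigma_1$ and $\Sigma_2$ linked by Reeb chords of $\Lambda_2$; one must also verify that every term in $\gamma_i$ arises from an admissible limit configuration, and crucially that the triangularity condition required for Proposition 2.5 to apply is respected by the ordering of the $\widehat{a}_i$.
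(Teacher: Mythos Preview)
This theorem is not proved in the present paper; it is cited from \cite{PanRu1}, and the paper only supplies a construction summary in the paragraph immediately following the statement. Nonetheless, that summary already reveals a substantive difference between your plan and the actual construction. In \cite{PanRu1} (as summarized here) one does \emph{not} work directly on the conical cobordism and analyze ``Reeb chords at infinity'' via rescaling as $s\to 0^+$. Instead one first replaces the conical ends $j^1(s\cdot\Lambda_\pm+A_\pm)$ by \emph{Morse ends} $j^1(h_\pm(s)\cdot\Lambda_\pm+B_\pm)$, with $h_-$ having a Morse minimum and $h_+$ a Morse maximum, obtaining a Morse cobordism $\widetilde{\Sigma}$. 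The point is that $\widetilde{\Sigma}$ now has honest Reeb chords located over the critical points of $h_\pm$, in natural bijection with the Reeb chords of $\Lambda_\pm$; the DGA $\alg(\Sigma)$ is then \emph{defined} using the Reeb chords of $\widetilde{\Sigma}$ corresponding to $\Sigma$ and $\Lambda_-$, and $f$ counts GFTs on $\widetilde{\Sigma}$ with positive puncture at a $\Lambda_+$ chord and image to the left of the Morse maximum. This is the Ekholm--Honda--K\'alm\'an trick, and it buys compactness for free: all relevant GFTs live over a compact base.

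Your asymptotic approach is intuitively appealing but has a genuine gap at exactly this point. On the conical end the difference functions decay like $s\cdot(z_i(\theta)-z_j(\theta))$, so gradient trajectories of $-\nabla F_{i,j}$ can run off to $s=0$ in infinite time, and one needs a compactness theorem for GFTs on the non-compact base $\R_{>0}\times M$ to make sense of ``asymptoting to a Reeb chord of $\Lambda_-$.'' That is substantially harder to set up than the Morse cobordism alternative, and you have not addressed it. Your sketch of (4) via neck-stretching and Proposition~\ref{prop:CompositionAlt} is on the right track and is consistent with how such composition statements are typically proved, but again the paper defers the argument to \cite{PanRu1} rather than carrying it out here.
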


The construction of the immersed DGA map is summarized as follows.
Given a conical Legendrian cobordism $\Sigma$ from $\Lambda_-$ to $\Lambda_+$, we construct a Morse cobordism $\wt\Sigma$ (as in \cite{EHK}) by replacing the conical ends $j^1(s\cdot \Lambda_{\pm} +A_{\pm})$ with standard Morse ends $j^1(h_{\pm}(s)\cdot \Lambda_{\pm}+B_{\pm})$, where $h_{-}$ (resp. $h_+$) are positive Morse functions with a single minimum (resp. maximum). 
 The set of Reeb chords of  $\wt\Sigma$ is in bijection with the set of Reeb chords of $\Sigma$ and $\Lambda_{\pm}$.
The DGA  $\alg(\Sigma)$ is generated by the Reeb chords of $\wt\Sigma$ that correspond to Reeb chords of $\Sigma$ and $\Lambda_-$, and the differential is defined by the usual count of GFTs with respect to a suitable choice of metric $g$ on $\R_{>0}\times M$ having the form $g_\R\times g_\pm$ near the critical points of $h_\pm$ where $g_\R$ is the Euclidean metric on $\R_{>0}$ and $g_\pm$ are regular\footnote{Here, {\bf regular} means that (i) there are no GFTs for $\Lambda$ of negative formal dimension and (ii) all $0$-dimensional GFTs for $\Lambda$ are transversally cut out.  See \cite{Ekh}.} metrics used for computing $\alg(\Lambda_\pm)$.
Moreover, the DGA $\alg(\Lambda_-)$ is a sub-DGA of $\alg(\Sigma)$ and the map $i$ is the natural DGA inclusion map. 
Finally, the DGA map $f$ is defined by counting GFTs with positive puncture at one of the $\Lambda_+$ Reeb chords of $\wt\Sigma$ and with image to the left of the local maximum of $h_+$.

In \cite[Section 6.3]{PanRu1}, the construction of Theorem \ref{thm:PanRu1} is formulated as a functor
\[
F: \leg_{im}^\rho\to \dga_{im}^\rho
\]
called the {\bf immersed LCH functor}.  The category $\leg_{im}^\rho$ has objects $(\Lambda, g, \mu)$ consisting of an (embedded) $\rho$-graded Legendrian knot, $\Lambda \subset J^1M$, with Maslov potential, $\mu$, equipped with a choice of regular metric, $g$, on $M$.
Morphisms from $(\Lambda_-,g_-,\mu_-)$ to $(\Lambda_+, g_+,\mu_+)$ are conical Legendrian isotopy classes of $\rho$-graded conical Legendrian cobordisms, $\Sigma:\Lambda_-\rightarrow \Lambda_+$, with  Maslov potentials, $\mu$, extending the Maslov potentials $\mu_-$ and $\mu_+$.  

\begin{corollary} \label{cor:LCHfunctor} The correspondence
\[
\begin{array}{lcl}
(\Lambda, g, \mu) & \mapsto & \mathcal{A}(\Lambda,g, \mu),  \\
(\Sigma, \mu) & \mapsto & [M_\Sigma]
\end{array}
\]
where $[M_\Sigma]$ denotes the immersed homotopy class of the immersed map $M_\Sigma$ from Theorem \ref{thm:PanRu1} defines a contravariant functor $
F: \leg_{im}^\rho\to \dga_{im}^\rho.$
\end{corollary}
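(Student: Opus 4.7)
The plan is to verify the three functor axioms in turn, drawing on the results already assembled in Theorem \ref{thm:PanRu1}. Well-definedness on objects is immediate, since $\mathcal{A}(\Lambda, g, \mu)$ is a fixed triangular $\Z/\rho$-graded DGA attached to each object of $\leg_{im}^\rho$. Well-definedness on morphisms follows at once from part (3) of Theorem \ref{thm:PanRu1}: if $\Sigma$ and $\Sigma'$ are conical Legendrian isotopic cobordisms from $(\Lambda_-, g_-, \mu_-)$ to $(\Lambda_+, g_+, \mu_+)$, then their induced immersed DGA maps $M_\Sigma$ and $M_{\Sigma'}$ are immersed homotopic, so $[M_\Sigma] = [M_{\Sigma'}]$ as morphisms in $\dga_{im}^\rho$.

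Compatibility with composition, in the contravariant sense, is precisely part (4) of Theorem \ref{thm:PanRu1}. Given composable cobordisms $\Sigma_1: \Lambda_1 \rightarrow \Lambda_2$ and $\Sigma_2: \Lambda_2 \rightarrow \Lambda_3$, concatenation yields a conical Legendrian cobordism $\Sigma_2 \circ \Sigma_1: \Lambda_1 \rightarrow \Lambda_3$, and the stated immersed homotopy $M_{\Sigma_2 \circ \Sigma_1} \simeq M_{\Sigma_1} \circ M_{\Sigma_2}$ translates to $F(\Sigma_2 \circ \Sigma_1) = F(\Sigma_1) \circ F(\Sigma_2)$ in $\dga_{im}^\rho$, exhibiting contravariance. (The fact that concatenation is well-defined up to conical Legendrian isotopy, so that composition in $\leg_{im}^\rho$ makes sense, is part of the cobordism setup recalled from \cite{PanRu1}.)

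The remaining and only substantive point is preservation of identities. The identity morphism at $(\Lambda, g, \mu)$ in $\leg_{im}^\rho$ is represented by a trivial conical cylinder, for instance $\Sigma_{\mathit{id}} = j^1(s\cdot \Lambda + A)$ for any constant $A$, viewed as a cobordism $\Lambda \rightarrow \Lambda$. Since $\Sigma_{\mathit{id}}$ has no Reeb chords of its own, the associated Morse cobordism $\wt{\Sigma}_{\mathit{id}}$ has its Reeb chords in bijection with those of $\Lambda_- = \Lambda$, and by construction $\mathcal{A}(\Sigma_{\mathit{id}}) = \mathcal{A}(\Lambda)$ with the inclusion $i$ equal to $\mathit{id}_{\mathcal{A}(\Lambda)}$. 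By part (5) of Theorem \ref{thm:PanRu1} the map $f: \mathcal{A}(\Lambda) \rightarrow \mathcal{A}(\Sigma_{\mathit{id}}) = \mathcal{A}(\Lambda)$ agrees, up to DGA homotopy, with the functorial map $f_L$ induced by the trivial embedded Lagrangian cylinder over $\Lambda$ in the sense of \cite{EHK}, which is known to be DGA homotopic to $\mathit{id}_{\mathcal{A}(\Lambda)}$. Therefore $M_{\Sigma_{\mathit{id}}}$ is immersed homotopic to $\bigl(\mathcal{A}(\Lambda) \stackrel{\mathit{id}}{\rightarrow} \mathcal{A}(\Lambda) \stackrel{\mathit{id}}{\hookleftarrow} \mathcal{A}(\Lambda)\bigr)$, which by Proposition \ref{prop:IFunctor} is the identity morphism of $\mathcal{A}(\Lambda)$ in $\dga_{im}^\rho$.

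The main obstacle to watch for is this identity check, since it is the one place where we must combine a geometric fact (the trivial cobordism induces the identity up to DGA homotopy) with the algebraic definition of identity morphisms in $\dga_{im}^\rho$; once this is in place, the other axioms are formal consequences of Theorem \ref{thm:PanRu1}. In particular, I would not need to redo any GFT counts: the key is to quote Theorem \ref{thm:PanRu1}(5) and the standard embedded-case computation so that the trivial cylindrical cobordism is recognized as an identity both geometrically and algebraically.
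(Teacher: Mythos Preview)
Your proof is correct and follows essentially the only natural route: the functor axioms are read off from parts (3), (4), and (5) of Theorem \ref{thm:PanRu1}, with the identity check being the one place requiring a small additional observation. The paper itself gives no separate proof of this corollary, since the functor is constructed in \cite[Section 6.3]{PanRu1} and the corollary merely records that construction; your argument amounts to reconstructing that verification from the pieces summarized in Theorem \ref{thm:PanRu1}.

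One small remark on the identity step: rather than routing through Theorem \ref{thm:PanRu1}(5) and the embedded-cobordism functoriality of \cite{EHK}, the paper (and \cite[Proposition 6.15]{PanRu1}) compute directly that for the trivial conical cylinder $j^1(s\cdot\Lambda)$ the map $f$ is literally equal to the inclusion $i$, via an explicit enumeration of the relevant GFTs. Your approach through part (5) is perfectly valid but introduces a mild dependence on the comparison with the \cite{EHK} map, whereas the direct GFT computation avoids this and yields $f=\mathit{id}$ on the nose rather than up to homotopy. Either way the conclusion is the same.
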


\subsection{Morse minimum cobordisms}  \label{sec:minimum}  
Let $[a_-,a_+] \subset \R$ be a closed interval.  A compact Legendrian $\Sigma_{\mathit{min}} \subset J^1([a_-,a_+]\times M)$ is called a  {\bf Morse minimum cobordism} from $\Lambda_-$ to $\Lambda_+$ if there are neighborhoods $U_- = [a_-, a_-+\delta)$ and $U_+ = (a_+ - \delta, a_+]$, for some $\delta>0$, such that $\Sigma_{\mathit{min}}$ has the form $j^1(h_{\pm}(s)\cdot \Lambda_{\pm}+B_{\pm})$ in $J^1(U_\pm \times M)$ where $h_\pm:U_\pm \rightarrow \R_{>0}$ are positive Morse functions with unique critical points that are non-degenerate local minima at $a_\pm$.  

As in \cite{EK}, the LCH DGA 
of a Morse minimum cobordism $\mathcal{A}(\Sigma_{\mathit{min}})$ is well-defined and can be computed using GFTs with respect to a regular metric $g$ on $[a_-,a_+]\times M$ having the form $g_\R \times g_{\pm}$ in a neighborhood of $\{a_\pm\} \times M$ where $g_{\pm}$ are regular metrics for $\Lambda_\pm$.  Moreover, there are DGA inclusions
\[
i_\pm: \mathcal{A}(\Lambda_\pm) \hookrightarrow \mathcal{A}(\Sigma_\mathit{min})
\]
obtained via identifying Reeb chords of $\Lambda_\pm$ with the Reeb chords of $\Sigma_\mathit{min}$ located above $\{a_\pm\}\times M$.  See also \cite[Section 5.3]{PanRu1}.

\begin{construction}  \label{const:conical}
Given $\Sigma_{\mathit{min}}$ one can form an associated conical Legendrian cobordism $\Sigma_{\mathit{conic}} \subset J^1(\R_{>0}\times M)$ by shifting the interval $[a_-, a_+]$ into $[a_-+s_0, a_++s_0] \subset \R_{>0}$,
 and modifying the ends to have the form $j^1(\widehat{h}_{\pm}(s) \cdot \Lambda_\pm+B_\pm)$ with 
\[
\widehat{h}_{-}: (0, a_-+s_0+\delta) \rightarrow \R_{>0}, \quad \mbox{and} \quad \widehat{h}_{+}: (a_++s_0-\delta, +\infty) \rightarrow \R_{>0}
\]
chosen as follows:
\begin{itemize}
\item The function $\widehat{h}_{-}$ has no critical points, is increasing on $(0, a_-+s_0+\delta)$, and satisfies 
\[
\begin{array}{lr}
\widehat{h}_{-}(s) = s, & \quad \mbox{ for $s \in (0, a_-+s_0]$}, \\
\widehat{h}_{-}(s) = h_{-}(s-s_0), & \quad \mbox{ near $s= a_-+s_0+\delta$.}
\end{array}
\]
\item The function $\widehat{h}_{+}$ has a unique critical point that is a non-degenerate local minimum at $s = a_+ +s_0$, and satisfies
\[
\begin{array}{lr}
\widehat{h}_{+}(s) = h_+(s-s_0), & \quad \mbox{ for $s\in (a_++s_0-\delta,a_++s_0]$}, \\
\widehat{h}_{+}(s) = s, & \quad \mbox{ for $s \gg 0$.}
\end{array}
\]
\end{itemize}
Moreover, from a regular metric $g_\mathit{min}$ for $\Sigma_{\mathit{min}}$ defined on $[a_-,a_+]\times M$ and having the form $g_\mathbb{R}\times g_\pm$ near $\{a_\pm\}\times M$ we construct a metric $g_\mathit{conic}$ on $\R_{>0}\times M$ by shifting $g_{\mathit{min}}$ by $s_0$ in the $\R_{>0}$-direction and then extending to agree with $g_\mathbb{R}\times g_-$ on $(0, a_-+s_0]\times M$ and $g_\mathbb{R}\times g_+$ on $[a_++s_0,+\infty)\times M$.
\end{construction}

Note that the conditions on $\widehat{h}_{-}$ can be arranged with an appropriate choice of $s_0$.  See Figure \ref{fig:MorseMin}.

\begin{figure}[!ht]

\quad

\labellist
\normalsize
\pinlabel $\Sigma_{\mathit{min}}$ [t] at 78 -4
\pinlabel $\Sigma_{\mathit{conic}}$ [t] at 272 -4
\pinlabel $\widetilde{\Sigma}_{\mathit{conic}}$ [t] at 506 -4
\pinlabel $\Sigma$  at 64 60
\pinlabel $\Sigma$  at 260 60
\pinlabel $\Sigma$  at 498 60

\endlabellist

\centerline{\includegraphics[scale=.6]{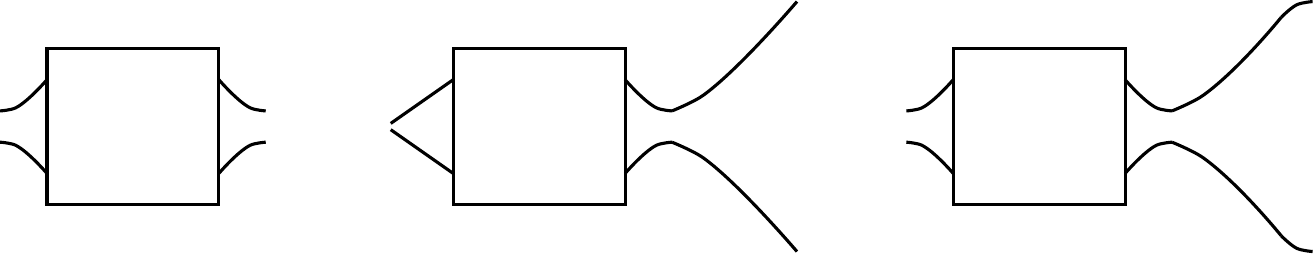}} 

\quad
 
\quad

\caption{A Morse minimum cobordism, an associated conical cobordism, and the cobordism $\widetilde{\Sigma}_{\mathit{conic}}$ used in the proof of Proposition \ref{prop:min}.
}   
\label{fig:MorseMin}
\end{figure}

\begin{proposition} \label{prop:min}
When $\Sigma_{\mathit{min}} \subset J^1([a_-,a_+] \times M)$ is a Morse minimum cobordism from $\Lambda_-$ to $\Lambda_+$ and $\Sigma_{\mathit{conic}}$ is an associated conical cobordism  as in Construction \ref{const:conical}.  Then, the immersed LCH functor satisfies
\[
F(\Sigma_{conic}) = \left[\, \mathcal{A}(\Lambda_+) \stackrel{i_+}{\rightarrow} \mathcal{A}(\Sigma_{\mathit{min}}) \stackrel{i_-}{\hookleftarrow} \mathcal{A}(\Lambda_-)\,\right].
\]
\end{proposition}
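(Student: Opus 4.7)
The plan is to unfold the definition of $F(\Sigma_{\mathit{conic}})$ in terms of the Morse cobordism $\widetilde{\Sigma}_{\mathit{conic}}$ of Figure~\ref{fig:MorseMin} and identify the three pieces of the resulting immersed DGA map directly with the claimed diagram. By Construction~\ref{const:conical}, the scaling function along $\widetilde{\Sigma}_{\mathit{conic}}$ has three critical values: a bottom Morse minimum near $s=0$ produced by the Morse end replacement, the internal minimum at $s = a_+ + s_0$ inherited from $h_+$, and a top Morse maximum at some $s_1' \gg a_++s_0$ produced by the top Morse end replacement. The metric $g_\mathit{conic}$ agrees with (the shift of) $g_\mathit{min}$ over the image of $\Sigma_\mathit{min}$, is a product metric on the two intervening trivial scaled cylinders, and has its standard form on the two Morse caps.

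First I would check that $\mathcal{A}(\Sigma_{\mathit{conic}}) = \mathcal{A}(\Sigma_{\mathit{min}})$ as $\mathbb{Z}/\rho$-graded DGAs. The generators of $\mathcal{A}(\Sigma_{\mathit{conic}})$ are, by definition, the Reeb chords of $\widetilde{\Sigma}_{\mathit{conic}}$ that do not lie above the top Morse cap, and these are in canonical bijection with the Reeb chords of $\Sigma_{\mathit{min}}$ (the $\Lambda_-$ chords sitting over the bottom minimum, the interior chords of $\Sigma_\mathit{min}$, and the $\Lambda_+$ chords sitting over the internal minimum). To match differentials I would exploit that on each trivial scaled cylinder $j^1(g(s)\cdot \Lambda_\pm + \text{const})$, the negative gradient of any positive local difference function $g(s) f^\pm_{ij}(x)$ has $s$-component $-g'(s) f^\pm_{ij}(x)$, which has constant sign wherever $g$ is monotone. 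Thus flowlines in these trivial regions are strictly monotone in $s$, and any GFT contributing to $\partial$ on $\mathcal{A}(\Sigma_{\mathit{conic}})$ (which has no puncture at the top max by construction) must be supported in (the shift of) $\Sigma_{\mathit{min}}$ and is in bijection with a GFT for $\Sigma_{\mathit{min}}$. Under this identification, the map $i$ appearing in $F(\Sigma_{\mathit{conic}})$ becomes $i_-$, since both are tautologically induced by the identification of $\Lambda_-$ Reeb chords with Reeb chords at the bottom minimum.

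The substantive step is showing $f = i_+$. By Theorem~\ref{thm:PanRu1}, $f$ counts rigid GFTs in $\widetilde{\Sigma}_{\mathit{conic}}$ with positive puncture at a Reeb chord $c$ of $\Lambda_+$ sitting above the top maximum and image contained in $\{s \leq s_1'\}$. The interval $[a_++s_0, s_1']$ supports a trivial scaled cylinder over $\Lambda_+$ with a Morse minimum at its left endpoint and a Morse maximum at $s_1'$, i.e.\ a trivial Morse minimum cobordism from $\Lambda_+$ to itself. For such a trivial piece the explicit rigid GFT count from \cite{EHK}, adapted to the immersed setting in \cite{PanRu1}, produces the identity DGA map on $\mathcal{A}(\Lambda_+)$: each top-max Reeb chord $c$ contributes precisely the corresponding Reeb chord at the internal minimum, which under $\mathcal{A}(\Sigma_{\mathit{conic}}) = \mathcal{A}(\Sigma_{\mathit{min}})$ is $i_+(c)$. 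Combined with the identification in the previous paragraph, this yields $f = i_+$ and the proposition.

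The main obstacle is the rigidity assertion in the last step: one must rule out rigid GFTs with positive puncture at the top max that descend through the trivial cylinder and then branch nontrivially past the internal Morse minimum at $a_++s_0$ into the interior of (the shift of) $\Sigma_\mathit{min}$. This is exactly the ``barrier'' property at a Morse minimum of the $s$-scaling function, and the needed input is the local index/transversality analysis already used in setting up Morse minimum cobordisms in \cite{EK} and the immersed LCH functor in \cite{PanRu1}: an index count at the local minimum of $g$ forces any rigid GFT branch arriving at $s = a_++s_0$ to terminate there at the unique matching Reeb chord of $\Lambda_+$ rather than continuing into $\Sigma_\mathit{min}$. Once this local statement is in place, both the differential identification and the equality $f = i_+$ follow, and the proposition is established.
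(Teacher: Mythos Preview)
Your proposal is correct and follows essentially the same route as the paper: build the Morse cobordism $\widetilde{\Sigma}_{\mathit{conic}}$, use the Morse minimum at $s=a_++s_0$ as a barrier to identify $\mathcal{A}(\Sigma_{\mathit{conic}})$ with $\mathcal{A}(\Sigma_{\mathit{min}})$, and then compute $f=i_+$ by reducing to the identity-cobordism GFT count on the trivial scaled cylinder over $\Lambda_+$ between the internal minimum and the top maximum (the paper cites \cite[Proposition 6.15]{PanRu1} here).

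One small correction worth making: the mechanism you cite for the ``main obstacle'' is not really an index count. The barrier at $s=a_++s_0$ is purely geometric: the $s$-component of each $-\nabla G_{i,j}$ is $-\widetilde{h}_+'(s)\,g_{i,j}(x)$, which vanishes along the slice $\{s=a_++s_0\}$ and points \emph{toward} that slice on both sides (since $\widetilde{h}_+$ has a nondegenerate minimum there and the $g_{i,j}$ are positive). Hence the slice is forward-invariant for the negative gradient flow and no GFT branch can cross it; any GFT with positive puncture at the top max has image in $(a_++s_0, s_1+\delta)\times M$, and any GFT contributing to the differential on $\mathcal{A}(\Sigma_{\mathit{conic}})$ has image in $[a_-+s_0, a_++s_0]\times M$. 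An index count alone would not rule out rigid trees with additional negative punctures inside $\Sigma_{\mathit{min}}$. Also, a minor coordinate slip: the bottom Morse minimum of $\widetilde{\Sigma}_{\mathit{conic}}$ sits at $s=a_-+s_0$, not near $s=0$.
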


\begin{proof}
A Morse cobordism $\widetilde{\Sigma}_{\mathit{conic}}$ (with a Morse minimum at the negative end and a Morse maximum at the positive end) that can be used to compute the induced immersed map 
\[
M_{\Sigma_{\mathit{conic}}} = \left(\mathcal{A}(\Lambda_+) \stackrel{f}{\rightarrow} \mathcal{A}(\Sigma_{\mathit{conic}}) \stackrel{i}{\hookleftarrow} \mathcal{A}(\Lambda_-)\right)
\]
 from Theorem \ref{thm:PanRu1} is obtained as follows:
\begin{enumerate}
\item At the  negative end of $\Sigma_{\mathit{conic}}$, replace $j^1(\widehat{h}_-(s) \cdot \Lambda_-+B_-)$ with $j^1(h_-(s-s_0) \cdot \Lambda_-+B_-)$.  This matches the negative end of $\Sigma_{\mathit{min}}$ shifted by $s_0$.
\item At the postive end,  replace the function $\widehat{h}_+$ with some  $\widetilde{h}_+:(a_++s_0-\delta, s_1+\delta] \rightarrow \R_{>0}$.  Here, $s_1$ is chosen large enough so that $\widehat{h}_+(s) = s $ for $s \geq s_1-\delta$ and we require
\begin{itemize}
\item $\widetilde{h}_+(s) = \widehat{h}_+(s)$ for $s \in (a_++s_0 -\delta, s_1]$,
\item $\widetilde{h}_+$ is increasing on $[s_1, s_1+\delta]$ and has a single critical point on this interval that is a non-degenerate local maximum at $s_1+\delta$.   
\end{itemize}
See Figure \ref{fig:MorseMin}.
\end{enumerate}
 By definition, the DGA $\mathcal{A}(\Sigma_\mathit{conic})$ is generated by those Reeb chords of $\widetilde{\Sigma}_{\mathit{conic}}$ that appear in the region where $s < s_1+\delta$.  These Reeb chords are the same as the Reeb chords of $\Sigma_{\mathit{min}}$ but shifted by $s_0$ in the $s$ direction.  Moreover, using metrics of the form $g_{\mathit{min}}$ and $g_{\mathit{conic}}$ as in Construction \ref{const:conical} to compute GFTs, because of the Morse minima all of the GFTs with positive punctures at these chords are contained in the region $[a_-+s_0,a_++s_0]\times M$ and therefore coincide (up to the shift in the $s$ direction) with the GFTs of $\Sigma_{\mathit{min}}$; see eg. \cite{EK}.  Thus, $\mathcal{A}(\Sigma_\mathit{conic}) = \mathcal{A}(\Sigma_\mathit{min})$ and the map $i$ agrees with $i_-$ by definition.   The GFTs that define the map $f$ (by definition) have their unique positive punctures at the Reeb chords located at $\{s_1+\delta\}\times M$, and (because of the local minimum of $\widetilde{h}_+$ at $s = a_++s_0$) have there images contained in $(a_++s_0, s_1+\delta) \times M$.  In this region, $\widetilde{\Sigma}_{\mathit{conic}} = j^1(\widetilde{h}_+(s) \cdot \Lambda_++B_+)$ and $\widetilde{h}_+$ strictly increases, so the computation of such GFTs is as in the case of the identity cobordism from $\Lambda_+$ to itself found in \cite[Proposition 6.15]{PanRu1}.  For each Reeb chord, $b$, of $\Lambda_+$ there is a single gradient trajectory that connects the Reeb chords of $\widetilde{\Sigma}_{\mathit{conic}}$ at $s=s_1+ \delta$ corresponding to $b$ to the Reeb chord of $\widetilde{\Sigma}_{\mathit{conic}}$ at $s=a_++s_0$ corresponding to $b$, and these are the only rigid GFTs.  It follows that $f = i_+$.  
\end{proof}

As discussed in the introduction, Morse minimum cobordisms may be used to define an equivalence relation on augmented Legendrians.  Refer to a triple $(\Lambda, \mu, \epsilon)$ consisting of a $\rho$-graded Legendrian $\Lambda \subset J^1M$ with Maslov potential, $\mu$, and a $\rho$-graded augmentation, $\epsilon:\mathcal{A}(\Lambda) \rightarrow \Z/2$, as a {\bf $\rho$-graded augmented Legendrian}.

\begin{definition}
Two $\rho$-graded augmented Legendrians $(\Lambda_i, \mu_i, \epsilon_i)$, $i =0,1$, in $J^1M$ are {\bf cobordant} if there exists a triple $(\Sigma, \mu, \alpha)$ consisting of a Morse minimum cobordism $\Sigma \subset J^1([0,1]\times M)$ from $\Lambda_0$ to $\Lambda_1$ together with a Maslov potential $\mu$ extending the $\mu_i$ and a $\rho$-graded augmentation $\alpha: \mathcal{A}(\Sigma) \rightarrow \Z/2$ satisfying $\alpha|_{\mathcal{A}(\Lambda_i)} \simeq \epsilon_i$ (DGA homotopy).  
\end{definition}

It is straightforward to see that cobordism defines an equivalence relation on $\rho$-graded augmented Legendrians.

We can now show that Theorem \ref{thm:main} implies Theorem \ref{thm:main2} from the introduction.

\begin{proof}[Proof of Theorem \ref{thm:main2}]
Given a $\rho$-graded augmented Legendrian $(\Lambda, \mu, \epsilon)$, assuming Theorem \ref{thm:main}, there exists a conical Legendrian cobordism with $\Z/\rho$-valued Maslov potential, 
\[
\Sigma: \emptyset \rightarrow \Lambda \quad \mbox{(if $\rho \neq 1$)}  \quad \mbox{or} \quad \Sigma: U \rightarrow \Lambda \quad \mbox{(if $\rho =1$)},
\]
together with a $\rho$-graded augmentation $\alpha:\mathcal{A}(\Sigma) \rightarrow \Z/2$ such that $\epsilon \simeq \alpha \circ f_{\Sigma}$.  Now, a Legendrian isotopy  that is compactly supported in the conical ends of $\Sigma$ modifies $\Sigma$ to have the form $\Sigma_{\mathit{conic}}$ for some Morse minimum cobordism $\Sigma_{\mathit{min}}$ as in Construction \ref{const:conical}.  Then, from Theorem  \ref{thm:PanRu1} there is an immersed DGA homotopy $M_{\Sigma} \simeq M_{\Sigma_{conic}}$ that (after using Proposition \ref{prop:min} to evaluate $M_{\Sigma_{conic}}$ and replacing the stable tame isomorphism $\varphi: \mathcal{A}(\Sigma_{\mathit{min}})*S \rightarrow \mathcal{A}(\Sigma_{\mathit{min}})*S'$ with its associated homotopy equivalence) gives rise to a DGA homotopy commutative diagram: 
\[
\xymatrix{ & \mathcal{A}(\Sigma_{\mathit{min}})  \ar[dd]^{h} \\ \alg(\Lambda_+) \ar[ru]^{i_+}  \ar[rd]^{f_\Sigma}  &  \\ & 
\mathcal{A}(\Sigma) } 
\]  
Then, we can compute
\[
\epsilon \simeq \alpha \circ f_\Sigma \simeq (\alpha \circ h) \circ i_+, 
\]
so that $(\Sigma_{\mathit{min}}, \alpha\circ h)$ provides the cobordism of $\rho$-graded augmented Legendrians from $\emptyset$ (if $\rho \neq 1$) or $(U, \epsilon_1)$ (if $\rho =1$) to $(\Lambda, \epsilon)$ as in the statement of Theorem \ref{thm:main2} where $\epsilon_1= \alpha \circ h \circ i_-$.    
\end{proof}

\begin{remark}
\begin{enumerate}
\item When $\rho$ is even, the cobordism $\Sigma$ and Legendrians are canonically oriented by the Maslov potential $\mu$.  If $\rho$ is odd, $\Sigma$ may be orientable or not.  In 
the odd case, a refined relation  of oriented cobordism for $\rho$-graded augmentation Legendrians arises from requiring that the $\Lambda_i$ and $\Sigma$ are additionally equipped with orientations.  We leave the computation of such oriented, odd-graded cobordism classes of augmented Legendrians in $J^1\R$ as an open problem.
\item Without augmentations, Legendrian cobordism classes in $J^1\R$  are computed as follows; see eg. \cite[Section 5.1]{ArnoldWave}.  Two Legendrians in $J^1\R$ are oriented cobordant if and only if they have the same rotation number, while any two Legendrians in $J^1\R$ are non-oriented cobordant.  In particular, Theorem \ref{thm:main2} implies the well known result of Sabloff, see \cite{Sabloff}, that if $\Lambda \subset J^1\R$ has an $\rho$-graded augmentation with $\rho$ even, then $\mathit{rot}(\Lambda) = 0$.  
\item As Legendrians that admit augmentations exhibit significantly more rigid behavior than general Legendrians, we do not see any {\it a priori} reason to expect that cobordism classes of $\rho$-graded augmented Legendrians should closely match the classical cobordism classes of Legendrians.  In fact, in the case of $J^1S^1$ we will show in \cite{PanRu3} that there are many examples of non-cobordant augmented Legendrians that become cobordant if one ignores the augmentations.
\end{enumerate}
\end{remark}

\section{Immersed DGA maps and augmentations} \label{sec:3}

A DGA morphism $f:\mathcal{A} \rightarrow \mathcal{B}$ contravariantly induces a map on homotopy classes of augmentations, $f^*:\mathit{Aug}(\mathcal{B})/{\sim} \rightarrow \mathit{Aug}(\mathcal{A})/{\sim}$.  In Section \ref{sec:augimmersed}, we consider analogous constructions for immersed DGA maps focusing on the {\it induced augmentation set} of an immersed DGA map, $M$, that is a subset $I(M) \subset \mathit{Aug}(\mathcal{B})/{\sim} \times \mathit{Aug}(\mathcal{A})/{\sim}$.
We show that the induced augmentation set 
induced by a conical Legendrian cobordism $\Sigma$ is a Legendrian invariant of $\Sigma$.  In addition, we make some observations about the form of the augmentation set in the case of Legendrian fillings and embedded Legendrian cobordisms. 
In Section \ref{sec:AugSet} we show that immersed augmentation sets compose as relations, and we record the effect of concatenating a conical Legendrian cobordism with the (invertible) Legendrian cobordism arising from a Legendrian isotopy.

\subsection{Induced augmentation sets}  \label{sec:augimmersed}
We work with $\Z/\rho$-graded DGAs, {\it with $\rho \geq 0$ understood to be fixed}.  As such, when the grading does not need to be emphasized we may refer to $\rho$-graded augmentations simply as augmentations.  We denote by $\aug_{\rho}(\alg)= \aug(\alg)$ the set of all ($\rho$-graded) augmentations of $\mathcal{A}$  to $\Z/2$, and we write $\aug(\alg)/{\sim}$ for the set of all DGA homotopy classes of augmentations.   In the case that $\mathcal{A} = \mathcal{A}(\Lambda)$ is the DGA of some $\rho$-graded Legendrian knot or cobordism, we may shorten these notations to $\aug(\Lambda)$ and $\aug(\Lambda)/{\sim}$.  A DGA map $f:\mathcal{A} \rightarrow \mathcal{B}$ induces a pullback map 
\[
f^*:\aug(\mathcal{B}) \rightarrow \aug(\mathcal{A}), \quad f^*\epsilon = \epsilon \circ f,
\]
and this gives a well-defined map on DGA homotopy classes also denoted as 
\[
f^*:\aug(\mathcal{B})/{\sim} \rightarrow \aug(\mathcal{A})/{\sim}.
\]

Let $M = \big(\mathcal{A}_1 \stackrel{f}{\rightarrow} \mathcal{B} \stackrel{i}{\hookleftarrow} \mathcal{A}_2 \big)$ be an immersed DGA map.   
 Then, the pullback construction results in maps
\[
\aug(\mathcal{A}_1) \stackrel{f^*}{\leftarrow} \aug(\mathcal{B}) \stackrel{i^*}{\rightarrow} \aug(\mathcal{A}_2) \quad \mbox{and}
\quad \aug(\mathcal{A}_1)/{\sim} \stackrel{f^*}{\leftarrow} \aug(\mathcal{B})/{\sim} \stackrel{i^*}{\rightarrow} \aug(\mathcal{A}_2)/{\sim}.
\]
The latter diagram is equivalent to the map 
\[
i^* \times f^*: \aug(\mathcal{B})/{\sim} \rightarrow (\aug(\mathcal{A}_2)/{\sim}) \times (\aug(\mathcal{A}_1)/{\sim})
\]
that we call the
 {\bf augmentation map} 
induced by $M$.  

\begin{definition}  The {\bf induced augmentation set}, $I(M)$, of an immersed DGA map $M = \big(\mathcal{A}_1 \stackrel{f}{\rightarrow} \mathcal{B} \stackrel{i}{\hookleftarrow} \mathcal{A}_2 \big)$ is the image of the augmentation map,
\[
I(M) = \mbox{Im}( i^* \times f^*) \subset  \aug(\alg_2)/{\sim} \times \aug(\alg_1)/{\sim}.
\]
\end{definition}

The induced augmentation set is an invariant of the immersed homotopy class of $M$.  

\begin{proposition}  \label{prop:indaug}
Suppose that $M = \big(\mathcal{A}_1 \stackrel{f}{\rightarrow} \mathcal{B} \stackrel{i}{\hookleftarrow} \mathcal{A}_2 \big)$ and $M' = \big(\mathcal{A}_1 \stackrel{f'}{\rightarrow} \mathcal{B}' \stackrel{i'}{\hookleftarrow} \mathcal{A}_2 \big)$ are immersed DGA maps that are immersed homotopic. Then, there is a bijection $h^*:\aug(\mathcal{B}')/{\sim} \stackrel{\cong}{\rightarrow} \aug(\mathcal{B})/{\sim}$ fitting into a commutative diagram
\begin{equation} \label{eq:augcomm2}
\xymatrix{ & \aug(\mathcal{B})/{\sim}  \ar[ld]_{f^*} \ar[rd]^{i^*} & \\ \aug(\mathcal{A}_1)/{\sim}  & & \aug(\mathcal{A}_2)/{\sim}  \\ & \aug(\mathcal{B}')/{\sim} \ar[uu]_{h^*}^\cong    \ar[lu]_{(f')^*} \ar[ru]^{(i')^*} & } \quad.
\end{equation}
In particular, the induced augmentation sets satisfy $I(M) = I(M')$.
\end{proposition}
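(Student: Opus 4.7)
The plan is to take $h^{*}$ to be the pullback of the associated DGA homotopy equivalence $h:\mathcal{B}\to \mathcal{B}'$ extracted from the stable tame isomorphism $\varphi:\mathcal{B}*S \to \mathcal{B}'*S'$ that witnesses the immersed homotopy $M \simeq M'$. Recall from Section \ref{sec:2-1} that $h = \pi' \circ \varphi \circ \iota$, where $\iota:\mathcal{B}\to \mathcal{B}*S$ and $\pi':\mathcal{B}'*S' \to \mathcal{B}'$ are the canonical inclusion and projection, and that $h$ is a DGA homotopy equivalence.

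The first substantive step is to verify that pullback by DGA homotopic maps descends to homotopy classes of augmentations. Suppose $g_{1} \simeq g_{2}: \mathcal{C} \to \mathcal{D}$ via a $(g_{1},g_{2})$-derivation $K$. For any augmentation $\epsilon \in \aug(\mathcal{D})$, a short calculation shows that $\epsilon \circ K: \mathcal{C} \to \Z/2$ is an $(\epsilon \circ g_{1}, \epsilon \circ g_{2})$-derivation; using $\epsilon \circ \partial_{\mathcal{D}} = 0$ and $\partial_{\Z/2} = 0$, one obtains $\epsilon \circ g_{1} - \epsilon \circ g_{2} = (\epsilon \circ K)\circ \partial_{\mathcal{C}}$, so $\epsilon \circ g_{1} \simeq \epsilon \circ g_{2}$ as augmentations. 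Granted this, $h^{*}$ is well defined on homotopy classes, and since $h$ is a DGA homotopy equivalence with some homotopy inverse $k$, the identities $k^{*}\circ h^{*} = (h\circ k)^{*} = \mathit{id}$ and $h^{*}\circ k^{*} = \mathit{id}$ exhibit $h^{*}$ as a bijection.

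Next I would check commutativity of the two triangles. Reading the condition $\varphi \circ i = i'$ as an equality of DGA maps $\mathcal{A}_{2} \to \mathcal{B}'*S'$ after composing with the stabilization inclusions, the identity $\pi'\circ \iota' = \mathit{id}_{\mathcal{B}'}$ yields the strict equality $h \circ i = \pi' \circ \varphi \circ \iota \circ i = \pi' \circ \iota' \circ i' = i'$, hence $i^{*} \circ h^{*} = (i')^{*}$. Similarly $\varphi \circ f \simeq f'$ gives $h \circ f \simeq f'$ after post-composing with $\pi'$, so $f^{*} \circ h^{*} = (f')^{*}$ by the previous step. The equality $I(M) = I(M')$ then follows formally: for any $\epsilon' \in \aug(\mathcal{B}')/{\sim}$, setting $\epsilon := h^{*}\epsilon'$ produces $(i^{*}\epsilon, f^{*}\epsilon) = ((i')^{*}\epsilon', (f')^{*}\epsilon')$, so $I(M')\subseteq I(M)$, and surjectivity of $h^{*}$ supplies the reverse inclusion. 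I do not anticipate any real obstacle: once pullback is known to respect homotopy of augmentations, the remainder is a routine diagram chase.
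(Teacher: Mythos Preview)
Your proof is correct and follows essentially the same approach as the paper: both take $h$ to be the associated DGA homotopy equivalence of the stable tame isomorphism $\varphi$, use that $h\circ i = i'$ strictly and $h\circ f \simeq f'$, and then pass to homotopy classes of augmentations. The paper packages the last step more abstractly by observing that $\mathcal{A}\mapsto \aug(\mathcal{A})/{\sim}$, $g\mapsto g^{*}$ is a contravariant functor on $\mathfrak{DGA}^\rho_{\mathit{hom}}$, whereas you unpack the verification that homotopic DGA maps induce the same pullback on augmentation classes; these are the same argument at different levels of detail.
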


\begin{proof}  There exists a diagram   
\begin{equation} \label{eq:augcomm1}
\xymatrix{ & \mathcal{B}*S \ar[dd]^{\varphi}_\cong  & \\ \alg_1 \ar[ru]^{f}  \ar[rd]^{f'}  & & \alg_2 \ar[lu]_{i} \ar[ld]_{i'}  \\ & \mathcal{B}'*S' & } 
\end{equation}
where $\varphi$ is a DGA isomorphism  
such that
\[
\varphi \circ f \simeq f' \quad \mbox{and} \quad \varphi \circ i = i'.
\]   
Let $h= \pi' \circ \varphi \circ \iota$, where $\iota: \mathcal{B} \rightarrow \mathcal{B} *S$ and $\pi':\mathcal{B}'*S' \rightarrow \mathcal{B}'$ are the inclusions and projections, be the associated homotopy equivalence from $\mathcal{B}$ to $\mathcal{B}'$.  
Then, since the maps such as $f: \alg_1 \rightarrow \mathcal{B}*S$ in (\ref{eq:augcomm1}) are implicitly understood to mean $\iota \circ f$, the diagram (\ref{eq:augcomm1}) leads to a similar homotopy commutative diagram (\ref{eq:augcomm1})'  with the vertical map replaced with $h: \B\rightarrow \B'$.  

Now, the association
\[
\begin{array}{lcl}
\mathcal{A} & \quad \leadsto \quad & \aug(\mathcal{A})/{\sim},  \\
 g:\mathcal{A} \rightarrow \mathcal{B} & \quad \leadsto \quad &  g^*:\aug(\mathcal{B})/{\sim} \rightarrow \aug(\mathcal{A})/{\sim}
\end{array}
\]
gives a well-defined contravariant functor from the category $\mathfrak{DGA}^\rho_{\mathit{hom}}$ (where morphisms are DGA homotopy classes of maps) to the category of sets.  In particular, since $h$ is a homotopy equivalence, $h^*$ is a bijection, and since (\ref{eq:augcomm1})' is homotopy commutative, the diagram (\ref{eq:augcomm2}) is indeed fully commutative.
\end{proof}

When $\Sigma:\Lambda_- \rightarrow \Lambda_+$ is a conical Legendrian cobordism with immersed DGA map, $M_\Sigma$, as in Theorem \ref{thm:PanRu1}, we write $I_{\Sigma} = I(M_\Sigma) \subset \aug(\Lambda_-)/{\sim} \times \aug(\Lambda_+)/{\sim}$ and refer to $I_\Sigma$  as the {\bf induced augmentation set} of $\Sigma$.

\begin{corollary}  
Suppose that $\Sigma, \Sigma': \Lambda_- \rightarrow \Lambda_+$ are conical Legendrian cobordisms related by a conical Legendrian isotopy.
Then, there is a commutative diagram
\begin{equation} \label{eq:comm2}
\xymatrix{ & \aug(\Sigma)/{\sim}  \ar[ld]_{f_\Sigma^*} \ar[rd]^{i_{\Sigma}^*} & \\ \aug(\Lambda_+)/{\sim}  & & \aug(\Lambda_-)/{\sim}  \\ & \aug(\Sigma')/{\sim} \ar[uu]_{h^*}^\cong    \ar[lu]_{f^*_{\Sigma'}} \ar[ru]^{i^*_{\Sigma'}} & } 
\end{equation}
i.e., $h^*$ is a bijection and $i^*_{\Sigma'} \times f^*_{\Sigma'} = \left(i^*_{\Sigma} \times f^*_{\Sigma}\right)  \circ h^*$.

In particular, the induced augmentation set
\[
I_{\Sigma}= \mbox{Im}( i^*_{\Sigma} \times f^*_{\Sigma}) \subset \aug(\Lambda_-)/{\sim} \times \aug(\Lambda_+)/{\sim},
\] is an invariant of $\Sigma$.
\end{corollary}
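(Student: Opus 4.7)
The plan is to derive this corollary as an immediate concatenation of Theorem \ref{thm:PanRu1}(3) with Proposition \ref{prop:indaug}, which has just been established. First, since $\Sigma$ and $\Sigma'$ are conical Legendrian isotopic cobordisms from $\Lambda_-$ to $\Lambda_+$, part (3) of Theorem \ref{thm:PanRu1} guarantees that their induced immersed DGA maps
\[
M_\Sigma = \big(\alg(\Lambda_+) \stackrel{f_\Sigma}{\rightarrow} \alg(\Sigma) \stackrel{i_\Sigma}{\hookleftarrow} \alg(\Lambda_-)\big) \quad \text{and} \quad M_{\Sigma'} = \big(\alg(\Lambda_+) \stackrel{f_{\Sigma'}}{\rightarrow} \alg(\Sigma') \stackrel{i_{\Sigma'}}{\hookleftarrow} \alg(\Lambda_-)\big)
\]
are immersed homotopic.

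Next, I would apply Proposition \ref{prop:indaug} to the pair $(M_\Sigma, M_{\Sigma'})$. Its conclusion, with $\mathcal{A}_1 = \alg(\Lambda_+)$, $\mathcal{A}_2 = \alg(\Lambda_-)$, $\mathcal{B} = \alg(\Sigma)$, and $\mathcal{B}' = \alg(\Sigma')$, yields exactly a bijection $h^* : \aug(\Sigma')/{\sim} \stackrel{\cong}{\to} \aug(\Sigma)/{\sim}$ fitting into the two commutative triangles $f^*_{\Sigma'} = f^*_\Sigma \circ h^*$ and $i^*_{\Sigma'} = i^*_\Sigma \circ h^*$, which together form the diagram (\ref{eq:comm2}). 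The ``In particular'' assertion is then immediate: composing both sides with the bijective $h^*$ shows that
\[
\mathrm{Im}(i^*_{\Sigma'} \times f^*_{\Sigma'}) = \mathrm{Im}\big((i^*_\Sigma \times f^*_\Sigma) \circ h^*\big) = \mathrm{Im}(i^*_\Sigma \times f^*_\Sigma),
\]
so $I_{\Sigma'} = I_\Sigma$, and Legendrian isotopy invariance of $I_\Sigma$ follows.

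Because the corollary is a direct citation chain, there is no real obstacle to overcome; all the substance is packaged inside Theorem \ref{thm:PanRu1}(3) (geometric invariance of the immersed homotopy class) and Proposition \ref{prop:indaug} (algebraic invariance of the augmentation data under immersed homotopy). The only thing to keep straight is the contravariance of the pullback, which forces $h^*$ to run from $\aug(\Sigma')/{\sim}$ to $\aug(\Sigma)/{\sim}$, matching the vertical arrow drawn in (\ref{eq:comm2}); this $h^*$ is induced by the DGA homotopy equivalence $h : \alg(\Sigma) \to \alg(\Sigma')$ associated to the stable tame isomorphism witnessing $M_\Sigma \simeq M_{\Sigma'}$.
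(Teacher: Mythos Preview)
Your proposal is correct and matches the paper's approach: the corollary is stated without its own proof precisely because it follows immediately from combining Theorem~\ref{thm:PanRu1}(3) with Proposition~\ref{prop:indaug}, exactly as you describe.
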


\begin{remark}
\begin{enumerate}
\item  To provide a more refined invariant of $\Sigma$, one can take multiplicities into account when considering $I_\Sigma$.  Eg., the function
\[
\aug(\Lambda_-)/{\sim} \times \aug(\Lambda_+)/{\sim} \rightarrow \Z_{\geq 0}, \quad ([\epsilon_-], [\epsilon_+]) \mapsto \left|( i^*_{\Sigma} \times f^*_{\Sigma})^{-1}\big(([\epsilon_-], [\epsilon_+])\big)\right|
\] 
is a conical Legendrian invariant of $\Sigma$.
  A similar invariant (using a normalized count of augmentations rather than homotopy classes) is studied for $1$-dimensional $\Sigma$ in \cite{Su}.
\item The set $\aug(\Lambda_{\pm})$ can be
 equipped with the additional structure of an $A_\infty$-category whose moduli space of objects is $\aug(\Lambda_{\pm})/{\sim}$, cf. \cite{BC, NRSSZ}, and one could hope for a further refinement of the augmentation map $i^*_{\Sigma} \times f^*_{\Sigma}$ to take this structure into account.  In the case of embedded cobordisms, such a refinement
is made in \cite{Pan1}.
\end{enumerate}
\end{remark}

\subsubsection{The embedded case}  \label{sec:embeddedcase}
When the conical Legendrian cobordism $\Sigma:\Lambda_-\rightarrow \Lambda_+$ corresponds to an embedded Lagrangian cobordism, $L$, i.e. when $\Sigma$ has no Reeb chords, $\mathcal{A}(\Sigma) = \mathcal{A}(\Lambda_-)$ and $i_\Sigma = \mathit{id}_{\mathcal{A}(\Lambda_-)}$.  Therefore, in this case the augmentation map $i^*_\Sigma\times f^*_\Sigma$ is determined by the induced augmentation set
\begin{equation}  \label{eq:embeddedSet}
I_\Sigma = \left\{([\epsilon], f^*_{\Sigma} [\epsilon]) \,|\,  [\epsilon] \in \aug(\Lambda_-)/{\sim}\right\} \subset \aug(\Lambda_-)/{\sim} \times \aug(\Lambda_+)/{\sim}
\end{equation}
which is simply the function $f^*_{\Sigma}$ viewed as a relation.

More generally, when $M = (\mathcal{A}_1 \stackrel{f}{\rightarrow} \mathcal{A}_2 \stackrel{\mathit{id}}{\hookleftarrow} \mathcal{A}_2)$ is the image of an ordinary DGA map $f:\mathcal{A}_1 \rightarrow \mathcal{A}_2$ under the functor from Proposition \ref{prop:IFunctor}, the induced augmentation set for $M$ is just the graph of $f^*$.

\subsubsection{Induced augmentations and immersed fillings}

Let $\Sigma$ be a conical Legendrian filling, 
or equivalently a good immersed Lagrangian filling.  Since $\Lambda_- = \emptyset$, 
 we have $\mathcal{A}(\Lambda_-) = \Z/2$.  Thus, $\aug(\Lambda_-) = \aug(\Lambda_-)/{\sim}$ consists of a single element, so that we can view $I_\Sigma$ as a subset of $\aug(\Lambda_+)$. 
To emphasize the analogy with the case of embedded Lagrangian fillings, given an augmentation $\alpha:\alg(\Sigma) \rightarrow \Z/2$, we use the notation
\[
\epsilon_{(\Sigma, \alpha)} : \alg(\Lambda_+) \rightarrow \Z/2, \quad \epsilon_{(\Sigma, \alpha)} = \alpha \circ f_{\Sigma}
\]
and refer to $\epsilon_{(\Sigma, \alpha)}$ as the {\bf augmentation induced by $\Sigma$ via $\alpha$}.  Thus, the induced augmentation set $I_\Sigma \subset \aug(\Lambda_+)$ consists of those augmentations of $\Lambda_+$ that can be induced by some choice of augmentation for $\Sigma$.

\subsection{Concatenation and induced augmentation sets}  \label{sec:AugSet}
Let $\mathfrak{Rel}$ denote the category whose objects are sets and morphisms are relations, i.e. a morphism $R \subset \mathit{Hom}_{\mathfrak{Rel}}(X,Y)$ is just a subset $R \subset X \times Y$.  Given relations $R \subset X\times Y$ and $S \subset Y \times Z$, there composition is
\[
S \circ R = \{(x,z) \in X \times Z\,|\, \exists y \in Y \mbox{ such that } (x,y) \in R \mbox{ and } (y,z) \in S \}.
\]
\begin{observation} \label{obs:relation}
\begin{enumerate}  
\item Any function $f:X \rightarrow Y$ defines a relation $\Gamma_f = \{(x,f(x))\,|\, x \in X \} \subset X \times Y$.
\item Given any $R \subset X \times Y$ and $g:Y \rightarrow Z$, we have
\[
\Gamma_g \circ R = \{ (x,g(y))\,|\, (x,y) \in R\} = (\mathit{id} \times g) (R). 
\]
\item If $f:X \rightarrow Y$ is a bijection, and $S \subset Y \times Z$,then
\[
S \circ \Gamma_f = (f^{-1} \times \mathit{id})(S).
\]
\end{enumerate}
\end{observation}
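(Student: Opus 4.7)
The plan is simply to unwind the definition of relation composition in each of the three cases; no nontrivial content beyond bookkeeping is involved, and there is no real obstacle.

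For part (1), I would just note that for any function $f : X \to Y$ the set $\Gamma_f = \{(x, f(x)) \mid x \in X\}$ is a well-defined subset of $X \times Y$, hence a morphism in $\mathfrak{Rel}$ from $X$ to $Y$ by definition.

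For part (2), I would unfold the composition
\[
\Gamma_g \circ R = \{(x,z) \in X \times Z \mid \exists\, y \in Y,\ (x,y) \in R \text{ and } (y,z) \in \Gamma_g\}.
\]
Since $(y,z) \in \Gamma_g$ precisely when $z = g(y)$, the existence condition collapses to $z = g(y)$ for some $y$ with $(x,y) \in R$. This gives $\{(x, g(y)) \mid (x,y) \in R\}$ on the nose, which is also the image $(\mathit{id} \times g)(R)$.

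For part (3), I would again unfold:
\[
S \circ \Gamma_f = \{(x,z) \in X \times Z \mid \exists\, y \in Y,\ (x,y) \in \Gamma_f \text{ and } (y,z) \in S\}.
\]
The condition $(x,y) \in \Gamma_f$ forces $y = f(x)$, so the set becomes $\{(x,z) \mid (f(x), z) \in S\}$. Bijectivity of $f$ lets me reparametrize by $y = f(x)$, equivalently $x = f^{-1}(y)$, so this set equals $\{(f^{-1}(y), z) \mid (y,z) \in S\} = (f^{-1} \times \mathit{id})(S)$, as claimed. The hypothesis that $f$ is a bijection is used exactly to guarantee that for every $y \in Y$ there is a unique preimage $x = f^{-1}(y)$, so the reparametrization is a bijection of sets.
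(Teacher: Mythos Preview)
Your proposal is correct. The paper treats this as a self-evident observation and offers no proof at all; your straightforward unwinding of the definition of relation composition is exactly the intended justification.
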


\begin{proposition} \label{prop:relfunctor} The induced augmentation set construction
gives a contravariant functor $\mathfrak{DGA}^\rho_{\mathit{im}} \rightarrow \mathfrak{Rel}$,
\[
\begin{array}{lcl}
\mathcal{A} & \leadsto & \aug(\mathcal{A})/{\sim} \\
\left[ M \right] & \leadsto & I(M).
\end{array}
\]
In particular, the induced augmentation sets for a pair of conical Legendrian cobordisms,  $\Sigma_i: \Lambda_{i+1} \rightarrow \Lambda_{i}$, $i=1,2$, satisfy
\begin{equation}  \label{eq:InducedComp}
I_{\Sigma_1 \circ \Sigma_2} = I_{\Sigma_1} \circ I_{\Sigma_2}.
\end{equation}
\end{proposition}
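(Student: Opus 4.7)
The plan is to verify the two functor axioms for the assignment $M \mapsto I(M)$ on $\mathfrak{DGA}^\rho_{\mathit{im}}$, and then extract the cobordism formula by combining the result with Theorem \ref{thm:PanRu1}(4). Well-definedness on morphisms---that $I(M)$ depends only on the immersed homotopy class---is exactly Proposition \ref{prop:indaug}. The identity axiom is immediate: the augmentation map of $(\alg \stackrel{\mathit{id}}{\to} \alg \stackrel{\mathit{id}}{\hookleftarrow} \alg)$ sends $[\epsilon]$ to $([\epsilon], [\epsilon])$, so its image is the diagonal relation on $\aug(\alg)/{\sim}$, which is the identity morphism in $\mathfrak{Rel}$.

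The substantive task is the composition formula $I(M_2 \circ M_1) = I(M_1) \circ I(M_2)$ for $M_k = (\alg_k \stackrel{f_k}{\to} \mathcal{B}_k \stackrel{i_k}{\hookleftarrow} \alg_{k+1})$. For this I would compute $I(M_2 \circ M_1)$ using the homotopy model provided by Proposition \ref{prop:CompositionAlt}, namely $(\alg_1 \stackrel{j_1 \circ f_1}{\to} \mathcal{D} \stackrel{j_2 \circ i_2}{\hookleftarrow} \alg_3)$ with $\mathcal{D} = \mathcal{B}_1 * \widehat{\alg}_2 * \mathcal{B}_2$ and $\partial \widehat{a}_i = i_1(a_i) + f_2(a_i) + \Gamma(\partial a_i)$ as in (\ref{eq:prop2-5b}). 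The key step is to establish a natural bijection between augmentations $\alpha$ of $\mathcal{D}$ and triples $(\alpha_1, \alpha_2, K)$, where $\alpha_k = \alpha|_{\mathcal{B}_k}$ is an augmentation of $\mathcal{B}_k$ and $K: \alg_2 \to \Z/2$, given on generators by $K(a_i) = \alpha(\widehat{a}_i)$, is a DGA homotopy from $\alpha_1 \circ i_1$ to $\alpha_2 \circ f_2$. The forward direction uses that $\alpha \circ \Gamma$ inherits the structure of an $(\alpha_1 \circ i_1, \alpha_2 \circ f_2)$-derivation, so $\alpha(\partial \widehat{a}_i) = 0$ becomes exactly the homotopy equation $\alpha_1\circ i_1 + \alpha_2\circ f_2 = K \circ \partial$ applied to $a_i$; the reverse direction reconstructs $\alpha$ by the uniqueness of derivations extending prescribed values on generators. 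Grading is automatic since the degree shift $|\widehat{a}_i| = |a_i| + 1$ is precisely that of a DGA homotopy.

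Given this bijection, the composition formula unwinds directly: $(\epsilon_3, \epsilon_1) \in I(M_2 \circ M_1)$ iff there exist augmentations $\alpha_1, \alpha_2$ with $[\alpha_1 \circ f_1] = [\epsilon_1]$, $[\alpha_2 \circ i_2] = [\epsilon_3]$, and $[\alpha_1 \circ i_1] = [\alpha_2 \circ f_2]$ in $\aug(\alg_2)/{\sim}$ (this last equality is equivalent to the existence of a homotopy $K$). Letting $[\epsilon_2]$ denote the common class exhibits $(\epsilon_2, \epsilon_1) \in I(M_1)$ and $(\epsilon_3, \epsilon_2) \in I(M_2)$, hence $(\epsilon_3, \epsilon_1) \in I(M_1) \circ I(M_2)$; the reverse implication reverses this construction. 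The concatenation formula for conical Legendrian cobordisms is then immediate from the contravariant functoriality just established combined with Theorem \ref{thm:PanRu1}(4), which gives $M_{\Sigma_1 \circ \Sigma_2} \simeq M_{\Sigma_2} \circ M_{\Sigma_1}$.

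The main obstacle is not conceptual but bookkeeping: establishing the bijection $\aug(\mathcal{D}) \leftrightarrow \{(\alpha_1, \alpha_2, K)\}$ requires careful unravelling of the derivation property of $\Gamma$ together with the compatibility between the degree shift on $\widehat{\alg}_2$ and the degree-one condition on DGA homotopies.
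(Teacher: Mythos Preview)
Your proposal is correct and follows essentially the same route as the paper: both arguments use the model $\mathcal{D} = \mathcal{B}_1 * \widehat{\mathcal{A}}_2 * \mathcal{B}_2$ from Proposition~\ref{prop:CompositionAlt} and establish the bijection between $\aug(\mathcal{D})$ and triples $(\alpha_1,\alpha_2,K)$ with $K$ a DGA homotopy from $i_1^*\alpha_1$ to $f_2^*\alpha_2$, from which the composition formula unwinds. Your treatment is in fact slightly more explicit than the paper's, spelling out the identity axiom and the derivation check for $\alpha\circ\Gamma$, and correctly deriving the cobordism formula from Theorem~\ref{thm:PanRu1}(4).
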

\begin{proof}
Let $M_1 = \big( \mathcal{A}_1 \stackrel{f_1}{\rightarrow} \mathcal{B}_1 \stackrel{i_1}{\leftarrow} \mathcal{A}_2 \big)$ and $M_2 = \big( \mathcal{A}_2 \stackrel{f_2}{\rightarrow} \mathcal{B}_2 \stackrel{i_2}{\leftarrow} \mathcal{A}_3 \big)$ be immersed maps. 
The set $I(M_2 \circ M_1)$ only depends on the immersed homotopy class of $M_2 \circ M_1$.  Thus, 
we can compute it using the immersed map $\mathcal{A}_1 \stackrel{f_1}{\rightarrow} \mathcal{D} \stackrel{i_2}{\leftarrow} \mathcal{A}_3$  as in Proposition \ref{prop:CompositionAlt} where $\mathcal{D} = \mathcal{B}_1 * \widehat{\mathcal{A}}_2 * \mathcal{B}_2$ with differential as in (\ref{eq:prop2-5b}).  That $I(M_2 \circ M_1) = I(M_1) \circ I(M_2)$ then follows from:

\medskip

\noindent{\bf Claim:}  
Let $X$ be the set of triples $(\alpha_1, \alpha_2, K)$ such that $\alpha_i \in \aug(\mathcal{B}_i)$ and $K: \mathcal{A}_2 \rightarrow \Z/2$ is a DGA homotopy operator from $i_1^*\alpha_1$ to $f_2^*\alpha_2$, i.e. a $(i_1^* \alpha_1, f_2^* \alpha_2)$-derivation with $i_1^* \alpha_1 - f_2^* \alpha_2 =  K \circ \partial$.  There is a bijection 
\[
X \rightarrow \aug(\mathcal{D}), \quad (\alpha_1, \alpha_2, K) \mapsto \alpha 
\]
where $\alpha:\mathcal{D} \rightarrow \Z/2$ satisfies $\alpha|_{\mathcal{B}_i} = \alpha_i$ and $\alpha(\widehat{a}_i) = K(a_i)$ for all generators $a_i \in \mathcal{A}_2$.

\medskip

To verify the claim note that $(\alpha \circ \partial)|_{\mathcal{B}_i} =0$ if and only if $\mathcal{B}_i$ is an augmentation.  In addition, since $\partial(\widehat{a}_i) = i_1(a_i) + f_2(a_i) + \Gamma \circ \partial a_i$ where $\Gamma(a_i) = \widehat{a_i}$ is an $(i_1, f_2)$-derivation, the equation $\alpha \circ \partial (\widehat{a}_i) = 0$ is equivalent to $i_1^*\alpha_1(a_i) + f_2^*\alpha_2(a_i) = K \circ \partial a_i$.  
\end{proof}

In the case that the corresponding exact Lagrangian cobordism is embedded, we get a simpler statement.

\begin{proposition}  \label{prop:LegIsotopyAug}
Suppose $\Sigma: \Lambda_- \rightarrow \Lambda_+$ is a conical Legendrian cobordism with embedded Lagrangian projection, and let $\Sigma': \Lambda'_- \rightarrow \Lambda_-$ and $\Sigma'': \Lambda_+ \rightarrow \Lambda_+''$.

\begin{enumerate}
\item Then, 
\[
I_{\Sigma \circ \Sigma'} = (\mathit{id} \times f_{\Sigma}^*)(I_{\Sigma'})
\]
\item  If $f^*_{\Sigma}:\aug(\Lambda_-)/{\sim} \rightarrow \aug(\Lambda_+)/{\sim}$ is a bijection, then 
\[
I_{\Sigma''\circ \Sigma} = ((f_{\Sigma}^*)^{-1} \times \mathit{id})(I_{\Sigma''}).
\]
\end{enumerate}
\end{proposition}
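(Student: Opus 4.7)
The plan is to derive both parts directly from the functoriality statement in Proposition~\ref{prop:relfunctor} together with the explicit description of $I_\Sigma$ in the embedded case. Recall from Section~\ref{sec:embeddedcase} that when $\Sigma$ has no Reeb chords, one has $\mathcal{A}(\Sigma) = \mathcal{A}(\Lambda_-)$, the map $i_\Sigma$ is the identity, and consequently $I_\Sigma$ is nothing but the graph $\Gamma_{f^*_\Sigma}$ of the pullback $f^*_\Sigma: \aug(\Lambda_-)/{\sim} \to \aug(\Lambda_+)/{\sim}$ viewed as a relation in $\mathfrak{Rel}$; see~\eqref{eq:embeddedSet}.

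For part~(1), I will apply the composition formula~\eqref{eq:InducedComp} with $\Sigma_1 = \Sigma$ and $\Sigma_2 = \Sigma'$ to obtain
\[
I_{\Sigma \circ \Sigma'} \;=\; I_\Sigma \circ I_{\Sigma'} \;=\; \Gamma_{f^*_\Sigma} \circ I_{\Sigma'}.
\]
Observation~\ref{obs:relation}(2), applied with $R = I_{\Sigma'} \subset \aug(\Lambda'_-)/{\sim} \times \aug(\Lambda_-)/{\sim}$ and $g = f^*_\Sigma$, rewrites this as $(\mathit{id} \times f^*_\Sigma)(I_{\Sigma'})$, which is the desired formula.

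For part~(2), the same composition formula yields $I_{\Sigma'' \circ \Sigma} = I_{\Sigma''} \circ I_\Sigma = I_{\Sigma''} \circ \Gamma_{f^*_\Sigma}$. Under the assumption that $f^*_\Sigma$ is a bijection, Observation~\ref{obs:relation}(3) applies with $f = f^*_\Sigma$ and $S = I_{\Sigma''} \subset \aug(\Lambda_+)/{\sim} \times \aug(\Lambda''_+)/{\sim}$, converting the composition into $((f^*_\Sigma)^{-1} \times \mathit{id})(I_{\Sigma''})$.

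There is no real obstacle here: once the machinery of Section~\ref{sec:AugSet} is in place, the proof is a short bookkeeping argument. The only point needing a bit of care is tracking sources, targets, and left/right ordering conventions when identifying each $I_{(\cdot)}$ as a subset of a product of augmentation sets and matching it to the correct slot in Observation~\ref{obs:relation}; the contravariance of $F$ together with the placement of arrows in the immersed DGA maps makes it easy to land a graph on the wrong side of the product if one is not watchful.
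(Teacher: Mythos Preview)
Your proof is correct and follows exactly the same approach as the paper's: identify $I_\Sigma$ with the graph $\Gamma_{f_\Sigma^*}$ via Section~\ref{sec:embeddedcase}, apply the composition formula from Proposition~\ref{prop:relfunctor}, and then invoke Observation~\ref{obs:relation}(2) and (3). The paper's proof is compressed to two sentences, but the content is identical.
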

\begin{proof}
As discussed in \ref{sec:embeddedcase}, when $\Sigma$ is embedded $I_\Sigma$ is the relation $\Gamma_{f_{\Sigma}^*}$ associated to the function $f_{\Sigma}^*$.  Thus, the formulas from Observation \ref{obs:relation} can be applied.  

\end{proof}

An important case where (2) of Proposition \ref{prop:LegIsotopyAug} applies is when $\Sigma$ is induced by a Legendrian isotopy.  We now briefly review a version of this construction.  

Let $\Phi = \{\Lambda_s\}_{s \in \R_{>0}}$ be a Legendrian isotopy from $\Lambda_-$ to $\Lambda_+$  so that $\Lambda_s:\sqcup_{i=1}^cS^1 \rightarrow J^1M$ for $s \in \R_{>0}$ is a Legendrian embedding satisfying $\Lambda_s = \Lambda_-$ for $0<s\leq 1$ and $\Lambda_s = \Lambda_+$ for $s \gg 0$.  Writing 
\[
\Lambda_s(\theta) = \left(x(s, \theta), \, y(s, \theta), \, z(s, \theta)\right)
\]
there is a conical Legendrian cobordism 
\[
\Sigma_\Phi: \R_{>0} \times(\sqcup_{i=1}^cS^1) \rightarrow J^1(\mathbb{R}_{>0} \times M) = \{(s,x, u,y,z)\}
\] from $\Lambda_-$ to $\Lambda_+$ associated to $\Phi$ that is parametrized by
\[
\Sigma_\Phi(s, \theta) = \left(s, \, x(s,\theta), \, z(s, \theta) + s\cdot \frac{\partial z}{\partial s}(s,\theta) - s \cdot y(s, \theta) \cdot \frac{\partial x}{\partial s}(s,\theta), \, s \cdot y(s, \theta), \, s \cdot z(s, \theta) \right).
\]
 It can be shown that after reparametrizing by an appropriate orientation preserving diffeomorphism, $\alpha:\R_{>0} \rightarrow \R_{>0}$, the conical Legendrian cobordism corresponding to $\Phi' = \{\Lambda_{\alpha(s)}\}$ will not have Reeb chords.  Indeed, with our setup, one can take $\alpha(s) = s^a$ with $a>0$ suitably small.  See \cite[Section 6.1]{EHK} for a version of this construction for exact  Lagrangian cobordisms in $\mathit{Symp}(J^1M)$.  

\begin{corollary} \label{cor:LegIsotopyAug} Let $\Phi = \{\Lambda_s\}_{s \in \R_{>0}}$ be a Legendrian isotopy parametrized so that the conical Legendrian cobordism $\Sigma_\Phi$ does not have Reeb chords.  Then, $f^*_{\Sigma_\Phi}:\aug(\Lambda_-)/{\sim} \rightarrow \aug(\Lambda_+)/{\sim}$ is a bijection.  In particular, we have 
\[
I_{\Sigma_{\Phi} \circ \Sigma'} = (\mathit{id} \times f_{\Sigma_\Phi}^*)(I_{\Sigma'}) \quad  \mbox{and} \quad I_{\Sigma''\circ \Sigma_\Phi} = ((f_{\Sigma_{\Phi}}^*)^{-1} \times \mathit{id})(I_{\Sigma''}).
\]
whenever $\Sigma'$ and $\Sigma''$ are composable with $\Sigma_{\Phi}$.  
\end{corollary}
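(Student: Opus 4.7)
The plan is to reduce everything to establishing the bijection claim: once $f^*_{\Sigma_\Phi}$ is known to be a bijection, the two displayed identities follow immediately from parts (1) and (2) of Proposition \ref{prop:LegIsotopyAug} applied to $\Sigma = \Sigma_\Phi$, whose Lagrangian projection is embedded by the assumption that $\Sigma_\Phi$ has no Reeb chords.

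To get the bijection I would produce a homotopy inverse for $f_{\Sigma_\Phi}$ from the reverse isotopy. Concretely, consider $\Phi^{-1} = \{\Lambda_{r(s)}\}_{s \in \R_{>0}}$ from $\Lambda_+$ back to $\Lambda_-$, where $r$ is a suitable $\R_{>0}$-reparametrization of the time-reversal of $\Phi$; by the same argument used for $\Sigma_\Phi$ (choosing an exponent $\alpha(s) = s^a$ with $a$ small), one can arrange that the associated conical Legendrian cobordism $\Sigma_{\Phi^{-1}} : \Lambda_+ \to \Lambda_-$ has no Reeb chords either. The key geometric input is that the concatenation of a Legendrian isotopy with its reverse is homotopic rel endpoints to a constant isotopy, and hence $\Sigma_{\Phi^{-1}} \circ \Sigma_\Phi$ and $\Sigma_\Phi \circ \Sigma_{\Phi^{-1}}$ are conical Legendrian isotopic to the trivial (cylindrical) cobordisms on $\Lambda_-$ and $\Lambda_+$ respectively. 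Applying Theorem \ref{thm:PanRu1}(3)--(5) then gives that $f_{\Sigma_{\Phi^{-1}}} \circ f_{\Sigma_{\Phi}}$ and $f_{\Sigma_\Phi} \circ f_{\Sigma_{\Phi^{-1}}}$ are DGA homotopic to the relevant identity maps. Pulling back to augmentations, $f^*_{\Sigma_\Phi}$ is a bijection on homotopy classes with inverse $f^*_{\Sigma_{\Phi^{-1}}}$.

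The main obstacle is verifying carefully that the concatenated cobordism $\Sigma_{\Phi^{-1}} \circ \Sigma_\Phi$ really is conical Legendrian isotopic to a trivial cylinder, rather than merely isotopic through (possibly non-conical) Legendrians. I expect this to be straightforward but slightly technical: one interpolates between $\Phi^{-1} * \Phi$ and the constant isotopy through a one-parameter family of Legendrian isotopies, applies the construction producing $\Sigma_\bullet$ to each member of the family (after reparametrization by $\alpha(s)=s^a$ for sufficiently small $a$ to keep the family Reeb chord free), and checks that at the endpoint the resulting cobordism matches the trivial conical cylinder up to a harmless reparametrization of the $s$-coordinate. Once this is in place, no further computation is needed: the composition formulas in the corollary are pure consequences of Proposition \ref{prop:LegIsotopyAug}.
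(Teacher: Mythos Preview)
Your proposal is correct and follows essentially the same route as the paper. The paper likewise constructs $\Sigma_{\Phi^{-1}}$ (using the isotopy $\{\Lambda_{1/s}\}$ reparametrized to be Reeb chord free), observes that both concatenations are conical Legendrian isotopic to identity cobordisms, and then concludes that $f_{\Sigma_\Phi}$ and $f_{\Sigma_{\Phi^{-1}}}$ are homotopy inverses; the displayed identities then follow from Proposition~\ref{prop:LegIsotopyAug} exactly as you say.

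Two small remarks. First, the paper treats your ``main obstacle'' as essentially evident: once one writes down the explicit parametrization of $\Sigma_\Phi$, a homotopy of isotopies from $\Phi^{-1}*\Phi$ to the constant path yields the desired conical Legendrian isotopy by the same formula, so the interpolation you outline is straightforward. Second, in passing from the immersed homotopy $M_{\Sigma_{\Phi^{-1}} \circ \Sigma_\Phi} \simeq M_{\mathrm{id}}$ to the DGA homotopy $f_{\Sigma_{\Phi^{-1}}} \circ f_{\Sigma_\Phi} \simeq \mathit{id}$, the paper is slightly more explicit than your appeal to Theorem~\ref{thm:PanRu1}(3)--(5): it notes that each $M_{\Sigma_{\Phi^{\pm 1}}}$ is the image of $f_{\Sigma_{\Phi^{\pm 1}}}$ under the functor $I:\mathfrak{DGA}^\rho_{\mathit{hom}} \to \mathfrak{DGA}^\rho_{\mathit{im}}$ of Proposition~\ref{prop:IFunctor}, and then uses that $I$ is injective on hom-sets to descend the immersed homotopy to an ordinary DGA homotopy. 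You are implicitly using this step, and it would be worth making it explicit.
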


\begin{proof}  Let $\Phi^{-1}$ be be the inverse Legendrian isotopy $\{\Lambda_{1/s}\}_{s \in \R_{>0}}$ reparametrized, if necessary, to ensure that $\Sigma_{\Phi^{-1}}$ has no Reeb chords.  Then, the immersed LCH functor satisfies
\[
F(\Sigma_{\Phi}) = [M_{\Sigma_{\Phi}}] = I(f_{\Sigma_{\Phi}})  \quad   \mbox{and} \quad F(\Sigma_{\Phi^{-1}} ) = [M_{\Sigma_{\Phi^{-1}}}] = I(f_{\Sigma_{\Phi^{-1}}})
\]
where $I: \dga^\rho_{\mathit{hom}} \rightarrow \dga^\rho_{\mathit{im}}$ is the functor from Proposition \ref{prop:IFunctor}.  There is a clear conical Legendrian isotopy between $\Sigma_\Phi \circ \Sigma_{\Phi^{-1}}$ (resp. $\Sigma_{\Phi^{-1}} \circ \Sigma_\Phi$) and the identity cobordism $j^1(s\cdot \Lambda_+)$ (resp.  $j^1(s\cdot \Lambda_-)$).  From functoriality, it follows that $I(f_{\Sigma_{\Phi}} \circ f_{\Sigma_{\Phi^{-1}}})$ and $I(f_{\Sigma_{\Phi^{-1}}} \circ f_{\Sigma_{\Phi}})$ are identity morphisms in $\dga^\rho_{\mathit{im}}$, and since $I$ is injective on hom-sets we conclude that $f_{\Sigma_\Phi}$ and $f_{\Sigma_{\Phi^{-1}}}$ are homotopy inverses.
\end{proof}

\section{Immersed maps and the cellular DGA}  \label{sec:4}

In \cite{RuSu1, RuSu2, RuSu25}, a cellular version of Legendrian contact homology is introduced and shown to be equivalent to the usual LCH DGA in the case of closed Legendrian surfaces.  In Sections \ref{sec:4-1} and \ref{sec:4-2}, we briefly review the cellular DGA and extend its definition to the case of (compact) Legendrian cobordisms.  The DGA of the identity  cobordisms is computed in Section \ref{sec:4-3} and shown to be a mapping cylinder DGA.   In Section \ref{sec:4-4} a cellular version, $F_\comp$, of the immersed LCH functor is defined with a domain category consisting of $\rho$-graded Legendrians equipped with some additional data and with compact Legendrian cobordisms as morphisms.     
Finally, in Section \ref{sec:4-5}  
we state in Proposition \ref{prop:Fisomorph} a precise relationship between the immersed and cellular LCH functors, $F$ and $F_{\comp}$, that will allow us to work with $F_{\comp}$ in place of $F$ when considering induced augmentation sets; see Corollary \ref{cor:augmentationset}.  
Specifically, after unifying the domain categories by precomposing with suitable functors, $F$ and $F_{\comp}$ become isomorphic.  
\dr{This isomorphism is an extension of the isomorphism between the cellular and LCH DGAs for closed Legendrian surfaces from \cite{RuSu1, RuSu2, RuSu25}, and is presented here in a condensed manner.  A more detailed presentation appears in Appendices A and B of the preprint version of this article \cite{PanRu2Arxiv}.}

\subsection{Review of the cellular DGA}  \label{sec:4-1} The cellular DGA construction requires as input a Legendrian knot or surface equipped with a suitable polygonal decomposition of its base projection.    Recall that for a generic $1$-dimensional Legendrian knot $\Lambda \subset J^1M$ the singularities of the front projection, $\pi_{xz}(\Lambda) \subset M \times \R$, are cusp points and crossings points (i.e. transverse double points).  Generically, front projections of  Legendrian surfaces have crossing arcs and cusp edges as codimension 1 singularities and triple points (intersection of three smooth sheets of $\Lambda$), cusp-sheet intersections (where a smooth sheet intersects a cusp edge), and swallowtail points as codimension 2 singularities.  See eg. \cite{ArnoldWave}; the front singularities for surfaces are illustrated in \cite[Section 2.2]{RuSu1}.  We say that $\Lambda$ has {\bf generic front and base projections} if the front singularities are generic, and the base projections (to $M$) of the different classes of front singularities are all self transverse and transverse to one another.  

\begin{definition}  \label{def:decorated}
Let $\Lambda \subset J^1M$ be a closed Legendrian submanifold with $\dim \Lambda = 1$ or $2$ having generic front and base projections.
  A {\bf compatible polygonal decomposition} for $\Lambda$ is a polygonal\footnote{By polygonal decomposition, we mean a CW-complex decomposition such that the boundary of each $2$-cell consists of a sequence of vertices and edges (with repeats allowed).} decomposition, $\mathcal{E} = \{e^d_\alpha\}$, of the base projection of $\Lambda$,
\[
\sqcup_{d = 0}^2 \sqcup_\alpha e^d_\alpha  =  \pi_x(\Lambda)   \subset M,
\]
(where the superscript $0 \leq d \leq 2$ denotes the dimension of a cell) such that the base projection of the singular set of $\pi_{xz}(\Lambda)$ (crossings, cusps, swallow tail points, etc.) is contained in the $1$-skeleton of $\mathcal{E}$.  
In addition, we require that:
\begin{enumerate}
\item Each $1$-cell is assigned an orientation.  
\item Each $2$-cell is assigned an initial and terminal vertex, $v_0$ and $v_1$.  If $v_0 = v_1$, then a prefered direction around the boundary of the $2$-cell is also chosen.   
\item At each swallowtail point, $s_0$, the two polygonal corners that border the crossing arc near its endpoint at $s_0$ are labelled as $S$ and $T$.
\end{enumerate}
\end{definition}

Let  $(\Lambda, \mathcal{E})$ be a pair consisting of a closed Legendrian knot or surface, $\Lambda \subset J^1M$, together with a choice of compatible polygonal decomposition.  The {\bf cellular DGA} of $(\Lambda, \mathcal{E})$ will be denoted $\alg(\Lambda, \mathcal{E})$ or $\calg(\Lambda, \mathcal{E})$, and is defined as follows.

\medskip

\noindent {\bf Algebra:}  Given a cell, $e^d_\alpha$, denote by $\{S^\alpha_p\}$ the set of {\bf sheets} of $\Lambda$ above $e^d_\alpha$.  By definition, sheets above $e^d_\alpha$ are those components of $\Lambda \cap \pi_x^{-1}(e^d_\alpha)$ not contained in any cusp edge.  (Note: (i) Sheets are subsets of $\Lambda$, not $\pi_{xz}(\Lambda)$, so that, eg., a crossing arc of $\pi_{xz}(\Lambda)$ above a $1$-cell corresponds to two sheets. (ii) A swallowtail point {\it is} considered to be a sheet above a $0$-cell.)  The algebra $\alg(\Lambda, \mathcal{E})$ is the free unital, associative $\Z/2$-algebra whose generators are in bijection with triples $(e^d_\alpha, S^\alpha_i, S^\alpha_j)$ where $S^\alpha_i$ and $S^\alpha_j$ are sheets above $e^d_\alpha$ such that the inequality $z(S^\alpha_i) > z(S^\alpha_j)$ holds pointwise above $e^d_\alpha$.  We denote the generator associated to $(e^d_\alpha, S^\alpha_i, S^\alpha_j)$  as 
\[
a^\alpha_{i,j}, \quad b^\alpha_{i,j}, \quad \mbox{or} \quad  c^\alpha_{i,j}
\]
 when the dimension of $e^d_\alpha$ is $0$, $1$, or $2$ respectively.    

\medskip

\noindent{\bf Grading:}
 A $\Z/\rho$-grading on $\mathcal{A}(\Lambda, \mathcal{E})$ arises from a choice of $\Z/\rho$-valued Maslov potential, $\mu$, for $\Lambda$. 
  The $\Z/\rho$-grading of generators is
 \[
|a^\alpha_{i,j}| = \mu(S^\alpha_i) - \mu(S^\alpha_j)-1, \quad |b^\alpha_{i,j}| = \mu(S^\alpha_i) - \mu(S^\alpha_j),  \quad |c^\alpha_{i,j}| = \mu(S^\alpha_i) - \mu(S^\alpha_j)+1.
\]
(If $S^\alpha_i$ is a swallowtail point above a $0$-cell, then take $\mu(S^\alpha_i)$ to be the value of $\mu$ on the two sheets that cross near $S^\alpha_i$.)

\medskip

\noindent{\bf Differential:}  The differential $\partial: \mathcal{A}(\Lambda, \mathcal{E}) \rightarrow \mathcal{A}(\Lambda, \mathcal{E})$ is characterized on the generators of $\mathcal{A}(\Lambda, \mathcal{E})$ by matrix formulas whose precise form depends on the dimension of the associated cell $e^d_\alpha \in \mathcal{E}$.  We review these formulas here \emph{in the case that $\Lambda$ does not have swallowtail points}.  See also \cite{RuSu1, RuSu3}.

\begin{itemize} \item {\it For $0$-cells:}  We choose a linear ordering of the sheets of $\Lambda$ above $e^0_\alpha$ so that the $z$-coordinates appear in non-increasing order 
\[
z(S^\alpha_{\iota(1)}) \geq z(S^\alpha_{\iota(2)}) \geq \cdots \geq z(S^\alpha_{\iota(n)})
\]
and use it to place the generators, $a^\alpha_{p,q}$, into a matrix, $A$, whose $(i,j)$-entry is $a^\alpha_{\iota(i), \iota(j)}$ when $z(S^\alpha_{\iota(i)}) > z(S^\alpha_{\iota(j)})$ and is $0$ otherwise.  When $\partial$ is applied entry-by-entry to $A$ we have
\[
\partial A = A^2.
\]

\item {\it For $1$-cells:}  After a choice of linear ordering of sheets as above, we place the generators, $b^\alpha_{p,q}$, associated to a $1$-cell, $e^1_\alpha$, into an $n \times n$ matrix, $B$.  In addition, we form $n\times n$ {\bf boundary matrices}, $A_-$ and $A_+$, associated to the initial and terminal vertices, $e^0_-$ and $e^0_+$,  of $e^1_\alpha$.  The sheets above $e^0_\pm$ are identified with a subset of the sheets above $e^1_\alpha$ (since every sheet of $e^0_\pm$ belongs to the closure of a unique sheet of $e^1_\alpha$).  Using this identification, we place the $e^0_\pm$ generators, $a^\pm_{p,q}$, into the corresponding rows and columns of the $n\times n$ matrices $A_\pm$.  Whenever two sheets of $e^1_\alpha$, $S^\alpha_k$ and $S^\alpha_{k'}$, meet at a cusp point above $e^0_\pm$, we place a $2 \times 2$ block of the form 
\[
N = \left[ \begin{array}{cc} 0 & 1 \\ 0 & 0 \end{array} \right]
\]
on the diagonal at the location of the two (possibly non-consecutive) rows and columns of $A_\pm$ that correspond to $S^\alpha_k$ and $S^\alpha_{k'}$ with respect to the linear ordering of sheets of $e^1_\alpha$.  All other entries of $A_\pm$ are $0$.  The differential on the $b^\alpha_{p,q}$ satisfies
\[
\partial B = A_+ (I+B) +(I+B) A_-.
\]

\item {\it For $2$-cells:}  The sheets above a $2$-cell, $e^2_\alpha$, are already linearly ordered by descending $z$-coordinate, and using this ordering we place the generators $c^\alpha_{i,j}$ into an $n\times n$-matrix, $C$.  We form $n\times n$ boundary matrices, $A_{v_0}$ and $A_{v_1}$, associated to the initial and terminal vertices, $v_0$ and $v_1$, for $e^2_\alpha$ following the same procedure as in the $1$-cell case.  Additional boundary matrices $B_1, \ldots, B_l$ are associated to the $1$-cells that appear around the boundary of $C$.  We assume that the numbering is such that $B_1, \ldots, B_{j}$ (resp. $B_{j+1}, \ldots, B_l$) are the boundary matrices for the sequence of edges that appear along the path $\gamma_{+}$ (resp. $\gamma_-$) where $\gamma_\pm$ are the two paths that travel around the boundary of $e^2_\alpha$ (in the domain of the characteristic map) from $v_0$ to $v_1$.  When $v_0=v_1$ one of $\gamma_\pm$ is constant (as specified by the choice of preferred direction around $\partial e^2_\alpha$).  The $B_i$ are formed using the same procedure as for the boundary matrices associated to vertices except that the $2\times 2$ $N$ blocks that correspond to pairs of cusp sheets are replaced with $2\times 2$ blocks of $0$.  The differential satisfies
\[
\partial C = A_{v_1} C + C A_{v_0} + (I+B_{j})^{\eta_j} \cdots (I+B_1)^{\eta_1} +(I+B_l)^{\eta_l} \cdots (I+B_{j+1})^{\eta_{j+1}}
\] 
where $\eta_i$ is $+1$ (resp. $-1$) when the orientation of the $B_i$ edge agrees (resp. disagrees) with the orientation (from $v_0$ to $v_1$) of the corresponding path $\gamma_{\pm}$.  Note that since the $B_i$ are strictly upper triangular, $(I+B_{i})^{-1} = I +B_i + B_i^2 + \cdots +B_i^{n-1}$.

\end{itemize}

\begin{remark}
To define $\partial$ in the case that $\Lambda$ has swallowtail points, the following additions should be made.
\begin{itemize}
\item The definition of the boundary matrices $A_{\pm}$, $A_{v_0}$, and $A_{v_1}$ needs to be adjusted when the vertex is a swallowtail point of $\Lambda$.
\item Whenever a $2$-cell contains one of the corners labeled $S$ or $T$ at a swallowtail point, an additional matrix need to be inserted into the product $(I+B_j) \cdots (I+B_1)$ or $(I+B_{l}) \cdots (I+B_{j+1})$.
\end{itemize}
These details may be found in \cite{RuSu1} or \cite{RuSu3} and are not needed for the arguments that follow. 
\end{remark}

\begin{observation} \label{obs:cobord}
Whenever $\mathcal{E}' \subset \mathcal{E}$ is a (CW) sub-complex, the collection of generators associated to cells of $\mathcal{E}'$ form a sub DGA of $\calg(\Sigma, \mathcal{E})$.  Moreover, if $\mathcal{E}'$ is a decomposition of a curve or curve segment $C \subset M$ such that the restriction of 
$\Sigma$ to $C$ (as in Section \ref{sec:2-3}) is a $1$-dimensional Legendrian knot $\Lambda \subset J^1C$, then this sub-DGA is precisely $\calg(\Lambda, \mathcal{E}')$.  
\end{observation}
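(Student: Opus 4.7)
The plan is to verify both claims by direct inspection of the differential formulas reviewed in Section \ref{sec:4-1}, exploiting their local nature.

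First, for the sub-DGA claim, I would observe that each of the matrix formulas $\partial A = A^2$, $\partial B = A_+(I+B) + (I+B)A_-$, and $\partial C = A_{v_1}C + CA_{v_0} + (I+B_j)^{\eta_j}\cdots(I+B_1)^{\eta_1} + (I+B_l)^{\eta_l}\cdots(I+B_{j+1})^{\eta_{j+1}}$ has the following ``locality'' property: the differential of a generator associated to a cell $e^d_\alpha$ is a polynomial in generators associated to cells contained in the closure $\overline{e^d_\alpha}$. Since a CW sub-complex $\mathcal{E}' \subset \mathcal{E}$ contains the closure of each of its cells, the $\Z/2$-subalgebra $\mathcal{A}' \subset \calg(\Sigma,\mathcal{E})$ generated by the cells of $\mathcal{E}'$ is preserved by $\partial$, hence forms a sub-DGA. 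The swallowtail modifications mentioned in the remark also only modify terms associated to cells incident to the swallowtail vertex, so the same locality argument applies.

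For the identification with $\calg(\Lambda, \mathcal{E}')$, I would first recall from Definition \ref{def:decorated} that the cellular DGA for a $1$-dimensional Legendrian $\Lambda \subset J^1 C$ with a compatible polygonal decomposition $\mathcal{E}'$ of $\pi_x(\Lambda) \subset C$ is generated by pairs $(e^d_\alpha, S^\alpha_i, S^\alpha_j)$ for $d = 0, 1$ with $z(S^\alpha_i) > z(S^\alpha_j)$, with the $0$-cell and $1$-cell differentials $\partial A = A^2$ and $\partial B = A_+(I+B) + (I+B)A_-$. The next step is to set up a bijection of generating sets: the sheets of $\Sigma$ above a cell $e^d_\alpha$ of $\mathcal{E}'$ (which by hypothesis lies in $C$) correspond bijectively to the sheets of the restriction $\Lambda = \Sigma|_C$ above $e^d_\alpha$, because the restriction map $\pi_x^{-1}(C) \to J^1 C$ identifies components and preserves $z$-ordering. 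Moreover, two sheets of $\Sigma$ meet at a cusp point above a $0$-cell of $\mathcal{E}'$ if and only if the same is true for their images in $\Lambda$, since cusp edges of $\Sigma$ lying above $C$ project to cusp points of $\Lambda$.

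With this bijection in place, the differential formulas for the $0$- and $1$-cells of the $2$-dimensional DGA $\calg(\Sigma, \mathcal{E})$ restrict verbatim to the $0$- and $1$-cell formulas for $\calg(\Lambda, \mathcal{E}')$: the $A$ and $B$ matrices coincide, and the cusp-induced $N = \bigl[\begin{smallmatrix} 0 & 1 \\ 0 & 0 \end{smallmatrix}\bigr]$ blocks in the boundary matrices $A_\pm$ appear at the same rows and columns by the cusp correspondence above. Combining this identification of generators, gradings (which depend only on the Maslov potential of the sheets in question), and differentials yields $\mathcal{A}' = \calg(\Lambda, \mathcal{E}')$ as sub-DGAs of $\calg(\Sigma, \mathcal{E})$. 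I do not anticipate any serious obstacle; the only mild bookkeeping concerns making sure that swallowtails, if present on the boundary of $\mathcal{E}'$, are handled by the analogous swallowtail modifications in the $1$-dimensional DGA, which is again a direct verification from the definitions in \cite{RuSu1, RuSu3}.
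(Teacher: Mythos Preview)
Your proposal is correct and is precisely the argument the paper has in mind: the statement is presented as an Observation without proof because it follows immediately from the locality of the differential formulas, and you have spelled this out correctly. One minor remark: your closing concern about swallowtails on $\mathcal{E}'$ is unnecessary, since $\Lambda \subset J^1C$ is $1$-dimensional and hence has no swallowtail points, and the cells of $\mathcal{E}'$ are $0$- and $1$-cells so the swallowtail modifications to the $2$-cell differential never enter.
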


\subsection{The cellular DGA for compact Legendrian cobordisms}  \label{sec:4-2}
In extending the definition of the cellular DGA to Legendrian cobordisms, it is most natural to consider compact cobordisms rather than conical cobordisms.  The following definition coincides precisely with a standard notion of Legendrian cobordism  introduced by Arnold, cf. \cite{Arnold1, Arnold2, ArnoldWave}.  

\begin{definition} Given an interval $I \subset [0,1]$ and $\Lambda \subset J^1M$, we write $j^1(1_I \cdot \Lambda)$ for the {\bf Legendrian cylinder} on $\Lambda$ in $J^1(I \times M)$.  It is defined to be the product of $\Lambda \subset J^1M$ with the $0$-section in $T^*I$, i.e.
\begin{equation}  \label{eq:1I}
j^1(1_I \cdot \Lambda) = 0_{T^*I} \times \Lambda \subset T^*I \times J^1M \cong J^1(I \times M).  
\end{equation}
The notation is chosen to be consistent with that of Definition \ref{def:j1} with $1_I:I \rightarrow \R_{>0}$ denoting the constant function $1$.  

Let $\Lambda_0, \Lambda_1 \subset J^1M$.  A {\bf compact Legendrian cobordism} from $\Lambda_0$ to $\Lambda_1$, written $\Sigma: \Lambda_0 \rightarrow \Lambda_1$, is a Legendrian surface $\Sigma \subset J^1([0,1]\times M)$ that, for some $\epsilon >0$, $\Sigma$ agrees with the Legendrian cylinder 
$j^1(1_{[0, \epsilon]} \cdot \Lambda_0)$ in $J^1([0, \epsilon] \times M)$ and agrees with $j^1(1_{[1-\epsilon, 1]} \cdot \Lambda_1)$ in $J^1([1-\epsilon, 1], \times M)$.  
\end{definition}   

When $\Sigma: \Lambda_0 \rightarrow \Lambda_1$ is a compact Legendrian cobordism, we modify the definition of compatible polygonal decomposition for $\Sigma$ to require that $\pi_x(\Sigma) \cap \left(\{0\} \times M\right)$ and $\pi_x(\Sigma) \cap \left( \{1\} \times M \right)$ are (CW) sub-complexes of $\mathcal{E}$, that we denote $\mathcal{E}_0$ and $\mathcal{E}_1$.  Then, the definition of the cellular DGA extends immediately to give DGAs $\mathcal{A}(\Sigma, \mathcal{E})$ when $\Sigma$ is a compact Lagrangian cobordism.  Moreover, since $\mathcal{E}_0$ and $\mathcal{E}_1$ may be viewed as polygonal decompositions for $\Lambda_0$ and $\Lambda_1$, as in the Observation \ref{obs:cobord}, we have inclusion maps
\begin{equation} \label{eq:i0i1}
i_0:\mathcal{A}(\Lambda_0, \mathcal{E}_0) \rightarrow \mathcal{A}(\Sigma, \mathcal{E}) \quad \mbox{and} \quad i_1:\mathcal{A}(\Lambda_1, \mathcal{E}_1) \rightarrow \mathcal{A}(\Sigma, \mathcal{E}).
\end{equation}

\subsection{The DGA of a product cobordism} \label{sec:4-3} Given a $1$-dimensional Legendrian with compatible polygonal decomposition, $(\Lambda, \mathcal{E})$, we now compute the DGA of the product cobordism $\Sigma = j^1(1_{[0,1]} \cdot \Lambda) \subset J^1([0,1] \times M)$. This will be a useful ingredient in a few later arguments.

\begin{figure}

\quad 

\quad 

\quad

\labellist
\small
\pinlabel $e^0_\alpha$ [l] at 12 166
\pinlabel $e^1_\beta$ [l] at 12 84
\pinlabel $\{0\}{\times}e^0_\alpha$ [r] at 190 164
\pinlabel $\{1\}{\times}e^0_\alpha$ [l] at 308 164
\pinlabel $(0,1){\times}e^0_\alpha$ [b] at 238 172
\pinlabel $\{0\}{\times}e^1_\beta$ [r] at 186 62
\pinlabel $\{1\}{\times}e^1_\beta$ [l] at 308 62
\pinlabel $(0,1){\times}e^1_\beta$ at 250 62
\pinlabel $v_0$ [r] at 190 8
\pinlabel $v_1$ [l] at 304 112

\endlabellist

\centerline{\includegraphics[scale=.6]{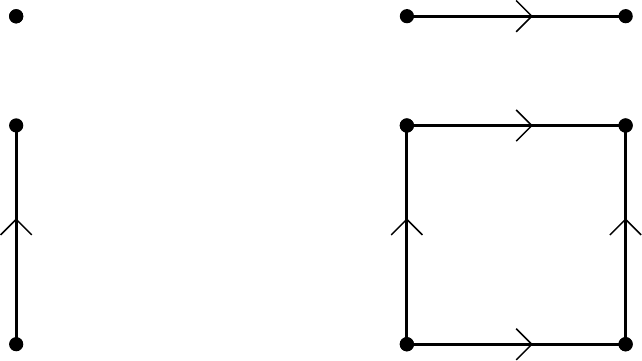}}

\caption{Notation for cells in $\mathcal{E}$ (left) and $\mathcal{E}'$ (right).
}   
\label{fig:Eprime2}
\end{figure}

As a preliminary, use $\mathcal{E}$ to form the {\bf product decomposition}, $\mathcal{E}'$, for $\pi_x(\Sigma) = [0,1] \times \pi_x(\Lambda)$ as follows:
 For each $d$-cell $e^d_\alpha \in \mathcal{E}$ (here, $d=0$ or $1$), decompose 
\[
[0,1] \times e^d_\alpha = \{0\} \times  e^d_\alpha \sqcup (0,1) \times e^d_\alpha \sqcup \{1\} \times e^d_\alpha
\]
 so that  $\{k\} \times e^d_\alpha$ are $d$-cells in $\mathcal{E}'$, for $k=0,1$, and $(0,1) \times e^d_\alpha $ is a $(d+1)$-cell in $\mathcal{E}'$.   Orient $1$-cells of the form $\{k\} \times e^1_\beta$ in the same direction as $e^1_\beta$, and those of the form $(0,1) \times e^0_\alpha$ using the standard orientation of $(0,1)$.  For each $2$-cell, $(0,1) \times e^1_\beta$ choose the initial and terminal vertices $v_0$ and $v_1$ to be $\{0\} \times e^0_-$ and $\{1\} \times e^0_+ $ where $e^0_-$ and $e^0_+$ are the initial and terminal vertices of $e^1_\beta$.  See Figure \ref{fig:Eprime2}. 
 
 Let us fix notation for the generators of the cellular DGA $\mathcal{A}(\Sigma, \mathcal{E}')$.  Observe that for any $e^d_\alpha \in \mathcal{E}$, the sheets of $\Lambda$ above $e^d_\alpha$ are in bijection with the sheets of $\Sigma$ above any of the $\{k\} \times e^d_\alpha$ or $(0,1) \times e^d_\alpha$ cells of $\mathcal{E}'$, so the generators associated to these cells are also in bijection with those of $e^d_\alpha$.  
 \begin{itemize}
 \item For a $0$-cell $e^0_\alpha \in \mathcal{E}$ with corresponding generators $a^\alpha_{i,j}$ we notate the generators corresponding to  $\{0\} \times e^0_\alpha $, $\{1\} \times e^0_\alpha$, and $(0,1) \times e^0_\alpha$ as $i_0(a^\alpha_{i,j})$, $i_1(a^\alpha_{i,j})$, and $b^\alpha_{i,j}$. 
\item For a $1$-cell $e^1_\beta \in \mathcal{E}$ with corresponding generators $b^\beta_{i,j}$ we notate the generators corresponding to  $\{0\} \times e^1_\beta$, $\{1\} \times e^1_\beta$, and $(0,1) \times e^1_\beta$ as $i_0(b^\beta_{i,j})$, $i_1(b^\beta_{i,j})$, and $c^\beta_{i,j}$. 
\end{itemize}

In Proposition \ref{prop:alg01}, the DGA of the product cobordism $\Sigma$ is described as a mapping cylinder DGA.  We now briefly review the relevant definitions, referring the reader to \cite[Section 2]{PanRu1} for a more thorough treatment in the present algebraic setting of triangular DGAs over $\Z/2$.  Let $f:(\mathcal{A}, \partial_\mathcal{A}) \rightarrow (\mathcal{B}, \partial_\mathcal{B})$ be a DGA map between based DGAs.  The {\bf standard mapping cylinder DGA} of $f$ is $(\mathcal{A} * \widehat{\mathcal{A}} * \mathcal{B}, \partial_\Gamma)$ where $\widehat{\mathcal{A}}$ has generators $\{\widehat{a_i}\}$ in correspondence with the generators $\{a_i\}$ of $\mathcal{A}$ but with the degree shift $|\widehat{a_{i}}| = |a_i|+1$.  The 
differential, $\partial_{\Gamma}$, satisfies
\[
\partial_{\Gamma}|_{\mathcal{A}} = \partial_{\mathcal{A}}, \quad \partial_{\Gamma}|_{\mathcal{B}} = \partial_{\mathcal{B}}, \quad \mbox{and} \quad \partial_\Gamma(\widehat{a_i}) = f(a_i) + a_i+ \Gamma \circ \partial_\mathcal{A}(a_i)
\]
where $\Gamma: \mathcal{A} \rightarrow \mathcal{A} * \widehat{\mathcal{A}} * \mathcal{B}$ is the unique $(f,\mathit{id}_\mathcal{A})$-derivation satisfying $\Gamma(a_i) = \widehat{a_i}$.  

\begin{proposition}  \label{prop:alg01}  Given $(\Lambda, \mathcal{E})$ with $\Lambda \subset J^1M$ and $\dim \Lambda = 1$, let $(\Sigma, \mathcal{E}')$ be the product cobordism, $\Sigma = j^1(1_{[0,1]}\cdot \Lambda) \subset J^1([0,1] \times M)$ equipped with the product decomposition, $\mathcal{E}'$.
\begin{enumerate}
\item For $k=0,1$, the maps $i_k:\alg(\Lambda, \mathcal{E}) \rightarrow \alg(\Sigma, \mathcal{E}')$ that extend the correspondence of generators are DGA isomorphisms from $\alg(\Lambda, \mathcal{E})$ onto the sub-DGAs $\mathcal{A}_k \subset \alg(\Sigma, \mathcal{E}')$ associated to the subcomplexes $ \{k\} \times \mathcal{E}\subset \mathcal{E}'$.
\item Identifying the sub-algebra of $\alg(\Sigma, \mathcal{E}')$ generated by the $b^\alpha_{i,j}$ and $c^\beta_{i,j}$ with $\widehat{\mathcal{A}_0}$ using the grading preserving bijection $\widehat{i_0(a^\alpha_{i,j})} \leftrightarrow b^\alpha_{i,j}$ and $\widehat{i_0(b^\beta_{i,j})} \leftrightarrow c^\beta_{i,j}$ 
gives a DGA isomorphism 
\[
\varphi:(\alg(\Sigma, \mathcal{E}'), \partial) \stackrel{\cong}{\rightarrow} (\alg_0 * \widehat{\alg_0} * \alg_1, \partial_\Gamma)
\]
with the standard mapping cylinder DGA of the map $i_1 \circ i_0^{-1}: \mathcal{A}_0 \rightarrow \mathcal{A}_1$.
\end{enumerate}
\end{proposition}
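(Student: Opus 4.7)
The plan is to verify both parts by exploiting the product structure of $\mathcal{E}'$ and the local character of the cellular differential. Part (1) follows immediately from Observation \ref{obs:cobord}: the subcomplex $\{k\}\times\mathcal{E}\subset\mathcal{E}'$ is the polygonal decomposition of the curve $\{k\}\times\pi_x(\Lambda)$ along which $\Sigma$ restricts, in the sense of Section \ref{sec:2-3}, to the original Legendrian $\Lambda$. Sheets of $\Sigma$ over each cell $\{k\}\times e^d_\alpha$ correspond canonically to sheets of $\Lambda$ over $e^d_\alpha$, and all local data entering the cellular differential (height ordering, cusp structure, boundary matrices, $N$-block insertion) is inherited verbatim from $\mathcal{E}$. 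Thus $i_k$ is the DGA isomorphism from $\alg(\Lambda,\mathcal{E})$ onto $\mathcal{A}_k$ asserted by the proposition.

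For Part (2), the prescribed bijection of generators preserves the $\Z/\rho$-grading because each hat generator is associated to a cell of $\mathcal{E}'$ one dimension higher than its $i_0$-counterpart, matching $|\wh{a}|=|a|+1$. Hence $\varphi$ is a graded algebra isomorphism, and the content of the statement is that it intertwines the cellular differential with the mapping cylinder differential $\partial_\Gamma$. On the sub-DGAs $\mathcal{A}_0$ and $\mathcal{A}_1$ this follows from Part (1), so it remains to verify the formulas on hat generators. The key algebraic observation is that, after identifying $\mathcal{A}_0\cong\alg(\Lambda,\mathcal{E})$ via $i_0$, the derivation $\Gamma$ extended entrywise to matrices satisfies $\Gamma(i_0(A^\alpha)) = B^\alpha$ and $\Gamma(i_0(B^\beta)) = C^\beta$ (noting that $\Gamma$ annihilates the constant $N$-block entries of any boundary matrix), together with $\Gamma(I+P)=\Gamma(P)$ and the matrix Leibniz rule
\[
\Gamma(PQ) = \Gamma(P)\,Q + f(P)\,\Gamma(Q), \qquad f = i_1\circ i_0^{-1}.
\]

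For the $1$-cell hat generator $b^\alpha_{i,j}$ at $(0,1)\times e^0_\alpha$, the cellular formula $\partial B^\alpha = A_+(I+B^\alpha)+(I+B^\alpha)A_-$, with $A_\pm$ being the images under $i_1,i_0$ of the vertex boundary matrix for $e^0_\alpha$ in $\mathcal{E}$, expands into a constant contribution $i_1(A^\alpha)+i_0(A^\alpha)$ plus mixed quadratic terms. The latter are reproduced by applying the matrix Leibniz rule to $\Gamma$ acting on $\partial A^\alpha = (A^\alpha)^2$. Thus the full sum matches $\partial_\Gamma(\wh{i_0(A^\alpha)}) = i_1(A^\alpha)+i_0(A^\alpha)+\Gamma(\partial i_0(A^\alpha))$ entry-by-entry.

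The main technical obstacle is the $2$-cell case for $c^\beta_{i,j}$ at $(0,1)\times e^1_\beta$. Here the cellular differential produces two products
\[
(I+B^{e^0_+})(I+i_0(B^\beta)) \quad \text{and} \quad (I+i_1(B^\beta))(I+B^{e^0_-})
\]
corresponding to the paths $\gamma_\pm$ around the boundary of the $2$-cell (with all orientation exponents equal to $+1$ under the product convention), together with corner contributions $i_1(A^{e^0_+})\,C^\beta + C^\beta\, i_0(A^{e^0_-})$. Expanding the two products over $\Z/2$ cancels the ``$I\cdot I$'' constants and leaves linear and mixed quadratic terms that must be matched against $i_1(B^\beta)+i_0(B^\beta)+\Gamma(\partial i_0(B^\beta))$, where $\Gamma$ is expanded via the matrix Leibniz rule applied to the $1$-cell formula $\partial B^\beta = A^{e^0_+}(I+B^\beta)+(I+B^\beta)A^{e^0_-}$. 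The combinatorial bookkeeping here --- in particular reconciling the two distinct boundary-matrix conventions (vertex matrices retain $N$-blocks, while the $1$-cell boundary matrices used in the $2$-cell formula zero those positions) with the fact that $\Gamma$ annihilates all constant entries --- is where the proof demands careful case analysis; swallowtail points, when present, introduce further terms requiring an analogous local verification.
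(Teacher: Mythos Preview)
Your approach matches the paper's exactly, and Part (1) via Observation~\ref{obs:cobord} is fine. The issue is that you stop short in the $2$-cell case: you flag the boundary-matrix bookkeeping as demanding ``careful case analysis'' rather than carrying it out. In fact no case analysis is needed. Once you have observed that $\Gamma$ annihilates the constant $N$-block entries, the identity $\varphi(B_\pm)=\Gamma(A_\pm)$ holds on the nose (here $A_\pm$ are the vertex boundary matrices for $e^1_\beta$ and $B_\pm$ the $1$-cell boundary matrices for the square $(0,1)\times e^1_\beta$), and the verification is a direct expansion via your matrix Leibniz rule:
\begin{align*}
\partial_\Gamma\,\widehat{i_0(B^\beta)} &= i_1(B^\beta)+i_0(B^\beta)+\Gamma\bigl(A_+(I+B^\beta)+(I+B^\beta)A_-\bigr)\\
&= i_1(B^\beta)+i_0(B^\beta)+\Gamma(A_+)(I+i_0(B^\beta))+i_1(A_+)\widehat{i_0(B^\beta)}\\
&\quad +\widehat{i_0(B^\beta)}\,i_0(A_-)+(I+i_1(B^\beta))\Gamma(A_-),
\end{align*}
which, after applying $\varphi^{-1}$ and regrouping into the two products $(I+B_+)(I+i_0(B^\beta))$ and $(I+i_1(B^\beta))(I+B_-)$, is exactly the cellular formula for $\partial C^\beta$. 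You should write this out rather than defer it.

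Your closing remark about swallowtail points is misplaced: $\Lambda$ is $1$-dimensional, so $\Lambda$ has no swallowtails, and the product cobordism $j^1(1_{[0,1]}\cdot\Lambda)$ has none either (its front singularities are products of those of $\Lambda$ with an interval). There is nothing further to verify.
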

\begin{proof}
(1) is obvious. For (2), $\varphi$ is clearly an algebra isomorphism which is the identity on the $\mathcal{A}_k$, so we just to check that $\partial_\Gamma  \circ \varphi (x) = \varphi \circ \partial (x)$ when $x = b^\alpha_{i,j}$ or $c^\beta_{i,j}$.  From the definition of $\partial_\Gamma$,
 we have 
\begin{align}
\label{eq:align1} \partial_\Gamma \widehat{i_0(a^\alpha_{i,j})}  & = i_1(a^\alpha_{i,j}) +i_0(a^{\alpha}_{i,j}) + \Gamma \circ \partial_\Lambda a^\alpha_{i,j}  \\
\label{eq:align2} \partial_\Gamma \widehat{i_0(b^\beta_{i,j})}  & = i_1(b^\beta_{i,j}) +i_0(b^\beta_{i,j}) + \Gamma \circ \partial_\Lambda b^\beta_{i,j} 
\end{align} 
where $\partial_\Lambda$ is the differential on $\alg(\Lambda, \mathcal{E})$ and $\Gamma:\alg(\Lambda, \mathcal{E}) \rightarrow \alg_1 * \widehat{\alg_0} * \alg_0$ is the $(i_1, i_0)$-derivation satisfying $\Gamma(x) = \widehat{i_0(x)}$ on generators.

Using matrix notation, this allows us to compute
\begin{align*}
\varphi \circ \partial( B_\alpha) &= \varphi\left(i_1(A_\alpha)(I+B_\alpha) + (I+B_\alpha) i_0(A_\alpha)\right) \\
 & =  i_1(A_\alpha)  + i_1(A_\alpha) \widehat{i_0(A_\alpha)} + i_0(A_\alpha) + \widehat{i_0(A_\alpha)} i_0(A_\alpha) \\
 & = i_1(A_\alpha) + i_0(A_\alpha) +  \Gamma( A_\alpha^2) \\
 & = \partial_\Gamma( \widehat{i_0(A_\alpha)}) = \partial_\Gamma \circ \varphi(B_\alpha), 
\end{align*}
and
\begin{align*}
\partial_\Gamma \circ \varphi(C_\beta) & = \partial_\Gamma( \widehat{i_0(B_\beta)}) = i_1(B_\beta) + i_0(B_\beta) + \Gamma(\partial B_\beta) \\
 & = i_1(B_\beta) + i_0(B_\beta) + \Gamma(A_+(I+ B_\beta) + (I+B_\beta) A_-) \\
 & = i_1(B_\beta) + i_0(B_\beta) + \Gamma(A_+)(I+ i_0(B_\beta)) +i_1(A_+) \widehat{i_0(B_\beta)} \\  
 &  \quad + \widehat{i_0(B_\beta)} i_0(A_-) + (I+ i_1(B_\beta)) \Gamma(A_-) \\
 & = \varphi\bigg(  i_1(B_\beta) + i_0(B_\beta) + B_+(I+ i_0(B_\beta))  +i_1(A_+) C_\beta  \\
 & \quad + C_\beta i_0(A_-) + (I+i_1(B_\beta)) B_-\bigg) \\
 & = \varphi\bigg( i_1(A_+) C_\beta+  C_\beta i_0(A_-) + (I+B_+)(I+ i_0(B_\beta))  + (I+i_1(B_\beta))(I+ B_-)\bigg)  \\ 
 & = \varphi \circ \partial(C_\beta).
\end{align*}
At the fourth equality, it should be observed that when $A_-$ and $A_+$ are the boundary matrices for $e^1_\beta$ associated to the vertices $e^0_-$ and $e^0_+$, the boundary matrices $B_-$ and $B_+$ for $(0,1) \times e^1_\beta$ associated to the edges $(0,1) \times e^0_-$ and $(0,1) \times e^0_+$ indeed satisfy $\varphi(B_\pm) = \Gamma(A_\pm)$.  [Note that when a $\left[\begin{array}{cc} 0 & 1 \\ 0 & 0 \end{array}\right]$ block appears on the diagonal of $A_\pm$ due to two sheets of $e^1_\beta$ meeting at a cusp above $e^0_\pm$, since $\Gamma(1) =0$ (any derivation has this property), the appropriate $\left[\begin{array}{cc} 0 & 0 \\ 0 & 0 \end{array}\right]$ block will appear in $\Gamma(A_\pm)$.]
\end{proof}

\subsection{Immersed DGA maps from cobordisms via the cellular DGA} \label{sec:4-4}  Our aim is to now define a cellular version of the immersed LCH functor  $F: \mathfrak{Leg}^\rho_{im} \rightarrow \mathfrak{DGA}^\rho_{\mathit{im}}$ from Corollary \ref{cor:LCHfunctor}.  Recall that the domain category for $F$ has $\rho$-graded Legendrians in $J^1M$ equipped with 
regular metrics as objects and has (conical Legendrian isotopy classes of) conical Legendrian cobordisms in $J^1(\R_{>0}\times M)$ (equivalently, good immersed Lagrangian cobordisms in $\mathit{Symp}(J^1M)$) as morphisms.  For the cellular construction we instead work with a  category of compact cobordisms.

With $M = \R$ or $S^1$ and $\rho\geq 0$ fixed, define a {\bf cellular Legendrian cobordism category}, denoted $\mathfrak{Leg}^\rho_{\mathit{cell}}$, whose objects are triples $(\Lambda, \mathcal{E}, \mu)$ consisting of a $1$-dimensional Legendrian link, $\Lambda \subset J^1M$, together with a choice, $\mathcal{E}$, of compatible polygonal decomposition, and a choice, $\mu$, of $\Z/\rho$-valued Maslov potential.  Morphisms from $(\Lambda_0, \mathcal{E}_0, \mu_0)$ to $(\Lambda_1, \mathcal{E}_1, \mu_1)$ 
are equivalence classes of   
compact Legendrian cobordisms $\Sigma \subset J^1([0,1] \times M)$ from $\Lambda_-$ to $\Lambda_+$ having generic base and front projection and equipped with a $\Z/\rho$-valued Maslov potential extending $\mu_0$ and $\mu_1$.  Here, two cobordisms are considered equivalent if their front and base projections are  {\bf combinatorially equivalent}.  That is, $\Sigma$ and $\Sigma'$ are equivalent if there is a homeomorphism $\phi:([0,1]\times M) \times \R \rightarrow ([0,1] \times M) \times \R$ with $\phi(\pi_{xz}(\Sigma)) = \phi(\pi_{xz}(\Sigma'))$ that is a composition  of (i) a  homeomorphism, $\phi_1$, of the $[0,1] \times M$ factor, and (ii) a homeomorphism, $\phi_2$, that preserves the $[0,1] \times M$ factor.  Moreover, $\phi_1$ and $\phi_2$ should be isotopic to the identity and equal  to the identity in a neighborhood of the boundary.  
 
\begin{remark}
\begin{enumerate}
\item  As with the category $\mathfrak{Leg}^\rho_{\mathit{im}}$, in the definition of morphisms $\Sigma$ is not equipped with any additional structure beyond a choice of Maslov potential, eg. $\Sigma$ is not equipped with a polygonal decomposition. 
 
 \item In contrast to $\mathfrak{Leg}^\rho_{\mathit{im}}$, we do NOT  allow general Legendrian isotopies of $\Sigma$ in the equivalence relation used to define morphisms.  This is because from the initial definition of our cellular LCH functor we will not check directly that the assignment of immersed DGA maps to Legendrian cobordisms  factors through general Legendrian isotopies.   However, it will later be established in Corollary \ref{cor:LegInv}  that the induced immersed DGA maps (considered up to immersed homotopy) are indeed Legendrian invariants of $\Sigma$.
 \end{enumerate}
 \end{remark}
 The next proposition defines the {\bf cellular LCH functor}, $F_{\mathit{cell}}$.  
\begin{proposition} \label{prop:4-7}
There is a well-defined contravariant functor $F_{\mathit{cell}}: \mathfrak{Leg}^\rho_{\mathit{cell}} \rightarrow \mathfrak{DGA}^\rho_{im}$ given by
\[
\begin{array}{lcl}
(\Lambda, \mathcal{E}) & \leadsto & \calg(\Lambda, \mathcal{E}), \\
\Sigma: (\Lambda_0, \mathcal{E}_0) \rightarrow (\Lambda_1, \mathcal{E}_1) & \leadsto & \big[\calg(\Lambda_1, \mathcal{E}_1) \stackrel{i_1}{\rightarrow} \calg(\Sigma, \mathcal{E}') \stackrel{i_0}{\hookleftarrow} \calg(\Lambda_0, \mathcal{E}_0)\big] \end{array}
\]
where $\mathcal{E}'$ is any choice of  compatible polygonal decomposition for $\Sigma$ that restricts to $\mathcal{E}_{k}$ on $\{k\}\times M$, $k=0,1$, and the maps $i_0$ and $i_1$ are as in (\ref{eq:i0i1}).  
\end{proposition}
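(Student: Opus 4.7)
My plan is to verify the three conditions that make $F_{\mathit{cell}}$ a contravariant functor: that the assignment on morphisms is independent of the choices made, that it sends identities to identities, and that it reverses composition. Well-definedness on objects is immediate from Section \ref{sec:4-1}.

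For well-definedness on morphisms, I need to show that the immersed homotopy class of $\calg(\Lambda_1, \mathcal{E}_1) \stackrel{i_1}{\rightarrow} \calg(\Sigma, \mathcal{E}') \stackrel{i_0}{\hookleftarrow} \calg(\Lambda_0, \mathcal{E}_0)$ depends neither on the extension $\mathcal{E}'$ of $\mathcal{E}_0, \mathcal{E}_1$ nor on the combinatorial equivalence class of $\Sigma$. The approach is to enumerate a short list of elementary local moves on $(\Sigma, \mathcal{E}')$ that, relative to $\mathcal{E}_0 \sqcup \mathcal{E}_1$, generate both the set of admissible refinements of $\mathcal{E}'$ and the set of ambient homeomorphisms realizing combinatorial equivalence (subdivision of a $1$- or $2$-cell by an interior vertex or edge, reversal of orientations/initial vertex choices, and small isotopies that move front singularities within the $2$-skeleton). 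For each move, the plan is to exhibit an explicit stable tame isomorphism $\varphi: \calg(\Sigma, \mathcal{E}')*S \to \calg(\Sigma, \widetilde{\mathcal{E}}')*S'$ with $\varphi \circ i_k = \widetilde{i}_k$ by iterating Proposition \ref{prop:quotient} on the canceling pairs of generators introduced or removed; that this is possible comes down to checking that the modifications preserve triangularity and that the new generators appear with the cancellation form $\partial x_i = x_j + w$ required by the proposition.

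For the identity morphism on $(\Lambda, \mathcal{E})$, take $\Sigma = j^1(1_{[0,1]}\cdot\Lambda)$ with the product decomposition $\mathcal{E}'$ of Section \ref{sec:4-3}. Proposition \ref{prop:alg01} identifies $\calg(\Sigma, \mathcal{E}')$ with the standard mapping cylinder DGA of $i_1\circ i_0^{-1}$, which under the identifications $i_k$ is the mapping cylinder of $\mathit{id}_{\calg(\Lambda, \mathcal{E})}$. Iteratively applying Proposition \ref{prop:quotient} to the pairs $(\widehat{i_0(x)}, i_0(x))$ (whose differentials $i_1(x) + i_0(x) + \Gamma\circ \partial_\Lambda(x)$ have exactly the required triangular cancellation form) collapses this mapping cylinder and produces a stable tame isomorphism $\varphi : \calg(\Lambda, \mathcal{E})*S \to \calg(\Sigma, \mathcal{E}')$ satisfying $\varphi|_{\calg(\Lambda, \mathcal{E})} = i_1$ and $\varphi \circ \mathit{id} \simeq i_0$, which is precisely an immersed homotopy from the identity immersed map to $F_{\mathit{cell}}(\mathit{id})$.

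For composition, given $\Sigma_k: \Lambda_{k-1} \to \Lambda_k$ with chosen decompositions $\mathcal{E}'_k$ for $k=1,2$, the concatenation $\Sigma_2 \circ \Sigma_1$ carries the natural compatible decomposition $\mathcal{E}'_{12}$ obtained by gluing $\mathcal{E}'_1$ and $\mathcal{E}'_2$ along $\mathcal{E}_1$. Because each generator's differential in the cellular DGA is computed entirely from local data at a single cell of $\mathcal{E}'_{12}$, the resulting DGA is literally the categorical pushout $\calg(\Sigma_1, \mathcal{E}'_1) *_{\calg(\Lambda_1, \mathcal{E}_1)} \calg(\Sigma_2, \mathcal{E}'_2)$ appearing in Definition \ref{def:immersedcomp}, so $F_{\mathit{cell}}(\Sigma_2\circ \Sigma_1) = F_{\mathit{cell}}(\Sigma_1)\circ F_{\mathit{cell}}(\Sigma_2)$ on the nose (once independence of $\mathcal{E}'$ is known). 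I expect the main obstacle to be the elementary-moves step: producing a complete enough list of moves and, for each, carrying out the iterated application of Proposition \ref{prop:quotient} while managing the matrix-form differentials attached to $1$- and $2$-cells and maintaining triangularity throughout, particularly for subdivisions of $2$-cells which alter several of the $A$, $B$, $C$ matrices simultaneously.
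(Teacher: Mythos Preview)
Your three-part structure (independence of $\mathcal{E}'$ via elementary moves and stable tame isomorphisms fixing the boundary sub-DGAs, composition via gluing decompositions into the categorical pushout, identity via the mapping cylinder of Proposition~\ref{prop:alg01}) matches the paper's proof. Two small points. First, for well-definedness the paper does not re-derive the moves but cites \cite[Section~4.2--4.4]{RuSu1}, whose list is: subdivide a $1$-cell, subdivide a $2$-cell, delete a $1$-valent edge, and swap the $S$/$T$ labels at a swallowtail; the corresponding stable tame isomorphisms are \cite[Theorems~4.2--4.5]{RuSu1}, and one checks directly that each fixes $\calg(\Lambda_k,\mathcal{E}_k)$. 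Your list should include the swallowtail move, and you need not treat combinatorial equivalence separately since the cellular DGA manifestly depends only on combinatorial data.

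Second, your identity argument has a small slip. Cancelling the pairs $(\widehat{i_0(x)}, i_0(x))$ leaves $\mathcal{A}_1$, so the resulting $\varphi$ satisfies $\varphi|_{\calg}=i_1$; but the definition of immersed homotopy demands the \emph{strict} equality on the $\hookleftarrow$ side, i.e.\ $\varphi|_{\calg}=i_0$. Cancel instead the pairs $(\widehat{i_0(x)}, i_1(x))$ (equally triangular), which gives $\varphi|_{\calg}=i_0$ exactly, and then $\varphi|_{\calg}\simeq i_1$ is the standard mapping-cylinder homotopy between the two inclusions. The paper avoids this bookkeeping by instead observing that the mapping cylinder of $\mathit{id}$ is precisely the DGA $\mathcal{D}$ of Proposition~\ref{prop:CompositionAlt} for $M_1=M_2=I$, so $D\simeq I\circ I=I$ immediately.
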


\begin{proof}
To see that $F_{\mathit{cell}}$ is well-defined it is enough to verify independence of the choice of $\mathcal{E}'$.  (Since modifying $(\Sigma, \mathcal{E}')$ in a manner that preserves the combinatorics of the front and base projections does not affect the cellular DGA.)

In \cite[Section 4.2-4.4]{RuSu1}, it is shown in the case when $\Sigma$ is a closed surface that any  compatible polygonal decompositions $\mathcal{E}'$ and $\mathcal{E}''$ for $\Sigma$ can be made the same after some sequence of the following modifications and their inverses (see \cite{RuSu1} for the precise meanings).
\begin{enumerate}
\item Subdivide a $1$-cell.
\item Subdivide a $2$-cell.
\item Delete a $1$-valent edge.
\item Switch the $S$ and $T$ decorations at a swallowtail point.
\end{enumerate}
The proof extends with minor adjustments to the case of a compact cobordism.  Moreover, if $\mathcal{E}'$ and $\mathcal{E}''$ agree above $\partial([0,1]\times M)$, then the polygonal decomposition of $\partial([0,1] \times M)$ can be left unchanged throughout the sequence of modifications.  [To see this, follow the proof of \cite[Theorem 4.1]{RuSu1}, but treat all $0$-cells  (resp. $1$-cells) in $\partial([0,1] \times M)$ as if they belong to what is notated there as $\Sigma_2$ (resp. $\Sigma_1$).]

If $\mathcal{E}'$ and $\mathcal{E}''$ are related by one of the modifications (1)-(4), then \cite[Theorems 4.2-4.5]{RuSu1} provide  stable tame isomorphisms $\varphi:\calg(\Sigma, \mathcal{E}')*S' \rightarrow \calg(\Sigma, \mathcal{E}'')*S''$.  Moreover, in all cases $\varphi$ can be seen to restrict to the identity on the sub-algebras $\calg(\Lambda_{k}, \mathcal{E}_{k})$, $k=0,1$.  [For (1)-(3), $\varphi$ is defined using the construction of Proposition \ref{prop:quotient}, and generators from $\calg(\Lambda_{k}, \mathcal{E}_{k})$ are never among the generators that are canceled.  For (4), $\varphi$ is the identity on all generators except for $1$-cells with endpoints at swallowtail points.]   Thus, we will have identities $\varphi \circ i_{k} = i_{k}$, $k=0,1$, so that $\varphi$ provides the required immersed DGA homotopy.

To verify that $F_{\mathit{cell}}$ preserves composition, suppose that $\mathcal{E}'$ and $\mathcal{E}''$ are compatible polygonal decompositions for a pair of composable (compact) cobordisms, $\Sigma': (\Lambda_0, \mathcal{E}_0) \rightarrow (\Lambda_1, \mathcal{E}_1)$ and $\Sigma'':(\Lambda_0, \mathcal{E}_0) \rightarrow (\Lambda_1, \mathcal{E}_1)$.  Since $\mathcal{E}'$ and $\mathcal{E}''$ agree with $\mathcal{E}_1$ along $\Lambda_1$, they can be glued to form a well-defined compatible polygonal decomposition, $\mathcal{E}' \cup \mathcal{E}''$, on $\Sigma'' \circ \Sigma'$.  Moreover, when this decomposition is used for computing it is readily verified from Definition \ref{def:immersedcomp} that $F_{\mathit{cell}}( \Sigma'' \circ \Sigma') = F_{\mathit{cell}}( \Sigma') \circ  F_{\mathit{cell}}( \Sigma'')$.

Finally, we check that $F_{\mathit{cell}}$ preserves identities.  The identity morphism in $\mathit{Hom}_{\mathfrak{Leg}^\rho_{\mathit{cell}}}((\Lambda, \mathcal{E}), (\Lambda, \mathcal{E}))$ is the product cobordism, $\Sigma = j^1(1_{[0,1]} \cdot \Lambda)$, and for computing $F_{\mathit{cell}}(\Sigma)$ we equip it with the product decomposition, $\mathcal{E}'$.  The computation from Proposition \ref{prop:alg01} shows that the morphism $F_{\mathit{cell}}(\Sigma)$ is the immersed homotopy class of the immersed DGA map
\[
D = \big(\mathcal{A}(\Lambda, \mathcal{E}) \stackrel{i_1}{\rightarrow} (\alg_0 * \widehat{\alg_0} * \alg_1, \partial_\Gamma) \stackrel{i_0}{\hookleftarrow} \mathcal{A}(\Lambda, \mathcal{E}) \big)
\]
where $\alg_0$ and $\alg_1$ are two copies of $\mathcal{A}(\Lambda, \mathcal{E})$ and $(\alg_0 * \widehat{\alg_0} * \alg_1, \partial_\Gamma)$ is the standard mapping cylinder DGA for the identity map on $\mathcal{A}(\Lambda,\mathcal{E})$.  On the other hand, the identity morphism in $\mathfrak{DGA}^\rho_{\mathit{im}}$ is just $[I]$ where $I$ is the immersed DGA map
\[
I = \big(\mathcal{A}(\Lambda, \mathcal{E})  \stackrel{\mathit{id}}{\rightarrow} \mathcal{A}(\Lambda, \mathcal{E}) \stackrel{\mathit{id}}{\leftarrow}  \mathcal{A}(\Lambda, \mathcal{E})\big).
\]
Note that the immersed map $D$ is obtained from two copies of $I$ precisely as in the statement of Proposition \ref{prop:CompositionAlt}, which therefore shows that $D$ is immersed homotopic to $I \circ I = I$ as required.

\end{proof}

\subsection{Isomorphism between the cellular and immersed LCH functors}  \label{sec:4-5} In this section, 
we state  a precise relationship between the immersed and cellular LCH functors, $F$ and $F_{\comp}$. 

The functors $F:\leg_{\mathit{im}}^\rho\to \dga_{im}^\rho$ and $F_{\mathit{cell}}:\mathfrak{Leg}^\rho_{\mathit{cell}} \rightarrow \mathfrak{DGA}^\rho_{\mathit{im}}$  have slightly different domain categories.  In $\leg_{im}^\rho$ (resp. $\mathfrak{Leg}^\rho_{\mathit{cell}}$), the objects are (closed, $1$-dimensional) $\rho$-graded Legendrians in $J^1M$ 
equipped with a suitable choice of metric (resp. polygonal decomposition), while morphisms are equivalence classes of conical (resp. compact) $\rho$-graded Legendrian cobordisms.  For comparing the two functors, it is convenient to work with a third domain category \dr{whose objects again are $1$-dimensional Legendrians equipped with some additional data: an {\it admissible transverse decomposition}, $\mathcal{E}_{\pitchfork}$.  The role of $\mathcal{E}_{\pitchfork}$ is to simultaneously specify a compatible polygonal decomposition of $\Lambda$, $\mathcal{E}_{||}$, and a modification of $\Lambda$ by a Legendrian isotopy to $\widetilde{\Lambda}$ so that the cellular DGA of $(\Lambda, \mathcal{E}_{||})$ and the LCH DGA of $\widetilde{\Lambda}$ are related by a canonical homotopy equivalence (in fact, a stable tame isomorphism).  This is precisely the role of the {\it transverse square decompositions} from \cite[Section 3]{RuSu2} in the case of closed $2$-dimensional Legendrians.}

\begin{definition}
With $\rho \in \Z_{\geq 0}$ and $1$-dimensional $M$ fixed, define the {\bf transverse Legendrian category}, $\mathfrak{Leg}^\rho_{\pitchfork}$, to have objects $(\Lambda, \mathcal{E}_\pitchfork, \mu)$ where $\Lambda \subset J^1M$ is Legendrian, $\mu$ is a $\Z/\rho$-valued Maslov potential, and $\mathcal{E}_\pitchfork$ is an {\bf admissible transverse decomposition} for $\Lambda$ as defined in \cite[Appendix A, Definition A.1]{PanRu2Arxiv}.  Morphisms are compact $\rho$-graded Legendrian cobordisms up to combinatorial equivalence of front and base projections (as in Section \ref{sec:4-4}). 
\end{definition}

An admissible transverse decomposition of $\Lambda$ is a cellular decomposition of a neighborhood of the base projection, \dr{$\pi_x(\Lambda) \subset M$, that has cells transverse to the projection of the singular set of $\pi_{xz}(\Lambda)$  and is   
subject to some technical restrictions that are useful for the detailed proof of Proposition \ref{prop:Fisomorph} but are not relevant when applying the proposition.  For use of Proposition \ref{prop:Fisomorph} in the remainder of the article, it will be sufficient to note that any $1$-dimensional Legendrian $\Lambda \subset J^1M$ with generic front and base projections admits an admissible transverse decomposition. } 

In \cite[Appendix A]{PanRu2Arxiv}, two functors are constructed  
\[
\begin{array}{ccc}
G_\mathit{std}: \mathfrak{Leg}^\rho_{\pitchfork} \rightarrow \mathfrak{Leg}^\rho_{\mathit{im}},  & \quad \quad \mbox{and} \quad \quad & G_{||}: \mathfrak{Leg}^\rho_{\pitchfork} \rightarrow \mathfrak{Leg}^\rho_{\comp},  \\
(\Lambda, \mathcal{E}_\pitchfork) \mapsto (\widetilde{\Lambda}, \widetilde{g}) & &  (\Lambda, \mathcal{E}_\pitchfork) \mapsto (\Lambda, \mathcal{E}_{||})
\end{array}
\]
where $\widetilde{\Lambda}$, called the {\bf standard geometric model} for $\Lambda$ \dr{with respect to $\mathcal{E}_\pitchfork$}, is Legendrian isotopic to $\Lambda$ with front and base projections combinatorially equivalent to those of $\Lambda$. The actions on morphisms are as follows: 
\begin{itemize}
\item The morphism spaces in $\mathfrak{Leg}^\rho_{\pitchfork}$ and $\mathfrak{Leg}^\rho_{\comp}$ only depend on $\Lambda$ and are the same in both categories, and $G_{||}$ acts as the identity on morphisms.  
\item Given a compact cobordism $\Sigma$ from $\Lambda_0$ to $\Lambda_1$, we can choose any conical Legendrian cobordism $\Sigma' \subset J^1(\R_{>0} \times M)$ from $G_{\mathit{std}}(\Lambda_0)$ to $G_{\mathit{std}}(\Lambda_1)$ whose front and base projections are  combinatorially equivalent to $\Sigma$ (after truncating the conical ends).  The conical Legendrian isotopy type of $\Sigma'$ depends only on $\Sigma$, and we have $G_{\mathit{std}}([\Sigma]) = [ \Sigma']$. 
\end{itemize}

The standard geometric model, $G_\mathit{std}(\Lambda, \mathcal{E}_\pitchfork)= (\widetilde{\Lambda}, \widetilde{g})$, is constructed so that the Reeb chords of $\widetilde{\Lambda}$ closely match the generators of the cellular DGA, $\mathcal{A}(\Lambda, \mathcal{E}_{||})$.  In fact, $\mathcal{A}(\widetilde{\Lambda}, \widetilde{g})$ is isomorphic to a stabilization of $\mathcal{A}(\Lambda, \mathcal{E}_{||})$, and an explicitly defined homotopy equivalence
\[
q_{\Lambda}: \mathcal{A}(\widetilde{\Lambda}, \widetilde{g}) \rightarrow \mathcal{A}(\Lambda, \mathcal{E}_{||})
\]
that we call the {\bf canonical quotient map} is identified in \cite[Section A.4]{PanRu2Arxiv}.  As $q_\Lambda$ is invertible in the DGA homotopy category $\mathfrak{DGA}^\rho_{\mathit{hom}}$, it follows that the immersed map 
\[
Q_{\Lambda} = \big[\mathcal{A}(\widetilde{\Lambda}, \widetilde{g}) \stackrel{q_\Lambda}{\rightarrow} \mathcal{A}(\Lambda, \mathcal{E}_{||}) \stackrel{\mathit{id}}{\hookleftarrow} \mathcal{A}(\Lambda, \mathcal{E}_{||}) \big]  \in \dga^\rho_{\mathit{im}}
\] 
(that is the image of $q_\Lambda$ under the functor from Proposition \ref{prop:IFunctor}) is an isomorphism in $\dga^\rho_{\mathit{im}}$.

\begin{proposition}  \label{prop:Fisomorph}
The canonical quotient map construction, 
\[
(\Lambda, \mathcal{E}_\pitchfork) \in \mathfrak{Leg}^\rho_\pitchfork   \, \mapsto \, Q_{\Lambda} \in Hom_{\dga^\rho_{\mathit{im}}}(\mathcal{A}(\widetilde{\Lambda}, \widetilde{g}), \mathcal{A}(\Lambda, \mathcal{E}_{||}))
\] 
gives an isomorphism (invertible natural transformation) from the functor that is the composition
\[
\mathfrak{Leg}^\rho_{\pitchfork} \stackrel{G_{std}}{\rightarrow} \mathfrak{Leg}^\rho_{im} \stackrel{F}{\rightarrow} \dga^\rho_{\mathit{im}}.
\]
to the functor
\[
 \mathfrak{Leg}^\rho_{\pitchfork} \stackrel{G_{||}}{\rightarrow} \mathfrak{Leg}^\rho_{\comp} \stackrel{F_{\mathit{cell}}}{\rightarrow} \dga^\rho_{\mathit{im}}.
\]
That is, the $Q_{\Lambda}$ are invertible, and for any compact Legendrian cobordism $\Sigma: (\Lambda_-, \mathcal{E}^{\pitchfork}_-) \rightarrow (\Lambda_+, \mathcal{E}^{\pitchfork}_+)$ with generic base and front projection 
we have a commutative diagram in $\dga^\rho_{\mathit{im}}$,
\begin{equation} \label{eq:QLambda}
\xymatrix{ \mathcal{A}(\widetilde{\Lambda}_+, \widetilde{g}_+)\ar[rr]^{F\circ G_{\mathit{std}}(\Sigma)} \ar[d]_{Q_{\Lambda_+}} & & \mathcal{A}(\widetilde{\Lambda}_-, \widetilde{g}_-) \ar[d]^{Q_{\Lambda_-}}  \\ \calg(\Lambda_+, \mathcal{E}^{||}_+) \ar[rr]_{F_{\mathit{cell}}\circ G_{||}(\Sigma)}  & & \calg(\Lambda_-, \mathcal{E}^{||}_-) }.
\end{equation}
\end{proposition}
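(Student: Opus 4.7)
The plan is to reduce the statement to a cobordism-level version of the stable tame isomorphism between the cellular DGA and the LCH DGA that was established for closed Legendrian surfaces in \cite{RuSu1, RuSu2, RuSu25}. At the object level, for each $(\Lambda, \mathcal{E}_\pitchfork)$ the standard geometric model $\wt{\Lambda}$ should be constructed so that its Reeb chords split into two types: those in bijection with the generators of $\calg(\Lambda, \mathcal{E}_{||})$, and auxiliary chords occurring in acyclic pairs. This will exhibit $\mathcal{A}(\wt{\Lambda}, \wt{g})$ as a stabilization of $\calg(\Lambda, \mathcal{E}_{||})$, and $q_\Lambda$ is then defined by iteratively cancelling the auxiliary pairs via Proposition \ref{prop:quotient}. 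Since $q_\Lambda$ is thereby a DGA homotopy equivalence, Proposition \ref{prop:IFunctor} immediately gives invertibility of $Q_\Lambda$ in $\dga^\rho_{\mathit{im}}$. The verification that the induced differential on the cellular generators agrees with the matrix formulas of Section \ref{sec:4-1} reduces to a local gradient-flow-tree computation which, being purely $1$-dimensional, is considerably simpler than its surface analog.

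For the morphism-level commutativity of (\ref{eq:QLambda}), I would extend the geometric-model construction to cobordisms. Given a compact cobordism $\Sigma : (\Lambda_-, \mathcal{E}_\pitchfork^-) \to (\Lambda_+, \mathcal{E}_\pitchfork^+)$, the plan is to produce a conical Legendrian $\wt{\Sigma}$ representing $G_{\mathit{std}}([\Sigma])$ whose Reeb chords again split into cellular and auxiliary types in a way that extends the boundary splittings of $\wt{\Lambda}_\pm$. Cancelling the auxiliary pairs via Proposition \ref{prop:quotient} should yield a DGA homotopy equivalence
\[
q_\Sigma : \mathcal{A}(\wt{\Sigma}, \wt{g}) \longrightarrow \calg(\Sigma, \mathcal{E}_\Sigma),
\]
for a suitable $2$-dimensional polygonal decomposition $\mathcal{E}_\Sigma$ extending the $\mathcal{E}^{||}_\pm$, such that the restriction of $q_\Sigma$ to the boundary sub-DGAs $\mathcal{A}(\wt{\Lambda}_\pm, \wt{g}_\pm) \subset \mathcal{A}(\wt{\Sigma}, \wt{g})$ agrees with $q_{\Lambda_\pm}$. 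The top and bottom paths of (\ref{eq:QLambda}) can then be rewritten via the alternate composition formula of Proposition \ref{prop:CompositionAlt} as immersed DGA maps with a common middle DGA built from $\mathcal{A}(\wt{\Sigma}, \wt{g})$ and $\calg(\Sigma, \mathcal{E}_\Sigma)$, and the compatibility of $q_\Sigma$ with the boundary inclusions then supplies the required immersed homotopy.

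The main obstacle is the $2$-dimensional step at the morphism level: arranging $\wt{\Sigma}$ together with a regular metric so that gradient flow trees with positive puncture at a $2$-cell Reeb chord localize near a single $2$-cell of $\mathcal{E}_\Sigma$ and reproduce the cellular matrix formula $\partial C = A_{v_1} C + C A_{v_0} + (I+B_j)^{\eta_j}\cdots (I+B_1)^{\eta_1} + (I+B_l)^{\eta_l}\cdots (I+B_{j+1})^{\eta_{j+1}}$. This is the direct analog in the cobordism setting of the main GFT calculation of \cite{RuSu1, RuSu2, RuSu25} and is expected to occupy the bulk of Appendices \ref{app:iso} and \ref{app:A}. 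A secondary subtlety is controlling GFTs near the conical ends of $\wt{\Sigma}$, so that the sub-DGA inclusions $\mathcal{A}(\wt{\Lambda}_\pm, \wt{g}_\pm) \hookrightarrow \mathcal{A}(\wt{\Sigma}, \wt{g})$ are strict and $q_\Sigma$ restricts correctly to $q_{\Lambda_\pm}$; the Morse-minimum reduction from Section \ref{sec:minimum}, in particular Proposition \ref{prop:min}, should handle this by reducing the end-behavior to an identity-cobordism calculation.
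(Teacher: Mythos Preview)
Your proposal is correct and follows essentially the same strategy as the paper: build a standard geometric model $\widetilde{\Sigma}$ for the cobordism (via a Morse minimum cobordism and Proposition \ref{prop:min}), compute its LCH DGA square-by-square as in \cite{RuSu2}, and use the resulting stable tame isomorphism with $\calg(\Sigma,\mathcal{E}_{||})$---checked to restrict to the identity on the boundary sub-DGAs---to supply the immersed homotopy. Two minor points where the paper's execution differs from your sketch: first, rather than invoking Proposition \ref{prop:CompositionAlt}, the paper computes each composite in (\ref{eq:QLambda}) directly from the pushout Definition \ref{def:immersedcomp} (one factor in each composite has identity inclusion, so the pushout collapses) and shows both are immersed homotopic to a common third map with middle DGA $\mathcal{A}(\widetilde{\Sigma},\widetilde{g})/\mathcal{I}(\mathit{Ex}(\widetilde{\Lambda}_-)\cup\mathit{Ex}(\widetilde{\Lambda}_+))$; second, the $2$-dimensional stable tame isomorphism from \cite[Section 8]{RuSu2} is not merely cancellation of auxiliary Reeb-chord pairs---it also cancels certain cellular generators near codimension-$2$ front singularities and applies an explicit tame isomorphism near swallowtails---so your $q_\Sigma$ is not literally a quotient map, though the boundary-compatibility you need still holds because those extra steps occur away from $\partial\Sigma$.
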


In summary, the following diagram is commutative up to a canonical isomorphism of functors:
\[
\xymatrix{ & \mathfrak{Leg}^\rho_{im}  \ar[rd]^{F} & \\ \mathfrak{Leg}^\rho_{\pitchfork} \ar[ru]^{G_{\mathit{std}}}  \ar[rd]^{G_{||}}  & & \dga^\rho_{\mathit{im}} \\ & \mathfrak{Leg}^\rho_{\comp} \ar[ru]^{F_{\mathit{cell}}} & } 
\]

\dr{A detailed proof of Proposition \ref{prop:Fisomorph} is presented over the course of \cite[Appendices A and B]{PanRu2Arxiv}.  It is an  extension of the isomorphism from \cite{RuSu2, RuSu25} for closed surfaces, and we provide here a summary of the key points.}

\begin{proof}[Sketch of Proof]
\dr{The technical requirements in the definition of admissible transverse decomposition are designed so that, with arbitrary $(\Lambda_-, \mathcal{E}_-^\pitchfork), (\Lambda_+, \mathcal{E}_+^\pitchfork) \in \mathfrak{Leg}^\rho_\pitchfork$ fixed, for any (compact) $\Sigma: \Lambda_- \rightarrow \Lambda_+$ the construction of \cite[Section 3]{RuSu2} for closed surfaces generalizes to produce a transverse square decomposition, $\mathcal{E}_\Sigma^\pitchfork$, that in a closed collar neighborhood of $\partial \Sigma = \Lambda_- \cup \Lambda_+$ agrees with the product of $\mathcal{E}_-^\pitchfork$ and $\mathcal{E}_+^\pitchfork$ with the standard CW decomposition of a closed interval.  Applying the square-by-square coordinate construction from \cite{RuSu25} using $\mathcal{E}_\Sigma^\pitchfork$ produces an initial geometric model for $\Sigma$,  $(\overline{\Sigma}, \overline{g})$.  The restriction of $(\overline{\Sigma}, \overline{g})$ to the negative boundary only depends on $(\Lambda_-, \mathcal{E}_-^\pitchfork)$, and this restriction is the definition of $G_{\mathit{std}}(\Lambda_-, \mathcal{E}_-^\pitchfork) = (\widetilde{\Lambda}_-, \widetilde{g}_-)$.  Similarly, $G_{\mathit{std}}(\Lambda_+, \mathcal{E}_+^\pitchfork)$ is the positive boundary of $(\overline{\Sigma}, \overline{g})$.  In addition, \cite[Section 3]{RuSu2} constructs from $\mathcal{E}_\Sigma^\pitchfork$ a compatible decomposition $\mathcal{E}_\Sigma^{||}$ whose restriction to the boundary is independent of $\Sigma$ and matches the $\mathcal{E}^{||}_{\pm}$ that appear in $G_{||}(\Lambda_\pm, \mathcal{E}_\pm^\pitchfork) = (\Lambda_\pm, \mathcal{E}_\pm^{||})$.  A technical point is that for computing the immersed DGA map $F \circ G_{std}(\Sigma) = [\alg(\widetilde{\Lambda}_+, \widetilde{g}_+) \rightarrow  \alg(\widetilde{\Sigma}, \widetilde{g}) \hookleftarrow \alg(\widetilde{\Lambda}_+, \widetilde{g}_+)]$ we need to pick a specific $(\widetilde{\Sigma}, \widetilde{g})$ that is a conical version of $\Sigma$.  This is done \cite[Appendix B]{PanRu2Arxiv} by carefully modifying $(\overline{\Sigma}, \overline{g})$ in the collar neighborhood of the boundary to be a Morse minimum cobordism and then applying the Construction \ref{const:conical}.  As verified in \cite[Appendix B]{PanRu2Arxiv}, the square-by-square computation of LCH from \cite{RuSu2} still applies to produce an explicit stable tame isomorphism relating $\alg(\widetilde{\Sigma}, \widetilde{g})$ and $\alg(\Sigma, \mathcal{E}_\Sigma^{||})$.  Moreover, \cite[Appendix A.6]{PanRu2Arxiv} uses this stable tame isomorphism to verify the commutativity of (\ref{eq:QLambda}).}   
\end{proof}   

\begin{remark}
An arbitrary Legendrian $\Lambda \subset J^1M$
 is related to one 
in the image of $G_{std}$ by a Legendrian isotopy (that preserves the combinatorial appearance of the front projection).  Thus Proposition \ref{prop:Fisomorph} allows the cellular DGA to be used for computing immersed maps $F(\Sigma)$ induced by a conical Legendrian cobordism $\Sigma:\Lambda_-\rightarrow \Lambda_+$ after possibly changing the Legendrians at the ends from $\Lambda_\pm$ to $\Lambda'_{\pm}$ by Legendrian isotopy.  Composing $\Sigma$ with cobordisms $\Sigma_+:\Lambda_+ \rightarrow \Lambda_+'$ and $\Sigma_-: \Lambda_-'\rightarrow \Lambda_-$ induced by the isotopies modifies $\Sigma$ to a cobordism $\Sigma': \Lambda_-' \rightarrow \Lambda_+'$ to which the cellular computation of Proposition \ref{prop:Fisomorph} can be applied directly.  
 Moreover, if one wants to work with the original  $\Lambda_\pm$, then the maps $F(\Sigma_-)$ and $F(\Sigma_+)$ can be 
explicitly computed as in \cite[Section 6]{EHK}.
\end{remark}

The following corollary of Proposition \ref{prop:Fisomorph} shows that the immersed DGA maps associated to a compact cobordism $\Sigma$ by the cellular functor, $F_{\comp}$, only depend on the Legendrian isotopy type of $\Sigma$.

\begin{corollary} \label{cor:LegInv}
Given $(\Lambda_0, \mathcal{E}_0), (\Lambda_1, \mathcal{E}_1) \in \mathfrak{Leg}^\rho_{\comp}$ and compact cobordisms $\Sigma$ and $\Sigma'$.  If $\Sigma$ and $\Sigma'$ are Legendrian isotopic rel. boundary, then $F_\comp(\Sigma) = F_\comp(\Sigma')$ in $\mathfrak{DGA}^\rho_{\mathit{im}}$. 
\end{corollary}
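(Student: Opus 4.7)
The plan is to reduce the assertion to the conical Legendrian isotopy invariance of the immersed LCH functor $F$ (Theorem \ref{thm:PanRu1}(3), packaged functorially in Corollary \ref{cor:LCHfunctor}), and then transport it to the cellular side via the natural isomorphism of Proposition \ref{prop:Fisomorph}.

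First, I would lift each $(\Lambda_i, \mathcal{E}_i) \in \mathfrak{Leg}^\rho_\comp$ to an object $(\Lambda_i, \mathcal{E}^{\pitchfork}_i) \in \mathfrak{Leg}^\rho_{\pitchfork}$ such that $G_{||}(\Lambda_i, \mathcal{E}^{\pitchfork}_i) = (\Lambda_i, \mathcal{E}_i)$. If the given polygonal decomposition is not already of this form, one can first replace it by a combinatorially related $\mathcal{E}^{||}_i$ obtained as the output of $G_{||}$, using the fact (from the proof of Proposition \ref{prop:4-7}) that the CW modifications (1)--(4) relate any two compatible polygonal decompositions and induce stable tame isomorphisms that restrict to the identity on the boundary sub-DGAs; these modifications leave $F_\comp$ unchanged up to immersed homotopy. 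With this done, $\Sigma$ and $\Sigma'$ represent morphisms in $\mathfrak{Leg}^\rho_{\pitchfork}$ with common endpoints.

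The key geometric step is to promote the Legendrian isotopy $\{\Sigma_t\}_{t\in[0,1]}$ rel.\ boundary (with $\Sigma_0=\Sigma$, $\Sigma_1=\Sigma'$) to a conical Legendrian isotopy between $G_{\mathit{std}}(\Sigma)$ and $G_{\mathit{std}}(\Sigma')$. Since the isotopy is rel.\ boundary, all $\Sigma_t$ coincide in a collar of $\partial([0,1]\times M)$, so a single modification near the boundary attaches the ends of each $\Sigma_t$ to the standard geometric models $\widetilde{\Lambda}_i$; truncating and then extending the family by conical ends in $J^1(\R_{>0}\times M)$ produces a smooth path of conical Legendrian cobordisms from $G_{\mathit{std}}(\Sigma)$ to $G_{\mathit{std}}(\Sigma')$. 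By Corollary \ref{cor:LCHfunctor}, we therefore obtain
\[
F(G_{\mathit{std}}(\Sigma)) = F(G_{\mathit{std}}(\Sigma')) \quad \text{in } \mathfrak{DGA}^\rho_{\mathit{im}}.
\]

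Finally, applying the commutative diagram (\ref{eq:QLambda}) of Proposition \ref{prop:Fisomorph} to both $\Sigma$ and $\Sigma'$ (viewed as cobordisms $\Lambda_0 \to \Lambda_1$) yields
\[
F_\comp(\Sigma) \circ Q_{\Lambda_1} \;=\; Q_{\Lambda_0} \circ F(G_{\mathit{std}}(\Sigma)) \;=\; Q_{\Lambda_0} \circ F(G_{\mathit{std}}(\Sigma')) \;=\; F_\comp(\Sigma') \circ Q_{\Lambda_1},
\]
and invertibility of $Q_{\Lambda_1}$ gives $F_\comp(\Sigma) = F_\comp(\Sigma')$, as required.

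The main obstacle is the first step — realizing an arbitrary polygonal decomposition $\mathcal{E}_i$ as the output of $G_{||}$ applied to some admissible transverse decomposition (up to the $F_\comp$-invariant modifications of Proposition \ref{prop:4-7}). Given that the technical setup for $\mathcal{E}^\pitchfork$ is relegated to Appendix \ref{app:iso}, this check is essentially bookkeeping, but it is the one place where the argument uses nontrivial content from the appendix rather than just the functorial statement of Proposition \ref{prop:Fisomorph}.
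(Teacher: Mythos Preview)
Your argument is essentially the paper's: reduce to the image of $G_{||}$, invoke conical Legendrian isotopy invariance of $F$, and conjugate by the $Q_{\Lambda_i}$ from Proposition \ref{prop:Fisomorph}. The one place your writeup differs is in handling general boundary decompositions $\mathcal{E}_i$ not in the image of $G_{||}$. You propose to ``replace'' $\mathcal{E}_i$ by some $\mathcal{E}_i^{||}$ using the CW modifications from the proof of Proposition \ref{prop:4-7}, but those modifications are for the polygonal decomposition of $\Sigma$, not of the boundary; changing $\mathcal{E}_i$ changes the objects and hence the hom-set in which $F_\comp(\Sigma)$ lives, so a bare replacement does not quite make sense. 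The paper resolves this cleanly (its Case 2) by inserting the product cobordism $I_k = j^1(1_{[0,1]}\cdot \Lambda_k)$ as an invertible morphism $(\Lambda_k,\mathcal{E}_k) \to (\Lambda_k, G_{||}(\mathcal{E}_k^\pitchfork))$ in $\mathfrak{Leg}^\rho_\comp$, writing $\Sigma = I_1^{-1}\circ \Sigma_\pitchfork \circ I_0$, and then applying functoriality of $F_\comp$ together with the already-established Case 1. Your idea can be salvaged in exactly this way, and once you phrase the ``replacement'' as pre/post-composition with these identity cobordisms the two arguments coincide.
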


\begin{proof}

\noindent {\bf Case 1.}  Suppose $(\Lambda_k,\mathcal{E}_k)= G_{||}(\Lambda_k,\mathcal{E}_k^\pitchfork)$  for some admissible transverse decompositions $\mathcal{E}_k^\pitchfork$ for $\Lambda_k$, $k=0,1$. 

\medskip

In this case, Proposition \ref{prop:Fisomorph} allows us to compute in $\mathfrak{DGA}^\rho_{\mathit{im}}$,
\[
F_{\comp}(\Sigma) = Q_{\Lambda_1} \circ F( G_{\mathit{std}}(\Sigma)) \circ Q_{\Lambda_0}^{-1} = Q_{\Lambda_1} \circ F( G_{\mathit{std}}(\Sigma')) \circ Q_{\Lambda_0}^{-1} = F_{\comp}(\Sigma').
\]  
[At the second equality we used that $F(G_{\mathit{std}}(\Sigma))$ only depends on the conical Legendrian isotopy type of $G_{\mathit{std}}(\Sigma)$.]

\medskip

\noindent {\bf Case 2.}  For general $\mathcal{E}_0$ and $\mathcal{E}_1$.

\medskip

In this case, fix some choice of admissible transverse decompositions $\mathcal{E}_k^\pitchfork$ for $\Lambda_k$, $k=0,1$.  Let $I_k$ denote the product cobordism $j^1(1_{[0,1]} \cdot \Lambda_k)$ viewed as a morphism in $\mathit{Hom}_{\mathfrak{Leg}^\rho_{\comp}}( (\Lambda_k, \mathcal{E}_k), (\Lambda_k, G_{||}(\mathcal{E}_k^\pitchfork))$, and let $\Sigma_\pitchfork$ and $\Sigma_\pitchfork'$ denote $\Sigma$ and $\Sigma'$ viewed as morphisms in $\mathit{Hom}_{\mathfrak{Leg}^\rho_{\comp}}( (\Lambda_0, G_{||}(\mathcal{E}_0^\pitchfork)), (\Lambda_1, G_{||}(\mathcal{E}_1^\pitchfork))$.  Using Case 1, we compute
\begin{align*}
F_{\comp}(\Sigma) = F_{\comp}(I_1^{-1} \circ \Sigma_\pitchfork \circ I_0) &= F_{\comp}(I_1^{-1}) \circ F_{\comp}(\Sigma_\pitchfork) \circ F_{\comp}(I_0) \\
 &= F_{\comp}(I_1^{-1}) \circ F_{\comp}(\Sigma_\pitchfork') \circ F_{\comp}(I_0) = F_{\comp}(\Sigma').
\end{align*}

\end{proof}

As in Section \ref{sec:augimmersed}, given an immersed DGA map $M= \left(\mathcal{A}_1 \stackrel{f}{\rightarrow} \mathcal{B} \stackrel{i}{\hookleftarrow} \mathcal{A}_2 \right)$ we write $I(M) = \mathit{Im}(i^*\times f^*) \subset \mathit{Aug}_\rho(\mathcal{A}_1)/{\sim} \times \mathit{Aug}_\rho(\mathcal{A}_2)/{\sim}$ for the {\it induced augmentation set} of $M$. According to Proposition \ref{prop:indaug}, $I(M)$ only depends on the immersed homotopy class of $M$.   

\begin{corollary}  \label{cor:augmentationset}  Let $\Sigma \subset J^1([0,1]\times M)$ be a compact Legendrian cobordism from $\Lambda_0$ to $\Lambda_1$.  Let $\mathcal{E}_k^\pitchfork$ be an admissible transverse decomposition for $\Lambda_k$, $k=0,1$, and write $G_{std}(\Lambda_k,\mathcal{E}_k^\pitchfork) = (\widetilde{\Lambda}_k, \widetilde{g}_k)$ and $G_{||}(\Lambda_k,\mathcal{E}_k^\pitchfork) =(\Lambda_k,\mathcal{E}_k^{||})$.     
Then, the induced augmentation sets 
$I_\Sigma := I(F(G_{\mathit{std}}(\Sigma)))$ and $I^{\mathit{cell}}_{\Sigma} := I(F_\mathit{cell}(G_{||}(\Sigma)))$ are related by 
\begin{equation} \label{eq:qlambda}
(q_{\Lambda_0}^* \times q_{\Lambda_1}^*)\big(I^{\mathit{cell}}_{\Sigma} \big) = I_\Sigma
\end{equation}
where 
\[
q_{\Lambda_0}^* \times q_{\Lambda_1}^*:\aug_\rho(\Lambda_0, \mathcal{E}_0^{||})/{\sim} \times \aug_\rho(\Lambda_1, \mathcal{E}_1^{||})/{\sim} \stackrel{\cong}{\rightarrow} \aug_\rho(\widetilde{\Lambda}_0, \widetilde{g}_0)/{\sim} \times \aug_\rho(\widetilde{\Lambda}_1, \widetilde{g}_1)/{\sim}
\]
 is the bijection induced by the canonical quotient maps,  $q_{\Lambda_k}: \alg(\widetilde{\Lambda}_k, \widetilde{g}_k) \rightarrow \alg(\Lambda_k, \mathcal{E}_k^{||})$.
\end{corollary}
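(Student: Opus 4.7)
The plan is to obtain the identity in (\ref{eq:qlambda}) by applying the induced augmentation set functor $I$ of Proposition \ref{prop:relfunctor} to the commutative square (\ref{eq:QLambda}) furnished by Proposition \ref{prop:Fisomorph}, and then interpreting the resulting equation of relations via Observation \ref{obs:relation}.

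In more detail, first I would note that since $Q_{\Lambda_k}$ is the image under the inclusion functor $\mathfrak{DGA}^\rho_{\mathit{hom}} \rightarrow \mathfrak{DGA}^\rho_{\mathit{im}}$ of the ordinary DGA map $q_{\Lambda_k}:\mathcal{A}(\widetilde{\Lambda}_k, \widetilde{g}_k) \rightarrow \mathcal{A}(\Lambda_k, \mathcal{E}_k^{||})$, the discussion in Section \ref{sec:embeddedcase} identifies $I(Q_{\Lambda_k})$ with the graph $\Gamma_{q_{\Lambda_k}^*}$ of the pullback $q_{\Lambda_k}^*$ on augmentation homotopy classes. Moreover, because $q_{\Lambda_k}$ is a DGA homotopy equivalence (as recalled in Section \ref{sec:4-5}), $q_{\Lambda_k}^*$ is a bijection on $\mathit{Aug}_\rho/{\sim}$.

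Now apply the contravariant functor $I:\mathfrak{DGA}^\rho_{\mathit{im}}\rightarrow \mathfrak{Rel}$ to the commutative square (\ref{eq:QLambda}). Writing $\Lambda_0, \Lambda_1$ in place of $\Lambda_-, \Lambda_+$, this yields the equality of relations
\[
I_\Sigma \circ \Gamma_{q_{\Lambda_0}^*} \;=\; \Gamma_{q_{\Lambda_1}^*} \circ I^{\mathit{cell}}_\Sigma
\]
as subsets of $\mathit{Aug}_\rho(\Lambda_0, \mathcal{E}_0^{||})/{\sim} \times \mathit{Aug}_\rho(\widetilde{\Lambda}_1, \widetilde{g}_1)/{\sim}$. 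Applying Observation \ref{obs:relation}(2) to the right-hand side and Observation \ref{obs:relation}(3) (using that $q_{\Lambda_0}^*$ is a bijection) to the left-hand side rewrites this as
\[
((q_{\Lambda_0}^*)^{-1} \times \mathit{id})(I_\Sigma) \;=\; (\mathit{id} \times q_{\Lambda_1}^*)(I^{\mathit{cell}}_\Sigma).
\]
Finally, applying the bijection $q_{\Lambda_0}^* \times \mathit{id}$ to both sides yields precisely (\ref{eq:qlambda}).

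The argument is essentially a bookkeeping step once Propositions \ref{prop:relfunctor} and \ref{prop:Fisomorph} are in hand; there is no real obstacle beyond keeping track of variances and the direction of the graphs. The only small subtlety is verifying that $q_{\Lambda_k}^*$ is a bijection so that Observation \ref{obs:relation}(3) applies, but this is immediate from $q_{\Lambda_k}$ being a DGA homotopy equivalence together with the fact that the contravariant association $\mathcal{A}\mapsto \mathit{Aug}_\rho(\mathcal{A})/{\sim}$ is a functor on $\mathfrak{DGA}^\rho_{\mathit{hom}}$ (as used already in the proof of Proposition \ref{prop:indaug}).
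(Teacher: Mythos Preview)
Your proof is correct and follows essentially the same approach as the paper's own proof: apply the functor $I$ from Proposition \ref{prop:relfunctor} to the commutative square (\ref{eq:QLambda}), identify $I(Q_{\Lambda_k})$ with the graph of $q_{\Lambda_k}^*$ via Section \ref{sec:embeddedcase}, and then invoke Observation \ref{obs:relation}(2) and (3) to rewrite the resulting equality of relations as (\ref{eq:qlambda}). The paper's proof is slightly terser but uses exactly the same ingredients.
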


\begin{proof}
  Applying the functor from Proposition \ref{prop:relfunctor} to (\ref{eq:QLambda}) produces the diagram of relations
\begin{equation} \label{eq:QLambdaRel}
\xymatrix{ \aug(\widetilde{\Lambda}_1), \widetilde{g}_1)/{\sim}  & \aug(\widetilde{\Lambda}_0, \widetilde{g}_0)/{\sim} \ar[l]_{I_\Sigma}  \\ \aug(\Lambda_1, \mathcal{E}^{||}_1)/{\sim} \ar[u]^{I(Q_{\Lambda_1})}  & \aug(\Lambda_0, \mathcal{E}_0)/{\sim} \ar[l]_{I^{\mathit{cell}}_\Sigma} \ar[u]_{I(Q_{\Lambda_0})} }.
\end{equation}
As noted in \ref{sec:embeddedcase}, the induced augmentation sets associated to $Q_{\Lambda_{k}}$ are just the graphs of $q^*_{\Lambda_{k}}$, so Observation \ref{obs:relation} (2) and (3) can be applied to arrive at (\ref{eq:qlambda}).
\end{proof}

\section{Computations via Morse complex families}  \label{sec:MCF}
With Corollary \ref{cor:augmentationset} we have seen that for a compact Legendrian cobordism, $\Sigma$, the two versions of the 
induced augmentation sets, $I_\Sigma^{\mathit{cell}}$ and $I_\Sigma$, defined via the cellular and ordinary versions of the LCH functor, $F_{\mathit{cell}}$ and  $F$, are equivalent invariants of $\Sigma$ (in a canonical way).  
In this section, we focus on obtaining methods for computing $I_\Sigma^{\mathit{cell}}$ based on \dr{an extension of  correspondences between augmentations and Morse complex families (abbr. MCFs), cf. \cite{Henry, HenryRu2, RuSu3}, to the case of $2$-dimensional Legendrian cobordisms.   In Section \ref{sec:5-2}, we review the definition of MCFs and include the present context of $2$-dimensional Legendrian cobordisms.  In Section \ref{sec:5-4}, we make use of a bijection between the set of equivalence classes of MCFs and DGA homotopy classes of augmentions of the cellular DGA for $1$-dimensional Legendrians to establish a characterization of induced augmentation sets in terms of MCFs;  see Proposition \ref{prop:MCFcomp}.  The section concludes in Section \ref{sec:Aform} with a discussion of $A$-form MCFs, as defined by Henry \cite{Henry}, that are particularly convenient for computing induced augmentation sets in explicit examples.}



\subsection{Morse complex families}  \label{sec:5-2}
We now review the definition of Morse complex families for Legendrian knots and surfaces, as in \cite{Henry, RuSu3}, allowing for the case of Legendrian cobordisms.  

\subsubsection{The 1-dimensional case} Let $(\Lambda, \mu) \subset J^1M$ be a $1$-dimensional $\rho$-graded Legendrian knot.
 Recall that we use the respective notations $\pi_x:J^1M \rightarrow M$ and $\pi_{xz}:J^1M \rightarrow M \times \R$ for the base and front projections.

\begin{definition} 
A {\bf $\rho$-graded Morse complex family} (abbr. {\bf MCF}) for $(\Lambda, \mu)$ 
 is a pair $\mathcal{C} = (H, \{d_\nu\})$ consisting of the following items:
\begin{enumerate}
\item[(1)] A {\bf handleslide set} $H$ which is a finite collection of points in $M$ such that each $p\in H$ is equipped with lifts $u(p), l(p) \in \Lambda$ not belonging to cusp edges and satisfying $\pi_x(u(p)) = \pi_x(l(p)) = p$, $z(u(p)) > z(l(p))$, and $\mu(u(p)) = \mu(l(p))$.  We refer to $p \in H$ as a (formal) handleslide of $\mathcal{C}$ with upper and lower endpoints at $u(p)$ and $l(p)$.
\end{enumerate}
Denote by $\Lambda_{\mathit{sing}}$ the base projection of the singular set of $\pi_{xz}(\Lambda)$ (crossings and cusps), and let $\{R_\nu\}$ be the collection of path components of  $M \setminus (H \cup \Lambda_{\mathit{sing}})$.  Note that above each $R_\nu$, the sheets of $\Lambda$ are totally ordered by their $z$-coordinates, so they can be enumerated as 
\[
S^\nu_1, \ldots, S^\nu_{n_\nu} \quad \mbox{with } z(S^\nu_1) > \cdots > z(S^\nu_{n_{\nu}}).
\]
We then have $\Z/\rho$-graded vector spaces $V(R_\nu) = \mbox{Span}_{\Z/2}\{S^\nu_i\}$ \dr{where the grading is via the Maslov potential $|S^\nu_i|= \mu(S^\nu_i)$}.  We will sometimes omit the superscript $\nu$ in the notation for sheets.

\begin{enumerate}
\item[(2)] The second piece of information defining an MCF is a collection of strictly upper triangular 
differentials $\{d_\nu\}$ of degree $+1$ mod $\rho$ making each $(V(R_\nu), d_\nu)$ into a $\Z/\rho$-graded cochain complex.  \dr{Here, the strictly upper triangular condition is that the matrix coefficient $\langle d_\nu S^\nu_q, S^\nu_p\rangle$ is $0$ unless $z(S^\nu_p) > z(S^\nu_q)$.}
\end{enumerate}
Moreover, $H$ and $\{d_\nu\}$ are required to satisfy the following Axiom \ref{ax:1dim}.
\end{definition}

\begin{axiom} \label{ax:1dim}
Let $R_0$ and $R_1$ be connected components of $\pi_{x}(\Lambda) \setminus (H \cup \Lambda_{\mathit{sing}})$ that border each other at a  point $p \in H \cup \Lambda_{\mathit{sing}}$.

\begin{enumerate}
\item If $p \in H$ is a \emph{handleslide}  with upper endpoint $u(p) \in S_u$ and $l(p) \in  S_l$, then the handleslide map 
\[
h_{u,l}( S_i) = \left\{\begin{array}{cr} S_i, & i \neq l, \\ S_u+S_l, & i = l, \end{array} \right. 
\] 
gives a chain isomorphism $h_{u,l} :(V(R_0), d_0)  \stackrel{\cong}{\rightarrow} (V(R_1), d_1)$.
 
\item If $p$ is the base projection of a \emph{crossing} between sheets $S_k$ and $S_{k+1}$, then the permutation map
\[
Q(S_k) = S_{k+1}, \,  Q(S_{k+1}) = S_k, \, Q(S_{i}) = S_i, i \notin \{k,k+1\}
\]
is a chain isomorphism, $Q :(V(R_0), d_0)  \stackrel{\cong}{\rightarrow} (V(R_1), d_1)$.
\item Suppose that $p$ is a \emph{cusp} point such that the two sheets of the cusp, $S^1_{k}$ and $S^1_{k+1}$, exist above $R_1$ but not above $R_0$.  Then, identifying sheets of $R_0$ and $R_1$ whose closures intersect at $p$ gives an inclusion $\iota: V(R_0) \hookrightarrow V(R_1)$ with
\[
V(R_1) = \iota(V(R_0)) \oplus V_{\mathit{cusp}}, \quad V_{\mathit{cusp}} = \mbox{Span}_{\Z/2}\{S^1_k, S^1_{k+1}\}.
\]
We require that $\iota \circ d_0 = d_1 \circ \iota$ and $d_1 S^1_{k+1} = S^1_k$, i.e. as a complex $(V(R_1), d_1)$ is the split extension of $(V(R_0), d_0)$ by $(V_{\mathit{cusp}}, d')$ where $d'$ maps the lower sheet of the cusp to the upper sheet of the cusp. 
\end{enumerate}
\end{axiom}

\begin{remark} \label{rem:6-8}
\begin{enumerate}
\item The requirement in Axiom \ref{ax:1dim} (3) used here is stronger than in some other versions of the definition, eg. in \cite{Henry, HenryRu1, HenryRu2}. In the terminology of \cite{Henry}, with our definition we only consider Morse complex families with ``simple births and deaths''.  The collection of equivalence classes of MCFs is not affected by making this restriction; see \cite[Proposition 3.17]{Henry}.

\item If $ \Lambda \subset J^1 \R$, then the collection of differentials $\{d_\nu\}$ is uniquely determined by $\Lambda$ and $H$, since one can work from left to right and apply Axiom \ref{ax:1dim} to determine the $\{d_\nu\}$ inductively.  However, it should be noted that not every handleslide set $H$ for $\Lambda$ will produce an MCF.  For example, consider the case when $R_0$ and $R_1$ are seperated by a crossing point, and  $d_0$ has already been determined.  If $\langle d_0 S_{k+1}, S_k \rangle \neq 0$, then the differential $d_1 = Q \circ d_0 \circ Q^{-1}$ required by Axiom \ref{ax:1dim} (2) will not be strictly upper triangular.
\end{enumerate}
\end{remark}

In figures, we picture handleslides as vertical segments connecting $u(p)$ and $l(p)$ on the front projection of $\Lambda$.  When convenient, differentials $d_\nu$ may be indicated above $R_{\nu}$ by drawing a dotted arrow from $S_i$ to $S_j$ whenever $\langle d_\nu S_j, S_i \rangle =1$.  See for example Figures \ref{fig:Endpoints} and \ref{fig:8_21} below.

\subsubsection{The 2-dimensional case}

Consider a 2-dimensional $\rho$-graded compact Legendrian surface, $(\Sigma, \mu) \subset J^1M$ where $\dim M = 2$. 
We allow for $\partial M \neq \emptyset$, but in this case require that $\Sigma$ is properly embedded, eg. $\Sigma$ could be a compact Legendrian cobordism in $J^1([0,1] \times \R)$ or $J^1([0,1] \times S^1)$.  We again use the notation $\Sigma_{\mathit{sing}} \subset M$ for the base projection of the singular set of $\pi_{xz}(\Sigma)$ (which now includes crossings, cusps, and swallowtail points).

\begin{definition}  \label{def:MCF2d}
A {\bf $\rho$-graded MCF} for $\Sigma$ is a triple $(H_0, H_{-1}, \{d_\nu\})$ consisting of:
\begin{enumerate}
\item A set of {\bf super-handleslide points}, $H_{-1} \subset \mathit{Int}(M)\setminus \Sigma_{\mathit{sing}}$, equipped with upper and lower endpoint lifts to $\Sigma$  satisfying $\mu(u(p)) = \mu(l(p))-1$ for all $p \in H_{-1}$.
\item A collection of {\bf handleslide arcs} which is an immersion $H_0: X \rightarrow M$ of a compact $1$-manifold, transverse to $\partial M$.
Aside from the exceptions at boundary points specified in Axiom \ref{ax:endpoints} below, we require that $H_0$ is transverse to $H_0 \cup \Sigma_{\mathit{sing}}$ and that the only self intersections are at transverse double points in $\mathit{Int}(M)$.  In addition, $H_0$ should be equipped with upper and lower endpoint lifts defined on the interior of $X$ 
\[
u,l:\mathit{Int}(X) \rightarrow \Sigma, \quad \pi_x\circ u = \pi_x \circ l = H_0, 
\]
satisfying
\[
 u(s) > l(s) \quad \mbox{and} \quad \mu(u(s)) = \mu(l(s)) \quad \forall s \in \mathit{Int}(X).
\]
Endpoints of handleslide arcs must satisfy Axiom \ref{ax:endpoints} below.
\item A collection of differentials $\{d_\nu\}$ of degree $+1$ mod $\rho$ making each $(V(R_\nu), d_\nu)$ into a $\Z/\rho$-graded cochain complex where $\{R_\nu\}$ is the collection of path components of $M \setminus ( H_{-1} \cup H_0(X) \cup \Sigma_{\mathit{sing}})$.  Note that $H_{-1} \cup H_0(X) \cup \Sigma_{\mathit{sing}}$ is a stratified subset of $M$ whose $1$-dimensional strata are \emph{handleslide arcs}, \emph{crossing arcs}, and \emph{cusp arcs}.  For each such $1$-dimensional stratum, we require that the complexes $(V(R_0), d_0)$ and $(V(R_1), d_1)$ that appear adjacent to the stratum are related as in Axiom \ref{ax:1dim} (1)-(3). 
\end{enumerate}
\end{definition}

Before stating Axiom \ref{ax:endpoints} we introduce some convenient terminology.  Above $M\setminus \Sigma_{\mathit{sing}}$ sheets of $\Sigma$ are locally labelled $S_1, \ldots, S_n$ with descending $z$-coordinate.  We will refer to a handleslide arc that has its upper and lower lift on sheets $S_i$ and $S_j$ as an {\bf $(i,j)$-handleslide}.  We caution that this terminology only applies locally since the labeling of upper and lower sheets of a handleslide arc may change when the arc passes through $\Sigma_{\mathit{sing}}$.  We use a similar terminology for super-handleslide points.

\begin{axiom} \label{ax:endpoints}
Endpoints of handleslide arcs can occur at transverse intersections of $H_0$ and $\partial M$.  All other endpoints of handleslide arcs occurring in the interior of $M$ are as specified in the following:
\begin{enumerate}
\item Suppose that $p \in M$ is a transverse double point for $H_0|_{\mathit{Int}(X)}$ where a $(u_1,l_1)$-handleslide arc crosses a $(u_2,l_2)$-handleslide arc.  If $l_1 = u_2$, then at $p$ there is a single endpoint of a $(u_1, l_2)$-handleslide arc.
\item Let $p \in H_0$ be a $(u,l)$-super-handleslide point, 
 and fix some region $R_\nu$ bordering $p$. 
\begin{itemize}
\item For every $i < u$ such that $\langle d_\nu S_u, S_i \rangle=1$, there is a single $(i,l)$-handleslide arc with an endpoint at $p$.
\item For every $l < j$ such that $\langle d_\nu S_j, S_l \rangle=1$, there is a single $(u,j)$-handleslide arc with an endpoint at $p$.
\end{itemize} 
\item Let $p \in \Sigma_{\mathit{sing}}$ be the base projection of an upward swallowtail point, i.e. one where in the front projection the crossing arc appears below the third sheet that
that limits to the swallowtail point.  The two cusp arcs in $\Sigma_{\mathit{sing}}$ that meet at $p$ divide a small neighborhood $N \subset M$ of $p$ into halfs $N_-$ and $N_+$ above which $\Sigma$ has $(n-2)$ or $n$ sheets respectively.  Let $k$ be such that above $N_-$ (resp. above $N_+$) 
the sheets that border the swallowtail point are numbered $S_k$ (resp. $S_k$, $S_{k+1}$, $S_{k+2}$) with   
$S_{k+1}$ and $S_{k+2}$ meeting along the crossing arc, $c$, that ends at $p$ and is contained in $N_+$.   
Let $(V(R_0), d_0)$ be the complex associated to the region, $R_0$, that borders $p$ on the same side as $N_-$.
\begin{itemize} 
\item There are two $(k+1,k+2)$-handleslide arcs contained in $N_+$ and ending at $p$, one on each side of $c$.
\item For each $i<k$ such that  $\langle d_0S_k, S_i \rangle=1$ there is a $(i,k)$-handle slide arc in $N_+$ with endpoint at $p$.
\end{itemize}
When $p$ is a downward swallowtail, the requirement is symmetric.  The sheets $S_k$ in $N_-$ (resp. $S_k, S_{k+1}, S_{k+2}$ in $N_+$) correspond now to sheets $S_{l-2}$ in $N_-$ (resp. $S_{l}, S_{l-1}, S_{l-2}$).  There are now two $(l-2,l-1)$-handleslide arcs in $N_+$  on opposite sides of $c$ with endpoints at $p$, and for each $l-2< j$ with $\langle d_0 S_j, S_{l-2} \rangle=1$ there is an $(l, j+2)$-handleslide arc in $N_+$ with endpoint at $p$.
\end{enumerate}
\end{axiom}

The three types of endpoints for handleslide arcs are pictured in Figure \ref{fig:Endpoints}.

\begin{remark}
\begin{enumerate}
\item In Axiom \ref{ax:endpoints} (2), if the condition holds for some region $R_\nu$ bordering $p$, then it holds for every region that borders $p$.
\item In Axiom \ref{ax:endpoints} (3), in a neighborhood of a swallowtail point $p$, assume that a differential $d_0$ has been assigned to the region $R_0$ that borders $p$ in $N_-$.  Then, as long as handleslide arcs are placed in $N_+$ as specified by the axiom, there is always a unique way to assign differentials to regions in $N_+$ to produce an MCF near $p$.

\item See \cite[Section 6.1]{RuSu3} for a further discussion of constructions of MCFs.  
\end{enumerate}
\end{remark}

\begin{figure}

\quad

\quad

\labellist
\small
\pinlabel (1) [r] at -16 64
\tiny
\pinlabel $(m,j)$ [b] at 24 116
\pinlabel $(i,j)$ [b] at 68 124
\pinlabel $(i,m)$ [b] at 136 116
\small
\pinlabel $S_i$ [l] at 586 112
\pinlabel $S_m$ [l] at 586 64
\pinlabel $S_j$ [l] at 586 16
\pinlabel $x_2=a$ [l] at 146 16
\pinlabel $x_2=b$ [l] at 146 72
\endlabellist
\centerline{\includegraphics[scale=.5]{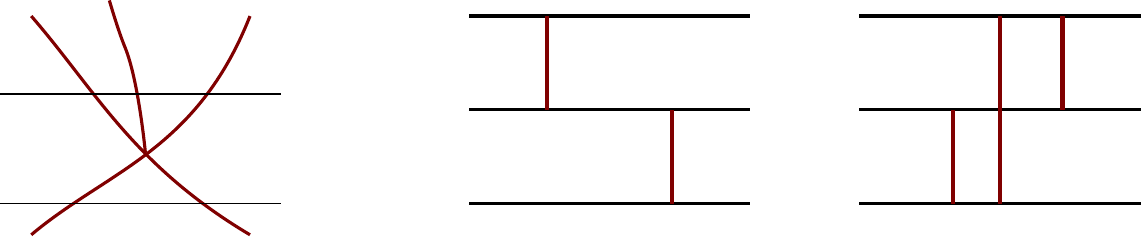}} 

\quad

\quad

\quad

\labellist
\small
\pinlabel (2) [r] at -16 64
\pinlabel $S_{u}$ [l] at 586 68
\pinlabel $S_{l}$ [l] at 586 34
\pinlabel $x_2=a$ [l] at 146 26
\pinlabel $x_2=b$ [l] at 146 82
\endlabellist

\centerline{\includegraphics[scale=.5]{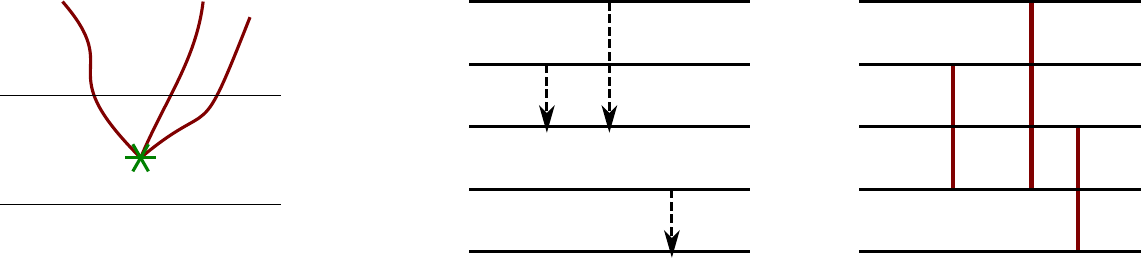}}

\quad

\quad

\quad

\labellist
\small
\pinlabel (3) [r] at -16 152
\pinlabel $x_1$ [t] at 32 -2
\pinlabel $x_2$ [r] at -2 32
\pinlabel $x_1$ [t] at 272 -2
\pinlabel $z$ [r] at 238 32
\pinlabel $x_2=a$ [l] at 146 74
\pinlabel $x_2=b$ [l] at 146 186

\endlabellist

\centerline{\includegraphics[scale=.5]{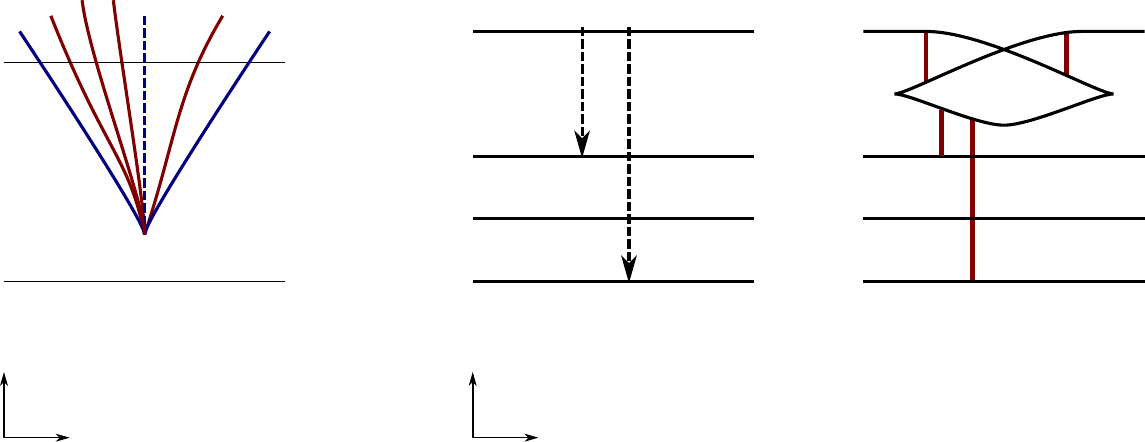}}

\caption{The three possible endpoints for handleslide arcs specified by Axiom \ref{ax:endpoints} (1)-(3).    The left column depicts the base projection (to $M$) of $H_0$ (in red), $H_{-1}$ (a green star) and $\Sigma_{\mathit{sing}}$, (in blue).  
The center and right column depict slices of the front projection, $\pi_{xz}(\Sigma)$, at $x_2=a$ and $x_2=b$ with dotted black arrows  from $S_i$ to $S_j$ used to indicate when $\langle d S_j, S_i \rangle = 1$.  In (3), a downward swallowtail point is pictured.
}   
\label{fig:Endpoints}
\end{figure}

Note that when $\Sigma$ and $M$ are $2$-dimensional and have boundary, an MCF $\mathcal{C}$ for $\Sigma$ restricts to an MCF for the $1$-dimensional Legendrian $\Sigma|_{\partial M} \subset J^1(\partial M)$.  

\begin{definition} \label{def:MCFequiv} Let $\mathcal{C}_0$ and $\mathcal{C}_1$ be two MCFs for a 1-dimensional Legendrian knot $\Lambda \subset J^1M$.  We say that $\mathcal{C}_0$ and $\mathcal{C}_1$ are {\bf equivalent} if there exists an MCF $\mathcal{C}$ for the product cobordism $j^1(1_{[0,1]}\cdot\Lambda)  \subset J^1([0,1]\times M )$ such that $\mathcal{C}|_{\{0\} \times M} = \mathcal{C}_0$ and $\mathcal{C}|_{\{1\} \times M} = \mathcal{C}_1$.  
\end{definition}

\begin{remark} \label{rem:6-13}
The statement of Definition \ref{def:MCFequiv} is slightly different than the definition of equivalence in \cite{Henry, HenryRu1, HenryRu2}.  These earlier works did not consider MCFs for $2$-dimensional Legendrians and instead present a collection of elementary moves on 1-dimensional MCFs with two MCFs $\mathcal{C}_0$ and $\mathcal{C}_1$ declared to be equivalent if they can be related by a sequence of elementary moves.  
 The two versions of the definitions are equivalent, since the elementary moves simply describe the bifurcations that occur along the slices, $\mathcal{C}|_{\{t\} \times M}$, of a generic $2$-dimensional MCF $\mathcal{C}$ for $j^1(1_{[0,1]}\cdot \Lambda)$ as $t$ increases from $0$ to $1$.  For instance, as numbered in \cite[Section 4.3]{HenryRu1},  Move 1 corresponds to passing a critical point of $t \circ H_0$; Moves 2-6 correspond to a transverse self intersection of $H_0$ with Move 6 demonstrating the handleslide endpoints required by Axiom \ref{ax:endpoints} (1); Moves 7, 8, and 10 are transverse intersections of $H_0$ with the base projection of a crossing arc of $j^1(1_{[0,1]}\cdot \Lambda)$; Moves 9 and 11 are transverse intersections of $H_0$ and the base projection of a cusp arc; and Move 15 corresponds to passing a super-handleslide point.  The Moves 12-14 do not occur in our case since we only consider MCFs with ``simple births and deaths'', and this does not affect the resulting equivalence classes.  Indeed, when two such MCFs with only ``simple births and deaths'' are related by a sequence of the Moves 1-15, possibly passing through some MCFs with non-simple births or deaths, we can always arrive at an alternate sequence of moves avoiding non-simple cusps by treating the implicit handleslides discussed in \cite{HenryRu1} as explicit handleslides located near a cusp.  See \cite[Proposition 3.17]{Henry}. 
\end{remark}

\subsection{MCFs and augmentations}  \label{sec:5-4}
The following proposition characterizes the (cellular) induced augmentation set of a compact Legendrian cobordism (see Section \ref{sec:AugSet} for the definition) in terms of MCFs.

\begin{proposition}  \label{prop:MCFcomp}   
\begin{enumerate} 
\item  For any $(\Lambda, \mathcal{E}, \mu)$ with $\Lambda \subset J^1M$ and $\dim \Lambda =1$,  there exists a canonical bijection 
\[
\widehat{\Phi} :  \mathit{MCF}_\rho(\Lambda)/{\sim} \rightarrow  \aug_\rho(\Lambda, \mathcal{E})/{\sim}
\]
between equivalence classes of $\rho$-graded MCFs for $(\Lambda, \mu)$ and DGA homotopy classes of $\rho$-graded augmentations of the cellular DGA $\mathcal{A}(\Lambda, \mathcal{E})$.  

\item  Let $\Sigma \subset J^1([0,1]\times M)$ be a compact $\rho$-graded Legendrian cobordism from $(\Lambda_0, \mu_0, \mathcal{E}_0)$ to $(\Lambda_1, \mu_1, \mathcal{E}_1)$.
Using the canonical bijections $\widehat{\Phi}:\mathit{MCF}_\rho(\Lambda_i)/{\sim} \stackrel{\cong}{\rightarrow} \aug_\rho(\Lambda_i, \mathcal{E}_i)/{\sim}$, the induced augmentation set 
\[
I^\comp_\Sigma \subset \aug_\rho(\Lambda_0, \mathcal{E}_0)/{\sim} \times \aug_\rho(\Lambda_1, \mathcal{E}_1)/{\sim}
\]
 viewed as a subset 
\[
I^{\mathit{MCF}}_\Sigma \subset \mathit{MCF}_\rho(\Lambda_0, \mathcal{E}_0)/{\sim} \times \mathit{MCF}_\rho(\Lambda_1, \mathcal{E}_1)/{\sim}
\]
satisfies
\[
I^{\mathit{MCF}}_\Sigma = \left\{ ([\mathcal{C}|_{\Lambda_0}], [\mathcal{C}|_{\Lambda_1}]) \, \middle|\, \mathcal{C} \in \MCF_\rho(\Sigma) \right\}.  
\]
\end{enumerate}
\end{proposition}

\dr{A detailed proof of this, including review of the relevant parts of \cite{RuSu3}, appears in the preprint version of this article; see  \cite[Proposition 5.17 and 5.19]{PanRu2Arxiv}.  We provide here a sketch.}

\begin{proof}[Sketch of proof]   \dr{Both (1) and (2) are consequences of a straightforward extension of the correspondences between MCFs and augmentations from \cite[Propositions 5.5 and 6.4]{RuSu3} for closed surfaces to the case of $2$-dimensional cobordisms.  These results produce augmentations of $\mathcal{A}(\Lambda, \mathcal{E})$ (that are viewed in \cite{RuSu3} as {\it Chain Homotopy Diagrams}) from MCFs that are suitably transverse to $\mathcal{E}$ (aka {\it nice} MCFs), and vice-versa.}  

\dr{In particular, in the $1$-dimensional case we have a map $\Phi$ from nice MCFs to augmentations of $\mathcal{A}(\Lambda, \mathcal{E})$.  That $\Phi$ induces a bijection $\widehat{\Phi}$ as in (1) is seen from applying the MCF/augmentation correspondences to the product cobordism $j^1(1_{[0,1]}\cdot \Lambda)$.  Indeed, the above Proposition \ref{prop:alg01} shows that two augmentations of $\mathcal{A}(\Lambda, \mathcal{E})$ are homotopic if and only if they extend to an augmentation of $j^1(1_{[0,1]}\cdot \Lambda)$, and this characterization matches the definition of equivalence for MCFs.  The second statement (2) follows since in the cobordism case the correspondences can be made to commute with the restrictions to the boundary components.}








\end{proof}

\subsection{$A$-form MCFs}\label{sec:Aform} From
Proposition \ref{prop:MCFcomp} and Corollary \ref{cor:LegInv}, we see that $I^{\mathit{MCF}}_\Sigma$ is invariant of $\Sigma$ up to Legendrian isotopy rel. $\partial \Sigma$.  Moreover, $I^{\mathit{MCF}}_\Sigma$  is equivalent in a canonical way to both the cellular induced augmentation set $I^{\mathit{cell}}_\Sigma$ and, via Corollary \ref{cor:augmentationset}, to the induced augmentation set
$I_{\Sigma}$ of the conical version of $\Sigma$, $G_{\mathit{std}}(\Sigma)$.  
 For computing $I^{\mathit{MCF}}_\Sigma$ in explicit examples, it is convenient to have an efficient characterization of the sets, $\mathit{MCF}_\rho(\Lambda)/{\sim}$, 
not requiring the cellular DGA which often has a large number of generators.   
For Legendrian links in $J^1\R$, work of Henry \cite{Henry, HenryRu2}, which we now review,  gives an explicit bijection $\MCF_\rho(\Lambda)/{\sim} \leftrightarrow \aug_\rho(\Lambda_\res)/{\sim}$ where $\Lambda_\res$ is the Ng resolution of $\Lambda$.  Recall that the DGA $\mathcal{A}(\Lambda_\res)$ has generators in bijection with the crossings and right cusps of the front projection of $\Lambda$; see \cite[Section 2]{NgCompute}.  

\begin{definition}[\cite{Henry}] \label{def:Aform}
A $\rho$-graded MCF $\mathcal{C}$ for a Legendrian link $\Lambda \subset J^1\R$ is in {\bf $A$-form} if its handleslides occur only in the following locations:
\begin{enumerate}
\item Handleslides connecting the two crossing strands appear immediately to the left of {\it some subset} of the crossings of $\pi_{xz}(\Lambda)$.
\item In the case $\rho=1$, handleslides connecting the two strands that meet at a right cusp appear immediately to the left of {\it some subset} of the right cusps of $\pi_{xz}(\Lambda)$.
\item Near a right cusp,  any number of additional handleslides may appear connecting the upper (resp. lower) strand of the right cusp to a strand above (resp. below) the right cusp.  These handleslides are located to the right of the handleslide from (2), if it exists.
\end{enumerate} 
See Figure \ref{fig:Aform}.
\end{definition}

\labellist
\small
\endlabellist

\begin{figure}
\centerline{ \includegraphics[scale=.6]{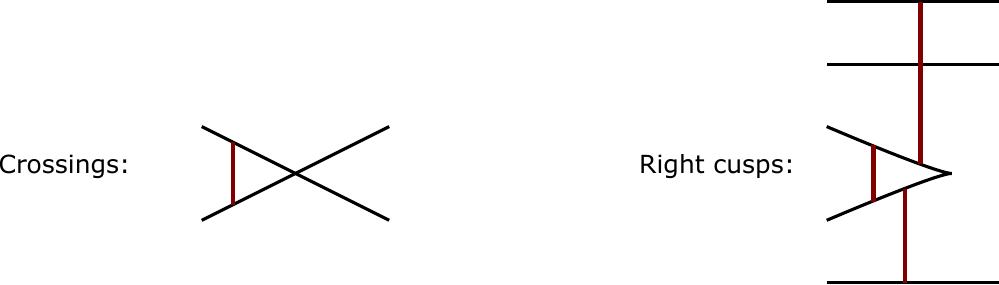} }
\caption{ The allowed locations for handleslides in an $A$-form MCF. }
\label{fig:Aform}
\end{figure}

\begin{remark}
The definition appears slightly different from that found in \cite{Henry, HenryRu2} as the handleslides from (3) are implicit handleslides in these references; see Remarks \ref{rem:6-8} (1) and \ref{rem:6-13}. Moreover, an $A$-form MCS is uniquely determined by its handleslides of type (1) and (2), since the handleslides of type (3) are determined by the form of the complexes $(R_\nu, d_\nu)$ near right cusps.
\end{remark}

Given an $A$-form MCF, $\mathcal{C}$, for $\Lambda \subset J^1\R$ we can define an algebra homomorphism
\[
\epsilon_{\mathcal{C}}: \mathcal{A}(\Lambda_\mathit{res}) \rightarrow \Z/2
\]
where for each crossing or right cusp $b_i \in \mathcal{A}(\Lambda_\mathit{res})$ we set $\epsilon_\mathcal{C}(b_i)$ to be $1$ (resp. $0$) if a handleslide connecting the two strands of the crossing or cusp appears (resp. does not appear) to the left of $b_i$.

\begin{proposition}[\cite{Henry, HenryRu2}] \label{prop:Aform}
For any $\Lambda \subset J^1\R$, the correspondence $\mathcal{C} \mapsto \epsilon_{\mathcal{C}}$ defines a surjective map from the set of $\rho$-graded $A$-form MCFs for $\Lambda$ to $\aug_\rho(\Lambda_\res)$.  Moreover, this induces a bijection
\[
\mathit{MCF}_\rho(\Lambda)/{\sim} \stackrel{\cong}{\rightarrow} \aug_\rho(\Lambda_\res)/{\sim}.
\]
\end{proposition}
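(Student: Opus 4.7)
The plan is to combine the bijection $\widehat{\Phi}:\mathit{MCF}_\rho(\Lambda)/{\sim} \stackrel{\cong}{\rightarrow} \aug_\rho(\Lambda, \mathcal{E})/{\sim}$ from Proposition \ref{prop:bijectMCFAug} with an identification $\aug_\rho(\Lambda, \mathcal{E})/{\sim} \cong \aug_\rho(\Lambda_{\res})/{\sim}$ coming from the fact that, for $\Lambda \subset J^1\R$, the Ng resolution DGA $\mathcal{A}(\Lambda_{\res})$ is stable tame isomorphic to $\mathcal{A}(\Lambda)$ (and hence to $\calg(\Lambda, \mathcal{E})$ by Proposition \ref{prop:Fisomorph}). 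The core task is to show that the explicit combinatorial map $\mathcal{C} \mapsto \epsilon_{\mathcal{C}}$ realizes this composition for $A$-form MCFs.

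\emph{Step 1: Every MCF is equivalent to an $A$-form MCF.} Starting from an arbitrary $\mathcal{C} \in \MCF_\rho(\Lambda)$, I would construct a $2$-dimensional MCF on the product cobordism $j^1(1_{[0,1]}\cdot\Lambda)$ whose handleslide arcs migrate each $1$-dimensional handleslide to an allowed $A$-form location. The relevant moves are: handleslides sufficiently far from singularities may be translated freely and combined using Axiom \ref{ax:endpoints}(1); a handleslide between crossing strands can be pushed to just left of the crossing (Move types 7, 8, 10 of \cite{HenryRu1}); and handleslides entering a right cusp are absorbed into prescribed type (3) configurations, since the complexes near cusps are rigidly constrained by Axiom \ref{ax:1dim}(3). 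This gives the $A$-form reduction of \cite[Prop.~3.17]{Henry}.

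\emph{Step 2: Surjectivity onto $\aug_\rho(\Lambda_{\res})$.} Given $\epsilon:\mathcal{A}(\Lambda_{\res})\to\Z/2$, I would produce the corresponding $A$-form MCF by placing a type (1) or (2) handleslide immediately left of each crossing or right cusp $b$ with $\epsilon(b) = 1$. Starting from the trivial complex in the leftmost region and processing the front diagram left to right, the differentials $\{d_\nu\}$ are determined inductively by Axiom \ref{ax:1dim}, with the type (3) handleslides near each right cusp added as forced by the resulting local complex. The nontrivial point is that the resulting $d_\nu$ remains strictly upper triangular when crossing each double point; this condition translates precisely into $\epsilon\circ\partial_{\Lambda_{\res}} = 0$, via Ng's combinatorial formula for $\partial_{\Lambda_{\res}}$ in which each disk is read off as a sequence of crossings traversed between cusps.

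\emph{Step 3: Bijectivity on equivalence classes.} I would fix a compatible polygonal decomposition $\mathcal{E}$ for $\Lambda$ with $0$-cells at crossings and cusps and $1$-cells along the intervening arcs. For such $\mathcal{E}$, the cellular augmentation $\widehat{\Phi}(\mathcal{C})$ associated to an $A$-form MCF is nontrivial only on the $1$-cells immediately left of crossings and right cusps, and one checks directly that the composition $\calg(\Lambda,\mathcal{E})\to \mathcal{A}(\Lambda_{\res})$ obtained by quotienting out the acyclic generators coming from arc $1$-cells and cusp $0$-cells (via repeated application of Proposition \ref{prop:quotient}) is a stable tame isomorphism sending $\widehat{\Phi}(\mathcal{C})$ to $\epsilon_{\mathcal{C}}$. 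Combining this with Proposition \ref{prop:bijectMCFAug} then gives the claimed bijection on equivalence classes.

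The main obstacle is Step 3: verifying that the stable tame isomorphism between $\calg(\Lambda,\mathcal{E})$ and $\mathcal{A}(\Lambda_{\res})$ really pulls back the Chekanov-Eliashberg augmentation condition to the cellular one in such a way that $\widehat{\Phi}(\mathcal{C})\mapsto \epsilon_{\mathcal{C}}$ on the nose. Equivalently, one must check that every word appearing in $\partial b$ for $b\in\mathcal{A}(\Lambda_{\res})$ is reproduced as a composition of handleslide matrix coefficients recorded by $\mathcal{C}$; this matches disks in $\Lambda_{\res}$ with strand-by-strand propagation of the complexes $\{d_\nu\}$, which is the geometric content of \cite{HenryRu2} and reduces to a careful local analysis at each crossing and cusp.
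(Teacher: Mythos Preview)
The paper's proof is simply a citation: Henry \cite{Henry} constructs a map $\widehat{\Psi}:\MCF_\rho(\Lambda)/{\sim}\to\aug_\rho(\Lambda_{\res})/{\sim}$ directly (not via the cellular DGA), shows it is surjective and that every MCF is equivalent to an $A$-form MCF, and verifies $\widehat{\Psi}([\mathcal{C}])=[\epsilon_{\mathcal{C}}]$ for $A$-form $\mathcal{C}$; then \cite{HenryRu2} supplies injectivity. Your approach is genuinely different: you attempt to route through the cellular DGA, composing the bijection $\widehat{\Phi}$ of Proposition~\ref{prop:bijectMCFAug} with a stable tame isomorphism $\calg(\Lambda,\mathcal{E})\cong\mathcal{A}(\Lambda_{\res})$.

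The gap is in Step~3. You claim that quotienting $\calg(\Lambda,\mathcal{E})$ by ``acyclic generators from arc $1$-cells and cusp $0$-cells'' yields $\mathcal{A}(\Lambda_{\res})$. But the cellular DGA has a generator $a^\alpha_{p,q}$ or $b^\beta_{p,q}$ for \emph{every} ordered pair of sheets above each cell, not just the two crossing or cusp sheets; so the number of generators vastly exceeds that of $\mathcal{A}(\Lambda_{\res})$, and there is no evident two-by-two acyclic cancellation of the sort Proposition~\ref{prop:quotient} handles that reduces to the resolution DGA. The known route is indirect (cellular $\cong$ LCH via Proposition~\ref{prop:Fisomorph}, then LCH $\cong$ resolution via Ng), and tracking an augmentation through that composite does not obviously produce $\epsilon_{\mathcal{C}}$ on the nose. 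Your final appeal to \cite{HenryRu2} is also off-target: that paper proves injectivity of Henry's direct map $\widehat{\Psi}$, not the compatibility between your $\widehat{\Phi}$ and any cellular-to-resolution isomorphism. In short, Steps~1--2 are essentially Henry's argument, while Step~3 replaces Henry's explicit $\widehat{\Psi}$ with a composite whose crucial identification you have not established.
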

\begin{proof}
In \cite{Henry}, a map $\widehat{\Psi}:\mathit{MCF}_\rho(\Lambda)/{\sim} \rightarrow \aug_\rho(\Lambda_\res)/{\sim}$ is defined and shown to be surjective.  Moreover, this map has the property that when $\mathcal{C}$ is an $A$-form MCF we have $\widehat{\Psi}([\mathcal{C}]) = [\epsilon_\mathcal{C}]$, and it is shown that every MCF is equivalent to an $A$-form MCF.  Finally, in \cite{HenryRu2} it is shown that $\widehat{\Psi}$ is injective. 
\end{proof}

\section{Examples}  \label{sec:example}

In this section, we apply the methods developed in Section \ref{sec:MCF} to compute induced augmentations and augmentation sets for some particular Legendrian fillings.  In Section \ref{sec:6-1}, we give examples of oriented Legendrian fillings inducing augmentations that cannot be induced by any oriented 
embedded Lagrangian fillings.  In Section \ref{sec:6-2}, we prove Theorem \ref{thm:1-4} by constructing $2n$ Legendrian fillings of the Legendrian torus knot, $T(2,2n+1)$, each with a single degree $0$ Reeb chord, and distinguished by their induced augmentation sets.

\subsection{Initial examples}  \label{sec:6-1}

Before launching into examples, we summarize the method that we use for computing induced augmentations and induced augmentation sets arising from a (compact) Legendrian filling, $\Sigma$.  For a $1$-dimensional Legendrian, $\Lambda$, with admissible transverse decomposition $\mathcal{E}_\pitchfork$, Proposition \ref{prop:MCFcomp} and Corollary \ref{cor:augmentationset} provide a commutative diagram of bijections and inclusions
\begin{equation}  \label{eq:}
\begin{array}{ccccc}
\mathit{MCF}_\rho(\Lambda)/{\sim} & \cong & \mathit{Aug}_\rho(\Lambda, \mathcal{E}_{||})/{\sim} & \cong & \mathit{Aug}_\rho(\widetilde{\Lambda})/{\sim} \\  
\cup & & \cup & & \cup \\
I^\mathit{MCF}_\Sigma  & \cong & I^\comp_\Sigma & \cong & I_{\widetilde{\Sigma}}
\end{array}
\end{equation} 
where, in the top row, from left to right the sets are the equivalence classes of MCFs of $\Lambda$; homotopy classes of augmentations of the cellular DGA of $(\Lambda, \mathcal{E}_{||})$; and homotopy classes of augmentations of the LCH DGA $\mathcal{A}(\widetilde{\Lambda})$ where $\widetilde{\Lambda}$ is Legendrian isotopic to $\Lambda$.  (It is the standard geometric model from Section \ref{sec:4-5}.)  The bottom row consists of the 
(equivalent) versions of the induced augmentation set in these three different settings (where $\widetilde{\Sigma}= G_{\mathit{std}}(\Sigma)$ is $\Sigma$ with its positive end extended to become a conical cobordism).  
The MCFs in $I^\mathit{MCF}_\Sigma$ are those that arise from restricting a $2$-dimensional MCF, $\mathcal{C}_\Sigma$, for $\Sigma$ to $\Lambda$.   
Thus, elements of $I^\mathit{MCF}_\Sigma$ (and so also induced augmentations in $I_{\widetilde{\Sigma}}$) may be constructed by specifying a $2$-dimensional MCF for $\Sigma$, and then observing its restriction to $\Lambda$.  
To keep track of the different MCFs of $\Lambda$ we use the further bijection from Proposition \ref{prop:Aform} with augmentations of the Ng resolution,
\[
\mathit{MCF}_\rho(\Lambda)/{\sim} \, \cong \, \mathit{Aug}(\Lambda_{res})/{\sim}
\]
following Section \ref{sec:Aform}.
That is, once a $2$-dimensional MCF has been restricted to the Legendrian boundary $\Lambda$, if necessary, we transform it by an equivalence into $A$-form to arrive at 
an augmentation of $\Lambda_\mathit{res}$.
 
For the rest of the section, all of the augmentations we talk about are of $\Lambda_\mathit{res}$, and we will refer to Reeb chords of $\Lambda_\mathit{res}$ as crossings and right cusps of the front projection.

\begin{example} An augmentation that can be induced by a Legendrian filling
for a knot that does not have any oriented embedded Lagrangian fillings.
\end{example}

The work \cite[Section 8.3]{PanRu1} gives an infinite family of such knots without explicitly computing the induced augmentations. One of them is  the knot $4_1$ shown in Figure \ref{fig:4_1}
where a conical Legendrian filling is constructed by a clasp move  (see Figure \ref{fig:PinchMoves} below for a discussion of local moves for constructing Legendrian cobordisms) followed by Reidemeister I moves and an unknot move; the pictures are slices of the front projection of the corresponding compact filling, $\Sigma \subset J^1([0,1]\times M)$, with $t \in [0,1]$ constant.  As shown in \cite[Section 8.3]{PanRu1}, the clasp move can be realized by a conical Legendrian cobordism with a single Reeb chord, so that a conical version of $\Sigma$ (after an appropriate Legendrian isotopy) is a disk with a single double point in  the Lagrangian projection.
Figure \ref{fig:4_1} indicates the MCF for $\Sigma$ as a  movie of handleslides.  Note that the handleslides that appear after the Reidemeister I moves are as required in Axiom \ref{ax:endpoints} (3).    
Using the correspondence between A-form  MCFs and augmentations of $\Lambda_\mathit{res}$, the induced augmentation $\e_1$ sends
the crossings $b_1$ and $b_2$ to $1$ and all other Reeb chords to $0$.
Note that a $\Z$-valued Maslov potential for $4_1$ extends (uniquely) over $\Sigma$, and the MCF for $\Sigma$ is $0$-graded.   Correspondingly, $\epsilon_1$ is the unique
$0$-graded augmentation of $\Lambda_\mathit{res}$.
As discussed in \cite[Section 8.3]{PanRu1}, the $4_1$ knot does not 
have  any oriented embedded Lagrangian fillings due to a restriction on the Thurston-Bennequin number.

\begin{figure}[!ht]
\labellist
\small
\pinlabel $b_1$ at 95 45
\pinlabel $b_2$ at 95 63 
\pinlabel $\downarrow$ at 175  65
\pinlabel $\downarrow$ at 175  -5
\pinlabel $\emptyset$ at 175 -20
\endlabellist
\includegraphics[width=3in]{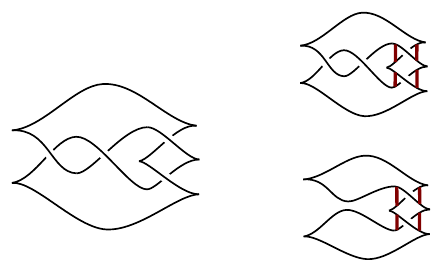}
\vspace{0.2in}

\caption{A conical Legendrian filling of Legendrian $4_1$ constructed by a movie. The arrow indicates the negative $t$ direction. Red vertical line segments indicates handle slides in the MCF.}
\label{fig:4_1}
\end{figure}

We note that $4_1$ does admit {\it non-orientable} embedded Lagrangian fillings.  Such a filling, constructed via applying two pinch moves (see Figure  \ref{fig:PinchMoves}) and then an unknot move, is pictured in Figure \ref{fig:4_1NonOrient}.  It induces the $1$-graded augmentation $\e_2$ that sends  $b_1, b_2, a$, and $c$ to $1$ and other Reeb chords to $0$.

\begin{question}  Can the augmentation $\e_1$ induced by the Legendrian filling from Figure \ref{fig:4_1} also be induced by a non-orientable embedded Lagrangian filling?
\end{question}
We suspect that the answer is no, but we do not know of any obstructions to such a non-orientable filling that are currently available in the literature.

\begin{figure}[!ht]
\labellist
\small
\pinlabel $a$ [t] at 18 32
\pinlabel $c$ [l] at 90 32 
\pinlabel $\emptyset$ at 268 36
\endlabellist
\includegraphics[width=3in]{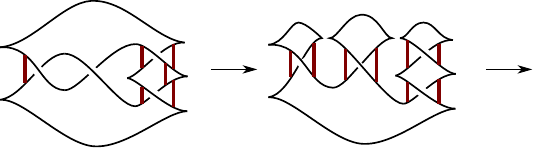}
\vspace{0.2in}

\caption{A non-orientable conical Legendrian filling of Legendrian $4_1$ with embedded Lagrangian projection.  A $1$-graded MCF inducing the augmentation $\e_2$ is pictured.}
\label{fig:4_1NonOrient}
\end{figure}

\begin{example} Two augmentations for the same knot, one can be induced by an embedded Lagrangian filling and one cannot.
\end{example}

Figure \ref{fig:8_21} shows a Legendrian of knot type
 $m(8_{21})$.
  The knot has a genus one embedded Lagrangian filling constructed by doing three pinch moves (see Figure \ref{fig:PinchMoves}) and closing up the two disjoint unknots
(see the first row in Figure \ref{fig:8_21}).
The associated MCF induces the augmentation $\epsilon_1$ that sends $b_1, b_4, b_5$ to $1$ and others to $0$.
 
 \begin{figure}[!ht]
\labellist
\small
\pinlabel $b_1$ at 35 85
\pinlabel $b_2$ at 70 90
\pinlabel $b_3$ at  115 88
\pinlabel $b_4$ at 35 35
\pinlabel $b_5$ at  73 37
\pinlabel $b_6$ at 113 37

\pinlabel $b_1$ at 35 248
\pinlabel $b_2$ at 70 253
\pinlabel $b_3$ at  115 253
\pinlabel $b_4$ at 35 200
\pinlabel $b_5$ at  73 198
\pinlabel $b_6$ at 113 198

\pinlabel $\emptyset$ at 600 60
\pinlabel $\emptyset$ at 600 220
\endlabellist
\hspace{-1in}\includegraphics[width=5in]{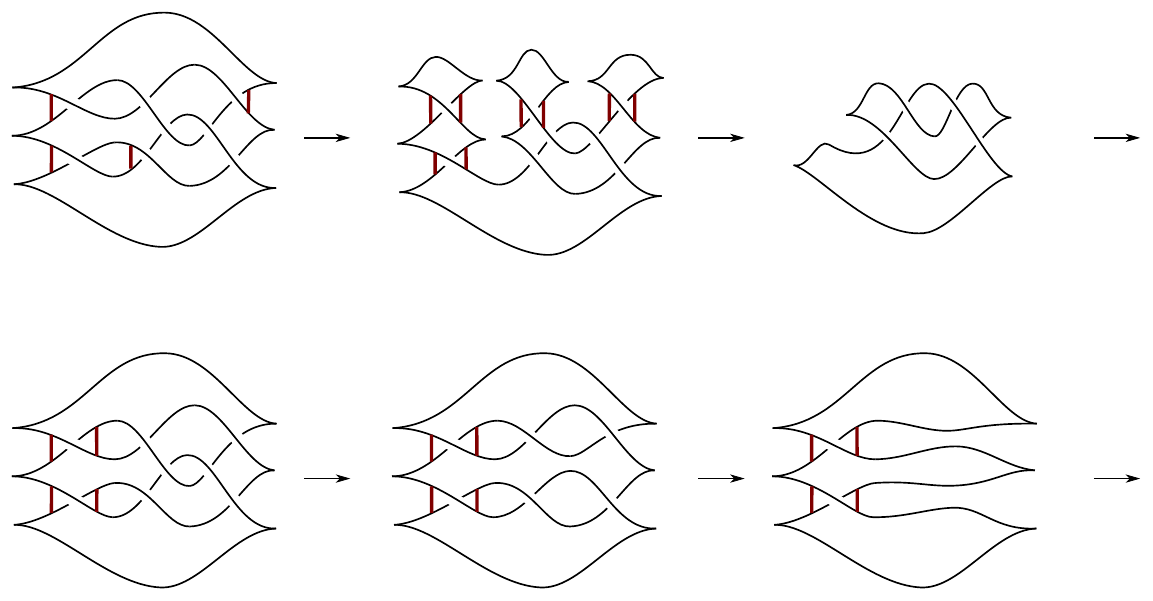}
\caption{The first row gives an embedded Lagrangian filling of $m(8_{21})$ constructed by a movie. The second row gives a conical Legendrian filling of $m(8_{21})$ through a movie. The arrow indicates negative $t$ direction.}
\label{fig:8_21}
\end{figure}

 One the other hand, one can construct a conical Legendrian filling through doing three clasp moves and then close up the unknot as shown in the movie in the second row of Figure \ref{fig:8_21}.
 This is a Legendrian disk with three Reeb chords. 
 The associated MCF as  shown in Figure \ref{fig:8_21} induces the augmentation
 $\e_2$ that sends $b_1,b_2,b_4$ and $b_5$ to $1$ and other Reeb chords to $0$.
 This augmentation cannot be induced by an embedded orientable Lagrangian filling since its linearized homology has Poincare polynomial
$t^{-1}+4+2t$.
 By Seidel's Isomorphism \cite{Ekh, DR}, if an augmentation was induced by an embedded orientable genus $g$ filling, using the $\Z/2$-grading on linearized homology arising from a $\Z/2$-valued Maslov potential on the filling (such a Maslov potential exists by orientability, see Remark \ref{rem:orient}), its Poincare polynomial with $\Z/2$-grading should be $t+2g$.

\subsection{Proof of Theorem \ref{thm:1-4}} \label{sec:6-2}

The theorem provides examples of knots with an arbitrary  (finite) number of immersed fillings, all having isomorphic DGAs,  
but distinguished by induced augmentation sets.

\medskip

\noindent {\bf Theorem \ref{thm:1-4}.}
{\it For each $n \geq 1$, there exists $2n$ distinct conical Legendrian fillings, $\Sigma_1, \ldots, \Sigma_{2n}$ of the max-$\mathit{tb}$ Legendrian torus knot $T(2, 2n+1)$
such that 
\begin{itemize}
\item[(i)] the $\Sigma_i$ are all orientable with genus $n-1$ and have $\Z$-valued Maslov potentials,
\item[(ii)] each $\Sigma_i$ has a single Reeb chord of degree $0$,
\item[(iii)] and the induced augmentation sets satisfy $I_{\Sigma_i} \neq I_{\Sigma_j}$ when $i \neq j$.
\end{itemize}
}

\medskip

\begin{figure}[!ht]
\labellist
\small
\pinlabel $b_1$ at  20 98
\pinlabel $b_2$ at  38 98
\pinlabel $b_3$ at 55 98
\pinlabel $b_{2n+1}$ at 150 98
\pinlabel $(a)$ at 80 60
\pinlabel $(b)$ at  280 100
\pinlabel $(c)$ at 280 -5
\pinlabel $c_1$ at 225 145
\pinlabel $c_{2n-1}$ at 353 143
\endlabellist
\includegraphics[width=6in]{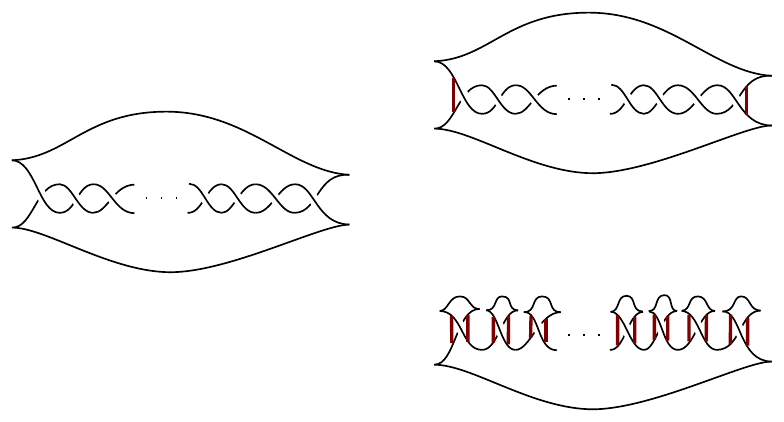}
\vspace{0.1in}

\caption{Part $(a)$ is a Legendrian $(2,2n+1)$ torus knot and $(b)$ is a Legendrian $(2, 2n-1)$ torus knot.  Part $(c)$ is the unknot got after doing $2n-2$ pinch moves to $T(2, 2n-1)$.}
\label{fig:torus}
\end{figure}

Note that each of the DGAs $\mathcal{A}(\Sigma_i)$ is $\Z$-graded with a single generator in degree $0$ and the differential is necessarily zero for grading reasons.
Thus, they are all isomorphic.

\begin{proof}

Consider the Legendrian torus knot $T(2, 2n+1)$ as shown in Figure \ref{fig:torus} part $(a)$. There are $2n$ ``eye shapes'' that one can do a clasp move on to get a $(2,2n-1)$ Legendrian torus knot as shown in part $(b)$.
One can then do $2n-2$ pinch moves to the $T(2,2n-1)$ knot and close the unknot in part $(c)$.
Let $\Sigma_i$ be the conical Legendrian filling of $T(2,2n+1)$ that is constructed by performing the first clasp move at the ``eye shape" between $b_i$ and $b_{i+1}$.
According to the MCF shown in Figure \ref{fig:torus} part $(b)$ and $(c)$, the pinching procedure on $T(2,2n-1)$ induces an augmentation of $T(2,2n-1)$ that only sends $c_1$ to $1$ and all others to $0$.

The clasp move produces a degree $0$ Reeb chord, $a$, (see Proposition 8.3 of \cite{PanRu1} for the degree computation) of the resulting conical Legendrian surface $\Sigma_i$, and the rest of the cobordism can be constructed without Reeb chords. Thus, $\alg(\Sigma_i)$ has  two augmentations, $a \mapsto 0$ and $a \mapsto 1$, and it follows that $I_{\Sigma_{i}}$ can have at most two elements.  In fact, $I_{\Sigma_{i}}$ does have two elements induced by the two  MCFs of the conical Legendrian surface shown in Figure \ref{fig:claspMCF}.  
For a subset $I$ of the set $\{1, \cdots 2n+1\}$, denote by $\e_{I}$ the algebra map from $\alg(T(2,2n+1))$ to $\Z/2$ that only sends $\{b_i| i\in I\}$ to $1$ and all other Reeb chords to $0$.
Then, Figure \ref{fig:claspMCF} shows $$I_{\Sigma_i}=\begin{cases}\{\e_{\{1\}}, \e_{\{3\}}\}, & \mbox{ if }i=1\\
\{\e_{\{1\}}, \e_{\{1,i, i+2\}}\}, & \mbox{ if }1<i<2n\\
\{\e_{\{1\}}, \e_{\{1, 2n\}}\}, & \mbox{ if }i=2n.
\end{cases}$$ 

\begin{figure}[!ht]
\labellist
\small
\pinlabel $(a)$ at 25 -5
\pinlabel $(b)$ at 105 -5
\pinlabel $(c)$ at 180 -5

\pinlabel $b_1$ at 13 111
\pinlabel $b_2$ at 33 111
\pinlabel $b_3$ at 43 111
\pinlabel $c_1$ at  43 65

\pinlabel $b_1$ at 13 47
\pinlabel $b_2$ at 33 47
\pinlabel $b_3$ at 43 47
\pinlabel $c_1$ at  43 3

\pinlabel $b_1$ at 88 111
\pinlabel $b_i$ at 95 111
\pinlabel $b_{i+1}$ at 115 111
\pinlabel $b_{i+2}$ at 125 111
\pinlabel $c_1$ at  88 65
\pinlabel $c_{i}$ at  125 65

\pinlabel $b_1$ at 88 47
\pinlabel $b_i$ at 95 47
\pinlabel $b_{i+1}$ at 115 47
\pinlabel $b_{i+2}$ at 125 47
\pinlabel $c_1$ at  88 3
\pinlabel $c_{i}$ at  125 3

\pinlabel $b_1$ at 160 111
\pinlabel $b_{2n-1}$ at 168 111
\pinlabel $b_{2n}$ at 177 111
\pinlabel $b_{2n+1}$ at 200 111
\pinlabel $c_1$ at  162 65
\pinlabel $c_{2n-1}$ at  172 65

\pinlabel $b_1$ at 160 47
\pinlabel $b_{2n-1}$ at 168 47
\pinlabel $b_{2n}$ at 177 47
\pinlabel $b_{2n+1}$ at 200 47
\pinlabel $c_1$ at  162 3
\pinlabel $c_{2n-1}$ at  172 3

\endlabellist
\includegraphics[width=6in]{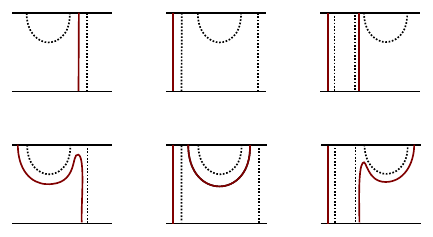}
\vspace{0.1in}

\caption{Two MCFs for the clasp move constructing $\Sigma_i$ pictured in the base projection.  These MCFs extend the MCF from Figure \ref{fig:torus} (b) and (c) to all of $\Sigma_i$.  Columns $(a), (b),(c)$ are for $i=1$, $1<i<2n$ and $i=2n$, respectively. The black dotted lines represent double points (in the front projection) and red lines represent handleslides.  }
\label{fig:claspMCF}
\end{figure}

\end{proof}
\section{Every augmentation is induced by an immersed filling}
\label{sec:aug}

With the characterization of induced augmentation sets for Legendrian cobordisms in terms of MCFs now in place, the present section establishes Theorem \ref{thm:main} from the introduction.  For convenience, we repeat the statement. 

\medskip

\noindent {\bf Theorem 1.2.} {\it
Let $\Lambda \subset J^1\R$ have the $\Z/\rho$-valued Maslov potential $\mu$ where $\rho \geq 0$, and  let $\epsilon: \mathcal{A}(\Lambda) \rightarrow \Z/2$ be any $\rho$-graded augmentation. } 
\begin{enumerate}
\item {\it If $\rho \neq 1$, there exists a conical Legendrian filling $\Sigma$  of $\Lambda$ with $\Z/\rho$-valued Maslov potential extending $\mu$ together with a $\rho$-graded augmentation $\alpha:\alg(\Sigma) \rightarrow \Z/2$ such that $\epsilon \simeq \epsilon_{(\Sigma, \alpha)}$.  Moreover, if $\rho$ is even, then $\Sigma$ is orientable.}
\item {\it If $\rho =1$, then there exists a conical Legendrian cobordism $\Sigma: U \rightarrow \Lambda$ where $U$ is the standard Legendrian unknot with $\mathit{tb}(U) = -1$ together with an augmentation $\alpha:\alg(\Sigma) \rightarrow \Z/2$ such that $\epsilon \simeq f^*_\Sigma \alpha$.}
\end{enumerate}

\medskip

\begin{proof}[Proof of Theorem \ref{thm:main}]  Note that the orientability of $\Sigma$ when $\rho$ is even follows from the existence of the $\Z/\rho$-valued Maslov potential on $\Sigma$.  See Remark \ref{rem:orient}.

In view of Corollary \ref{cor:LegIsotopyAug}, if $\Lambda_1$ and $\Lambda_2$ are Legendrian isotopic and the statement holds for $\Lambda_1$, then it also holds for $\Lambda_2$.  Therefore, we can replace $\Lambda$ with its standard geometric model $G_{\mathit{std}}(\Lambda, \mathcal{E}_\pitchfork)$ with respect to some admissible transverse decomposition (see Section \ref{sec:4-5}), so that  Corollary \ref{cor:augmentationset} and Proposition \ref{prop:MCFcomp} apply to reduce Theorem \ref{thm:main} to \dr{Proposition \ref{prop:MCSextend} below}.

\end{proof}

\begin{proposition} \label{prop:MCSextend} Let  $\mathcal{C}_\Lambda$ be any $\rho$-graded MCF for $\Lambda \subset J^1\R$ with respect to the Maslov potential $\mu$.  
\begin{enumerate}
\item  If $\rho \neq 1$, then there exists a compact Legendrian filling, $\Sigma \subset J^1([0,1] \times \R )$, of $\Lambda$ with a Maslov potential extending $\mu$ such that  $\mathcal{C}_\Lambda$ can be extended to a $\rho$-graded MCF, $\mathcal{C}_\Sigma$, on $\Sigma$.  
\item If $\rho =1$, then the same statement holds except that $\Sigma$ is a compact Legendrian cobordism from the Legendrian unknot $U$ to $\Lambda$.
\end{enumerate}
\end{proposition}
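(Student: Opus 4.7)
The plan is to induct on the combinatorial complexity of the front projection of $\Lambda$, reducing to simpler Legendrian links by attaching elementary slabs of cobordism that simultaneously extend the MCF $\mathcal{C}_\Lambda$ across them. As a preliminary, I would convert $\mathcal{C}_\Lambda$ into Henry's $SR$-form, in which handleslides are organized into a rigid pattern near crossings and right cusps. In $SR$-form the MCF is specified by a small combinatorial data set, making it possible to prescribe explicitly the handleslide arcs and super-handleslide points appearing on the newly attached slab, and to recognize which local simplification of the front projection is appropriate at each step.

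The inductive step should consist of one of a handful of local moves that simplify $\Lambda$ while extending the MCF across a thin slab of $\Sigma$. At each crossing $b$ of $\Lambda$, depending on whether $\mathcal{C}_\Lambda$ carries a handleslide immediately to the left of $b$, I would attach either an embedded pinch (saddle) cobordism or a clasp cobordism that produces a Reeb chord of $\Sigma$ of degree $|b|$; in the latter case the configuration of handleslide arcs emanating from the super-handleslide point inserted above the clasp is dictated by Axiom~\ref{ax:endpoints}(2). At a right cusp, I would perform a Reidemeister--I capping that removes the cusp, placing the handleslide arcs around the resulting swallowtail point as required by Axiom~\ref{ax:endpoints}(3). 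After each local move, the MCF induced on the top of the slab should be renormalized back into $SR$-form so the induction continues. The base case is a disjoint union of standard unknots carrying trivial MCFs, each closed off by a standard filling disk.

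The case distinction between $\rho = 1$ and $\rho \neq 1$ arises at exactly one place: an $SR$-form unknot with a handleslide between the two strands of its right cusp cannot be capped, since by the definition of a handleslide its endpoints must have equal Maslov potential, while the two cusp strands differ by $1$ modulo $\rho$. This is compatible only when $\rho = 1$. For $\rho \neq 1$, the $SR$-form reduction never produces such a configuration and the induction terminates at $\emptyset$, yielding a filling. For $\rho = 1$, one such unknot may survive the reduction, and I would leave it as the negative boundary component $U$ of $\Sigma$, producing a cobordism $\Sigma\colon U \rightarrow \Lambda$ rather than a filling. Throughout, the $\Z/\rho$-valued Maslov potential extends naturally across each elementary slab because pinches, clasps, and cusp caps are all locally modeled by standard pieces that carry canonical Maslov potentials.

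The main obstacle I expect is the compatibility bookkeeping at each inductive step: one must check that after inserting the prescribed super-handleslides, handleslide arcs, and (in the cusp-capping step) swallowtail points, the resulting configuration really satisfies both Axiom~\ref{ax:1dim} and Axiom~\ref{ax:endpoints} on the new slab, and in particular that every complex $(V(R_\nu), d_\nu)$ stays strictly upper triangular. The delicate point is that the handleslide arcs forced by Axiom~\ref{ax:endpoints} near clasps and swallowtails may themselves propagate to produce further handleslides in the top boundary MCF; $SR$-form was designed precisely to constrain this propagation, but verifying that the propagated data can always be normalized back into $SR$-form without introducing an obstructing cusp-handleslide is the core combinatorial argument, and will be handled by casework on the type of the local move.
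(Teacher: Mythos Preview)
Your high-level skeleton matches the paper's: convert to $SR$-form, induct on front complexity, extend the MCF across elementary cobordism slabs, and isolate the $\rho=1$ obstruction at an unknot whose cusp strands carry a handleslide. But the inductive step you propose does not work as stated, and the paper's actual mechanism is substantially different.

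Your plan is to visit each crossing $b$ and apply ``either an embedded pinch (saddle) cobordism or a clasp cobordism,'' depending on whether a handleslide sits to its left. Neither option does what you need in general. A pinch/saddle at $b$ is available (with Maslov potential extending) only when the two crossing strands are paired by the ruling, i.e.\ when $b$ is a switch; for departures and returns the strands are not companions and (B4) does not apply. A clasp move, on the other hand, is a move on a \emph{pair} of crossings of the same two strands and cannot be applied at an isolated crossing; there is no ``clasp at $b$'' that removes a single crossing while creating one Reeb chord. Likewise, ``Reidemeister-I capping that removes the cusp'' is not a move: R-I removes a cusp together with an adjacent crossing in a specific configuration, not an arbitrary right cusp. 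So as written there is no well-defined simplification at each step, and hence no termination argument beyond the vague ``combinatorial complexity.''

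What the paper actually does is more delicate. It writes the front word in the form
\[
w_\Lambda = X\, l_k\, \sigma_{k-1}\cdots\sigma_{k-s}\,\sigma_{k+1}\cdots\sigma_{k+t}\, Y
\]
with $Y$ free of left cusps, and runs a nested induction on $(\text{number of left cusps},\, |Y|)$. The first letter of $Y$ is analyzed in ten cases. The crucial non-isotopy move is not a clasp but the \emph{Cusp Tangency Move} (B3): when the crossing adjacent to $l_k$ is a departure (so no handleslide), one slides the left cusp through it, which is realized by a Reidemeister II followed by a Clasp Move in the direction that removes two crossings; when that crossing is a switch, one pinches there (B4) and then applies R-I. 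Repeating this pushes $l_k$ toward a right cusp or absorbs it, reducing either $|Y|$ or the cusp count. The Unknot Move (B5) closes off standard unknot components; for $\rho=1$ several such unknots may survive with cusp-handleslides, and the paper then joins them into a single $U$ by further pinch moves---a step your sketch omits (you say ``one such unknot may survive'').

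In short: your strategy is on the right track conceptually, but the specific local moves you propose are not valid simplifications. The missing idea is the Cusp Tangency Move and the word-based induction that organizes when to pinch versus when to slide the cusp through a crossing.
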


We will prove Proposition \ref{prop:MCSextend} at the end of this section after establishing some preliminaries about extending MCFs over various elementary cobordisms.

\subsection{Normal rulings and $SR$-form MCFs}

In \cite{Henry}, Henry introduced a class of MCFs (the ``$SR$-form MCFs'') for $1$-dimensional Legendrian links that are in a clear many-to-one correspondence with the normal rulings of $\Lambda$.  Moreover, Henry proved that any MCF is equivalent to an $SR$-form MCF.  It will be convenient to make use of this result in proving Proposition \ref{prop:MCSextend}, so we briefly discuss the relevant definitions.

See any of \cite{HenryRu1, Fuchs, Sabloff} for a detailed definition of {\bf normal rulings} for Legendrian links in $J^1\R$.  Here, we recall that 
a normal ruling,  $\sigma$, for $\Lambda \subset J^1 \R$ (where $\Lambda$ has generic front and base projections) may be viewed as a continuous family of fixed point free involutions, $\sigma_{x_0}: \Lambda \cap \{x = x_0\} \rightarrow \Lambda \cap \{x = x_0\}$, defined for $x_0 \in \R \setminus \Lambda_{\mathit{sing}}$, i.e. for all values of $x_0$ such that $\pi_{xz}(\Lambda)$ has no crossing or cusp along the vertical line $x=x_0$.  For each connected component $R_\nu \subset \R \setminus \Lambda_{\mathit{sing}}$ there is a resulting involution on the sheets of $\Lambda$ above $R_\nu$, notated simply as 
\[
\sigma: \{S^\nu_i\} \rightarrow \{S^\nu_i\}, 
\]
which partitions $\{S^\nu_i\}$ into a collection of disjoint pairs (because of the fixed point free condition). 
  Moreover, if two regions $R_i$ and $R_{i+1}$ share a border at a cusp or crossing, then the pairings of sheets above $R_i$ and $R_{i+1}$ are required to be related in a standard way.  In particular, sheets that meet at a cusp point are paired near the cusp point, and every crossing of $\pi_{xz}(\Lambda)$ is either a {\bf departure}, a {\bf return}, or a {\bf switch} for $\sigma$ as in Figure \ref{fig:Rulings}.
The {\bf degree} of a crossing of $\pi_{xz}(\Lambda)$ with respect to a $\Z/\rho$-valued Maslov potential $\mu$ is defined to be the difference $\mu(u) - \mu(l) \in \Z/\rho$ where $u$ and $l$ denote the upper and lower strands at the left side of the crossing.  
We say that $\sigma$ is {\bf $\rho$-graded} with respect to $\mu$ if all switches of $\sigma$ have degree $0$ mod $\rho$. 
 On each region $R_\nu \subset \R \setminus \Lambda_{\mathit{sing}}$, we can use $\sigma$ to define a {\bf standard ruling differential} 
\begin{equation}  \label{eq:standardruling}
d^\sigma_\nu:V(R_\nu)  \rightarrow V(R_\nu), \quad d^\sigma_\nu S^\nu_j = \left\{ \begin{array}{cr} S^\nu_i, & \sigma(S^\nu_j) = S^\nu_i \, \mbox{and} \, z(S^\nu_i) > z(S^\nu_j), \\ 0, & \mbox{else}. \end{array} \right.
\end{equation}

\labellist
\small
\endlabellist

\begin{figure}
\centerline{ \includegraphics[scale=.6]{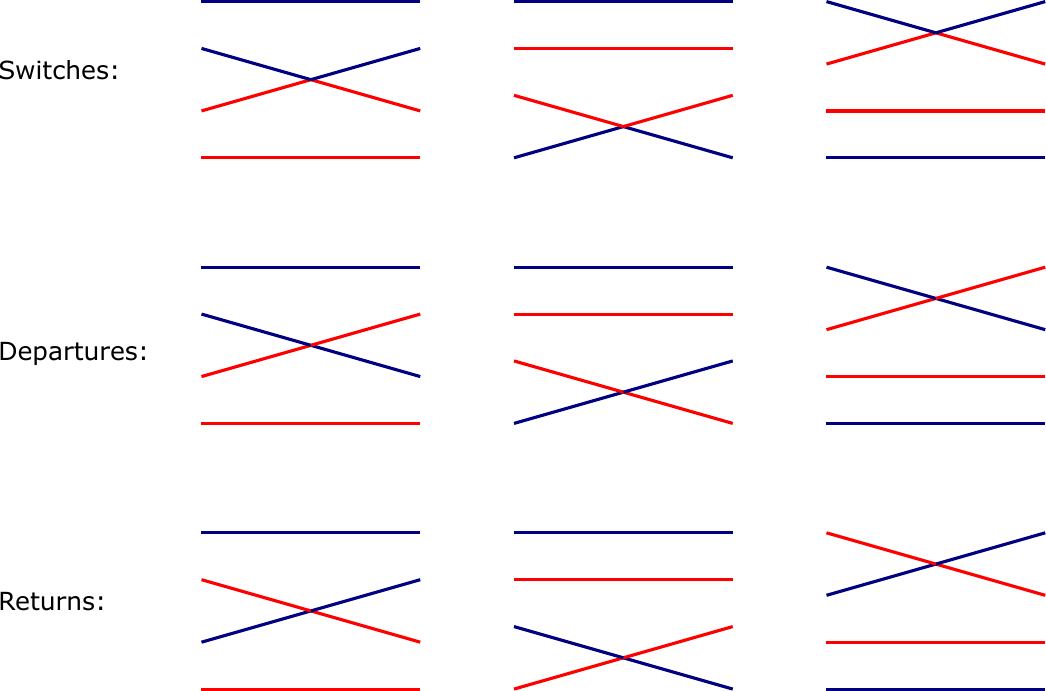} }
\caption{ Near a crossing the two sheets that cross cannot be paired with one another by a normal ruling $\sigma$.  The figure indicates the allowed configurations for the crossing sheets and their companion sheets.  (There may be other sheets of $\Lambda$ in between those pictured.) }
\label{fig:Rulings}
\end{figure}

\begin{definition}
A $\rho$-graded MCF $\mathcal{C} = (H, \{d_\nu\})$ for a $1$-dimensional Legendrian link $\Lambda$ is in {\bf $SR$-form} with respect to a $\rho$-graded normal ruling $\sigma$ for $\Lambda$ if the only handleslides of $\mathcal{C}$ are as follows:
\begin{enumerate}
\item  At \emph{every} switch of $\sigma$, handleslides are placed as specified in Figure \ref{fig:SRForm}.
\item  At \emph{some subset} of the degree $0$ (mod $\rho$) returns
 of $\sigma$, handleslides are placed as in Figure \ref{fig:SRForm}.
\item  In the $1$-graded case, at \emph{some subset} of the right cusps of $\Lambda$, a single handeslide connects the two cusp strands near the cusp.
\end{enumerate}
\end{definition}

\begin{figure}
\centerline{ \includegraphics[scale=.6]{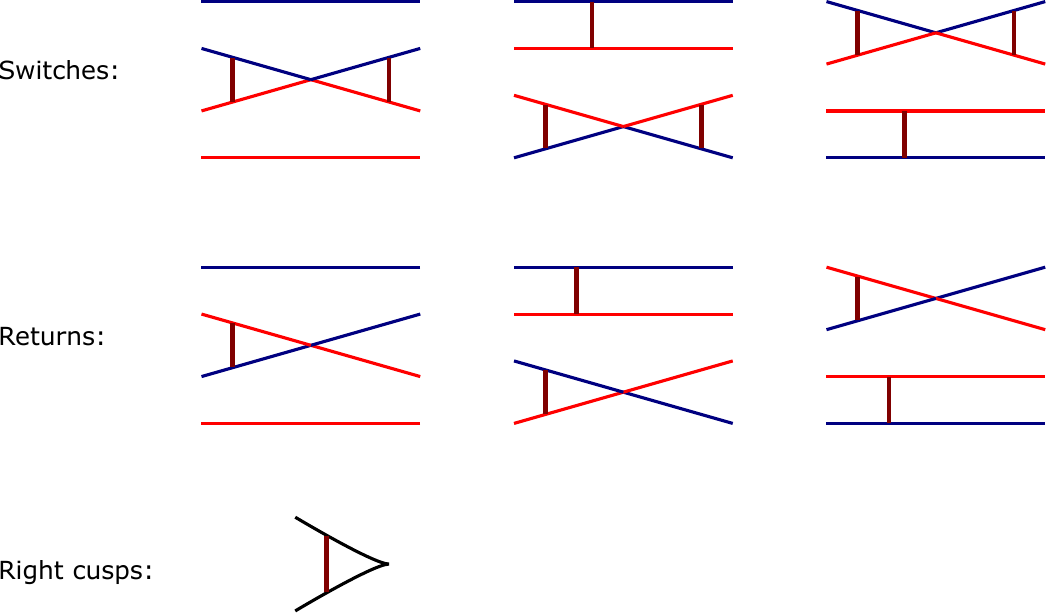} }
\caption{ The placement of handleslides near switches and some subset of the degree $0$ returns of $\sigma$ for an SR-form MCF.  When $\rho = 1$, handleslides are also placed next to some collection of right cusps as indicated.}
\label{fig:SRForm}
\end{figure}

\begin{remark}  \label{rem:SRForm}
\begin{enumerate}
\item It can be shown that for any $\rho$-graded normal ruling $\sigma$ of $\Lambda$ and any chosen subset of the $\rho$-graded returns of $\Lambda$ (and also of the right cusps of $\Lambda$ in the $1$-graded case), there is an $SR$-form MCF with handleslide set as in Figure \ref{fig:SRForm}.  
\item Whenever $\mathcal{C}= (H, \{d_\nu\})$ is an SR-form MCF and  $R_\nu \subset \R \setminus( H \cup \Lambda_{\mathit{sing}})$ is a region outside of the collection of handleslides  near switches, returns, and cusps, the differential $d_\nu$ agrees with the standard ruling differential $d^\sigma_\nu$ defined in (\ref{eq:standardruling}).
\end{enumerate}
\end{remark}

The following proposition is Theorem 6.17 from \cite{Henry}. 
\begin{proposition} \label{prop:SRform} Any $\rho$-graded MCF on a Legendrian link $\Lambda \subset J^1\R$ is equivalent to a SR-form MCF with respect to some $\rho$-graded normal ruling of $\Lambda$.
\end{proposition}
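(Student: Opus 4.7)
The plan is to reduce $\mathcal{C} = (H,\{d_\nu\})$ to $SR$-form by a finite sequence of MCF equivalences, reading off the normal ruling $\sigma$ from the simplified MCF. Recall from Remark \ref{rem:6-13} that MCF equivalence is generated by a finite list of local moves (Moves 1--15 of \cite{HenryRu1}), each realized as a bifurcation in a $2$-dimensional MCF on the cylinder $[0,1]\times\R$. These moves allow one to create or cancel handleslide pairs (Move 1), combine or split handleslides at transverse double points of $H_0$ (Moves 2--6), push handleslides through crossings (Moves 7, 8, 10), and push handleslides through cusps (Moves 9, 11). A key background fact used throughout is that in each region $R_\nu$, any strictly upper triangular chain isomorphism factors as a product of handleslide maps in many equivalent ways, and Moves 2--6 realize any two such factorizations as equivalent MCFs.

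The simplification would proceed in two stages. First, using this factorization flexibility together with Moves 1--6, I would push all handleslides out of the bulk so that they are concentrated in arbitrarily small neighborhoods of the singular points of $\pi_{xz}(\Lambda)$. Second, I would normalize the handleslide configuration in each such neighborhood into a short list of canonical local forms: near a crossing, using Moves 7, 8, 10, these are schematically the three patterns of Figure \ref{fig:Rulings}, together with the switch pattern of Figure \ref{fig:SRForm}; near a cusp, Axiom \ref{ax:1dim} (3) already forces a split structure on $d_\nu$, and the only local freedom is an optional handleslide between the two cusp strands at a right cusp (allowed when $\rho=1$). With the canonical forms in place, one defines $\sigma$ by classifying each crossing as a departure, return, or switch according to its local form, pairing cusp strands at each cusp, and extending by continuity across handleslide arcs and non-singular regions.

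The main obstacle is verifying that $\sigma$ thus defined is a genuine global normal ruling: the local sheet pairings must propagate consistently throughout $\Lambda$ and respect the axioms of Figure \ref{fig:Rulings} at every crossing. This reduces, via Axiom \ref{ax:1dim}, to a region-by-region comparison of $d_\nu$ with the standard ruling differential $d^\sigma_\nu$ from (\ref{eq:standardruling}); at each singular point, consistency forces the form of the differential on the adjacent region, and the global ruling emerges by induction on the singular points ordered from left to right. A final round of equivalence moves then matches the handleslides exactly to the patterns of Figure \ref{fig:SRForm}, yielding an $SR$-form MCF in the equivalence class $[\mathcal{C}]$ together with a compatible subset of degree-$0$ returns (and, when $\rho=1$, a subset of marked right cusps).

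An attractive alternative route, which bypasses the explicit global verification, combines Proposition \ref{prop:bijectMCFAug} with the already known correspondence between $\rho$-graded augmentations of $\Lambda_\mathit{res}$ and pairs consisting of a $\rho$-graded normal ruling together with a subset of marked degree-$0$ returns (and marked right cusps when $\rho=1$). Given $[\mathcal{C}]$, one reads off the corresponding augmentation $[\epsilon_\mathcal{C}]$, selects the normal ruling $\sigma$ and marking data matching $[\epsilon_\mathcal{C}]$, constructs any $SR$-form MCF $\mathcal{C}'$ realizing these data as in Remark \ref{rem:SRForm} (1), and then invokes Proposition \ref{prop:bijectMCFAug} together with Proposition \ref{prop:Aform} to conclude $\mathcal{C}\sim \mathcal{C}'$. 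The price paid here is that one must first independently establish the augmentation-to-ruling correspondence, whose combinatorial content is essentially equivalent to the direct simplification argument sketched above.
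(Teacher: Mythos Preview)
The paper does not prove this proposition; it simply cites it as Theorem 6.17 of \cite{Henry}. So there is no in-paper argument to compare against, and your task is really to reconstruct Henry's proof.

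Your first sketch is headed in the right direction and matches the spirit of Henry's argument, but the step you label ``normalize the handleslide configuration in each such neighborhood into a short list of canonical local forms'' is precisely where the content lies, and you have not supplied it. At a crossing one must show that after pushing handleslides, the pair of complexes on either side can always be brought into one of the three normal-ruling configurations of Figure \ref{fig:Rulings} (or the switch pattern). This is not automatic: it requires an inductive argument showing that in each region the differential $d_\nu$ is already the standard ruling differential $d^\sigma_\nu$ for a fixed-point-free involution $\sigma$, and that this involution transforms correctly across each crossing. In Henry's paper this is a careful left-to-right induction that simultaneously builds $\sigma$ and simplifies the handleslides; your outline gestures at this but does not carry it out.

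Your second approach is genuinely circular, not merely expensive. Proposition \ref{prop:Aform} is itself proved (in \cite{Henry}) by passing through $SR$-form: one first shows every MCF is equivalent to an $SR$-form MCF, and then that $SR$-form MCFs are equivalent to $A$-form MCFs. Likewise the ``already known correspondence'' between augmentations and pairs (ruling, marked returns) that you invoke is exactly the output of the $SR$-form theorem, not an independent input. So this route cannot be used to establish Proposition \ref{prop:SRform}; your closing caveat is correct but understates the problem.
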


\subsection{Construction of MCFs}  \label{sec:Construct}
 The following results from \cite{Henry} and \cite{RuSu3} are useful for constructing MCFs.

\begin{proposition}  \label{prop:Handleslide}
Let $\Lambda \subset J^1M$ be a $1$-dimensional Legendrian, and let $\mathcal{C}_0 = (H, \{d_\nu\})$ be an $\rho$-graded MCF for $\Lambda$.  
\begin{enumerate}
\item Suppose that $H'$ is a handleslide set for the product cobordism $j^1(1_{[0,1]}\cdot\Lambda) \subset J^1([0,1] \times M)$ as in Definition \ref{def:MCF2d} (2) (and without  super-handleslide points) that agrees with $H$ when restricted to 
  $\{0\} \times \Lambda $ and satisfies Axiom \ref{ax:endpoints}.  Then, there is a unique $\rho$-graded MCF for $j^1(1_{[0,1]}\cdot\Lambda)$ that agrees with $\mathcal{C}_0$ on $\{0\}\times \Lambda$ and has handleslide set $H'$.
 \item Construct
a handleslide set $H'$ for $j^1(1_{[0,1]}\cdot\Lambda) \subset J^1([0,1] \times M)$ by extending each handleslide point $p$ of $\mathcal{C}_0$ to a handleslide arc along $[0,1] \times \{p\}$, and for some region $R_\nu \subset M \setminus (\Lambda_{\mathit{sing}} \cap H)$ and some $i < j$ such that $\mu(S_i^\nu) = \mu(S_j^\nu)-1$ placing a single $(i,j)$-super handleslide point, $q$, in the interior of $[0,1]\times R_\nu$ and adding handleslide arcs in $[0,1] \times R_\nu$ as specified by Axiom \ref{ax:endpoints} (2) that connect $q$ to $\{1\} \times R_\nu$ with monotonically increasing $[0,1]$ component. Then, there is a unique $\rho$-graded MCF for $j^1(1_{[0,1]}\cdot\Lambda)$ that agrees with $\mathcal{C}_0$ on $\{0\} \times \Lambda$ and has handleslide set $H'$.
\end{enumerate}

\end{proposition}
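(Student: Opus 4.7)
The plan is to build the extending MCF on $[0,1]\times M$ by specifying a differential $d_\mu$ on each connected component $R_\mu$ of $([0,1]\times M)\setminus(H'\cup\Sigma_{\mathit{sing}})$, where $\Sigma_{\mathit{sing}}=[0,1]\times\Lambda_{\mathit{sing}}$.  The given MCF $\mathcal{C}_0$ determines $d_\mu$ on each region adjacent to $\{0\}\times M$, and Axiom \ref{ax:1dim} forces $d_\mu$ on the neighbor across any 1-stratum of $H'\cup\Sigma_{\mathit{sing}}$.  Since each region can be joined to one at $\{0\}\times M$ by a path crossing finitely many 1-strata, the data is fully determined, giving uniqueness in both (1) and (2).

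For existence the issue is that this propagation be globally consistent, i.e.\ that the transition rules of Axiom \ref{ax:1dim} compose to the identity around every small loop encircling a 0-stratum of $H'\cup\Sigma_{\mathit{sing}}$.  The 0-strata in the interior of $[0,1]\times M$ are of four types: (a) transverse self-intersections of $H'$ covered by Axiom \ref{ax:endpoints} (1); (b) transverse intersections of a handleslide arc with a crossing arc of $\Sigma_{\mathit{sing}}$; (c) transverse intersections of a handleslide arc with a cusp arc; and, in part (2) only, (d) the super-handleslide point $q$ with its attached handleslide arcs.  Cases (a)--(c) each reduce to a direct elementary-matrix identity: in (a), when a $(u_1,l_1)$- and a $(u_2,l_2)$-arc meet with $l_1=u_2$, the extra $(u_1,l_2)$-arc produced by Axiom \ref{ax:endpoints} (1) encodes exactly the commutator relation $h_{u_1,l_1}h_{u_2,l_2}=h_{u_2,l_2}h_{u_1,l_2}h_{u_1,l_1}$; while in (b), (c) the handleslide map manifestly commutes with the transposition of non-participating sheets at a crossing, respectively with the cusp inclusion and quotient of Axiom \ref{ax:1dim} (3).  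I expect each of these checks to be a short matrix calculation with no genuine obstruction.

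The crux is case (d), and this is the main obstacle.  By the construction of $H'$ in part (2), the slab $[0,1]\times R_\nu$ contains no other handleslides or singular arcs between $\{0\}\times R_\nu$ and $q$, so the differential immediately below $q$ and in the two far-outer regions above $q$ (all path-connected around $q$) is exactly the original $d_\nu$ of $\mathcal{C}_0$.  Hence well-definedness near $q$ amounts to showing that the product $h$ of the handleslide maps $h_{u,j}$ and $h_{i,l}$ encountered as one sweeps across the fan of arcs above $q$ commutes with $d_\nu$.  Writing these maps as $I+E_{u,j}$ and $I+E_{i,l}$ with $i<u<l<j$, all pairwise products of the $E$'s vanish (the constraints $i<u$, $l<j$, and $u<l$ force all intermediate index matches to fail), so
\[
h \;=\; I+\sum_{j} E_{u,j}+\sum_{i} E_{i,l} \;=\; I+[E_{u,l},d_\nu],
\]
where the second equality is a direct reading of the rows and columns of $[E_{u,l},d_\nu]$ against the prescription of Axiom \ref{ax:endpoints} (2).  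The required identity $h\,d_\nu=d_\nu\,h$ is then equivalent to $[[E_{u,l},d_\nu],d_\nu]=0$, which in turn reduces to $d_\nu^2=0$ via the Jacobi identity in characteristic $2$.  A parallel check shows that each intermediate differential across the fan remains strictly upper triangular.  This establishes consistency at $q$, completes the existence proof of (2), and recovers (1) as the special case in which case (d) does not appear.
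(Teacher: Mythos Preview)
Your argument is correct and is essentially the direct verification that the paper defers to the literature: the paper's own proof simply cites Proposition~3.8 and Move~17 of Henry together with Proposition~6.3 of \cite{RuSu3}, which establish precisely that the slice differentials can be propagated across each generic bifurcation of $H'$.  Your case-by-case monodromy check around the $0$-strata is exactly this propagation argument made explicit, and your treatment of case~(d) via the identity $h=I+[E_{u,l},d_\nu]$ and $[[E_{u,l},d_\nu],d_\nu]=E_{u,l}d_\nu^2+d_\nu^2E_{u,l}=0$ is a clean repackaging of Henry's Move~17.

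One small point worth tightening: in cases~(a)--(c) you write ``I expect each of these checks to be a short matrix calculation,'' which is true but leaves them unperformed.  For~(b) you should note that the handleslide arc cannot involve \emph{both} crossing sheets (since the lift condition $z(u(s))>z(l(s))$ would fail on one side of the crossing), and then the relevant identity is $Q_{k,k+1}h_{a,b}=h_{\sigma(a),\sigma(b)}Q_{k,k+1}$ with $\sigma$ the transposition, which is immediate.  For~(c) the lift condition similarly forces neither endpoint to lie on a cusp sheet, whence the handleslide map and the cusp extension of Axiom~\ref{ax:1dim}~(3) act on disjoint summands.  With these remarks your proof is complete and more self-contained than the paper's.
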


\begin{proof}  In all cases, it needs to be shown that it is possible to produce the required collection of differentials $\{d_\nu\}$ extending the given ones and so that Axiom \ref{ax:1dim} is satisfied.  (1) and (2) are both consequences of Proposition 3.8 from \cite{Henry} which shows that the differentials $\{d_\nu\}$ can be extended from $[0, t_0-\epsilon]\times M$ to $[0, t_0+\epsilon]\times M$ when a generic bifurcation of the $t$-slices of $H$ occurs at $t= t_0$.  (2) corresponds to  Henry's Move 17 in \cite{Henry}.  Alternatively, see Proposition 6.3 of \cite{RuSu3}.
\end{proof}

Next, we make a sequence of observations (B1)-\dr{(B6)} about extending MCFs along various types of elementary cobordisms that will form   the building blocks for the proof of Proposition \ref{prop:MCSextend}.  In all cases it should be observed that the Maslov potential for the given $1$-dimensional Legendrian extends in a unique way over the elementary cobordism, though we mostly leave this implicit.  Moreover, although we omit the terminology, all MCFs under consideration are $\rho$-graded.   
\begin{enumerate}
\item[{\bf (B1)}]  {\it If $\mathcal{C}_0$ and $\calC_1$ are equivalent MCFs on $\Lambda \subset J^1\R$, then there exists a MCF $\mathcal{C}$ on $j^1(1_{[0,1]}\cdot\Lambda) \subset J^1([0,1]\times \R)$ that restricts to $\mathcal{C}_i$ above $\{i\} \times \R$, for $i=0,1$.}
\end{enumerate}
\begin{proof}
This is the definition of equivalence.
\end{proof}

\begin{enumerate}
\item[{\bf (B2)}] {\it Let $\Lambda_t$, $0 \leq t \leq 1$ be a Legendrian isotopy, and consider the Legendrian surface $\Sigma \subset J^1([0,1] \times \R )$ such that  the $\{t\}\times \R$ slices  of the front projection
of $\Sigma$ are the front projections of the $\Lambda_t$.  Any MCF on $\Lambda_0$ extends over $\Sigma$.}
\end{enumerate}

\begin{figure}
\centerline{ \includegraphics[scale=.5]{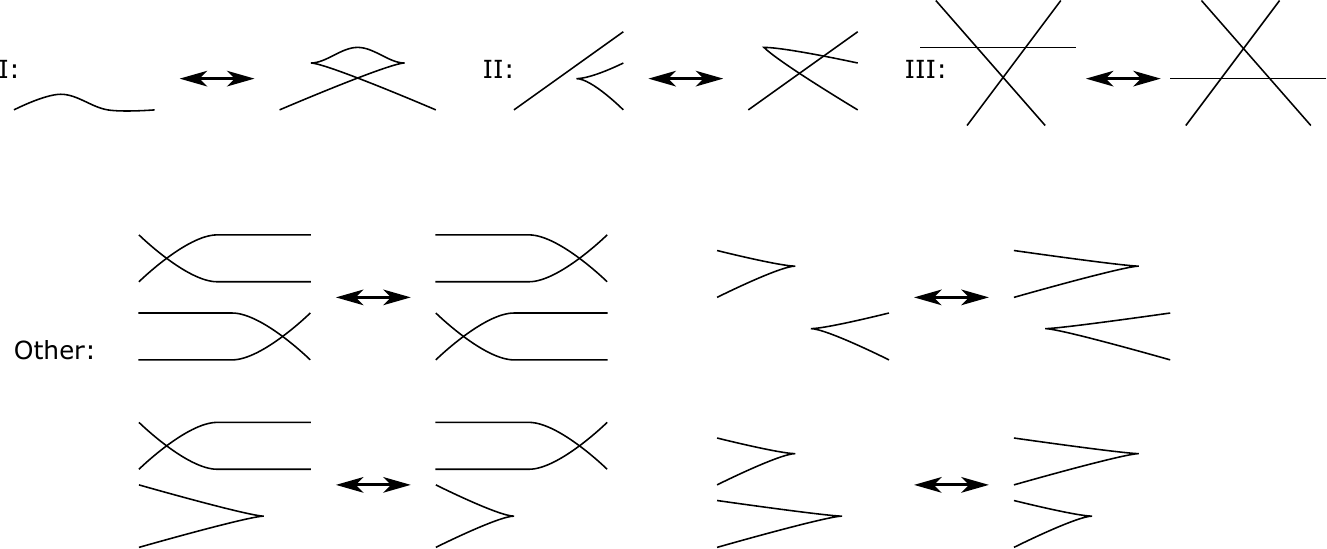}}
\caption{ Generic bifurcations of the front and base projections during a Legendrian isotopy include the Reidemeister moves I-III, and instances where two front singularities share the same $x$-coordinate (marked ``Other'' in the figure).  Vertical and horizontal reflections of all moves are allowed.  }
\label{fig:Moves}
\end{figure}

\begin{proof}
It suffices to consider the case where the isotopy contains a single bifurcation of the front or base projection, i.e. a single Reidemeister move or an instance at which two singularities (crossings or cusps) at different locations in the front diagram share the same $x$-coordinate.  See Figure \ref{fig:Moves}.

Before applying the move, we apply (B1) together with Henry's result (Proposition \ref{prop:SRform}) to assume that the given MCF $\mathcal{C}_0$ is in SR-form with respect to some normal ruling $\sigma$.  
\begin{itemize}
\item For a Type III move or any of the ``Other'' moves (both directions):  Use Proposition \ref{prop:Handleslide} to first move all of the  handleslides outside of the $x$-interval that contains the pictured part of the diagram where the move occurs.  Then, we apply the move, and check that there is a unique way to assign differentials to the new region(s) that arise from the move so that Axiom \ref{ax:1dim} is satisfied.  

This is a straightforward case-by-case check that we illustrate by considering the Type III move in detail.  (Other cases are left to the reader.)  The $t$-slices before (resp. after)  the move occurs intersect a sequence of $4$ regions of $\pi_x(\Sigma) \setminus (\Sigma_{\mathit{sing}} \cup H)$, which we label $R_A$, $R_B$, $R_C$, and $R_D$ (resp. $R_A$, $R_{B'}$, $R_{C'}$, and $R_D$);  see Figure \ref{fig:Type3}.  The differentials on $R_A$, $R_B$, $R_C$, and $R_D$ are already specified by $\mathcal{C}_0$.  Suppose that the three sheets that intersect at the triple point are numbered as $S^\nu_k, S^\nu_{k+1}, S^{\nu}_{k+2}$ above these regions.  (Recall that above any particular region we label sheets with decreasing $z$-coordinate, eg. the sheet labeled $S^A_k$ becomes $S^B_{k+1}$ when it passes through the crossing locus.)  Write $Q_{i\,i+1}$ for a linear map that interchanges the $i$ and $i+1$ sheets.  According to Axiom \ref{ax:1dim} (2) we must define the differentials on $R_{B'}$ and $R_{C'}$ so that the following maps are chain isomorphisms,
\begin{equation} \label{eq:QdA}
Q_{k+1\,k+2}: (V(R_{A}), d_A) \stackrel{\cong}{\rightarrow} (V(R_{B'}), d_{B'}), \quad \mbox{and} \quad Q_{k\,k+1}: (V(R_{B'}), d_{B'}) \stackrel{\cong}{\rightarrow} (V(R_{C'}), d_{C'}).
\end{equation}
In order for this to define a valid MCF we only need to check that Axiom \ref{ax:1dim} (2) is satisfied along the border between $R_{C'}$ and $R_D$, i.e., we need to show that 
\[
Q_{k+1\,k+2}:(V(R_{C'}), d_{C'}) \rightarrow (V(R_D), d_D) 
\]
is a chain map.  In view of (\ref{eq:QdA}), this is equivalent to the composition $Q_{k+1\,k+2} \circ Q_{k\, k+1} \circ Q_{k+1 \,k+2}: (V(R_A), d_A) \rightarrow (V(R_D), d_D)$ being a chain isomorphism. Since we know that Axiom \ref{ax:1dim} is satisfied by $\mathcal{C}_0$, when we pass from $R_A$ to $R_D$ by way of $R_B$ and $R_C$ we see that $Q_{k \, k+1} \circ Q_{k+1 \, k+2} \circ Q_{k\,k+1}:(V(R_A), d_A) \rightarrow (V(R_D), d_D)$ is a chain isomorphism, so we can just note the braid relation $Q_{k\,k+1} \circ Q_{k+1\, k+2} \circ Q_{k \,k+1} = Q_{k+1 \, k+2} \circ Q_{k \, k+1} \circ Q_{k+1\,k+2}$.

\labellist
\small
\pinlabel $x$ [l] at 38 2
\pinlabel $z$ [b] at 2 38
\pinlabel $x$ [l] at 476 2
\pinlabel $t$ [b] at 442 38
\pinlabel $R_A$  at 504 64
\pinlabel $R_{B}$ [b] at 534 0
\pinlabel $R_{B'}$ [t] at 534 134
\pinlabel $R_C$ [b] at 578 0
\pinlabel $R_{C'}$ [t] at 578 134
\pinlabel $R_D$ at 604 64

\endlabellist

\begin{figure}
\centerline{ \includegraphics[scale=.6]{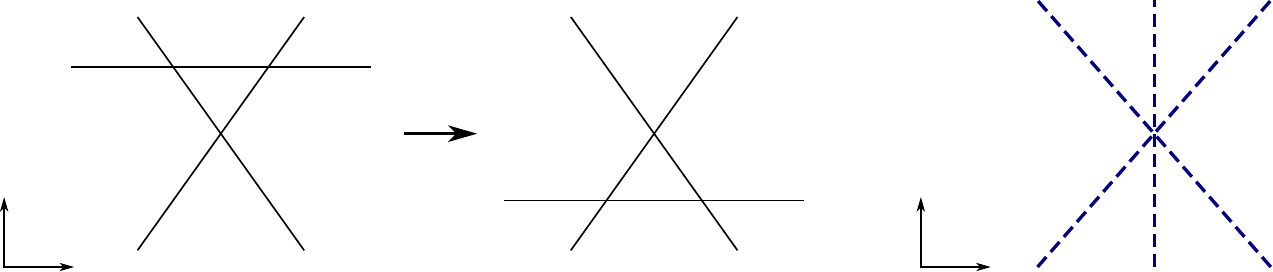} }
\caption{ Labeling of the regions in the base projection near a triple point.}
\label{fig:Type3}
\end{figure}

\item For a Type I move ($\leftarrow$ direction):   A crossing, $b$, a left cusp, $c_l$, and a right cusp, $c_r$, of $\Lambda_0$ all vanish at a swallowtail point, $s$, during the move.   The crossing, $b$, must be a switch for $\sigma$, so under the SR-form assumption, \dr{when $\rho\neq 1$}, the handleslides appearing near the crossing are exactly those required to have endpoints at the swallowtail point by Axiom \ref{ax:endpoints} (3).  Extend these to handleslide arcs with endpoints at $s$.  Let $\partial_{\nu_l}$ (resp.  $\partial_{\nu_r}$) denote the differential from $\mathcal{C}_0$ assigned to the region $R_{\nu_l}$ (resp. $R_{\nu_r}$) that borders $c_l$ on the left (resp. $c_r$ on the right).  After the Type I move, $R_{\nu_l}$ and $R_{\nu_r}$ merge to become a single region, so it is important to note that $\partial_{\nu_l}$ and $\partial_{\nu_r}$ agree.  This is the case since by Remark \ref{rem:SRForm} (2), they both agree with the standard ruling differential for $\sigma$. 

\dr{When  $\rho=1$, the $SR$-form may also have a handleslide connecting the strands of $c_r$, and this requires a preliminary step to remove the handleslide.   This is done using Proposition \ref{prop:Handleslide} to alter the handleslide set via the sequence of adjustments pictured in  \cite[Figure 16]{PanRu3}.}

\item For a Type I move ($\rightarrow$ direction):  To extend $\mathcal{C}_0$, we just add additional handleslide arcs with end points at the swallowtail point as required by Axiom \ref{ax:endpoints}.  Differentials for the new regions can be defined since the handleslide set on the slices after the swallow tail point occurs is in SR-form for the ruling $\sigma'$ obtained from $\sigma$ by making the new crossing into a switch.  Alternatively, see Proposition 6.2 of \cite{RuSu3}.

\item For a Type II move ($\leftarrow$ direction): Once again, the strategy is to use an equivalence to move all of the handleslides out of the $x$-inteval, $I$, where the front diagram is pictured and then perform the move.  It is then routine to check that the required differentials can be defined to complete the extension of $\mathcal{C}_0$ to $\mathcal{C}$.  The two pictured crossings of $\Lambda_0$ are a departure followed by a return, so (since $\mathcal{C}_0$ is in SR-form) the only handleslides that need to be moved are those that may appear near the return.  Figure \ref{fig:Type2} illustrates a $2$-dimensional MCF on $j^1(1_{[0,1]}\cdot\Lambda)$ that will remove all handleslides from $I$ as required.    This MCF involves a single super-handleslide point, so Proposition \ref{prop:Handleslide} (1) and (2) produce the required differentials.  (A similar procedure applies for the horizontally and/or vertically reflected versions of Move II.)

\item For a Type II move ($\rightarrow$ direction):  There are no handleslides in the interval where the move occurs.  We perform the  move and then check that the differentials may be extended.

\end{itemize}
\end{proof}

\labellist
\small
\pinlabel $x$ [l] at 38 154
\pinlabel $z$ [b] at 2 188
\pinlabel $x$ [l] at 292 2
\pinlabel $t$ [b] at 258 36
\endlabellist

\begin{figure}
\centerline{ \includegraphics[scale=.6]{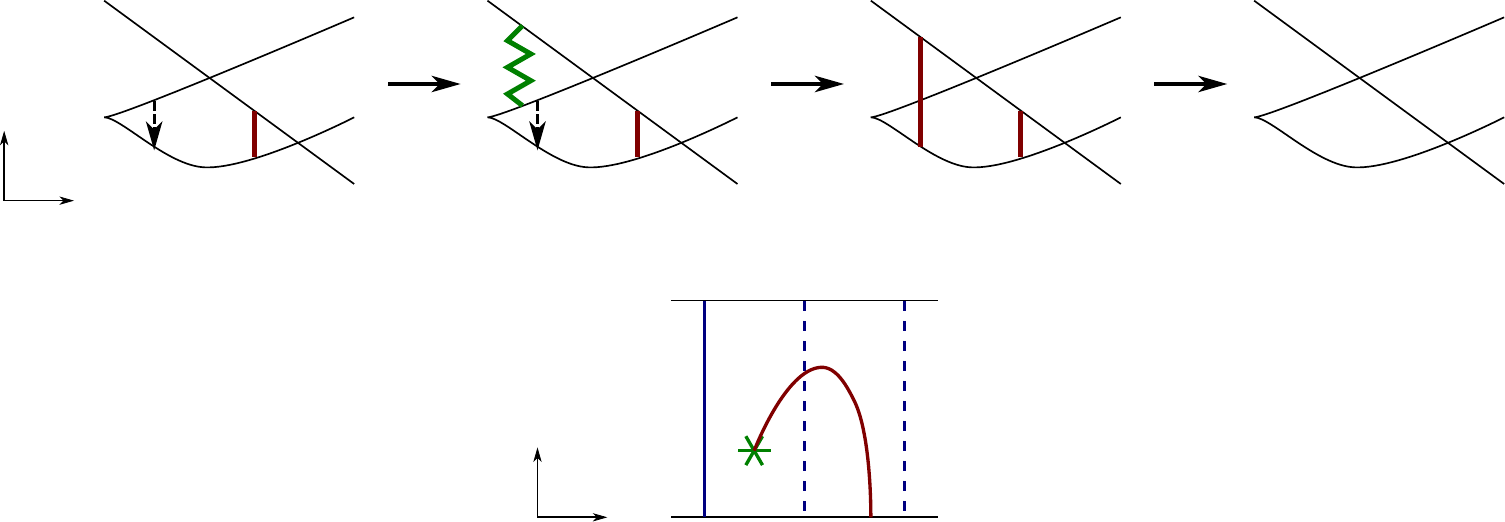} }
\caption{ Removing a handleslide at a return to prepare for a Type II move.  The top row illustrates the MCF on a sequence of $t$-slices with $t$ increasing.  The bottom row depicts the MCF in the base projection.  If the return is as in the 2nd or 3rd column of Figure \ref{fig:Rulings}, there will be a second handleslide arc with endpoint at the super-handleslide point.  Neither of its endpoints are on the cusp sheets, so it can be moved away from the pictured part of the front projection.}
\label{fig:Type2}
\end{figure}

\begin{remark} An
 alternate 
approach to establishing (B2) is made possible by the correspondence between MCFs and augmentations. When $\Phi$ is a Legendrian isotopy 
from $\Lambda_-$ to $\Lambda_+$, as in Section \ref{sec:AugSet}, there is an invertible conical Legendrian cobordism, $\Sigma_\Phi$, with embedded Lagrangian projection. 
Thus, because the induced augmentation set $I_\Sigma$ has the form found in equation (\ref{eq:embeddedSet}) the calculation of $I_\Sigma$ in terms of MCFs via Proposition \ref{prop:MCFcomp} and Corollary \ref{cor:augmentationset} implies that any MCF for $\Lambda_-$ can be extended over (a compact version of) $\Sigma$.
\end{remark}

Along with moves associated to $1$-dimensional Legendrian isotopies, the generic front bifurcations of the $t$-slices of a Legendrian cobordism include the {\bf Clasp Move}, the {\bf Pinch Move}, and the {\bf Unknot Move} as pictured in Figure \ref{fig:PinchMoves}.  These moves correspond to a local maximum or minimum in the $t$-direction for the crossing locus (in the case of the Clasp Move) or the cusp locus (for the Pinch and Unknot Move).  It will also be convenient to consider a (non-generic) {\bf Cusp Tangency Move} pictured in Figure \ref{fig:CuspMove} which can be realized by a combination of the Clasp Move with a Legendrian isotopy.

\labellist
\small
\pinlabel $x$ [l] at 84 2 
\pinlabel $z$ [b] at 52 34
\pinlabel $x$ [l] at 398 2 
\pinlabel $t$ [b] at 366 34
\large
\pinlabel $\emptyset$ [b] at 116 40
\endlabellist

\begin{figure}
\centerline{ \includegraphics[scale=.5]{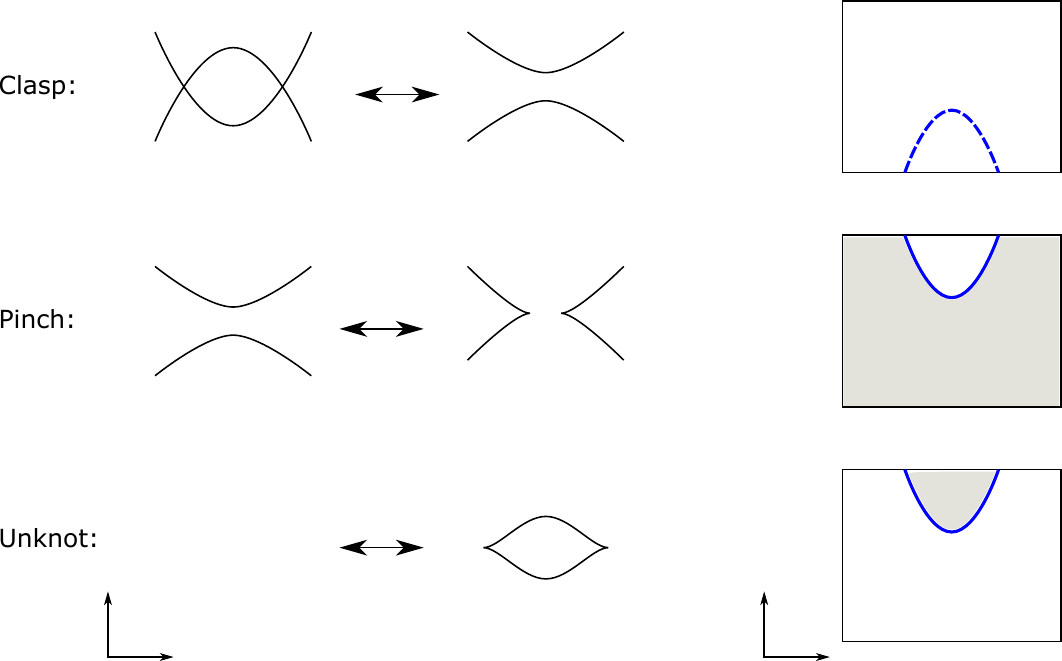} }
\caption{ The Clasp, Pinch, and Unknot Moves, pictured as front projection slices (left) and in the base projection of the corresponding $2$-dimensional cobordism (right).  Shading in the base projection indicates the region where the two cusp sheets exist.}
\label{fig:PinchMoves}
\end{figure}

\labellist
\small
\endlabellist

\begin{figure}
\centerline{ \includegraphics[scale=.5]{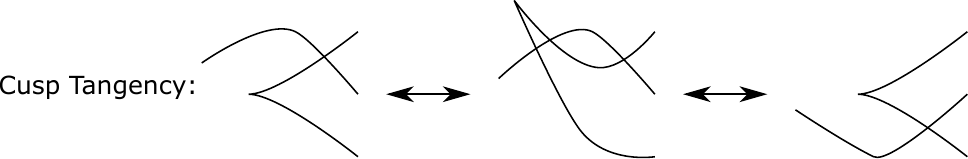} }
\caption{ The Cusp Tangency Move refers to the interchange of the far left and far right diagrams.  This is accomplished by a Type II Move and a Clasp Move, as pictured.}
\label{fig:CuspMove}
\end{figure}

\begin{enumerate}
\item[{\bf (B3)}]  
\begin{enumerate}
\item {\it An MCF can be extended along the $\rightarrow$ direction of the Clasp Move if there is no handleslide that connects the crossing sheets and has its $x$-coordinate between the two crossings.}  
\item {\it An MCF can be extended along the Cusp Tangency Move (either direction) provided there is no handleslide that connects the crossing strands and has its $x$-coordinate to the left of the crossing.} 
\end{enumerate}
\end{enumerate}

See Figure \ref{fig:Clasp}.

\labellist
\small
\endlabellist

\begin{figure}
\centerline{ \includegraphics[scale=.5]{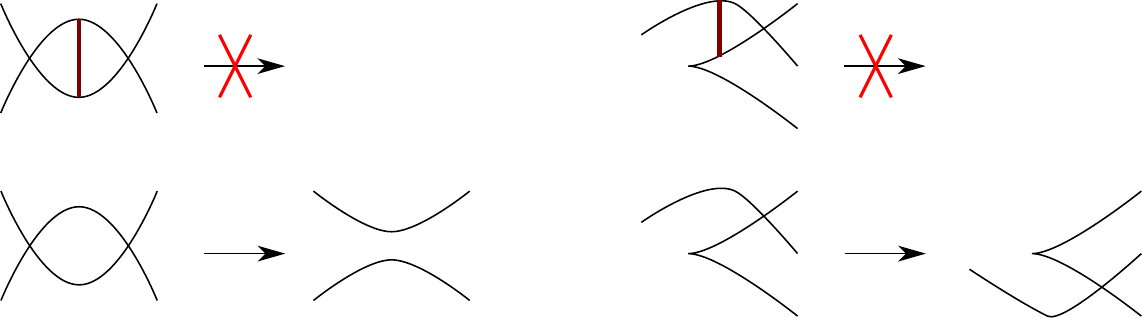} }
\caption{ The requirement on handleslides from (B3) that allows for MCFs to be extended through a clasp move (left) or cusp tangency (right).}
\label{fig:Clasp}
\end{figure}

\begin{proof}
Let $\Sigma$ be the Legendrian surface corresponding to the Clasp Move ($\rightarrow$), with $\Lambda_0$ and $\Lambda_1$ denoting the $1$-dimensional slices before and after the move.  Let $\mathcal{C}_0$ be an MCF for $\Lambda_0$ with no handleslides between the crossings.  Then, there is a sequence of three adjacent regions $R_A,R_B, R_C$ for $\mathcal{C}_0$ where $R_B$ is between the crossings and $R_A$ and $R_C$ are to the left and to the right.  Since there are no handleslides between the crossings, we can define a MCF for $\Sigma$ by extending all handleslide points of $\mathcal{C}_0$ along straight line segments in the $t$-direction.  Axiom \ref{ax:1dim} (2) for $\mathcal{C}_0$ shows that the differentials on the two regions $R_A$ and $R_B$ agree, so that there is a well defined differential on the common region for $\Sigma$ that contains them.  

The Cusp Tangency Move follows from the first case of (B3) since the hypothesis restricting the location of handleslides before the move implies that the move can be realized (in either direction) by a Type II Move followed by a Clasp Move ($\rightarrow$ direction) in such a way that there are no handleslides between the crossings when the Clasp Move is applied.
\end{proof}

\begin{enumerate}
\item[{\bf (B4)}] {\it An SR-form MCF for $\Lambda$ associated to a $\rho$-graded normal ruling $\sigma$ can be extended to a $\rho$-graded MCF on the elementary cobordism $\Sigma$ arising from applying a Pinch Move  ($\rightarrow$ direction) to adjacent sheets of $\Lambda$, $S^\nu_{k}$ and $S^\nu_{k+1}$, above a region where they are paired by $\sigma$.}  
\end{enumerate}

Note that the $\Z/\rho$-valued Maslov potential $\mu$ for $\Lambda$ extends over $\Sigma$ since the fact that $\sigma$ is $\rho$-graded implies that $\mu(S^\nu_{k}) = \mu(S^\nu_{k+1})+1$  mod $\rho$.

\begin{proof}

At the location of the pinch move, the differential for the SR-form MCF, $d_\nu$, is the standard ruling differential for $\sigma$.  (See Remark \ref{rem:SRForm} (2).)  Thus,  $d_{\nu} S_{k+1} = S_k$, and $S_{k}$ and $S_{k+1}$ do not appear in the differentials of the other generators.  That is, the complex for $(V(R_\nu), d_{\nu})$ splits as a direct sum $(V(R_\nu), d_{\nu}) = (C_1,d_1) \oplus (C_2,d_2)$ with $C_2$ spanned by $\{S_{k}, S_{k+1}\}$ and $C_1$ spanned by the rest of the sheets. Therefore, when we extend the MCF over the surface by using $(C_1,d_1)$ in the region where $S_k$ and $S_{k+1}$ do not exist, the Axiom \ref{ax:1dim} is satisfied. 
\end{proof}

\begin{enumerate}
\item[{\bf (B5)}] {\it An MCF $\mathcal{C}$ can be extended along the Unknot Move ($\leftarrow$ direction), provided there is no handleslide connecting the two sheets of the unknot.  (Note that such a handleslide cannot exist if $\mathcal{C}$ is $\rho$-graded and $\rho \neq 1$.)}
\end{enumerate}

\begin{proof}
Using (B1) we can assume $\mathcal{C}$ is in SR-Form on the slice that precedes the unknot move.  It then follows that (i) there are no handleslides with endpoints on the unknot sheets, and (ii) in the region where the unknot exists $(V(R_\nu), d_{\nu}) = (C_1,d_1) \oplus (C_2,d_2)$ where $C_1$ is spanned by the unknot sheets, $S^\nu_k$ and $S^\nu_{k+1}$, and $d_\nu S^\nu_{k+1} = S_k$.  Moreover, by Axiom \ref{ax:1dim} the differentials on the two regions adjacent to the unknot both agree with $d_2$. Thus, $\mathcal{C}$ extends in an obvious way over the surface.
\end{proof}

\begin{enumerate}
\item[{\bf (B6)}]  {\it Any MCF $\mathcal{C}$ can be extended along the $\leftarrow$ direction of the Pinch Move.}
\end{enumerate}

\begin{proof}
\dr{Again using (B1) we can assume $\mathcal{C}$ has no handleslides near the two cusps that are joined by the Pinch Move.  From Axiom \ref{ax:1dim} the differentials agree in the two regions where the cusp sheets exist, so that $\mathcal{C}$ extends over the surface.} 
\end{proof}

\begin{remark}  \label{rem:extramoves}
Although we will not need to use them in our proof of Theorem \ref{thm:main}, it is also easy to give necessary and sufficient conditions for an MCF to extend over the remaining moves.
\begin{enumerate}
\item Any MCF can be extended 
over 
the Unknot Move ($\rightarrow$ direction). (This is easy to see directly, but also follows from the fact that there are conical Legendrian cobordisms realizing these moves that have embedded Lagrangian projections; cf. \cite{EHK, BST}. The same comment applies to the $\leftarrow$ Pinch Move.)
\item An MCF can be extended over the Clasp Move ($\leftarrow$ direction) if and only if $\langle d_\nu S_{k+1}, S_k \rangle = 0$ where $R_{\nu}$ is the region where the crossings will appear and $S_k$ and $S_{k+1}$ are the two crossing sheets.
\end{enumerate}
\end{remark}

\subsection{Construction of the Legendrian filling}

The front projection of a generic Legendrian $\Lambda \subset J^1\R$ can be represented as a word $w_\Lambda$ that is a product (left to right concatenation) of elementary Legendrian tangles, each one of which contains a single crossing or cusp.  We notate these {\bf elementary tangles} as $l^{n-2,n}_{k}, r^{n,n-2}_{k}$, and $\sigma^{n,n}_k$ in the case of a tangle with a left cusp, right cusp, or crossing respectively, where $n \geq 2$ and $ 1 \leq k \leq n-1$.  The superscripts indicate the number of strands of the tangle at its left and right vertical boundaries, while $k$ is the numbering of the upper of the two strands that is involved with the crossing or cusp when we number strands as $1$ to $n$ from top to bottom.  See Figure \ref{fig:Elementary}.  In the following, we suppress the superscripts from notation.  

\labellist
\tiny
\pinlabel $1$ [r] at -2 106
\pinlabel $2$ [r] at -2 84
\pinlabel $n-2$ [r] at -2 16
\pinlabel $1$ [l] at 76 142
\pinlabel $2$ [l] at 76 120
\pinlabel $k$ [l] at 76 98
\pinlabel $k+1$ [l] at 76 68
\pinlabel $n$ [l] at 76 2
\endlabellist

\begin{figure}
\centerline{ \includegraphics[scale=.6]{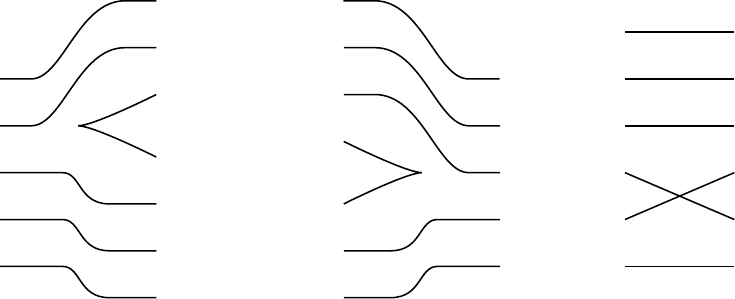} }
\caption{ The elementary tangles $l^{n-2,n}_{k}, r^{n,n-2}_{k}$, and $\sigma^{n,n}_k$.  The picture illustrates $l^{5,7}_3$, $r^{7,5}_4$, and $\sigma^{6,6}_4$ from left to right. }
\label{fig:Elementary}
\end{figure}

\begin{proof}[Proof of Proposition \ref{prop:MCSextend}]
Assume $\mathcal{C}_\Lambda$ is an $\rho$-graded MCF for $\Lambda \subset J^1\R$.  We first prove the proposition {\it assuming $\rho \neq 1$}, and then close by indicating the minor modifications to the proof when $\rho=1$.

Let $w_\Lambda$ be the word representing the front projection of $\Lambda$, and note that \dr{if $\Lambda \neq\emptyset$ then} $w_\Lambda$ can be written in the form
\begin{equation}  \label{eq:Yform}
w_\Lambda = X l_k \sigma_{k-1} \sigma_{k-2} \cdots \sigma_{k-s} \sigma_{k+1} \sigma_{k+2} \cdots \sigma_{k+t} Y
\end{equation}
where $s,t \geq 0$ and $Y$ contains no left cusps.  (For example, just take the $l_k$ term to be the left cusp of $\Lambda$ with largest $x$-coordinate, and put $s=t=0$.)  See Figure \ref{fig:wLambda}.

\labellist
\large
\pinlabel $X$  at 48 136 
\pinlabel $Y$  at 366 136
\endlabellist

\begin{figure}
\centerline{ \includegraphics[scale=.5]{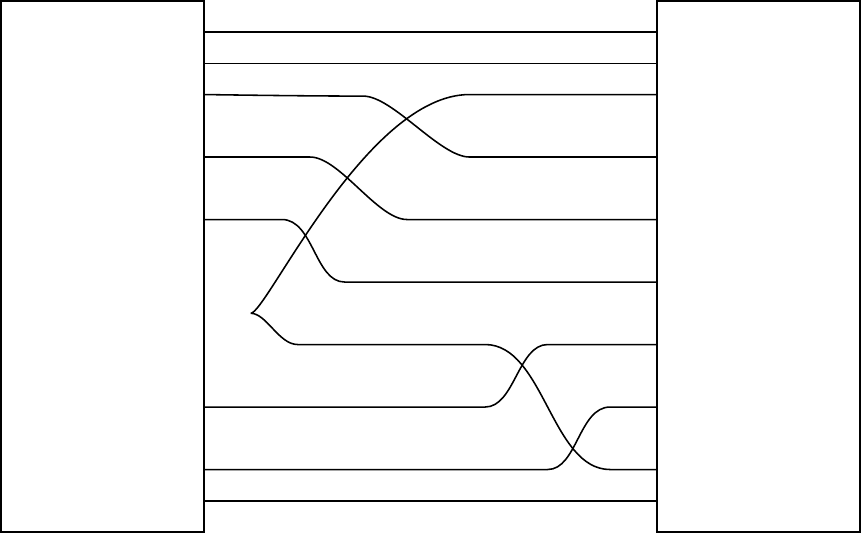} }
\caption{The front projection corresponding to the word $w_\Lambda = X l_k \sigma_{k-1} \sigma_{k-2} \cdots \sigma_{k-s} \sigma_{k+1} \sigma_{k+2} \cdots \sigma_{k+t} Y$ when $s =3$ and $t=2$.
 }
\label{fig:wLambda}
\end{figure}

We prove the following statement by a nested induction.  The outer induction is on $c$ and the inner induction is on $n$.

\medskip

\noindent{\bf Inductive Statement:}  Suppose that $w_\Lambda$ has $c$ left cusps, and if $c \geq 1$ assume that $w_\Lambda$ can be written in the form (\ref{eq:Yform}) such that the length of $Y$ is $|Y|=n$.  Then, there exists a compact Legendrian filling  $\Sigma\subset J^1([0,1]\times \R)$ with a $\rho$-graded MCF $\mathcal{C}$ such that $\mathcal{C}|_\Lambda = \mathcal{C}_\Lambda$.  

\medskip

The case $c=0$ is trivial since $\Lambda = \emptyset$.  For fixed $c \geq 1$, \dr{assuming the statement holds for smaller values of $c$, we establish the statement via induction on $n$.}   The case $n=0$ is vacuously true, \dr{since $Y$ must have at least one right cusp.}  Assuming $n\geq 1$, we write $Y = z Y'$ \dr{(allowing the possibility that $Y'$ is the empty word)} where $z$ is an elementary tangle that is necessarily a right cusp or a crossing, $z= r_j$ or $z=\sigma_j$.  We consider cases depending on the vertical location of this right cusp (in Cases 1-5) or crossing (in Cases 6-10).  By symmetry
we can assume $j \leq k$.  In addition, using (B1) and Proposition \ref{prop:SRform} we can assume that $\mathcal{C}_\Lambda$ is in SR-form with respect to a $\rho$-graded normal ruling $\sigma$.

\medskip

\noindent {\bf Case 1:}  $z = r_{j}$ with $j\leq k-s-2$.  Then, $z$ is a right cusp above all the sheets with cusps or crossings in the product $l_k \sigma_{k-1} \sigma_{k-2} \cdots \sigma_{k-s} \sigma_{k+1} \sigma_{k+2} \cdots \sigma_{k+t}$.  So, a Legendrian isotopy of $\Lambda$  modifies the front diagram by 
\[
X l_k \sigma_{k-1} \sigma_{k-2} \cdots \sigma_{k-s} \sigma_{k+1} \sigma_{k+2} \cdots \sigma_{k+t} (r_{j}Y') \, \rightarrow \, (X r_{j}) l_{k-2} \sigma_{k-3} \sigma_{k-4} \cdots \sigma_{k-2-s} \sigma_{k-1} \sigma_{k} \cdots \sigma_{k-2+t} Y'.
\]
Use (B2) to extend $\mathcal{C}_\Lambda$ over the Legendrian isotopy.  Then, the inductive hypothesis on $n$ applies to complete the construction of $(\Sigma, \mathcal{C})$.

\medskip

\noindent {\bf Case 2:} $z = r_{k-s-1}$.  \dr{See Figure \ref{fig:wmoves} for a summary of this case.}  
Using a Legendrian isotopy, we have
\[
w_\Lambda \rightarrow  X l_k \sigma_{k-1} \sigma_{k-2} \cdots \sigma_{k-s} r_{k-s-1} \sigma_{k-1} \sigma_{k} \cdots \sigma_{k-2+t} Y'.
\]
Note that we must have $s \geq 1$, since otherwise a ``zig-zag'' $l_k r_{k-1}$ would occur in $\Lambda$.  This is not possible since MCFs do not exist for stabilized Legendrian links. 
The idea is to try to move the cusp $l_k$ up next to the $r_{k-s-1}$.  Consider the first crossing to the right of $l_k$, $x=\sigma_{k-1}$.

\begin{figure}[!ht]
\labellist
\small
\pinlabel $\vdots$ at  20 30
\pinlabel $\vdots$ at  20 175
\pinlabel $\vdots$ at  20 330
\pinlabel $\vdots$ at  20 470

\pinlabel $\vdots$ at  360 35
\pinlabel $\vdots$ at  360 175
\pinlabel $\vdots$ at  360 330
\pinlabel $\vdots$ at  360 470

\pinlabel $\vdots$ at  690 35
\pinlabel $\vdots$ at  690 110
\pinlabel $\vdots$ at  690 335
\pinlabel $\vdots$ at  690 475

\pinlabel $(A)$ at 470 260
\pinlabel $(B)$ at 470 -20
\endlabellist
\includegraphics[width=5in]{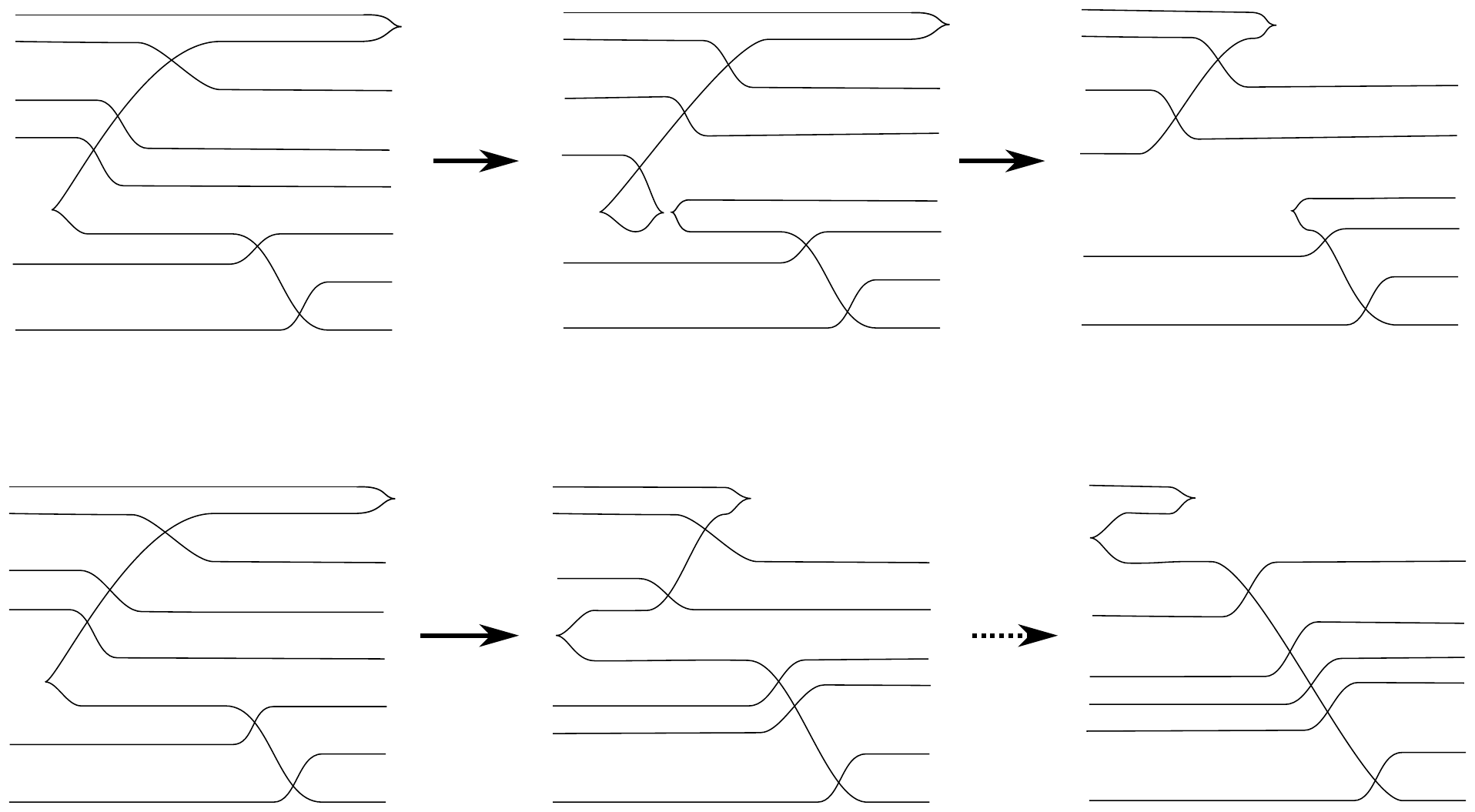}
\caption{\dr{The first line outlines the cobordism used in Case 2, Subcase A and the second line outlines the cobordism for Subcase B.  Subcase A will necessarily occur before the front at the far right of (B) is reached.}}
\label{fig:wmoves}
\end{figure}
\begin{itemize}
\item[Subcase A:]  $x$ is a switch for $\sigma$.  Since $\mathcal{C}$ is in SR-form for $\sigma$, we apply a $\rightarrow$ Pinch Move to the right of $x$ (extending the MCF using (B4)) followed by a Type I Reidemeister Move to produce an MCF on a surface with the slices
\begin{align*}
\cdots l_k \sigma_{k-1} \cdots \stackrel{(B4)}{\rightarrow} & \cdots l_k \sigma_{k-1} r_k l_k \cdots \rightarrow \cdots l_k \sigma_{k-2} \cdots \sigma_{k-s} r_{k-s-1} \sigma_{k-1} \sigma_{k} \cdots \sigma_{k-2+t} Y' \\
\rightarrow & \cdots \sigma_{k-2} \cdots \sigma_{k-s} r_{k-s-1} l_{k-2}\sigma_{k-1} \sigma_{k} \cdots \sigma_{k-2+t} Y'.
\end{align*}
Then, the inductive hypothesis on $n$ applies.

\item[Subcase B:]  $x$ is not a switch for $\sigma$.  Then, it is a departure, so since $\mathcal{C}_\Lambda$ is in SR-form, it has no handleslides connecting the crossing strands to the left of $x=\sigma_{k-1}$.  Then, we can apply (B3) to extend past the Cusp Tangency Move:
\begin{align*}
 X l_k \sigma_{k-1} \sigma_{k-2} \cdots \sigma_{k-s} r_{k-s-1} \sigma_{k-1} \sigma_{k} \cdots \sigma_{k-2+t} Y' \stackrel{(B3)}{\rightarrow} & X l_{k-1} \sigma_k \sigma_{k-2} \cdots \sigma_{k-s} r_{k-s-1} \sigma_{k-1} \sigma_{k} \cdots \sigma_{k-2+t} Y'  \\
 \rightarrow & X l_{k-1} \sigma_{k-2} \cdots \sigma_{k-s} r_{k-s-1} \sigma_{k-2} \sigma_{k-1} \sigma_{k} \cdots \sigma_{k-2+t} Y'.
\end{align*}
We then repeat this argument with $x=\sigma_{k-2}$.  Continuing in this manner, we must encounter the switch case at some point; if not, the left cusp would eventually be moved next to the right cusp, and we would have an MCF for a stabilized link which is impossible.  (The front would have a ``zig-zag'', $l_{k-s}r_{k-s-1}$.)  
\end{itemize}

\medskip

\noindent {\bf Case 3:}  $s \geq 1$ and $z= r_{k-s}$.  Then, the presence of the term $\sigma_{k-s} r_{k-s}$ shows that $\Lambda$ is stabilized which is impossible.  (There is a standard Legendrian isotopy that will turn the ``fish tail'' from the $\sigma_{k-s} r_{k-s}$ product into a ``zig-zag''.)

\medskip

\noindent {\bf Case 4:} $z = r_j$ with $k-s < j < k$.  Then,  a Type II move can be applied to produce a Legendrian isotopy
\begin{align*}
X l_k \sigma_{k-1} \sigma_{k-2} \cdots \sigma_{k-s} \sigma_{k+1} \sigma_{k+2} \cdots \sigma_{k+t} (r_{j}Y') \, \rightarrow  & \, (X r_{j-1}) l_{k-2} \sigma_{k-3}  \cdots \sigma_{k-s} \sigma_{k-1} \sigma_{k} \cdots \sigma_{k-2+t} Y',
\end{align*}
where the product $\sigma_{k-3} \cdots \sigma_{k-s}$ just means the identity tangle if $s =2$.  Again, we can apply (B2) and then the inductive hypothesis on $n$ to produce $(\Sigma, \mathcal{C})$.

\medskip

\noindent {\bf Case 5:}  $z = r_{k}$. 

\begin{itemize}
\item[Subcase A:] $s=t=0$.  Then, we have a standard Legendrian unknot in the middle of the diagram.  {\it As long as $\rho \neq 1$}, there can be no handleslides connecting the strands of this unknot, so we apply (B5) to extend $\mathcal{C}$ as we remove the unknot.  This decreases $c$, so that the inductive hypothesis applies.
\item[Subcase B:] Exactly one of $s >0$ or $t>0$.  Then, we can apply a Type I Reidemeister move to reduce the number of cusps.  The $(\Sigma, \mathcal{C})$ is then constructed via (B2) and the inductive hypothesis on $c$.

\item[Subcase C:] $s>0$ and $t>0$.  If the crossing $\sigma_{k-1}$ is a switch for the normal ruling $\sigma$ associated to $\mathcal{C}_\Lambda$, then  we can apply a pinch move, as in Case 2, followed by two Type I moves to decrease $c$ while extending $\mathcal{C}$ via (B4) and (B2).  If $\sigma_{k-1}$ is not a switch, then we apply a Cusp Tangency Move as in (B3) to arrive at 
\[
X l_{k-1}  \sigma_{k-2} \cdots \sigma_{k-s} \sigma_{k} \sigma_{k+1} \sigma_{k+2} \cdots \sigma_{k+t} r_k Y'.
\] 
At this point, Case 4 (reflected vertically) applies.
\end{itemize}

\medskip

\noindent {\bf Case 6:}  $z = \sigma_{j}$ with $j\leq k-s-2$.  As in Case 1, move $z$ to the left of $l_k$ and then apply the inductive hypothesis on $n$.

\medskip
 
\noindent {\bf Case 7:} $z = \sigma_{k-s-1}$.  
Using a Legendrian isotopy, we can group $\sigma_{k-s-1}$ into the product
\[
X (l_k \sigma_{k-1} \sigma_{k-2} \cdots \sigma_{k-s} \sigma_{k+1} \sigma_{k+2} \cdots \sigma_{k+t}) (\sigma_{k-s-1}Y') \rightarrow 
X (l_k \sigma_{k-1} \sigma_{k-2} \cdots \sigma_{k-s} \sigma_{k-s-1} \sigma_{k+1} \sigma_{k+2} \cdots \sigma_{k+t}) Y'
\]
and apply the inductive hypothesis on $n$.

\medskip

\noindent {\bf Case 8:}  $s \geq 1$ and $z= \sigma_{k-s}$.  Using an argument similar to Case 2, we attempt to apply cusp tangency moves to move the left cusp directly next to $z$.  Initially, assume $k-1 >k-s$.  If the crossing $x=\sigma_{k-1}$ directly to the right of $l_k$ is a switch, we can apply a Pinch Move  followed by a Type I Reidemeister Move to go from
\begin{align*}
\cdots l_k \sigma_{k-1} \sigma_{k-2} \cdots \stackrel{(B4)}{\rightarrow} & \cdots l_k \sigma_{k-1} r_k l_k \sigma_{k-2} \cdots \rightarrow \cdots l_k \sigma_{k-2} \cdots \sigma_{k-s} \sigma_{k-s} \sigma_{k+1} \sigma_{k+2} \cdots \sigma_{k+t} Y' \\
\rightarrow & \cdots \sigma_{k-2} \cdots \sigma_{k-s} \sigma_{k-s} l_k\sigma_{k+1} \sigma_{k+2} \cdots \sigma_{k+t} Y'
\end{align*}
and then apply induction on $n$.

If $x=\sigma_{k-1}$ is not a switch, then it is a departure.  Then, we apply (B3) to extend $\mathcal{C}$ during the sequence 
\begin{align*}
 X l_k \sigma_{k-1} \sigma_{k-2} \cdots \sigma_{k-s}  \sigma_{k+1} \sigma_{k+2} \cdots \sigma_{k+t}(\sigma_{k-s} Y') \stackrel{(B3)}{\rightarrow} & X l_{k-1} \sigma_k \sigma_{k-2} \cdots \sigma_{k-s} \sigma_{k-s} \sigma_{k+1} \sigma_{k+2} \cdots \sigma_{k+t} Y'  \\
 \rightarrow & X l_{k-1} \sigma_{k-2} \cdots \sigma_{k-s} \sigma_{k-s} \sigma_k \sigma_{k+1} \sigma_{k+2} \cdots \sigma_{k+t} Y'.
\end{align*}
We repeat this argument until we either find a switch or arrive at a word of the form
\[
X l_{k-s+1}\sigma_{k-s} \sigma_{k-s} \sigma_{k-s+2} \cdots \sigma_{k+t} Y'
\]
If the first $\sigma_{k-s}$ is a switch, then the usual combination of Pinch Move and Type I Reidemeister Move produces
\[
X l_{k-s+1} \sigma_{k-s} \sigma_{k-s+2} \cdots \sigma_{k+t} Y'
\]
which has the form (\ref{eq:Yform}) with $|Y'|= n-1$ so that induction applies.
If instead $\sigma_{k-s}$ is a departure, then we can extend $\mathcal{C}$ over another Cusp Tangency Move and then apply a Reidemeister II move:
\begin{align*}
X l_{k-s+1}\sigma_{k-s} \sigma_{k-s} \sigma_{k-s+2} \cdots \sigma_{k+t} Y' \stackrel{(B3)}{\rightarrow} & X l_{k-s}\sigma_{k-s+1} \sigma_{k-s} \sigma_{k-s+2} \cdots \sigma_{k+t} Y' \\
\rightarrow & X l_{k-s+1} \sigma_{k-s+2} \cdots \sigma_{k+t} Y'. 
\end{align*}
Again, we are able to use the inductive hypothesis on $n$.

\medskip

\noindent {\bf Case 9:} $z = \sigma_j$ with $k-s < j < k$.  Then,  a Type III Reidemeister move can be applied: 
\begin{align*}
X l_k \sigma_{k-1} \sigma_{k-2} \cdots \sigma_{k-s} \sigma_{k+1} \sigma_{k+2} \cdots \sigma_{k+t} (\sigma_{j}Y') \, \rightarrow  & \, (X \sigma_{j-1}) l_k \sigma_{k-1} \sigma_{k-2} \cdots \sigma_{k-s} \sigma_{k+1} \sigma_{k+2} \cdots \sigma_{k+t} Y'.
\end{align*}
We apply (B2) and then the inductive hypothesis on $n$ to produce $(\Sigma, \mathcal{C})$.

\medskip

\noindent {\bf Case 10:}  $z = \sigma_{k}$. 
\begin{itemize}
\item[Subcase A:]  $s=t=0$.  This case cannot occur since the appearance of the product $l_k\sigma_k$ would show that $\Lambda$ is stabilized. 
\item[Subcase B:]  Exactly one of $s>0$ or $t>0$.  Then, a Type II Move allows us to apply induction on $n$. For instance, when $s>0$ and $t=0$,
\begin{align*}
X l_k\sigma_{k-1} \sigma_{k-2} \cdots \sigma_{k-s}(\sigma_k Y') \rightarrow & X l_k\sigma_{k-1} \sigma_k 
\sigma_{k-2} \cdots \sigma_{k-s} Y' \\
\rightarrow & X l_{k-1} \sigma_{k-2} \cdots \sigma_{k-s} Y'.
\end{align*}  
\item[Subcase C:]  Both $s>0$ and $t>0$.  We have
\[
X l_k\sigma_{k-1} \cdots \sigma_{k-s} \sigma_{k+1} \cdots \sigma_{k+t}(\sigma_k Y') \rightarrow  X l_k\sigma_{k-1} \sigma_{k+1}\sigma_k (  
\sigma_{k-2} \cdots \sigma_{k-s})(\sigma_{k+2} \cdots \sigma_{k+t}) Y'. 
\]
If the $\sigma_{k-1}$ is a switch, then we apply (B4) to do a Pinch Move followed by a Type I and Type II Move:
\begin{align*}
X l_k\sigma_{k-1} \sigma_{k+1}\sigma_k \cdots \stackrel{(B4)}{\rightarrow} & X l_k \sigma_{k-1}r_kl_k \sigma_{k+1} \sigma_k \cdots \\
 \rightarrow & X l_k \sigma_{k+1} \sigma_k \cdots   \\
\rightarrow & X l_{k+1} (  
\sigma_{k-2} \cdots \sigma_{k-s})(\sigma_{k+2} \cdots \sigma_{k+t}) Y'   \\
\rightarrow & (X \sigma_{k-2} \cdots \sigma_{k-s})(l_{k+1} \sigma_{k+2} \cdots \sigma_{k+t}) Y'. 
\end{align*}
Finally, if $\sigma_{k-1}$ is not a switch, we use (B3) and apply a Cusp Tangency Move followed by a Type III Move:
\begin{align*}
X l_k\sigma_{k-1} \sigma_{k+1}\sigma_k \cdots \stackrel{(B3)}{\rightarrow} &  X l_{k-1} \sigma_k \sigma_{k+1} \sigma_k \cdots \\
 \rightarrow &  X l_{k-1} \sigma_{k+1} \sigma_{k} \sigma_{k+1} (  
\sigma_{k-2} \cdots \sigma_{k-s})(\sigma_{k+2} \cdots \sigma_{k+t}) Y'  \\
\rightarrow &   (X \sigma_{k+1}) l_{k-1} (  
\sigma_{k-2} \cdots \sigma_{k-s}) (\sigma_{k} \sigma_{k+1} \sigma_{k+2} \cdots \sigma_{k+t}) Y'.
 \end{align*}
Then, induction on $n$ applies.
\end{itemize}

This completes the proof when $\rho \neq 1$.  The only place where the hypothesis $\rho \neq 1$ was used in the above induction was Subcase A of Case 5 where (B5) was applied to remove a Legendrian unknot component.  When $\rho =1$, instead of applying (B5) for this subcase, we can simply apply a Legendrian isotopy to move the unknot component to the left of the rest of the front.  In this manner, the inductive argument produces a cobordism $\Sigma$ from a disjoint union of standard Legendrian unknots (each with 2 cusps and no crossings) to $\Lambda$ together with an MCF $\mathcal{C}$ on $\Sigma$ extending $\mathcal{C}_\Lambda$.  \dr{Finally, we can apply the (B6) to use the ($\leftarrow$) direction of the Pinch Move to join all of the unknots into a single unknot via a cobordism.} 

\end{proof}



\bibliographystyle{abbrv}
\bibliography{PR}

\begin{thebibliography}{10}

\bibitem{Arnold1}
V.~I. Arnol`d.
\newblock Lagrange and {L}egendre cobordisms. {I}.
\newblock {\em Funktsional. Anal. i Prilozhen.}, 14(3):1--13, 96, 1980.

\bibitem{Arnold2}
V.~I. Arnol`d.
\newblock Lagrange and {L}egendre cobordisms. {II}.
\newblock {\em Funktsional. Anal. i Prilozhen.}, 14(4):8--17, 95, 1980.

\bibitem{ArnoldWave}
V.~I. Arnol`d.
\newblock {\em Singularities of caustics and wave fronts}, volume~62 of {\em
  Mathematics and its Applications (Soviet Series)}.
\newblock Kluwer Academic Publishers Group, Dordrecht, 1990.

\bibitem{Audin87}
M.~Audin.
\newblock {\em Cobordismes d'immersions lagrangiennes et legendriennes},
  volume~20 of {\em Travaux en Cours [Works in Progress]}.
\newblock Hermann, Paris, 1987.

\bibitem{BC}
F.~Bourgeois and B.~Chantraine.
\newblock Bilinearized {L}egendrian contact homology and the augmentation
  category.
\newblock {\em J. Symplectic Geom.}, 12(3):553--583, 2014.

\bibitem{BST}
F.~Bourgeois, J.~M. Sabloff, and L.~Traynor.
\newblock Lagrangian cobordisms via generating families: construction and
  geography.
\newblock {\em Algebr. Geom. Topol.}, 15(4):2439--2477, 2015.

\bibitem{Chan}
B.~Chantraine.
\newblock Lagrangian concordance of {L}egendrian knots.
\newblock {\em Algebr. Geom. Topol.}, 10(1):63--85, 2010.

\bibitem{Chan2}
B.~Chantraine.
\newblock Lagrangian concordance is not a symmetric relation.
\newblock {\em Quantum Topol.}, 6(3):451--474, 2015.

\bibitem{Che}
Y.~Chekanov.
\newblock Differential algebra of {L}egendrian links.
\newblock {\em Invent. Math.}, 150(3):441--483, 2002.

\bibitem{CornNS}
C.~Cornwell, L.~Ng, and S.~Sivek.
\newblock Obstructions to {L}agrangian concordance.
\newblock {\em Algebr. Geom. Topol.}, 16(2):797--824, 2016.

\bibitem{DR}
G.~Dimitroglou~Rizell.
\newblock Lifting pseudo-holomorphic polygons to the symplectisation of
  {$P\times\Bbb{R}$} and applications.
\newblock {\em Quantum Topol.}, 7(1):29--105, 2016.

\bibitem{Ekh}
T.~Ekholm.
\newblock Morse flow trees and {L}egendrian contact homology in 1-jet spaces.
\newblock {\em Geom. Topol.}, 11:1083--1224, 2007.

\bibitem{E2}
T.~Ekholm.
\newblock Rational symplectic field theory over {$\Bbb Z_2$} for exact
  {L}agrangian cobordisms.
\newblock {\em J. Eur. Math. Soc. (JEMS)}, 10(3):641--704, 2008.

\bibitem{EESLCH}
T.~Ekholm, J.~Etnyre, and M.~Sullivan.
\newblock Legendrian contact homology in {$P\times\Bbb R$}.
\newblock {\em Trans. Amer. Math. Soc.}, 359(7):3301--3335, 2007.

\bibitem{EES}
T.~Ekholm, J.~Etnyre, and M.~Sullivan.
\newblock Legendrian contact homology in {$P\times\Bbb R$}.
\newblock {\em Trans. Amer. Math. Soc.}, 359(7):3301--3335, 2007.

\bibitem{EHK}
T.~Ekholm, K.~Honda, and T.~K{\'a}lm{\'a}n.
\newblock Legendrian knots and exact {L}agrangian cobordisms.
\newblock {\em J. Eur. Math. Soc. (JEMS)}, 18(11):2627--2689, 2016.

\bibitem{EK}
T.~Ekholm and T.~K\'alm\'an.
\newblock Isotopies of {L}egendrian 1-knots and {L}egendrian 2-tori.
\newblock {\em J. Symplectic Geom.}, 6(4):407--460, 2008.

\bibitem{E84}
J.~Eliashberg.
\newblock Cobordisme des solutions de relations diff\'{e}rentielles.
\newblock In {\em South {R}hone seminar on geometry, {I} ({L}yon, 1983)},
  Travaux en Cours, pages 17--31. Hermann, Paris, 1984.

\bibitem{Eli}
Y.~Eliashberg.
\newblock Invariants in contact topology.
\newblock In {\em Proceedings of the {I}nternational {C}ongress of
  {M}athematicians, {V}ol. {II} ({B}erlin, 1998)}, number Extra Vol. II, pages
  327--338, 1998.

\bibitem{EGH}
Y.~Eliashberg, A.~Givental, and H.~Hofer.
\newblock Introduction to symplectic field theory.
\newblock Number Special Volume, Part II, pages 560--673. 2000.
\newblock GAFA 2000 (Tel Aviv, 1999).

\bibitem{Fuchs}
D.~Fuchs.
\newblock Chekanov-{E}liashberg invariant of {L}egendrian knots: existence of
  augmentations.
\newblock {\em J. Geom. Phys.}, 47(1):43--65, 2003.

\bibitem{HS}
K.~Hayden and J.~M. Sabloff.
\newblock Positive knots and {L}agrangian fillability.
\newblock {\em Proc. Amer. Math. Soc.}, 143(4):1813--1821, 2015.

\bibitem{Henry}
M.~B. Henry.
\newblock Connections between {F}loer-type invariants and {M}orse-type
  invariants of {L}egendrian knots.
\newblock {\em Pacific J. Math.}, 249(1):77--133, 2011.

\bibitem{HenryRu1}
M.~B. Henry and D.~Rutherford.
\newblock A combinatorial {DGA} for {L}egendrian knots from generating
  families.
\newblock {\em Commun. Contemp. Math.}, 15(2):1250059, 60, 2013.

\bibitem{HenryRu2}
M.~B. Henry and D.~Rutherford.
\newblock Equivalence classes of augmentations and {M}orse complex sequences of
  {L}egendrian knots.
\newblock {\em Algebr. Geom. Topol.}, 15(6):3323--3353, 2015.

\bibitem{Limo}
M.~Limouzineau.
\newblock On {L}egendrian cobordisms and generating functions.
\newblock {\em J. Knot Theory Ramifications}, 29(3):2050008, 17, 2020.

\bibitem{NRSSZ}
L.~Ng, D.~Rutherford, V.~Shende, S.~Sivek, and E.~Zaslow.
\newblock Augmentations are sheaves.
\newblock {\em Geom. Topol., to appear}.

\bibitem{NgCompute}
L.~L. Ng.
\newblock Computable {L}egendrian invariants.
\newblock {\em Topology}, 42(1):55--82, 2003.

\bibitem{Pan1}
Y.~Pan.
\newblock The augmentation category map induced by exact {L}agrangian
  cobordisms.
\newblock {\em Algebr. Geom. Topol.}, 17(3):1813--1870, 2017.

\bibitem{PanRu2Arxiv}
Y.~Pan and D.~Rutherford.
\newblock Augmentations and immersed lagrangian fillings.
\newblock {\em Preprint, arXiv:2006.16436}.

\bibitem{PanRu3}
Y.~Pan and D.~Rutherford.
\newblock Augmented {L}egendrian cobordism in ${J}^1{S}^1$.
\newblock {\em Preprint, arXiv:2111.10065v2}.

\bibitem{PanRu1}
Y.~Pan and D.~Rutherford.
\newblock Functorial {LCH} for immersed {L}agrangian cobordisms.
\newblock {\em Preprint}.

\bibitem{RuSu1}
D.~Rutherford and M.~Sullivan.
\newblock Cellular {L}egendrian contact homology for surfaces, part {I}.
\newblock {\em Preprint}.

\bibitem{RuSu2}
D.~Rutherford and M.~Sullivan.
\newblock Cellular {L}egendrian contact homology for surfaces, part {II}.
\newblock {\em Internat. J. Math.}, 30(7):1950036, 135, 2019.

\bibitem{RuSu25}
D.~Rutherford and M.~Sullivan.
\newblock Cellular {L}egendrian contact homology for surfaces, part {III}.
\newblock {\em Internat. J. Math.}, 30(7):1950037, 111, 2019.

\bibitem{RuSu3}
D.~Rutherford and M.~G. Sullivan.
\newblock Generating families and augmentations for {L}egendrian surfaces.
\newblock {\em Algebr. Geom. Topol.}, 18(3):1675--1731, 2018.

\bibitem{Sabloff}
J.~M. Sabloff.
\newblock Augmentations and rulings of {L}egendrian knots.
\newblock {\em Int. Math. Res. Not.}, (19):1157--1180, 2005.

\bibitem{STWZ}
V.~Shende, D.~Treumann, H.~Williams, and E.~Zaslow.
\newblock Cluster varieties from {L}egendrian knots.
\newblock {\em Duke Math. J.}, 168(15):2801--2871, 2019.

\bibitem{Su}
T.~Su.
\newblock Ruling polynomials and augmentations for {L}egendrian tangles.
\newblock {\em Preprint}.

\bibitem{Tagami}
K.~Tagami.
\newblock On the {L}agrangian fillability of almost positive links.
\newblock {\em J. Korean Math. Soc.}, 56(3):789--804, 2019.

\bibitem{Vass}
V.~A. Vassilyev.
\newblock {\em Lagrange and {L}egendre characteristic classes}, volume~3 of
  {\em Advanced Studies in Contemporary Mathematics}.
\newblock Gordon and Breach Science Publishers, New York, 1988.
\newblock Translated from the Russian.

\end{thebibliography}

\end{document}